\documentclass[12pt,reqno]{amsart}
\usepackage{ramsstyle}
\usepackage{lineno}
\usepackage{biblatex} 
\addbibresource{ref.bib} 
\title[Equidist of integers via Std. Quad. Form under arithmetic constraints]{Equidistribution of integers represented by standard quadratic form under arithmetic constraints}
\thanks{The author is supported by a CSC grant from the Chinese government.}
\author{Yefei Ma}
\address{IMAG – UMR 5149, Université de Montpellier, France}
\email{yefei.ma@umontpellier.fr}
\date{\today}
\begin{document}
\begin{abstract}
We study the equidistribution of integers of the form $n= x_1^2 + \cdots + x_d^2$ under the arithmetic constraints given by $(\Z/p\Z)^d$. The first step in addressing this problem is to construct modular forms whose Fourier expansion coefficients correspond to the counting problem over the quadric in $\Z^d$ induced by the standard quadratic form, subject to the aforementioned arithmetic constraints. The weak modular property of these modular forms allows us to use representation theory to identify the congruence subgroup to which our modular forms correspond. We then establish a necessary and sufficient condition for functions on $(\Z/p\Z)^d$ that defines a cusp form. Finally, we conclude that the equidistribution phenomenon occurs locally on $p+1$ orbits for $d \geq 4$.
\end{abstract}
\maketitle
\tableofcontents
\section{Introduction}
\subsection{Description of the problem and statement of the results}
Let the dimension $d \in \N^*$. We equip the vector space $\R^d$ with the standard symmetric bilinear form $Q: \R^d \times \R^d \to \R$ defined as $Q(x,y) = x_1y_1 + \cdots + x_dy_d$. For any prime $p$ and $n \in \N$, we want to study the distribution of $\{ x \in \Z^d : Q(x,x) = n\}$ in $(\Z/p\Z)^d$ as $n \to \infty$. By an abuse of notation, we still denote the symmetric bilinear form on $(\Z/p\Z)^d$ by $Q$ as defined in the standard way. For any odd prime $p$ and $a \in \Z/p\Z$, let $X_{p,d}(a)$ be defined as the quadric
\begin{align}\label{cardinality xp}
    X_{p,d}(a) :=\bbra{ x \in (\Z/p\Z)^d : Q(x,x) = a} \subset (\Z/p\Z)^d.
\end{align}
In addition, for $a \in \Z/4\Z$ we define
\begin{align}
    X_{2,d}(a) := \bbra{ x \in (\Z/2\Z)^d : Q(x,x) = a} \subset (\Z/2\Z)^d.
\end{align}
\begin{rem}
For $p=2$, the standard symmetric bilinear form $Q$ in $(\Z/2\Z)^d$ gives $Q(x+2,x+2) = Q(x,x)+ 4Q(x,1)+4Q(1,1)$, so this allows us to define the map $(\Z/2\Z)^d \to \Z/4\Z$, $x \mapsto Q(x,x)$. Hence, $X_{2,d}(a)$ is well defined for $a \in \Z/4\Z$.
\end{rem}
For any $n \in \N$, let $X_d(n)$ be defined as the quadric
\begin{align}
    X_d(n) :=\bbra{ x \in \Z^d: Q(x,x) = n} \subset \Z^d,
\end{align}
whose cardinality, which we call the representation number by $Q$, is denoted by $r_d(n)$.

For a fixed prime $p$ and any $x$ defined on a subset of $\Z^d$, the map $x \mapsto x[p]$ defines the canonical projection of $\Z^d$ on $(\Z/p\Z)^d$.

We will prove the following result.
\begin{thm}\label{theorem equidistribution}
For $d \geq 4$ and $p$ an odd prime, let $a \in \Z/p\Z$. Then the integral points of $X_d(n)$ are asymptotically equidistributed on $X_{p,d}(n)$ in the following sense.
\begin{enumerate}
    \item If $a \neq 0$, then for $d \geq 5$ and any $f : X_{p,d}(a) \to \C$, the following holds
    \begin{equation}\label{eqn1 equi}
       \lim_{\substack{n \to \infty\\ n \equiv a [p]}} \frac{1}{\abs{X_d(n)}} \sum_{x \in X_d(n)}f(x[p]) = \frac{1}{\abs{X_{p,d}(a)}} \sum_{x \in X_{p,d}(a)} f(x).
    \end{equation}
    For $d=4$ and any $f : X_{p,4}(a) \to \C$, (\ref{eqn1 equi}) holds as $n \to \infty$ over odd integers $n \equiv a [p]$.

    \item If $a = 0$, then for $d \geq 5$ and any $f: X_{p,d}(0) \setminus \{0,\cdots,0\} \to \C$, the following holds
    \begin{equation}\label{eqn2 equi}
       \lim_{\substack{n \to \infty\\ n \in p\Z}} \frac{1}{\abs{\bbra{x \in \Z^d\setminus (p\Z)^d: Q(x,x)=n}}} \sum_{\substack{ x \in \Z^d \setminus (p\Z)^d,\\ Q(x,x)=n}}f(x[p])= \frac{1}{\abs{X_{p,d}(0)}-1} \sum_{\substack{x \in X_{p,d}(0),\\x \neq 0}} f(x).
    \end{equation}
    For $d=4$ and any $f : X_{p,4}(0) \setminus \{0,\cdots,0\} \to \C$, (\ref{eqn2 equi}) holds as $n \to \infty$ over odd integers $n \in p\Z$.


\end{enumerate}
\end{thm}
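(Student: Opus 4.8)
The plan is to realise each sum $\sum_{x \in X_d(n)} f(x[p])$ as a Fourier coefficient of a theta series and to split that series into a trivial main term and a negligible cuspidal remainder. For $g : (\Z/p\Z)^d \to \C$ set $\theta_g(z) := \sum_{x \in \Z^d} g(x[p])\, q^{Q(x,x)}$ with $q = e^{2\pi i z}$; splitting the sum over the cosets of $p\Z^d$ writes $\theta_g$ as a finite $\C$-linear combination of theta series with characteristics, hence a holomorphic modular form of weight $d/2$ for a congruence subgroup $\Gamma$, whose $q^n$-coefficient is $\sum_{x \in X_d(n)} g(x[p])$. The first step is to identify $\Gamma$. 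The space $\C[(\Z/p\Z)^d]$ carries the Weil representation $\rho$ of a metaplectic cover of $\mathrm{SL}_2(\Z/p\Z)$ attached to the reduction of $Q$ modulo $p$ (the unipotent acting by a quadratic exponential, the Weyl element by the finite Fourier transform), and the transformation of $\theta_g$ under $\mathrm{SL}_2(\Z)$ is governed --- via reduction modulo $p$ and the weight-$d/2$ theta multiplier --- by $\rho$; thus $\Gamma$ is the preimage in $\mathrm{SL}_2(\Z)$ of the stabiliser of the line $\C g$ under $\rho$, and for the $g$ arising below this is computed explicitly (it contains $\Gamma_0(4p^2)$, with an explicit nebentypus). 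Since the orthogonal group $\mathrm{O}(Q)(\Z/p\Z)$ commutes with $\rho$ (finite Howe duality), its invariants in $\C[(\Z/p\Z)^d]$ --- exactly the $(p+1)$-dimensional span of $\mathbf 1_{\{0\}}$, $\mathbf 1_{X_{p,d}(0)\setminus\{0\}}$ and the $\mathbf 1_{X_{p,d}(a)}$ for $a \in (\Z/p\Z)^\times$ --- form an $\mathrm{SL}_2$-stable subspace; this is the structural reason the limiting distribution is supported on exactly these $p+1$ orbits.

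Next we reduce the theorem to a cuspidality statement. For $a \neq 0$ and $f$ on $X_{p,d}(a)$, write $f = \bar f + f_0$ with $\bar f := \abs{X_{p,d}(a)}^{-1}\sum_{x \in X_{p,d}(a)} f(x)$ constant and $f_0$ of mean zero. The constant part supplies the entire main term for free: every $x \in X_d(n)$ with $n \equiv a\,[p]$ satisfies $x[p] \in X_{p,d}(a)$, so $\sum_{x \in X_d(n)} \bar f(x[p]) = \bar f\,\abs{X_d(n)}$, which after dividing by $\abs{X_d(n)}$ is the right-hand side of (\ref{eqn1 equi}). It remains to prove $\sum_{x \in X_d(n)} f_0(x[p]) = o(\abs{X_d(n)})$. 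Let $g$ be $f_0$ extended by $0$ to all of $(\Z/p\Z)^d$; then for $n \equiv a\,[p]$ the $q^n$-coefficient of $\theta_g$ is $\sum_{x \in X_d(n)} f_0(x[p])$. The promised necessary and sufficient condition is: $\theta_g$ is a cusp form if and only if every orbital average of $g$ vanishes, i.e.\ $g$ is orthogonal to the $\mathrm{O}(Q)(\Z/p\Z)$-invariants above. One proves this by computing, via Poisson summation, the constant term of $\theta_g$ at each cusp of $\Gamma$: a twisted Siegel--Weil computation identifies these constant terms as products of local densities times orbital averages of Fourier transforms of $g$, and the Fourier transform permutes the orbital-average functionals (being $\mathrm{O}(Q)$-equivariant), so vanishing of all orbital averages of $g$ kills every constant term. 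Our $g$ has mean zero on $X_{p,d}(a)$ and is $0$ on every other orbit, hence $\theta_g$ is cuspidal. The case $a = 0$ is identical once one extends the mean-zero $f_0$ on $X_{p,d}(0)\setminus\{0\}$ by $g(0) := 0$ and $g := 0$ off the isotropic cone: then $g$ vanishes on $(p\Z)^d$, so for $n \in p\Z$ the $q^n$-coefficient of $\theta_g$ is exactly $\sum_{x \in \Z^d \setminus (p\Z)^d,\, Q(x,x)=n} f_0(x[p])$, all orbital averages of $g$ vanish (trivially on $\{0\}$, by construction on $X_{p,d}(0)\setminus\{0\}$), $\theta_g$ is again a cusp form, and the denominator $\abs{\{x \in \Z^d \setminus (p\Z)^d : Q(x,x)=n\}} = r_d(n) - r_d(n/p^2) \gg_d n^{d/2-1}$.

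To conclude, combine the elementary Hecke bound --- a holomorphic cusp form of weight $d/2$ on a congruence subgroup has $q^n$-coefficient $\ll_{\theta_g} n^{d/4}$ --- with the classical lower bound $\abs{X_d(n)} = r_d(n) \gg_d n^{d/2-1}$, valid for $d \geq 5$ (the singular series of $Q$ being bounded below). Since $d/4 < d/2 - 1$ for $d \geq 5$, the cuspidal term is $o(\abs{X_d(n)})$, which proves (\ref{eqn1 equi}) and (\ref{eqn2 equi}); note that this route uses only the trivial bound for cusp coefficients and so entirely avoids the Ramanujan conjecture, hence also its known failure in half-integral weight when $d$ is odd. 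For $d = 4$ the weight is the critical value $2$ and the Hecke bound $n^{1+\varepsilon}$ no longer beats $r_4(n)$; here one invokes Deligne's bound $\ll_\varepsilon n^{1/2+\varepsilon}$ for coefficients of weight-$2$ holomorphic cusp forms, and one restricts $n$ to odd integers precisely because $r_4(n)$ fails to grow along the powers of $2$, whereas for odd $n$ one has $r_4(n) \gg n$, so the ratio is $\ll n^{-1/2+\varepsilon} \to 0$.

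The step I expect to be the main obstacle is making the automorphy of $\theta_g$ fully explicit (Step 1) in tandem with the cusp-by-cusp constant-term computation inside the cuspidality criterion: one needs the Weil-representation bookkeeping precise enough that $\theta_g$ is a bona fide classical holomorphic modular form on an explicit congruence subgroup with an explicit nebentypus, together with a clean closed formula for all of its constant terms in terms of the orbital averages of $g$, so that the criterion is a genuine equivalence and the Hecke and Deligne bounds apply verbatim. The even-$d$ versus odd-$d$ dichotomy (integral versus half-integral weight) and the boundary weight-$2$ case $d = 4$ --- where the Siegel--Weil Eisenstein series sits at the edge of convergence and must be analytically continued --- are where the argument will need the most care.
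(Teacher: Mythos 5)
Your outline follows essentially the same route as the paper: realise $\sum_{x\in X_d(n)}f(x[p])$ as a Fourier coefficient of a weighted theta series, pin down its level by representation-theoretic bookkeeping on $\C[(\Z/p\Z)^d]$ (the paper does this with an explicit cocycle $\sigma:\Lambda\times\mathbb{P}^1_{\mathbb{F}_p}\to\GL(V_p)$ rather than invoking the Weil representation by name), prove that $\theta_g$ is cuspidal exactly when all orbital averages of $g$ vanish together with $g(0)=0$, and then beat the trivial lower bound $r_d(n)\gg n^{d/2-1}$ with a coefficient bound. Your reduction $f=\bar f+f_0$ and the observation that the constant part produces the main term exactly is the same reduction the paper makes. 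For $d=4$ you invoke Deligne's bound $n^{1/2+\epsilon}$ where the paper uses only the Weil--Kloosterman bound $a_n\ll n^{3/4+\epsilon}$; both suffice since $r_4(n)\gg n$ for odd $n$, so this is a legitimate (if heavier) substitute.

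The genuine gap is in the case $a=0$: you assert without proof that $r_d(n)-r_d(n/p^2)\gg_d n^{d/2-1}$ for $n\in p^2\Z$. This does not follow from $r_d(m)\asymp m^{d/2-1}$, because both terms are of the same order $n^{d/2-1}$ and the implied constants are a priori incomparable: the circle method gives $r_d(m)=\delta_{\infty,d}(m)\mathfrak{S}_d(m)+o(m^{d/2-1})$ with $\mathfrak{S}_d(m)$ only known to be bounded between two positive constants, so the difference of the two main terms, $\frac{\pi^{d/2}}{\Gamma(d/2)}n^{d/2-1}\bigl(p^{d-2}\mathfrak{S}_d(p^2n)-\mathfrak{S}_d(n)\bigr)$, could in principle be small or even negative for all one knows from $\asymp$ alone. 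You need the precise relation between $\mathfrak{S}_d(p^2n)$ and $\mathfrak{S}_d(n)$: since the two singular series differ only in their $p$-Euler factor, the required input is a uniform lower bound $p^{d-2}\delta_{p,d}(p^2n)-\delta_{p,d}(n)\geq C_0>0$, which the paper obtains only after computing the $p$-adic local densities $\delta_{p,d}(n)$ in closed form (separately for $d$ even and odd, and for $\mathrm{ord}_p(n)$ even and odd). The analogous statement for $d=4$, namely $r_4(p^2n)-r_4(n)\gg n$ for odd $n$, also needs an argument (the paper deduces it from Jacobi's exact formula) and is absent from your sketch. Without this block the denominator in (\ref{eqn2 equi}) is not controlled and the limit is not established; everything else in your proposal is a matter of carrying out computations you have correctly identified, but this step is a missing idea rather than a routine verification.
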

\begin{rem}
We can readily verify that if $a = 0$, the integral points of $\bbra{x \in (p\Z)^d: Q(x,x)=n}$ are asymptotically equidistributed on $\{0,\cdots,0\} \subset (\Z/p\Z)^d$ as $n \to \infty$ over the integers $n \in p\Z$. Explicitly, for $n \in p^2 \Z$ we have $\abs{\bbra{x \in (p\Z)^d: Q(x,x)=n}} = r_d(n/p^2)$; and for $n \in p\Z \setminus p^2 \Z$ we know $\bbra{x \in (p\Z)^d: Q(x,x)=n} = \varnothing$. Therefore, the equidistribution phenomenon on $\bbra{0,\cdots,0}$ actually occurs over the subsequence $n \in p^2 \Z$.

\end{rem}

The theorem says that for any odd prime $p$, the equidistribution phenomenon occurs locally on $p+1$ orbits. In the paper \cite{MR0931205}, Duke mentioned the usage of estimates on the Fourier coefficients of a holomorphic cusp form for the proof of the equidistribution phenomenon of lattice points on the 2-dimensional sphere $S^2$. We will adapt his method for the proof of our theorem.

We denote by $\pazocal{H}$ the upper-half plane in $\C$, that is
\begin{align*}
    \pazocal{H} = \bbra{\tau \in \C : \im(\tau)>0}.
\end{align*}
For any prime $p$, let $f:(\Z/p\Z)^d \to \C$ be a function on $(\Z/p\Z)^d$. We define the following \textit{weighted theta function} $\theta_f: \pazocal{H} \to \C$ to be
\begin{align}
    \mathcal{\theta}_f(\tau) &:= \sum_{n=0}^{\infty}\rbra{\sum_{x \in X_d(n)}f(x[p])}q^n,\quad q=e^{2\pi i \tau}, \tau \in \pazocal{H}.\label{theta fourier series}\\
&= \sum_{x \in \Z^d} f(x[p]) e^{2\pi i Q(x,x)\tau}, \quad \tau \in \pazocal{H}.\label{theta z^d}
\end{align}
We will show that this is a modular form for any prime $p$.
\begin{thm}\label{modular form}
Let $d \in \N^*$, $p$ be a prime and $f:(\Z/p\Z)^d\to \C$.
\begin{enumerate}
    \item If $p=2$, then the weighted theta function $\theta_f$ is a modular form of weight $\frac{d}{2}$ under $\Gamma_2$ where
\begin{align}\label{group gamma2}
    \Gamma_2 := \bbra{ g = \begin{bmatrix}
        a&b\\c&d
    \end{bmatrix} \in \SL_2(\Z): a,d=1[4],c=0[16]}.
\end{align}
\item If $p$ is an odd prime, then the weighted theta function $\theta_f$ is a modular form of weight $\frac{d}{2}$ under $\Gamma_p$ where
\begin{align}\label{group gamma_p}
    \Gamma_p := \bbra{ g = \begin{bmatrix}
        a&b\\c&d
    \end{bmatrix} \in \SL_2(\Z): a,d=1[4p],c=0[4p^2]}.
\end{align}
\end{enumerate}

\end{thm}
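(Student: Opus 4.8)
The plan is to reduce the statement to the classical transformation theory of one–variable theta constants with rational characteristics, from which the precise congruences defining $\Gamma_p$ and $\Gamma_2$ emerge by themselves. \emph{Reduction to rank one.} Expanding $f$ as a linear combination of indicators $\mathbf{1}_{\{r\}}$, $r\in(\Z/p\Z)^d$, gives $\theta_f=\sum_r f(r)\,\theta_r$ with $\theta_r(\tau)=\sum_{x\in\Z^d,\ x\equiv r\,[p]}e^{2\pi iQ(x,x)\tau}$, and since $Q$ is the standard diagonal form this sum factors coordinatewise:
\[
\theta_r(\tau)=\prod_{i=1}^{d}\vartheta_{r_i}(\tau),\qquad
\vartheta_j(\tau):=\sum_{\substack{x\in\Z\\ x\equiv j\,[p]}}e^{2\pi ix^2\tau}\quad(j\in\Z/p\Z).
\]
So it suffices to show that each $\vartheta_j$ is a holomorphic modular form of weight $\tfrac12$ on $\Gamma_p$ (resp.\ $\Gamma_2$) whose automorphy factor $\nu(\gamma,\tau)$ is the \emph{same} for all $j$, namely the standard weight–$\tfrac12$ theta factor: then $\theta_r(\gamma\tau)=\nu(\gamma,\tau)^{d}\theta_r(\tau)$ for every $r$, hence $\theta_f(\gamma\tau)=\nu(\gamma,\tau)^{d}\theta_f(\tau)$ (the weight–$\tfrac d2$ law, with the weight–$\tfrac d2$ theta multiplier $\nu^d$), and holomorphy of $\theta_f$ at every cusp is inherited from that of the $\vartheta_j$'s.

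\emph{Identification with a Jacobi theta constant.} Writing $x=j+py$ and setting $u=2p^2\tau$ gives $\vartheta_j(\tau)=\sum_{y\in\Z}e^{\pi i(y+j/p)^2u}=\theta\!\left[\begin{smallmatrix}j/p\\ 0\end{smallmatrix}\right]\!(0,u)$, a theta constant with rational characteristic in $\tfrac1p\Z^2$ (for $p=2$ this is $u=8\tau$, characteristic $\binom00$ or $\binom{1/2}{0}$). The classical functional equations under $u\mapsto u+1$ and $u\mapsto -1/u$ show that the finite set $\{\theta\!\left[\begin{smallmatrix}a\\ b\end{smallmatrix}\right]\!(0,u):a,b\in\tfrac1p\Z/\Z\}$ spans a space stable under $\SL_2(\Z)$ acting on $u$: a matrix $\left[\begin{smallmatrix}\alpha&\beta\\ \gamma_0&\delta\end{smallmatrix}\right]$ sends the characteristic $\binom ab$ to $\left[\begin{smallmatrix}\delta&-\gamma_0\\ -\beta&\alpha\end{smallmatrix}\right]\binom ab+\tfrac12\binom{\gamma_0\delta}{\alpha\beta}$ (mod $\Z^2$) and multiplies by a cocycle of the shape $\kappa(\gamma)\,e^{\pi i\phi_\gamma(a,b)}\,(\gamma_0u+\delta)^{1/2}$, with $\kappa(\gamma)$ an eighth root of unity depending only on $\gamma$ and $\phi_\gamma$ an explicit inhomogeneous quadratic in the characteristic.

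\emph{Descent to $\Gamma_p$.} For $\gamma=\left[\begin{smallmatrix}a&b\\ c&d\end{smallmatrix}\right]\in\Gamma_p$, the condition $4p^2\mid c$ makes the action of $\gamma$ on $\tau$ coincide, in the variable $u=2p^2\tau$, with the action of the \emph{integral} matrix $\gamma'=\left[\begin{smallmatrix}a&2p^2b\\ c/(2p^2)&d\end{smallmatrix}\right]\in\SL_2(\Z)$, and $(\gamma_0u+\delta)^{1/2}=(c\tau+d)^{1/2}$. Feeding $\binom{j/p}{0}$ into the rule above, the new characteristic is $\binom{a'}{b'}$ with $b'\in\Z$ and $a'\equiv d\tfrac jp+\tfrac{cd}{4p^2}\pmod{\Z}$; the congruences $d\equiv1\,[4p]$ and $c\equiv0\,[4p^2]$ are exactly what forces $a'\equiv\tfrac jp\pmod{\Z}$ (and a one–line check gives $e^{2\pi i(j/p)b'}=1$), so the same $\vartheta_j$ reappears on the right — this is the step that pins down the definition of $\Gamma_p$, and likewise of $\Gamma_2$ (for $p=2$ the characteristic $\tfrac j2$ is fixed for free, the work being shifted onto the multiplier). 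What remains is to verify that, under these congruences, $\kappa(\gamma')\,e^{\pi i\phi_{\gamma'}(j/p,0)}$ collapses to the weight–$\tfrac12$ theta multiplier $\varepsilon(\gamma)$ appearing in $\Theta(\gamma\tau)=\varepsilon(\gamma)(c\tau+d)^{1/2}\Theta(\tau)$, $\Theta(\tau):=\sum_nq^{n^2}\in M_{1/2}(\Gamma_0(4))$, and in particular is independent of $j$; holomorphy of $\vartheta_j$ at the cusps is then clear, since $\vartheta_j$ is entire on $\pazocal{H}$ with a $q$–expansion supported on exponents $n\ge0$ at $\infty$ and the transformation formula (valid already over $\SL_2(\Q)$ acting on $u$) exhibits it as a bounded fractional $q$–series at every other cusp.

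\emph{Main obstacle.} I expect the multiplier computation in the descent step — showing $\kappa(\gamma')\,e^{\pi i\phi_{\gamma'}(j/p,0)}=\varepsilon(\gamma)$ with no residual dependence on $j$ — to be the crux; it is a finite but fiddly congruence calculation. I would organise it by first treating $j=0$, where $\vartheta_0(\tau)=\Theta(p^2\tau)$ is directly a weight–$\tfrac12$ form on $\Gamma_0(4p^2)\supseteq\Gamma_p$ carrying the standard multiplier (using $\left(\tfrac{p^2}{d}\right)=1$), and then reducing general $j$ to a comparison of $\vartheta_j$ with $\vartheta_0$ through the characteristic–shift identity, so that only \emph{differences} of quadratic phases — controllable modulo $2$ via $a\equiv d\equiv1\,[4p]$ and $c\equiv0\,[4p^2]$ — have to be evaluated. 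For $p=2$ the same scheme runs with $u=8\tau$, the stronger modulus $16\mid c$ absorbing the extra $2$–adic ramification of $\Theta$ and the weaker hypothesis $a\equiv d\equiv1\,[4]$ being enough because the characteristic $\tfrac j2$ is preserved automatically.
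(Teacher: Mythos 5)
Your proposal is essentially correct, but it follows a genuinely different route from the paper. You decompose $f$ into indicator functions, factor each $\theta_r$ coordinatewise into one-variable theta constants $\vartheta_j(\tau)=\theta\!\left[\begin{smallmatrix}j/p\\0\end{smallmatrix}\right]\!(0,2p^2\tau)$ with rational characteristics, and invoke the classical $\SL_2(\Z)$-transformation theory of such constants; the congruences $d\equiv1\,[4p]$, $c\equiv0\,[4p^2]$ (resp.\ $16\mid c$ for $p=2$) are then exactly what preserves the characteristic, and your checks of this are correct. The paper instead works directly with the $d$-dimensional sums: it introduces the family $\theta_f^j$ for $j\in\mathbb{F}_p\cup\{\infty\}$, proves via Poisson summation that the generators $\alpha,\gamma$ of $\Lambda$ permute this family up to operators $L,S_j\in\GL(V_p)$, packages this as a cocycle $\sigma:\Lambda\times\mathbb{P}^1_{\mathbb{F}_p}\to\GL(V_p)$ built from a representation of the parabolic subgroup of $\SL_2(\mathbb{F}_p)$, and identifies $\Gamma_p$ as the $\SL_2(\Z)$-part of the kernel of the resulting induced representation. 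What your approach buys is brevity and the ability to quote standard theta-constant formulas; what the paper's buys is a structural explanation of $\Gamma_p$ (kernel of $\mathrm{Ind}_{\Lambda_\infty}^{\Lambda}V_p$) and, more importantly, the explicit Fourier-coefficient formula at every cusp (Proposition \ref{prop theta_f holomophic at all s}) that is reused to prove the cusp-form criterion of Theorem \ref{cusp form}; the indicator decomposition does not hand you that directly. Concerning the step you flag as the crux: it does close as you predict. The characteristic-dependent phase in the transformation law is of the shape $e^{-\pi i\beta\delta(j/p)^2}$ with $\beta=2p^2b$, hence equals $e^{-2\pi i bdj^2}=1$, and the eighth root of unity $\kappa(\gamma')$ is $j$-independent by construction; so the multiplier is independent of $j$ and is determined by the case $j=0$, where $\vartheta_0(\tau)=\Theta(p^2\tau)$ carries the multiplier $\varepsilon_d^{-1}\left(\frac{c/p^2}{d}\right)=\varepsilon_d^{-1}\left(\frac{c}{d}\right)$ since $\left(\frac{p^2}{d}\right)=1$. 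One caveat: to match the paper's statement literally you must verify weak modularity in the sense of Definition \ref{defi weakly modular half integral}, i.e.\ that your $\nu(\gamma,\tau)$ coincides with the cocycle $\bar h(\gamma,\tau)$ (not merely some square root of $j(\gamma,\tau)$), which the $j=0$ comparison with $\Theta$ supplies.
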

Next, we will find those functions $f$ defined on $(\Z/p\Z)^d$, which give rise to cusp forms $\theta_f$. The precise description is as follows.
\begin{thm}\label{cusp form}
Let $d \in \N^*$, $p$ be a prime and $f:(\Z/p\Z)^d\to \C$.
\begin{enumerate}
    \item If $p=2$, then the weighted theta function $\theta_f$ is a cusp form if and only if
\begin{equation}\label{vanishing condition p=2}
   \forall a \in \Z/4\Z: \sum_{x \in X_{2,d}(a)}f(x) =0,\quad f(1,\ldots,1)=0 \quad\text{ and }\quad f(0,\ldots,0)=0.
\end{equation}
    \item If $p$ is an odd prime, then the weighted theta function $\theta_f$ is a cusp form if and only if
\begin{equation}\label{vanishing condition}
   \forall a \in \Z/p\Z: \sum_{x \in X_{p,d}(a)}f(x) =0 \quad\text{ and }\quad f(0,\ldots,0)=0.
\end{equation}
\end{enumerate}
\end{thm}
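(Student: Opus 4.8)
The plan is to use Theorem~\ref{modular form}: since $\theta_f$ is already a modular form of weight $\tfrac d2$ on $\Gamma_p$ (resp.\ $\Gamma_2$), it is a cusp form if and only if it vanishes at every cusp, i.e.\ if and only if for every $\gamma=\begin{bmatrix}a&b\\c&d\end{bmatrix}\in\SL_2(\Z)$ the linear functional
\[
  L_\gamma(f):=\lim_{\tau\to i\infty}(c\tau+d)^{-d/2}\,\theta_f\!\left(\tfrac{a\tau+b}{c\tau+d}\right)
\]
(read with the theta multiplier when $d$ is odd) vanishes on $f$. The problem thus becomes: compute the $L_\gamma$ and show that $\bigcap_{\gamma}\ker L_\gamma$ is exactly the subspace of functions on $(\Z/p\Z)^d$ defined by (\ref{vanishing condition}) (resp.\ (\ref{vanishing condition p=2})). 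Since the common kernel of a set of functionals depends only on their linear span, it suffices to show $\mathrm{span}\{L_\gamma\}$ equals the span of the functionals occurring in (\ref{vanishing condition}) (resp.\ (\ref{vanishing condition p=2})).

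To evaluate $L_\gamma$ I would expand $\theta_f$ near the cusp $a/c=\gamma\infty$: writing $\theta_f\!\left(\tfrac ac+z\right)$ for $z\to 0$ and grouping $x\in\Z^d$ by its residue modulo $\mathrm{lcm}(p,c)$, the factor $e^{2\pi i (a/c)Q(x,x)}$ depends only on $x$ modulo $c$ and the weight $f(x[p])$ only on $x$ modulo $p$, while Poisson summation shows each tail $\sum_{x\equiv v}e^{2\pi i Q(x,x)z}$ has a $z\to0$ asymptotic whose leading coefficient is independent of $v$. Hence $L_\gamma(f)$ is a nonzero $\gamma$-dependent constant times the finite quadratic sum $\sum_{v}f(v[p])\,e^{2\pi i(a/c)Q(v,v)}$, which the Chinese Remainder Theorem factors into a prime-to-$p$ quadratic Gauss sum (a scalar, possibly zero) times a ``$p$-part'': for $v_p(c)=k\ge1$ this is $\sum_{v\bmod p^{k}}f(v[p])\,e^{2\pi i(a_1/p^{k})Q(v,v)}$ with $a_1$ a unit mod $p^{k}$, and for $p\nmid c$ it is simply $\sum_{v\bmod p}f(v)$.

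The heart of the matter is the direct evaluation of the $p$-part, which depends on $\gamma$ only through $k=v_p(c)$. Write $F(a):=\sum_{x\in X_{p,d}(a)}f(x)$ for the orbit-sum function and $\widehat F$ for its finite Fourier transform over $\Z/p\Z$ (resp.\ $\Z/4\Z$). For odd $p$: $k=0\mapsto\widehat F(0)=\sum_v f(v)$; $k=1\mapsto\widehat F(\lambda)$ for a unit $\lambda$ depending on $\gamma$; and $k\ge2\mapsto$ a nonzero multiple of $f(0,\dots,0)$ (the cross term in $v=v_0+pu$ kills all $v_0\ne 0$). For $p=2$: $k=0\mapsto\widehat F(0)$; $k=1\mapsto\widehat F(2)$; $k=2\mapsto\widehat F(\lambda)$ for an odd $\lambda$ mod $4$; $k=3\mapsto$ a nonzero multiple of $f(1,\dots,1)$; and $k\ge4\mapsto$ a nonzero multiple of $f(0,\dots,0)$. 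In every case $L_\gamma$ is a scalar multiple (possibly zero) of one of the functionals $f\mapsto\sum_{x\in X_{p,d}(a)}f(x)$, $f\mapsto f(0,\dots,0)$, or (when $p=2$) $f\mapsto f(1,\dots,1)$; hence $\mathrm{span}\{L_\gamma\}$ lies in the span of the functionals in (\ref{vanishing condition}) (resp.\ (\ref{vanishing condition p=2})), so any $f$ satisfying that condition yields a cusp form $\theta_f$. For the reverse inclusion I would exhibit explicit cusps realizing each of those functionals: the cusps $a/p$ with $a$ running over units of $\Z/p\Z$ together with the cusp $0$ realize $\widehat F(\lambda)$ for all $\lambda\in\Z/p\Z$, hence — by invertibility of the finite Fourier transform — every $f\mapsto\sum_{x\in X_{p,d}(a)}f(x)$, while the cusp $\infty$ realizes $f\mapsto f(0,\dots,0)$; for $p=2$ one additionally uses $1/2,\,1/4,\,3/4,\,1/8$ to realize $\widehat F(2),\widehat F(1),\widehat F(3)$ and $f\mapsto f(1,\dots,1)$. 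This yields the equality of spans, so $\bigcap_\gamma\ker L_\gamma$ is exactly the subspace cut out by (\ref{vanishing condition}) (resp.\ (\ref{vanishing condition p=2})).

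The main obstacle is the local (especially $2$-adic) Gauss-sum computation of the $p$-part: one must determine precisely which combination of the $F(a)$'s, or which single value $f(0,\dots,0)$ or $f(1,\dots,1)$, survives in each regime of $v_p(c)$ — the regime $v_2(c)=3$ is what produces the extra condition $f(1,\dots,1)=0$, and the collapse to $f(0,\dots,0)$ for $v_2(c)\ge4$ and for odd $p$ with $v_p(c)\ge2$ are the delicate points — and one must check that the prime-to-$p$ Gauss sums and the surviving local Gauss sums are nonzero, so that the functionals are not accidentally annihilated, all while carrying the theta multiplier along when $d$ is odd. A lesser obstacle is confirming that the chosen list of cusps really hits every functional in the statement; invertibility of the finite Fourier transform handles this bookkeeping once enough cusps with $v_p(c)=1$ (resp.\ $v_2(c)\le 3$) have been produced.
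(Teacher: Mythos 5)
Your proposal is correct and follows essentially the same route as the paper: both reduce vanishing at a cusp $w/(cp^r)$, via Poisson summation, to the vanishing of the finite quadratic sum $\sum_x f(x)e^{2\pi i Q(x,x)w/(cp^r)}$, split off the prime-to-$p$ factor by B\'ezout/CRT, and analyze the $p$-part by the valuation $r$ of the denominator — with the shift trick $v\mapsto v+p^{r-2}u$ collapsing $r\ge 2$ (odd $p$) and $r\ge 4$ (resp.\ $r=3$) for $p=2$ to the functionals $f(0,\dots,0)$ (resp.\ $f(1,\dots,1)$), exactly as in the paper's Lemmas on $S(r,w)$, $R(r,k,w)$ and $T(r,k,w)$. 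Your "span of functionals plus invertibility of the finite Fourier transform" packaging is only a cosmetic reorganization of the paper's two implications.
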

\begin{rem}
For any odd prime $p$, by Witt theorem the condition (\ref{vanishing condition}) above could also be written as
\begin{equation}
    \sum_{g \in \OO(Q,(\Z/p\Z)^d)}gf=0,
\end{equation}
where $\OO(Q,(\Z/p\Z)^d)$ is the orthogonal group of $Q$ over $(\Z/p \Z)^d$ defined by $\OO(Q,(\Z/p\Z)^d)= \bbra{ g \in \GL(\Z/p\Z)^d : \forall u \in (\Z/p\Z)^d, Q(gu,gu)=Q(u,u) }$.
\end{rem}
We will prove Theorem \ref{modular form} and \ref{cusp form} in \S \ref{section 2} and Theorem \ref{theorem equidistribution} in \S \ref{section 3}.
\subsection*{Acknowledgements} I would like to thank Jean-François Quint for numerous inspiring discussions as well as his help during the preparation of the manuscript. I also thank Guillaume Ricotta for helpful discussions.
\section{Construction of cusp forms}\label{section 2}
In this section we prove Theorem \ref{modular form} and \ref{cusp form}. As for classical $\theta$-functions (cf. \cite{MR0344216}), the modularity property (which will be defined later in Definition \ref{defi weakly modular}) in Theorem \ref{modular form} relies on the use of a Poisson summation formula. To write this formula, we recall facts from abstract harmonic analysis in \S\ref{section poisson summation}. Next we introduce the language of group actions in \S\ref{section Cocycles and linear actions} so that the Poisson summation formula and the construction of modular forms can be interpreted in this way. We then recall the definition of modular forms and their properties in \S\ref{section Generalities on modular forms}, where we discuss the integral and half-integral weight modular forms separately. Our detailed construction and proofs take place in \S\ref{subsection Weak modularity and equivariance} and \S\ref{subsection proof of two theorems}.
\subsection{General Poisson summation formula}\label{section poisson summation}
One can take \cite{MR2457798} as a more detailed introduction to this subsection. Let $A$ be a locally compact Abelian group. Let $\widehat{A}=\Hom_{\text{cont}}(A,S^1)$ be the Pontryagin dual of $A$, where $S^1=\{ z \in \C : \abs{z}=1\}$ is the circle group. One can equip the group $\widehat{A}$ with the compact-open topology\footnote{The topology generated by the sets $\{ \chi \in \widehat{A}: \chi(K) \subset U\}$ for every compact subset $K \subset G$ and open subset $U \subset S^1$.} that makes it also into a locally compact Abelian group. Pontryagin duality (cf. \cite{MR0005741}) asserts that the canonical homomorphism between $A$ and $\hat{\hat{A}}$
\begin{align*}
    A &\to \hat{\hat{A}}\\
    x &\mapsto (\delta_x : \chi \mapsto \chi(x))
\end{align*}
is an isomorphism of locally compact Abelian groups.
We write the continuous bilinear pairing as follows
\begin{align*}
    A \times \widehat{A} &\to S^1\\
    (x,\chi) &\mapsto \chi(x).
\end{align*}
Fix a Haar measure $dx$ on $A$. For $\phi \in L^1(A)$, we define its \textit{Fourier transform} to be the map $\mathcal{F}_A(\phi):\widehat{A} \to \C$ (or simply $\mathcal{F}(\phi)$ when there is no possible confusion) given by
\begin{equation}
    \mathcal{F}_A(\phi)(\chi) := \int_A \phi(x) \overline{\chi(x)} dx, \quad \chi \in \widehat{A}.
\end{equation}
By the Plancherel Theorem, there exists a uniquely determined Haar measure $d\chi$ on $\widehat{A}$, called the \textit{Plancherel measure}, such that for $\phi \in L^1(A) \cap L^2(A)$, one has $\norm{\phi}_2=\lVert\mathcal{F}_A(\phi)\rVert_2$. Moreover, the Fourier transform extends to a canonical unitary equivalence $L^2(A) \cong L^2(\widehat{A})$. With these two measures chosen, one gets the \textit{Fourier inversion formula} $\mathcal{F}_{\widehat{A}}\mathcal{F}_{A}(\phi)(x) = \phi(-x)$ for all $x \in A$, where we have considered a function on $\hat{\hat{A}}$ as a function on $A$.

To introduce the Poisson summation formula, let $H$ be a closed subgroup of $A$. One can choose Haar measures on $A$, $H$ and $A/H$ in such a way that for every $\phi \in C_c(A)$ we get the quotient integral formula
\begin{equation}\label{quotient integral formula}
    \int_{A/H}\int_H \phi(xh)dhd(xH) = \int_A \phi(x) dx.
\end{equation}
We want to study the relations between the dual group $\widehat{A}$ of $A$ and the dual groups $\widehat{H}$ and $\widehat{A/H}$ of the subgroup $H$ and the quotient group $A/H$. In fact, one can identify $\widehat{A/H}$ with the subgroup $H^{\perp}$, called the \textit{annihilator} of $H$, defined as
\begin{equation*}
    H^{\perp} := \bbra{ \chi \in \widehat{A}: \chi(x) = 1 \quad \forall x \in H},
\end{equation*}
so that if we take $\phi \in L^1(A)$ and define $\phi^H \in L^1(A/H)$ as $\phi^H(xH) = \int_H \phi(xh)dh$, then by the above identification, we get $\mathcal{F}_{A/H}(\phi^H) = \mathcal{F}_A(\phi)\rvert_{H^{\perp}}$, see the proof of the theorem below, which explains this technique.
\begin{thm}[General Poisson summation formula]\label{poisson summation}
Let $H$ be a closed subgroup of the locally compact Abelian group $A$. For $\phi \in L^1(A)$, if $\mathcal{F}_A(\phi)\rvert_{H^{\perp}} \in L^1(H^{\perp})$, then
\begin{equation}
    \int_H \phi(xh)dh = \int_{H^{\perp}} \mathcal{F}_A(\phi)(\chi)\chi(x)d\chi,
\end{equation}
for all $x \in A$, where Haar measure on $H^{\perp} \cong \widehat{A/H}$ is the Plancherel measure with respect to the chosen Haar measure on $A/H$.
\end{thm}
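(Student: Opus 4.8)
The plan is to deduce the identity from the ordinary Fourier inversion formula, applied on the quotient group $A/H$ to the pushforward $\phi^H$ of $\phi$. The first step is to check that $\phi^H$, defined by $\phi^H(xH)=\int_H\phi(xh)\,dh$, is well defined for almost every coset and lies in $L^1(A/H)$: integrating the quotient integral formula \eqref{quotient integral formula}, extended to nonnegative measurable functions by monotone convergence, against $|\phi|$ gives $\int_{A/H}\int_H|\phi(xh)|\,dh\,d(xH)=\|\phi\|_{L^1(A)}<\infty$, so by Tonelli the inner integral converges for a.e.\ coset, its value does not depend on the chosen representative by invariance of the Haar measure on $H$, and $\|\phi^H\|_{L^1(A/H)}\le\|\phi\|_{L^1(A)}$.

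Next I would make precise the identification $\widehat{A/H}\cong H^{\perp}$ recalled above: the quotient map $A\to A/H$ induces an injective continuous homomorphism $\widehat{A/H}\to\widehat{A}$ whose image is exactly $H^{\perp}$, since a character of $A$ factors through $A/H$ precisely when it is trivial on $H$, and this map is a homeomorphism onto $H^{\perp}$ by the standard properties of Pontryagin duality. Under this identification the crucial point is the compatibility $\mathcal{F}_{A/H}(\phi^H)=\mathcal{F}_A(\phi)\rvert_{H^{\perp}}$: writing $\bar\chi$ for the character of $A/H$ corresponding to $\chi\in H^{\perp}$ and using $\chi(xh)=\chi(x)$ for $h\in H$, one computes
\[
\mathcal{F}_{A/H}(\phi^H)(\bar\chi)=\int_{A/H}\Big(\int_H\phi(xh)\,dh\Big)\overline{\chi(x)}\,d(xH)=\int_{A/H}\int_H\phi(xh)\,\overline{\chi(xh)}\,dh\,d(xH)=\int_A\phi(x)\overline{\chi(x)}\,dx,
\]
the interchange of the two integrals being licit because the double integral of the modulus equals $\|\phi\|_{L^1(A)}$, and the last equality being \eqref{quotient integral formula} once more.

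It then remains to invert. The hypothesis $\mathcal{F}_A(\phi)\rvert_{H^{\perp}}\in L^1(H^{\perp})$ says, via the preceding step, precisely that $\mathcal{F}_{A/H}(\phi^H)\in L^1(\widehat{A/H})$; since $\phi^H\in L^1(A/H)$ as well, the Fourier inversion formula on $A/H$ applies and gives $\phi^H(xH)=\int_{\widehat{A/H}}\mathcal{F}_{A/H}(\phi^H)(\bar\chi)\,\bar\chi(xH)\,d\bar\chi$, which upon transport along $\widehat{A/H}\cong H^{\perp}$ is exactly $\int_H\phi(xh)\,dh=\int_{H^{\perp}}\mathcal{F}_A(\phi)(\chi)\chi(x)\,d\chi$. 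I expect the part needing the most care to be the bookkeeping of Haar measures: on $A/H$ one must use the measure furnished by \eqref{quotient integral formula}, and one must check that the identification $\widehat{A/H}\cong H^{\perp}$ carries the associated Plancherel measure to the measure appearing in the statement. A second, standard subtlety is that Fourier inversion yields equality only almost everywhere; since the right-hand side is continuous in $x$ by dominated convergence, the stated identity holds for every $x\in A$ as soon as $\phi^H$ is represented by this continuous function, which is automatic for the Schwartz–Bruhat data to which the formula is applied for theta series below.
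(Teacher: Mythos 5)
Your proposal follows essentially the same route as the paper's proof: push $\phi$ forward to $\phi^H$ on $A/H$, use the quotient integral formula to identify $\mathcal{F}_{A/H}(\phi^H)$ with $\mathcal{F}_A(\phi)\rvert_{H^{\perp}}$, and then apply Fourier inversion on $A/H$. The additional care you take with the integrability of $\phi^H$ and with the almost-everywhere nature of Fourier inversion is a genuine (and welcome) refinement of details the paper leaves implicit, but it does not change the argument.
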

\begin{proof}
For $\chi \in H^{\perp}$ we have $\chi(xh) = \chi(x)$ for every $x \in A$ and $h \in H$. We therefore get from the quotient integral formula (\ref{quotient integral formula}) that
\begin{align*}
    \mathcal{F}_{A/H}(\phi^H)(\chi) &= \int_{A/H} \phi^H(xH)\overline{\chi(x)}d(xH)\\
    &= \int_{A/H} \int_H \phi(xh)\overline{\chi (xh)}dhd(xH)\\
    &=\int_A \phi(x) \overline{\chi(x)}dx = \mathcal{F}_A(\phi)(\chi)
\end{align*}
for every $\chi \in H^{\perp}$. Moreover, if $\mathcal{F}_A(\phi)\rvert_{H^{\perp}} \in L^1(H^{\perp}) = L^1\rbra{\widehat{A/H}}$, then the Fourier inversion formula implies that for all $x \in A$
\begin{align*}
    \int_{H}\phi(xh)dh &= \phi^H(xH) = \mathcal{F}_{H^\perp}\mathcal{F}_{A/H}(\phi^H)(-xH)\\
    &= \mathcal{F}_{H^\perp}\mathcal{F}_{A}(\phi)(-xH) = \int_{H^{\perp}} \mathcal{F}_{A}(\phi)(\chi)\overline{-xH(\chi)}d\chi \\
    &= \int_{H^{\perp}} \mathcal{F}_{A}(\phi)(\chi)\overline{\chi(-x)}d\chi =\int_{H^{\perp}} \mathcal{F}_{A}(\phi)(\chi)\chi(x)d\chi.
\end{align*}
\end{proof}

In all our applications, the group $A$ will be of the form $A= (\Z/N\Z)^{d_1} \times \R^{d_2}$ for $N \in \N_+$, $d_1,d_2 \in \N$. Recall we have defined the standard symmetric bilinear form on $\R^{d_2}$ as in the beginning of \S1, and for any $N \in \N_+$, we still denote the symmetric bilinear form on $(\Z/N\Z)^{d_1}$ by $Q$ as defined in the standard way. We then identify $A$ with its dual group $\widehat{A}$ through the pairing
\begin{align*}
    A &\cong \widehat{A}\\
   (s,t)&\mapsto \chi_{s,t}(l,\xi) = e^{2\pi i \rbra{\frac{Q(s,l)}{N}+Q(t,\xi)}},
\end{align*}
where $s,l \in (\Z/N\Z)^{d_1}$ and $t,\xi \in \R^{d_2}$. Also, we equip such a group $A$ with the Haar measure defined as the product of the counting measure on $(\Z/N\Z)^{d_1}$ and the standard Lebesgue measure on $\R^{d_2}$. By these choices, we will thus consider the Fourier transform of a function on $A$ as a function on $A$. Note that, this means the Plancherel measure associated with the counting measure differs from the standard one by a normalizing constant $\frac{1}{N^{d_1}}$ which gives the Fourier inversion formula $\mathcal{F}_{\widehat{A}}\mathcal{F}_{A}(\phi)(x) = \frac{1}{N^{d_1}}\phi(-x)$ for all $x=(s,t) \in A$ and $\phi \in L^1(A) \cap L^2(A)$. In addition, one can know from \cite{MR1970295} that the Schwartz space $S_e(U)$ is defined as the space of rapidly decreasing functions on some Eucliean space $U \subset \R^d$ so that the Fourier transform is an automorphism on this space. As a generalization of the classical Schwartz space, we define the \textit{Schwartz space} $S(A)$ to be
\begin{align*}
    S(A) := \bbra{\phi \in L^1(A) \cap L^2(A) : \forall s \in (\Z/N\Z)^{d_1}, (t \mapsto \phi(s,t)) \in S_e(\R^{d_2})}.
\end{align*}
We will always take $\phi$ from $S(A)$ with the Fourier transform defined as
\begin{equation}\label{convention}
    \mathcal{F}_A(\phi)(l,\xi) = \sum_{s \in (\Z/N\Z)^{d_1}}\int_{\R^{d_2}}\phi(s,t)e^{-2\pi i \rbra{\frac{Q(s,l)}{N}+Q(\xi,t)}}dt.
\end{equation}
In particular, we have the following two corollaries.
\begin{cor}[The Poisson summation formula for $\R$]
For $\phi \in S(\R)$ we have
\begin{equation*}
     \sum_{k \in \Z} \phi(k) = \sum_{k \in \Z} \mathcal{F}(\phi)(k).
\end{equation*}
\begin{proof}
For the group $A = (\R,+)$, we identify it with its dual group $\widehat{A}$ via the pairing $t \mapsto \chi_t$ where $\chi_t(\xi) = e^{2\pi i t\xi}$ for $t,\xi \in \R$. Let $H$ be the closed subgroup $\Z$ inside $\R$, then the above identification implies that $H \cong H^{\perp} = \Z$. For $\phi \in S(\R)$ by Theorem \ref{poisson summation}, the equality
\begin{equation*}
    \sum_{t \in \Z} \phi(\xi + t) = \sum_{t \in \Z} \mathcal{F}(\phi)(t) e^{2\pi i t \xi}
\end{equation*}
holds for all $\xi \in \R$, where the Fourier transform is defined as $\mathcal{F}(\phi)(\xi)= \int_{\R} \phi(t) e^{-2\pi i t\xi}dt$. In particular, let $x=0$, then we get the desired result.
\end{proof}
\end{cor}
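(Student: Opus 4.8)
The plan is to deduce this corollary directly from the general Poisson summation formula of Theorem \ref{poisson summation}, specializing to the case $A = \R$ and $H = \Z$. First I would fix the self-duality of $\R$: identify $\widehat{\R}$ with $\R$ through the pairing $t \mapsto \chi_t$, where $\chi_t(\xi) = e^{2\pi i t \xi}$ for $t, \xi \in \R$, which is exactly the pairing adopted in \S\ref{section poisson summation}. Under this identification the Fourier transform takes the familiar form $\mathcal{F}(\phi)(\xi) = \int_{\R} \phi(t) e^{-2\pi i t \xi}\,dt$.

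The next step is to compute the annihilator of $H = \Z$. A character $\chi_t$ lies in $\Z^{\perp}$ precisely when $e^{2\pi i t n} = 1$ for every $n \in \Z$, which forces $t \in \Z$; hence under our identification $H^{\perp} = \Z$ as well, recovering the self-duality $\Z \cong \widehat{\R/\Z}$. With Lebesgue measure on $\R$ and the counting measure on $\Z$, the Plancherel normalization is the standard one, so no correction constant intervenes.

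Before invoking Theorem \ref{poisson summation} I must verify its integrability hypothesis, namely that $\mathcal{F}(\phi)\rvert_{\Z} \in L^1(\Z)$. This is where the Schwartz condition $\phi \in S(\R)$ enters: the Fourier transform preserves $S(\R)$, so $\mathcal{F}(\phi)$ is again rapidly decreasing, and any rapidly decreasing function on $\R$ is summable over the integers. This is the only point requiring an argument, and it is routine. Granting it, Theorem \ref{poisson summation} yields, for every $x = \xi \in \R$,
\begin{equation*}
    \sum_{t \in \Z} \phi(\xi + t) = \sum_{t \in \Z} \mathcal{F}(\phi)(t)\, e^{2\pi i t \xi}.
\end{equation*}
Finally I would specialize to $\xi = 0$, which collapses every exponential factor to $1$ and delivers $\sum_{k \in \Z} \phi(k) = \sum_{k \in \Z} \mathcal{F}(\phi)(k)$, as claimed. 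The main obstacle is thus solely the verification of the $L^1$ condition, and it dissolves immediately for Schwartz inputs.
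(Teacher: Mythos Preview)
Your proposal is correct and follows essentially the same route as the paper: specialize Theorem \ref{poisson summation} to $A=\R$, $H=\Z$, identify $H^{\perp}=\Z$ under the self-duality pairing, and then evaluate the resulting identity at $\xi=0$. Your version is in fact slightly more careful, since you explicitly check the $L^1$ hypothesis on $\mathcal{F}(\phi)\rvert_{\Z}$ via the Schwartz condition, a point the paper leaves implicit.
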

\begin{cor}[The Poisson summation formula for $(\Z/N\Z)^d\times \R^d$]\label{corollary poisson}
For any $N \geq 1$ and $\phi \in S((\Z/N\Z)^d \times \R^d)$, we have
\begin{align}\label{poisson summation for lattice}
     \sum_{t \in \Z^d} \phi(t[N],t) = \frac{1}{N^d} \sum_{\xi \in \Z^d} \mathcal{F}(\phi)\rbra{-\xi[N],N^{-1}\xi}.
\end{align}
\begin{proof}
For the group $A = ((\Z/N\Z)^d\times \R^d,+)$, we identify it with its dual group $\widehat{A}$ via the pairing $(s,t) \mapsto \chi_{(s,t)}$ where $\chi_{s,t}(l,\xi) = e^{2\pi i \rbra{\frac{Q(s,l)}{N}+Q(t,\xi)}}$ for $s,l \in (\Z/N\Z)^d$ and $t,\xi \in \R^d$. If we take $H = \bbra{(t[N],t): t\in \Z^d }$ as the closed subgroup inside $A$, then to identify $H^{\perp}$ we need to find those $(l,\xi) \in \widehat{A} \cong (\Z/N\Z)^d\times \R^d$ such that for any $(t[N],t) \in H$, we have $e^{2\pi i \rbra{\frac{Q(t,l)}{N}+Q(t,\xi)}} = 1$. In other words, we need $\frac{l}{N}+\xi \in \Z^d$, which means for $l \in (\Z/N\Z)^d$, we have $\xi = -\frac{l}{N}+ n$ for some $n \in \Z^d$. Equivalently speaking, we can write $H^{\perp}$ as
\begin{align*}
    \quad H^{\perp}= \bbra{ (-\xi[N],N^{-1}\xi): \xi \in \Z^d }.
\end{align*}
 For $\phi \in S((\Z/N\Z)^d \times \R^d)$, the Fourier transform formula is given by
\begin{equation}\label{fourier r,zp}
    \mathcal{F}(\phi)(l,\xi) = \sum_{s \in (\Z/N\Z)^d}\int_{\R^d}\phi(s,t)e^{-2\pi i \rbra{\frac{Q(s,l)}{N}+Q(\xi,t)}}dt.
\end{equation}
By the choice of the counting measure on $(\Z/N\Z)^d$, the normalizing factor $\frac{1}{N^d}$ for the Fourier inversion formula gives rise to the normalizing factor for the Poisson summation formula and the result follows by setting $x = 0$ in the sense of Theorem \ref{poisson summation}.
\end{proof}
\end{cor}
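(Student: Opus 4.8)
The plan is to obtain this identity as the special case of the General Poisson summation formula (Theorem~\ref{poisson summation}) in which $A = (\Z/N\Z)^d \times \R^d$, equipped with the self-dual pairing and the Haar measure (counting $\times$ Lebesgue) fixed above, and $H$ is the diagonally embedded lattice $H = \bbra{(t[N],t) : t \in \Z^d}$. The first thing I would check is that $H$ is a closed subgroup: it is the image of the injective homomorphism $\Z^d \to A$, $t \mapsto (t[N],t)$, and its second coordinate already realizes $H$ as a discrete, hence closed, subgroup; the Haar measure on $H$ compatible with the quotient integral formula (\ref{quotient integral formula}) is then the counting measure transported from $\Z^d$.

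The first computational step is to identify the annihilator $H^\perp \subseteq \widehat A \cong A$. A character $\chi_{l,\xi}$ annihilates $H$ precisely when $e^{2\pi i(Q(t,l)/N + Q(t,\xi))} = 1$ for every $t \in \Z^d$, i.e. when $Q(t, \tfrac{l}{N} + \xi) \in \Z$ for all $t \in \Z^d$, i.e. when $\tfrac{l}{N}+\xi \in \Z^d$; solving $\xi = -l/N + n$ with $n \in \Z^d$ and reparametrizing by a single variable ranging over $\Z^d$ yields $H^\perp = \bbra{(-\xi[N], N^{-1}\xi) : \xi \in \Z^d}$, again a discrete closed subgroup. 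Before applying Theorem~\ref{poisson summation} I would verify its hypothesis $\mathcal{F}(\phi)\rvert_{H^\perp} \in L^1(H^\perp)$: since $\phi \in S((\Z/N\Z)^d \times \R^d)$, for each fixed $l$ the function $\xi \mapsto \mathcal{F}(\phi)(l,\xi)$ is a finite sum of Fourier transforms of Schwartz functions on $\R^d$, hence Schwartz, so $\sum_{\xi \in \Z^d}\abs{\mathcal{F}(\phi)(-\xi[N], N^{-1}\xi)}$ converges absolutely.

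With these in hand, Theorem~\ref{poisson summation} applied at the point $x = 0 \in A$ reads $\int_H \phi(h)\,dh = \int_{H^\perp}\mathcal{F}(\phi)(\chi)\,d\chi$; the left-hand side is $\sum_{t \in \Z^d}\phi(t[N],t)$ for the counting measure on $H$, and the right-hand side, after the identification $H^\perp \cong \widehat{A/H}$ and the parametrization above, is a constant multiple of $\sum_{\xi \in \Z^d}\mathcal{F}(\phi)(-\xi[N], N^{-1}\xi)$. It remains only to pin down that constant, and this is the place that requires genuine care: because we chose the counting measure (rather than the normalized mass-one measure) on the finite factor $(\Z/N\Z)^d$, the compatible Haar measure on $A/H \cong \R^d/(N\Z)^d$ is ordinary Lebesgue measure, and the associated Plancherel measure on $\widehat{A/H} \cong H^\perp$ is $N^{-d}$ times the counting measure --- equivalently, this is the very factor $N^{-d}$ already visible in the Fourier inversion formula $\mathcal{F}_{\widehat A}\mathcal{F}_A(\phi)(x) = N^{-d}\phi(-x)$ recorded above. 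Tracing this $N^{-d}$ through the statement of Theorem~\ref{poisson summation} yields exactly (\ref{poisson summation for lattice}).

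I expect this normalization bookkeeping --- matching the counting measure on $(\Z/N\Z)^d$, the Lebesgue measure on the quotient torus $\R^d/(N\Z)^d$, and the Plancherel measure on the annihilator --- to be the main (though mild) obstacle; the identification of $H$ and of $H^\perp$, and the convergence of the dual series, are routine once the pairing has been set up.
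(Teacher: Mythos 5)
Your proposal is correct and follows essentially the same route as the paper's proof: specializing Theorem \ref{poisson summation} to $A=(\Z/N\Z)^d\times\R^d$ with the diagonal lattice $H$, computing $H^\perp$ identically, and tracing the $N^{-d}$ normalization coming from the counting measure. You additionally spell out the integrability hypothesis $\mathcal{F}(\phi)\rvert_{H^\perp}\in L^1(H^\perp)$ and the measure bookkeeping on $A/H$, which the paper leaves implicit.
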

\subsection{Cocycles and linear actions}\label{section Cocycles and linear actions}
In this subsection, we will recall how the data of a group action and a cocycle produce a linear representation of the group. More precisely, given a group $G$, acting on some set $X$, and given another group $H$, we say that a \textit{cocycle} is a map $\varphi: G \times X \to H$ such that for any $g_1,g_2 \in G$ and any $j \in X$, the following holds
\begin{equation}\label{cocycle property}
    \varphi(g_1g_2,j) = \varphi(g_1,g_2 \cdot j)\varphi(g_2, j).
\end{equation}
 Let $V$ be a vector space. Denote by $V^X$ the space of $V$-valued functions on $X$. We have the following statement.
\begin{prop}\label{prop cocycle defines a representation}
 Let $G,H$ be two groups and let $G$ act on some set $X$. Given a cocycle $\varphi: G \times X \to H$ and a linear representation $\pi: H \to \GL(V)$, then for any $g \in G$, $F \in V^X$ and $j \in X$, the formula
\begin{equation}\label{formula cocycle action}
(g \cdot F)(j) := \pi \circ \varphi(g,g^{-1}\cdot j) F(g^{-1} \cdot j)
\end{equation}
defines an action of $G$ on $V^X$, which gives rise to a linear representation $\Pi: G \to \GL(V^X)$.
\end{prop}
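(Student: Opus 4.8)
The plan is to verify directly that formula (\ref{formula cocycle action}) defines an action of $G$ on $V^X$ by linear automorphisms; the representation $\Pi$ is then obtained by setting $\Pi(g)\colon F \mapsto g \cdot F$. Everything reduces to bookkeeping with the cocycle relation (\ref{cocycle property}) and the multiplicativity of $\pi$.

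First I would record the normalization $\varphi(e_G, j) = e_H$ for every $j \in X$: putting $g_1 = g_2 = e_G$ in (\ref{cocycle property}) and using $e_G \cdot j = j$ gives $\varphi(e_G, j) = \varphi(e_G, j)^2$, whence the claim. Consequently $\pi\circ\varphi(e_G, e_G^{-1}\cdot j) = \pi(e_H) = \mathrm{id}_V$, and (\ref{formula cocycle action}) yields $(e_G \cdot F)(j) = F(j)$ for all $j$, so $e_G$ acts as the identity of $V^X$. Linearity of $F \mapsto g \cdot F$ for fixed $g$ is immediate, the right-hand side of (\ref{formula cocycle action}) being the composition of the linear evaluation map $F \mapsto F(g^{-1}\cdot j)\in V$ with the linear automorphism $\pi\circ\varphi(g, g^{-1}\cdot j) \in \GL(V)$; hence $g\cdot(-)$ is an endomorphism of $V^X$.

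The substantive step is the compatibility $(g_1 g_2)\cdot F = g_1 \cdot (g_2 \cdot F)$. Fixing $j \in X$ and writing $k := (g_1 g_2)^{-1}\cdot j = g_2^{-1}\cdot(g_1^{-1}\cdot j)$, the key observation is that $g_2 \cdot k = g_1^{-1}\cdot j$. Expanding $\big((g_1 g_2)\cdot F\big)(j)$ via (\ref{formula cocycle action}), then applying (\ref{cocycle property}) to $\varphi(g_1 g_2, k) = \varphi(g_1, g_2\cdot k)\,\varphi(g_2, k)$ and using that $\pi$ is a homomorphism, one obtains $\pi\big(\varphi(g_1, g_1^{-1}\cdot j)\big)\,\pi\big(\varphi(g_2, k)\big)\,F(k)$; this is exactly what results from unwinding $\big(g_1\cdot(g_2\cdot F)\big)(j)$ — first applying (\ref{formula cocycle action}) for $g_1$ at the point $j$, then for $g_2$ at the point $g_1^{-1}\cdot j$, noting $g_2^{-1}\cdot(g_1^{-1}\cdot j) = k$. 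Thus the two actions agree pointwise on $X$, giving $\Pi(g_1 g_2) = \Pi(g_1)\Pi(g_2)$. Combined with $\Pi(e_G) = \mathrm{id}_{V^X}$, each $\Pi(g)$ is invertible with inverse $\Pi(g^{-1})$, so $\Pi$ lands in $\GL(V^X)$ and is the desired representation.

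I do not expect a genuine obstacle: the statement is a routine verification, and the only thing requiring care is tracking the arguments of $\varphi$ under the $G$-action, in particular the identity $g_2 \cdot \big((g_1 g_2)^{-1}\cdot j\big) = g_1^{-1}\cdot j$, which is precisely what makes the cocycle relation line up with the two-step unwinding of (\ref{formula cocycle action}).
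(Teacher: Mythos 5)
Your proof is correct and follows essentially the same route as the paper: expand both $(g_1\cdot(g_2\cdot F))(j)$ and $((g_1g_2)\cdot F)(j)$ and match them via the cocycle identity applied at the point $(g_1g_2)^{-1}\cdot j$. The extra checks you include (the normalization $\varphi(e_G,j)=e_H$, linearity, and invertibility of $\Pi(g)$) are routine details the paper leaves implicit.
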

\begin{proof}
    For $g_1,g_2 \in G$ the two quantities
\begin{align*}
    (g_1 \cdot (g_2 \cdot F))(j) &= \pi\circ\varphi(g_1,g_1^{-1}\cdot j)(g_2 \cdot F)(g_1^{-1}\cdot j)\\
    &= \pi \circ \varphi(g_1,g_1^{-1}\cdot j)\pi \circ \varphi(g_2,g_2^{-1}\cdot (g_1^{-1}\cdot j))F(g_2^{-1}\cdot (g_1^{-1}\cdot j)),\\
    ((g_1g_2)\cdot F)(j) &= \pi \circ \varphi(g_1g_2,(g_2^{-1}g_1^{-1})\cdot j)F((g_2^{-1}g_1^{-1})\cdot j)
\end{align*}
are equal by the defining cocycle property (\ref{cocycle property}). Thus it is indeed an action.
\end{proof}

We will now interpret modular forms in terms of group actions.
\subsection{Generalities on modular forms}\label{section Generalities on modular forms}
\subsubsection{The case of integral weight}
We recall standard definitions and properties of modular forms of weight $k \in \N$.

Let $\Omega(\pazocal{H})$ be the space of holomorphic functions from $\pazocal{H}$ to $\C$. By $\GL_2^+(\R)$ we denote the group of real $2 \times 2$ matrices with positive determinant. For an element $g = \begin{bsmallmatrix}a&b\\c&d\end{bsmallmatrix} \in \GL_2^+(\R)$, we let it act on $\pazocal{H}$ in the standard way: that is $g \cdot \tau :=\frac{a\tau+b}{c\tau+d}$. For such $g$, we define the map $j: \GL_2^+(\R) \times \pazocal{H} \to \C$ such that $(g,\tau)\mapsto c\tau + d$. This map serves as our first example of a cocycle.
\begin{lemma}\label{cocycle j}
The map $j: \GL_2^+(\R) \times \pazocal{H} \to \C$ defines a cocycle, that is for $g_1,g_2 \in \GL_2^+(\R)$ and $\tau \in \pazocal{H}$, we have
\begin{equation*}
    j(g_1g_2,\tau) = j(g_1,g_2\cdot\tau) j(g_2,\tau).
\end{equation*}
\end{lemma}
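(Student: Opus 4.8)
The plan is to verify the cocycle identity by a direct computation with $2\times 2$ matrices, mirroring the chain rule for the automorphy factor $c\tau+d$. First I would fix $g_1=\begin{bmatrix}a_1&b_1\\c_1&d_1\end{bmatrix}$ and $g_2=\begin{bmatrix}a_2&b_2\\c_2&d_2\end{bmatrix}$ in $\GL_2^+(\R)$ and read off the bottom row of the product $g_1g_2$, namely $(c_1a_2+d_1c_2,\ c_1b_2+d_1d_2)$; by the definition of $j$ this gives
\[
j(g_1g_2,\tau)=(c_1a_2+d_1c_2)\tau+(c_1b_2+d_1d_2).
\]
Next I would expand the other side: using $g_2\cdot\tau=\frac{a_2\tau+b_2}{c_2\tau+d_2}$ one has $j(g_1,g_2\cdot\tau)=c_1\frac{a_2\tau+b_2}{c_2\tau+d_2}+d_1$ and $j(g_2,\tau)=c_2\tau+d_2$, so that the denominator cancels in the product and
\[
j(g_1,g_2\cdot\tau)\,j(g_2,\tau)=c_1(a_2\tau+b_2)+d_1(c_2\tau+d_2).
\]
Rearranging the right-hand side in powers of $\tau$ yields exactly the displayed expression for $j(g_1g_2,\tau)$, which is the claim.

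The computation is entirely routine, so there is no genuine obstacle; the only point worth recording is that every quantity above is meaningful. For this I would note, as is standard, that for $\tau\in\pazocal{H}$ and $g=\begin{bmatrix}a&b\\c&d\end{bmatrix}\in\GL_2^+(\R)$ one has $c\tau+d\neq0$ — if $c=0$ then $d\neq0$ because $\det g>0$, and if $c\neq0$ then $\im(c\tau+d)=c\,\im(\tau)\neq0$ — and moreover $\im(g\cdot\tau)=\frac{(\det g)\,\im(\tau)}{\lvert c\tau+d\rvert^{2}}>0$. The latter identity guarantees that $g_2\cdot\tau$ again lies in $\pazocal{H}$, so that the composite $j(g_1,g_2\cdot\tau)$ is well defined and the action of $\GL_2^+(\R)$ on $\pazocal{H}$ used implicitly in the statement is legitimate. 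Hence $j$ satisfies the cocycle relation (\ref{cocycle property}), and in the language of \S\ref{section Cocycles and linear actions} it is a cocycle for the action of $G=\GL_2^+(\R)$ on $X=\pazocal{H}$ with values in $H=\C^{\times}$; this is precisely the input needed to feed Proposition \ref{prop cocycle defines a representation} when constructing the slash action on modular forms.
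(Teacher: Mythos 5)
Your proof is correct. It establishes the same identity as the paper but organizes the computation differently: you expand everything in terms of the matrix entries — reading off the bottom row of $g_1g_2$ and clearing the denominator $c_2\tau+d_2$ by hand — whereas the paper encapsulates the same algebra in the single identity
\begin{equation*}
    g \cdot \begin{bmatrix} \tau \\ 1 \end{bmatrix} = \begin{bmatrix} g\cdot\tau \\ 1 \end{bmatrix} j(g,\tau),
\end{equation*}
applies it twice, and compares $(g_1g_2)\cdot\begin{bsmallmatrix}\tau\\1\end{bsmallmatrix}$ with $g_1\cdot\bigl(g_2\cdot\begin{bsmallmatrix}\tau\\1\end{bsmallmatrix}\bigr)$. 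The paper's route makes the cocycle property visibly a consequence of associativity of matrix multiplication and generalizes without change to other automorphy factors; your route is more elementary and self-contained. A genuine plus of your write-up is that you record the facts the paper leaves implicit — that $c\tau+d\neq 0$ for $\tau\in\pazocal{H}$ and $\det g>0$, and that $\im(g\cdot\tau)=\det(g)\,\im(\tau)/\lvert c\tau+d\rvert^2>0$ so $g_2\cdot\tau\in\pazocal{H}$ — which is exactly what is needed for every term in the identity to be defined and for the target group of the cocycle to be taken as $\C^{\times}$ when feeding Proposition \ref{prop cocycle defines a representation}.
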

\begin{proof}
Note that $\GL_2^+(\R)$ acts on column vectors via multiplication and in particular for $g \in \GL_2^+(\R)$ and $\tau \in \pazocal{H}$ we have
\begin{align*}
    g \cdot \begin{bmatrix}
        \tau \\ 1
    \end{bmatrix} = \begin{bmatrix}
        g \cdot \tau \\ 1
    \end{bmatrix} j(g,\tau).
\end{align*}
Applying this identity repeatedly we have for $g_1,g_2 \in \GL_2^+(\R)$ and $\tau \in \pazocal{H}$ that
\begin{align*}
   \rbra{ g_1g_2 }\cdot \begin{bmatrix}
        \tau \\ 1
    \end{bmatrix} &= \begin{bmatrix}
       \rbra{g_1g_2} \cdot \tau \\ 1
    \end{bmatrix} j(g_1g_2,\tau),\\
    g_1 \cdot \rbra{g_2 \cdot \begin{bmatrix}
        \tau \\ 1
    \end{bmatrix}} &= g_1 \cdot \begin{bmatrix}
        g_2 \cdot \tau \\ 1
    \end{bmatrix}j(g_2,\tau) = \begin{bmatrix}
       g_1 \cdot \rbra{g_2 \cdot \tau} \\ 1
    \end{bmatrix} j(g_1,g_2\cdot \tau)j(g_2,\tau).
\end{align*}
The left hand sides are equal, hence so are the right hand sides.
\end{proof}
Let $g \in \GL_2^+(\R)$, for $k \in \N$ and $u \in \C$, we define the operator $[g]_{k,u}$ on functions in $\Omega(\pazocal{H})$ by
\begin{equation}\label{left action}
    ([g]_{k,u} \cdot \psi )(\tau):=j(g^{-1},\tau)^{-k} \Det(g)^{-u}\psi(g^{-1} \cdot \tau).
\end{equation}
\begin{prop}\label{prop left action}
For $k \in \N$ and $u \in \C$, the map $g \mapsto [g]_{k,u}$ defines a left action of $\GL_2^+(\R)$ on $\Omega(\pazocal{H})$.
\end{prop}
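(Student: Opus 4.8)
The plan is to check the two defining axioms of a left action for the map $g \mapsto [g]_{k,u}$, after first confirming that this operator preserves $\Omega(\pazocal{H})$. For the preservation: for $g \in \GL_2^+(\R)$ the map $\tau \mapsto g^{-1}\cdot\tau$ is a holomorphic self-map of $\pazocal{H}$; the function $\tau \mapsto j(g^{-1},\tau)$ is a polynomial in $\tau$ of degree at most $1$ with real coefficients, hence holomorphic and with at most one zero, which is real if it exists, so $\tau\mapsto j(g^{-1},\tau)^{-k}$ is holomorphic on $\pazocal{H}$; and $\Det(g)^{-u}$ is an unambiguous nonzero constant because $\Det(g)>0$, so we may set $\Det(g)^{-u} = e^{-u\log\Det(g)}$ with the real logarithm and no branch issue arises. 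Consequently $[g]_{k,u}\cdot\psi \in \Omega(\pazocal{H})$ whenever $\psi\in\Omega(\pazocal{H})$.

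The identity axiom is immediate: if $I$ denotes the identity matrix, then $j(I,\tau)=1$, $\Det(I)=1$ and $I\cdot\tau=\tau$, so $[I]_{k,u}$ is the identity operator. The substantive step is the relation $[g_1]_{k,u}\cdot\big([g_2]_{k,u}\cdot\psi\big) = [g_1g_2]_{k,u}\cdot\psi$ for all $g_1,g_2\in\GL_2^+(\R)$. Unwinding the left-hand side from (\ref{left action}) twice gives $j(g_1^{-1},\tau)^{-k}\,j\big(g_2^{-1},g_1^{-1}\cdot\tau\big)^{-k}\,\Det(g_1)^{-u}\Det(g_2)^{-u}\,\psi\big(g_2^{-1}\cdot(g_1^{-1}\cdot\tau)\big)$, and I would compare this factor by factor with the right-hand side $j\big((g_1g_2)^{-1},\tau\big)^{-k}\,\Det(g_1g_2)^{-u}\,\psi\big((g_1g_2)^{-1}\cdot\tau\big)$. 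The arguments of $\psi$ agree because the M\"obius action of $\GL_2^+(\R)$ on $\pazocal{H}$ is itself a left action, so $g_2^{-1}\cdot(g_1^{-1}\cdot\tau) = (g_2^{-1}g_1^{-1})\cdot\tau = (g_1g_2)^{-1}\cdot\tau$; the determinant factors agree because $\Det$ is multiplicative and positive-valued, so $\Det(g_1g_2)^{-u}=\Det(g_1)^{-u}\Det(g_2)^{-u}$; and the automorphy factors agree because $(g_1g_2)^{-1}=g_2^{-1}g_1^{-1}$, so Lemma \ref{cocycle j} applied to the pair $g_2^{-1},g_1^{-1}$ gives $j\big((g_1g_2)^{-1},\tau\big) = j\big(g_2^{-1},g_1^{-1}\cdot\tau\big)\,j\big(g_1^{-1},\tau\big)$.

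A more structural route is to deduce the statement directly from Proposition \ref{prop cocycle defines a representation}. The map $\varphi\colon \GL_2^+(\R)\times\pazocal{H}\to\C^{*}$ given by $\varphi(g,\tau) := j(g,\tau)^{k}\,\Det(g)^{-u}$ is a cocycle, which follows at once from Lemma \ref{cocycle j} together with the multiplicativity of the determinant; feeding $\varphi$ and the tautological representation of $\C^{*}$ on $\C$ into Proposition \ref{prop cocycle defines a representation} yields the left action $(g\cdot\psi)(\tau) = j\big(g,g^{-1}\cdot\tau\big)^{k}\,\Det(g)^{-u}\,\psi\big(g^{-1}\cdot\tau\big)$ on $\C$-valued functions on $\pazocal{H}$, which leaves $\Omega(\pazocal{H})$ invariant by the holomorphy remarks above. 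This action coincides with $g\mapsto[g]_{k,u}$ because $j\big(g,g^{-1}\cdot\tau\big) = j(g^{-1},\tau)^{-1}$, which is Lemma \ref{cocycle j} applied to $g$ and $g^{-1}$ combined with $j(I,\tau)=1$. In either approach there is no genuine difficulty — the argument is pure bookkeeping — and the only point requiring care, and the place I would expect an error to creep in, is keeping the inverses straight so that the cocycle identity of Lemma \ref{cocycle j} is invoked with its two arguments in the correct slots.
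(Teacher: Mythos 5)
Your proof is correct, and your second, ``structural'' route is exactly the paper's argument: the author simply notes that $j(g,\tau)$ is holomorphic in $\tau$ (so $\Omega(\pazocal{H})$ is preserved) and invokes Proposition \ref{prop cocycle defines a representation} together with the cocycle property of $j$ from Lemma \ref{cocycle j}, with the determinant factor causing no trouble since $\Det$ is multiplicative and positive. Your direct factor-by-factor verification is just an unwinding of the same computation, so nothing further is needed.
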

\begin{proof}
For $g \in \GL_2^+(\R)$, the map $j(g,\tau)$ is a holomorphic function of $\tau$, thus the action defined in Proposition \ref{prop cocycle defines a representation} preserves the space $\Omega(\pazocal{H})$, the conclusion follows from the cocycle property of $j(g,\tau)$ by Lemma \ref{cocycle j}.
\end{proof}
\begin{rem}
In the literature, for $g \in \GL_2^+(\R)$ and $\psi \in \Omega(\pazocal{H})$, one may usually find the right action defined by
$\psi[g]_{k,u}(\tau):=j(g,\tau)^{-k}\Det(g)^u\psi(g\cdot \tau)$. For the purpose of the representation theory which we will develop later, we stick to the left action version.
\end{rem}

If $g \in \SL_2(\R)$ and $\psi \in \Omega(\pazocal{H})$, we have $([g]_{k,u}\cdot \psi)(\tau) = j(g^{-1},\tau)^{-k}\psi(g^{-1} \cdot \tau)$, which is independent of $u$, so we simply denote $[g]_{k,u}$ by $[g]_k$ in this case. Note that if there is no ambiguity caused, we may omit the parameter $u$ when talking about the operator for any element of a subgroup of $\SL_2(\R)$.

We will specialize our discussion to congruence subgroups of $\SL_2(\Z)$. By ``congruence subgroup'', we mean the following.
\begin{defi}
For $N \geq 1$ we define $$\Gamma(N):=\bbra{g \in \SL_2(\Z): g \equiv 1 [N] }.$$ Let $\Gamma \subset \SL_2(\Z)$ be a subgroup, we say that $\Gamma$ is a \textit{congruence subgroup (of level $N$)} if there exists $N \geq 1$ such that $\Gamma(N) \subseteq \Gamma$. In particular, we put
\begin{align*}
\Gamma_1(N) &:= \bbra{g = \begin{bsmallmatrix}
    a&b\\c&d
\end{bsmallmatrix} \in \SL_2(\Z): c \equiv 0 [N], a,d \equiv 1 [N] },\\
\Gamma_0(N) &:= \bbra{g = \begin{bsmallmatrix}
    a&b\\c&d
\end{bsmallmatrix} \in \SL_2(\Z): c \equiv 0 [N] }.
\end{align*}
\end{defi}
An immediate consequence by this definition is as follows.
\begin{lemma}\label{finite index}
    Every congruence subgroup $\Gamma$ has finite index in $\SL_2(\Z)$.
\end{lemma}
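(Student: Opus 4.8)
The plan is to reduce immediately to the principal congruence subgroups $\Gamma(N)$ and then exhibit $\Gamma(N)$ as the kernel of a reduction homomorphism into a finite group. Concretely, suppose $\Gamma \subseteq \SL_2(\Z)$ is a congruence subgroup; by definition there is some $N \geq 1$ with $\Gamma(N) \subseteq \Gamma \subseteq \SL_2(\Z)$. Since $[\SL_2(\Z):\Gamma(N)] = [\SL_2(\Z):\Gamma]\,[\Gamma:\Gamma(N)]$, it suffices to prove that $\Gamma(N)$ itself has finite index in $\SL_2(\Z)$, for then $[\SL_2(\Z):\Gamma]$ divides $[\SL_2(\Z):\Gamma(N)]$ and in particular is finite.

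First I would observe that entrywise reduction modulo $N$ gives a map $\pi_N : \SL_2(\Z) \to \mathrm{M}_2(\Z/N\Z)$, and that it lands in $\SL_2(\Z/N\Z)$ because $\det$ commutes with ring homomorphisms, so $\det(\pi_N(g)) = \overline{\det(g)} = \bar 1$. The same compatibility of matrix multiplication with reduction shows $\pi_N$ is a group homomorphism. By the very definition of $\Gamma(N) = \{ g \in \SL_2(\Z) : g \equiv 1\ [N]\}$, the kernel of $\pi_N$ is exactly $\Gamma(N)$: an element reduces to the identity matrix mod $N$ if and only if all of its entries satisfy the congruences defining $\Gamma(N)$.

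Then the first isomorphism theorem identifies the quotient $\SL_2(\Z)/\Gamma(N)$ with a subgroup of $\SL_2(\Z/N\Z)$, hence $[\SL_2(\Z):\Gamma(N)] \leq \abs{\SL_2(\Z/N\Z)} \leq \abs{\mathrm{M}_2(\Z/N\Z)} = N^4 < \infty$. Combined with the reduction in the first paragraph, this yields $[\SL_2(\Z):\Gamma] < \infty$. There is no genuine obstacle here; the only points that warrant a line of verification are that $\pi_N$ is a well-defined group homomorphism and that its kernel is precisely $\Gamma(N)$, both of which are immediate from the definitions. (If one also wanted the exact value of the index one would invoke the surjectivity of $\pi_N$ together with the standard formula for $\abs{\SL_2(\Z/N\Z)}$, but this is not needed for the finiteness assertion.)
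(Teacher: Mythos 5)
Your proof is correct and follows essentially the same route as the paper: realize $\Gamma(N)$ as the kernel of the reduction homomorphism $\SL_2(\Z) \to \SL_2(\Z/N\Z)$, note that the target is finite, and conclude for general $\Gamma$ via the containment $\Gamma(N) \subseteq \Gamma$. The extra verifications you include (well-definedness of $\pi_N$, identification of the kernel, the crude bound $N^4$) are fine but not needed beyond what the paper already records.
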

\begin{proof}
Since $\Gamma(N)$ is the kernel of the map $\SL_2(\Z) \to \SL_2(\Z/N\Z)$, and we also know that $\SL_2(\Z/N\Z)$ is a finite group, it follows that $[\SL_2(\Z):\Gamma(N)]$ is finite for all $N$. Since every congruence subgroup $\Gamma$ contains $\Gamma(N)$ for some $N \geq 1$, the statement holds.
\end{proof}

A particular example which is interesting throughout our article is the congruence subgroup $\Gamma_1(4)$. We write $\alpha =\begin{bsmallmatrix}1&1\\0&1\end{bsmallmatrix}$ and $\gamma =\begin{bsmallmatrix}0&1\\-4&0\end{bsmallmatrix}$ which we consider as matrices in $\GL_2(\Z[\frac{1}{2}])$. Let $\Lambda$ be the subgroup of $\GL_2(\Z[\frac{1}{2}])$ generated by $\alpha,\gamma$. Moreover, let $\beta := \gamma \alpha \gamma^{-1} = \begin{bsmallmatrix}
    1&0\\-4&1
\end{bsmallmatrix}$ so that $\beta \in \SL_2(\Z)$. The following lemma lists some key properties of $\Gamma_1(4)$.

\begin{lemma}\label{lemma Lambda inside SL2Z}
The intersection of $\Lambda$ and $\SL_2(\Z)$ is $\Gamma_1(4)$, which is generated by $\alpha,\beta$.
\end{lemma}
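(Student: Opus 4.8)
The statement has two parts: (i) $\Lambda \cap \SL_2(\Z) = \Gamma_1(4)$, and (ii) $\Gamma_1(4)$ is generated by $\alpha$ and $\beta$. I would prove (ii) first, then use it to get the nontrivial inclusion $\Gamma_1(4) \subseteq \Lambda \cap \SL_2(\Z)$ in (i) for free, since $\alpha,\beta \in \Lambda \cap \SL_2(\Z)$ (recall $\beta = \gamma\alpha\gamma^{-1} \in \SL_2(\Z)$ by construction).

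For (ii): the plan is to run the standard descent/division algorithm. Given $g = \begin{bsmallmatrix} a & b \\ c & d\end{bsmallmatrix} \in \Gamma_1(4)$, so $c \equiv 0\,[4]$ and $a \equiv d \equiv 1\,[4]$, I would show one can reduce $g$ to the identity by multiplying on the left by powers of $\alpha = \begin{bsmallmatrix}1&1\\0&1\end{bsmallmatrix}$ and $\beta = \begin{bsmallmatrix}1&0\\-4&1\end{bsmallmatrix}$. Left-multiplication by $\alpha^n$ sends $(a,b,c,d) \mapsto (a+nc, b+nd, c, d)$, and by $\beta^n$ sends $(a,b,c,d)\mapsto (a,b,c-4na,d-4nb)$. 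Both operations preserve membership in $\Gamma_1(4)$ (the congruence conditions mod $4$ are stable). I would argue by induction on $|c|$ (or $\min(|a|,|c|)$): if $c \neq 0$, since $\gcd(a,c)=1$ and $4 \mid c$, $a$ is a unit mod $c$, and I can use $\beta^n$ to shrink $|c|$ below $|a|$ if $|a| \le |c|$, or first use $\alpha^n$ to shrink $|a|$ below $|c|$; iterating, the Euclidean-type descent terminates at $c = 0$. When $c = 0$ we have $ad = 1$ with $a \equiv 1\,[4]$, so $a = d = 1$, and then $g = \alpha^b$. Collecting the inverse operations expresses $g$ as a word in $\alpha, \beta$. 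The converse inclusion $\langle \alpha, \beta\rangle \subseteq \Gamma_1(4)$ is immediate since $\Gamma_1(4)$ is a group containing both generators.

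For (i): the inclusion $\Lambda \cap \SL_2(\Z) \supseteq \Gamma_1(4)$ follows from (ii) since $\alpha, \beta \in \Lambda$. For the reverse inclusion $\Lambda \cap \SL_2(\Z) \subseteq \Gamma_1(4)$, I would first understand $\Lambda$ better. Conjugation by $\gamma = \begin{bsmallmatrix}0&1\\-4&0\end{bsmallmatrix}$ swaps upper- and lower-triangular unipotents with the $4$-scaling as above, so $\Lambda$ is generated by $\alpha$, $\beta$, and $\gamma$; and $\gamma^2 = \begin{bsmallmatrix}-4&0\\0&-4\end{bsmallmatrix} = -4I$ is central. Hence every element of $\Lambda$ can be written in the form $(-4)^k w$ with $k \in \Z$ and $w$ a word in $\alpha, \beta$ (using $\gamma \alpha \gamma^{-1} = \beta$, $\gamma\beta\gamma^{-1} = \alpha$ up to the central factor, one can push all $\gamma$'s to one side). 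Such an element lies in $\SL_2(\Z)$ only when $(-4)^k w \in \SL_2(\Z)$; since $w \in \Gamma_1(4) \subseteq \SL_2(\Z)$ and $\det((-4)^k w) = 16^k$, we need $16^k = 1$, i.e.\ $k = 0$, so the element equals $w \in \Gamma_1(4)$. The main obstacle here is making the normal-form argument for $\Lambda$ rigorous — carefully tracking that the $\gamma$'s can indeed all be moved to a single central power of $-4I$ without the word in $\alpha,\beta$ leaving $\SL_2(\Z)$; this is where one must be slightly careful about whether intermediate conjugates stay integral, but since $\gamma \alpha^{\pm 1}\gamma^{-1} = \beta^{\pm 1}$ and $\gamma\beta^{\pm 1}\gamma^{-1} = \alpha^{\pm 1}$ exactly (no stray denominators — the $4$'s cancel), the bookkeeping goes through. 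Once the normal form is in hand, the determinant computation finishes (i).
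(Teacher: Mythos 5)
Your proposal is correct and follows essentially the same route as the paper: a Euclidean-type descent in $\alpha,\beta$ to show $\langle\alpha,\beta\rangle=\Gamma_1(4)$, and a word-reduction in $\Lambda$ using the central element $\gamma^2=-4I$ together with a determinant count to show $\Lambda\cap\SL_2(\Z)=\langle\alpha,\beta\rangle$. One intermediate claim is stated too strongly: it is not true that every element of $\Lambda$ has the form $(-4)^k w$ with $w$ a word in $\alpha,\beta$ --- pushing the $\gamma$'s to one side yields $(-4)^k w$ only when the total $\gamma$-exponent is even, and otherwise yields $(-4)^k w\gamma$ (e.g.\ $\gamma$ itself is not of the first form, as $\det\gamma=4\neq 16^k$). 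This does not break your argument, since the determinant computation you already invoke kills the odd case ($\det((-4)^k w\gamma)=4^{2k+1}\neq 1$); the paper sidesteps the issue by using $\det g=1$ at the outset to force the sum of $\gamma$-exponents to vanish before reducing the word.
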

\begin{proof}
We first claim that $\Lambda \cap \SL_2(\Z)$ is spanned by $\alpha,\beta$, i.e. $\Lambda \cap \SL_2(\Z) = \bra{\alpha,\beta}$. For any $g \in \bra{\alpha,\beta}$, it quickly follows that $g \in \Lambda \cap \SL_2(\Z)$ because $\alpha,\beta \in \SL_2(\Z)$ and $\alpha,\beta \in \Lambda$. Conversely, any $g \in \Lambda \cap \SL_2(\Z)$ can be written as $g =\alpha^{m_1}\gamma^{n_1}\cdots\alpha^{m_r}\gamma^{n_r}$ for some $m_i,n_i \in \Z$ and $r \in \N$ with $\sum_{i=1}^r n_i = 0$, since $\det g = 1$. By definition, we have $\gamma^2 = -4$, so we can rewrite $g$ as follows
\begin{align*}
    g &= (-4)^{\sum_{i=1}^r\floor{n_i/2}}\alpha^{m_1} \gamma^{\epsilon(n_1)} \cdots \alpha^{m_r} \gamma^{\epsilon(n_r)},
\end{align*}
where $\sum_{i=1}^r\floor{n_i/2} \in \Z$ and $\epsilon(n) = 0$ if $n \in 2\Z$ and $\epsilon(n) = 1$ if $n \in 2\Z+1$. It quickly follows that $-2\sum_{i=1}^r\floor{n_i/2} = \sum_{i=1}^r \epsilon(n_i)$. Therefore, $g$ can be expressed as
\begin{align*}
    g &= (-4)^{\sum_{i=1}^r\floor{n_i/2}} \alpha^{s_1}\gamma \alpha^{s_2}\gamma \cdots = \alpha^{s_1}\gamma \alpha^{s_2}\gamma(\gamma^{-2}) \cdots\\
    &= \alpha^{s_1} \gamma \alpha^{s_2} \gamma^{-1} \cdots = \alpha^{s_1} \beta^{s_2} \cdots,
\end{align*}
where $s_i \in \Z$ and $\sum_i s_i = \sum_{i=1}^r m_i$. This means $g \in \bra{\alpha,\beta}$.

Next, we want to prove that $\bra{\alpha,\beta} = \Gamma_1(4)$. For any $g \in \bra{\alpha,\beta}$, it follows immediately that $g \in \Gamma_1(4)$ because both of its generators $\alpha,\beta \in \Gamma_1(4)$. Conversely, we prove by induction on $k$ that the statement $S(k)$ as follows is true for all $k \in \N$,
\begin{align*}
    S(k): \text{ For all }g \in \Gamma_1(4), \text{ if }g = \begin{bsmallmatrix}
    a&b\\4c&d
\end{bsmallmatrix} \text{ with }\abs{c} \leq k, \text{ then }g \in \bra{\alpha,\beta}.
\end{align*}

For $k=0$, we know $g \in \Gamma_1(4)$ implies that $ad = 1$. Since $a,d = 1[4]$, we have $a = d = 1$, which means $g = \begin{bsmallmatrix}
    1&b\\0&1
\end{bsmallmatrix} = \alpha^b \in \bra{\alpha,\beta}$. Hence, $S(0)$ is true. Next we suppose $S(k)$ is true for some $k \in \N$, and we prove that $S(k+1)$ is true. We take $g = \begin{bsmallmatrix}
    a&b\\4c&d
\end{bsmallmatrix} \in \Gamma_1(4)$ such that $\abs{c} \leq k+1$. If $\abs{c} \leq k$, then by induction hypothesis $g \in \bra{\alpha,\beta}$. For $\abs{c}=k+1$, one can find $m,n \in \Z$ such that
\begin{align*}
    g' &= g \alpha^m = \begin{bmatrix}
       a&b+ma\\c&c+md
    \end{bmatrix} = \begin{bmatrix}
    a'&b'\\ 4c'&d'
\end{bmatrix} \quad \text{with } c'= c \text{ and }\abs{d'} < 2 \abs{c'},\\
g'' &= g'\beta^n = \begin{bmatrix}
    a+nb&b\\c+nd&d
\end{bmatrix} =\begin{bmatrix}
    a''&b''\\ 4c''&d''
\end{bmatrix} \quad \text{with } d''= d' \text{ and }4\abs{c''} < 2 \abs{d''}.
\end{align*}
Then it follows that $4\abs{c''} < 2 \abs{d''} < 4 \abs{c}$, which means $g'' \in \bra{\alpha,\beta}$ by induction hypothesis. In this case, we note that $g= g'' \beta^{-n} \alpha^{-m} \in \bra{\alpha,\beta}$, so we have proved that $S(k+1)$ is true. This means for any $g \in \Gamma_1(4)$, we have $g \in \bra{\alpha,\beta}$. In summary, we have proved the statement.

\end{proof}

Now, we introduce an important notion called weakly modular, as discussed in \cite[Chapter VII, \S 2]{MR0344216} in the case of $\Gamma = \SL_2(\Z)$.
\begin{defi}\label{defi weakly modular}
    Let $\psi \in \Omega(\pazocal{H})$ and $k \in \N$ and let $\Gamma \subset \SL_2(\Z)$ be a congruence subgroup. We say $\psi$ is \textit{weakly modular of weight $k$ under $\Gamma$} if the set $\bbra{ g \in \SL_2(\R) : [g]_k \cdot \psi=\psi}$ contains $\Gamma$.
\end{defi}
\begin{rem}
    Note that if $k$ is odd and $-I \in \Gamma$, then there are no nonzero function $\psi$ which is weakly modular of weight $k$ under $\Gamma$, since then $[-I]_k \cdot \psi = - \psi$. This technicality is part of the reason why we will later study the functions $\psi$ that are associated with subgroups of $\Gamma_1(4)$, which we mentioned earlier.

\end{rem}
The subsequent lemmas will prepare us to define the notion of holomorphicity at cusps for $\psi \in \Omega(\pazocal{H})$, under the assumption that $\psi$ is weakly modular.
\begin{lemma}\label{lemma congruence subgroup}
Let $h \in \GL_2^+(\Q)$ and $\Gamma \subset \SL_2(\Z)$ be a congruence subgroup, then $h\Gamma h^{-1} \cap \SL_2(\Z)$ is again a congruence subgroup.
\end{lemma}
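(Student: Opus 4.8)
The plan is to exhibit an explicit principal congruence subgroup sitting inside $h\Gamma h^{-1}\cap\SL_2(\Z)$. Since $\Gamma$ is a congruence subgroup, I will fix once and for all an integer $N\geq 1$ with $\Gamma(N)\subseteq\Gamma$. Because conjugation by $h$ is an injective group homomorphism, it suffices to find an integer $M\geq 1$ with $\Gamma(M)\subseteq h\Gamma(N)h^{-1}$: indeed then $\Gamma(M)\subseteq h\Gamma(N)h^{-1}\subseteq h\Gamma h^{-1}$, and $\Gamma(M)\subseteq\SL_2(\Z)$ automatically, so $\Gamma(M)\subseteq h\Gamma h^{-1}\cap\SL_2(\Z)$; the latter, being a subgroup of $\SL_2(\Z)$ containing a principal congruence subgroup, is then a congruence subgroup by definition.

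The first step is to clear denominators: write $h=\frac1r h_0$ with $r\geq 1$ an integer and $h_0\in M_2(\Z)$, and set $D:=\det h_0>0$. The scalar $r$ commutes with everything, so $h\Gamma(N)h^{-1}=h_0\Gamma(N)h_0^{-1}$, and I may work with $h_0$. I will use $h_0^{-1}=\tfrac1D\operatorname{adj}(h_0)$, where $\operatorname{adj}(h_0)\in M_2(\Z)$ denotes the adjugate matrix.

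I then claim $M:=ND$ works. To check this I would take an arbitrary $A\in\Gamma(ND)$, write $A=I+ND\,A'$ with $A'\in M_2(\Z)$, and compute
\[
h_0^{-1}Ah_0 \;=\; I+ND\,h_0^{-1}A'h_0 \;=\; I+N\,\operatorname{adj}(h_0)\,A'\,h_0 .
\]
Since $\operatorname{adj}(h_0)A'h_0\in M_2(\Z)$, this exhibits $h_0^{-1}Ah_0$ as an integer matrix congruent to $I$ modulo $N$; moreover $\det(h_0^{-1}Ah_0)=\det A=1$, so $h_0^{-1}Ah_0\in\Gamma(N)\subseteq\Gamma$. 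Hence $A=h_0\bigl(h_0^{-1}Ah_0\bigr)h_0^{-1}\in h_0\Gamma h_0^{-1}=h\Gamma h^{-1}$, which gives $\Gamma(ND)\subseteq h\Gamma h^{-1}$ and, combined with the first paragraph, finishes the proof.

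There is no deep obstacle here; the one point requiring care is the integrality bookkeeping, namely ensuring that after conjugating an element of $\Gamma(ND)$ the denominator $D$ introduced by $h_0^{-1}$ is exactly cancelled by the factor $ND$. This is precisely why one conjugates the \emph{smaller} group $\Gamma(N)$ rather than $\Gamma$ itself, and why the new level has to be inflated by $D=\det h_0$.
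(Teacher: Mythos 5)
Your proof is correct and follows essentially the same strategy as the paper's: exhibit an explicit principal congruence subgroup $\Gamma(M)$ inside $h\Gamma h^{-1}$ by clearing the denominators of $h$ and inflating the level accordingly (the paper tracks integrality via integers $n_1,n_2$ with $n_1h\Z^2\subset\Z^2$ and $n_2h^{-1}\Z^2\subset\Z^2$, while you package the same bookkeeping via the adjugate and $\det h_0$). The adjugate computation is clean and the integrality cancellation is verified correctly, so nothing further is needed.
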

\begin{proof}
For $h \in \GL_2^+(\Q)$, there exists $n_1 \geq 1$ such that $n_1 h \Z^2 \subset \Z^2$ and also $n_2 \geq 1$ such that $n_2 h^{-1}\Z^2 \subset \Z^2$. Suppose $\Gamma(m) \subseteq \Gamma$ for some $m \geq 0$, then it follows that $n_1n_2 m \Z^2 \subset n_1 h m\Z^2 \subset m\Z^2$. Let $N = n_1n_2m$, then for any $\gamma \in \Gamma(N)$ and $v \in \Z^2$, we know $\gamma v - v \in N \Z^2 \subset n_2^{-1}Nh\Z^2$ and hence $h^{-1}\gamma v - h^{-1}v \in n_2^{-1}N \Z^2$. Take $v = n_1h x$ for some $x \in \Z^2$, then it follows that $n_1h^{-1} \gamma h x - n_1 x \in n_2^{-1}N\Z^2$, which means $h^{-1} \gamma h x -  x \in n_1^{-1}n_2^{-1}N\Z^2 \subset m \Z$ and it follows that $h^{-1}\gamma h \in \Gamma(m) \subseteq \Gamma$, so $\gamma \in h^{-1}\Gamma h$ and finally $\Gamma(N) \subseteq h^{-1}\Gamma h$.
\end{proof}
\begin{lemma}\label{lemma weakly modular}
Let $k \in \N$ and $\Gamma \subset \SL_2(\Z)$ be a congruence subgroup. If $\psi \in \Omega(\pazocal{H})$ is weakly modular of weight $k$ under $\Gamma$, then for any $h \in \GL_2^+(\Q)$ and $u \in \C$, the function $[h]_{k,u} \cdot \psi$ is weakly modular of weight $k$ under $h\Gamma h^{-1} \cap \SL_2(\Z)$.
\end{lemma}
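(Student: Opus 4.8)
The plan is to verify the required invariance directly, exploiting that $g\mapsto[g]_{k,u}$ is a genuine left action of $\GL_2^+(\R)$ on $\Omega(\pazocal{H})$ (Proposition \ref{prop left action}). Before the main computation I would record two harmless preliminaries. First, by Lemma \ref{lemma congruence subgroup} the group $h\Gamma h^{-1}\cap\SL_2(\Z)$ is itself a congruence subgroup, so the assertion that $[h]_{k,u}\cdot\psi$ is weakly modular under $h\Gamma h^{-1}\cap\SL_2(\Z)$ is meaningful. Second, since $h\in\GL_2^+(\Q)\subset\GL_2^+(\R)$ we also have $h^{-1}\in\GL_2^+(\R)$, so $h^{-1}\cdot\tau\in\pazocal{H}$ and $j(h^{-1},\tau)=c\tau+d$ (writing $h^{-1}=\begin{bsmallmatrix}a&b\\c&d\end{bsmallmatrix}$) is a nowhere-vanishing holomorphic function on $\pazocal{H}$; hence $[h]_{k,u}\cdot\psi\in\Omega(\pazocal{H})$, again by Proposition \ref{prop left action}.

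For the core step, fix $\gamma\in h\Gamma h^{-1}\cap\SL_2(\Z)$ and write $\gamma=hgh^{-1}$ with $g\in\Gamma$. Applying the left-action property of $[\,\cdot\,]_{k,u}$ twice and using $\gamma h=hg$ gives
\begin{align*}
[\gamma]_k\cdot\bigl([h]_{k,u}\cdot\psi\bigr)
&=[\gamma h]_{k,u}\cdot\psi
=[hg]_{k,u}\cdot\psi
=[h]_{k,u}\cdot\bigl([g]_{k,u}\cdot\psi\bigr)\\
&=[h]_{k,u}\cdot\bigl([g]_k\cdot\psi\bigr)
=[h]_{k,u}\cdot\psi,
\end{align*}
where I used $[\gamma]_k=[\gamma]_{k,u}$ and $[g]_{k,u}=[g]_k$ (both hold because $\Det$ is trivial on $\SL_2(\Z)$, so the factor $\Det(\cdot)^{-u}$ disappears), and the last equality uses that $\psi$ is weakly modular of weight $k$ under $\Gamma$, i.e. $[g]_k\cdot\psi=\psi$. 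Since $\gamma$ was arbitrary in $h\Gamma h^{-1}\cap\SL_2(\Z)$, this shows $\{\,g\in\SL_2(\R):[g]_k\cdot([h]_{k,u}\cdot\psi)=[h]_{k,u}\cdot\psi\,\}\supseteq h\Gamma h^{-1}\cap\SL_2(\Z)$, which is precisely weak modularity of $[h]_{k,u}\cdot\psi$ of weight $k$ under that congruence subgroup.

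There is essentially no obstacle here beyond bookkeeping, but two points deserve a line of care. The first is the role of the parameter $u$: it enters $[g]_{k,u}$ only through the scalar $\Det(g)^{-u}$, which is well defined with no branch ambiguity since $\Det(g)$ is a positive real for $g\in\GL_2^+(\R)$, and this scalar is multiplicative in $g$; combined with the cocycle property of $j$ from Lemma \ref{cocycle j}, this is exactly what makes $g\mapsto[g]_{k,u}$ a homomorphism, and in particular forces $[g]_{k,u}=[g]_k$ on $\SL_2(\Z)$. The second is placing the conjugation on the correct side: writing $\gamma=hgh^{-1}$ (rather than $h^{-1}gh$) is what makes the telescoping $[\gamma h]_{k,u}=[hg]_{k,u}=[h]_{k,u}\cdot[g]_{k,u}$ collapse onto $[h]_{k,u}\cdot\psi$.
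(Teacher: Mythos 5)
Your proposal is correct and follows essentially the same route as the paper: conjugate $\gamma = hgh^{-1}$, use that $g \mapsto [g]_{k,u}$ is a left action to telescope $[\gamma]_{k}\cdot([h]_{k,u}\cdot\psi) = [hg]_{k,u}\cdot\psi = [h]_{k,u}\cdot\psi$, and invoke Lemma \ref{lemma congruence subgroup} to see that $h\Gamma h^{-1}\cap\SL_2(\Z)$ is a congruence subgroup. Your added remarks on why $[g]_{k,u}=[g]_k$ on $\SL_2(\Z)$ and on the holomorphy of $[h]_{k,u}\cdot\psi$ are correct bookkeeping that the paper leaves implicit.
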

\begin{proof}
    Let $g' = hgh^{-1} \in h\Gamma h^{-1}$ for some $g \in \Gamma$, then for any $u \in \C$ we have
    \begin{align*}
        [g']_{k,u} \cdot ([h]_{k,u}  \cdot \psi) &= [hgh^{-1}h]_{k,u}\cdot \psi = [hg]_{k,u}\cdot \psi\\  &= [h]_{k,u} \cdot \psi,
    \end{align*}
    as desired. By Lemma \ref{lemma congruence subgroup}, the function $[h]_{k,u} \cdot \psi$ is weakly modular of weight $k$ under $h\Gamma h^{-1} \cap \SL_2(\Z)$.
\end{proof}

Given a field $K$, we always identify the set $\mathbb{P}^1_K$ of vector lines in $K^2$ with $K \cup \{\infty\}$ in the usual way: we identify any $x \in K$ with the line $K(x,1)$ and $x= \infty$ with the line $K(1,0)$. Then the natural action of $\GL_2(K)$ maybe written as follows: for $g = \begin{bsmallmatrix}
    a&b\\c&d
\end{bsmallmatrix} \in \GL_2(K)$
\begin{align*}
    g \cdot x = \frac{ax+b}{cx+d} \quad \text{for } x \in K, &\quad\text{if }cx + d \neq 0\\
    g \cdot \rbra{-\frac{d}{c}} = \infty, \quad g \cdot \infty = \frac{a}{c}, &\quad\text{if } c \neq 0\\
    g \cdot \infty = \infty, &\quad\text{if } c = 0.
\end{align*}
We will consider this action with $K = \R$ now and $K = \mathbb{F}_p$ with $p$ prime, later. Note that $\SL_2(\Z)$ acts transitively on $\mathbb{P}^1_{\Q} \cong \Q \cup \{\infty\}$, so by Lemma \ref{finite index}, any congruence subgroup of $\SL_2(\Z)$ has finitely many orbits in $\mathbb{P}^1_{\Q}$. Let $\Gamma$ be a congruence subgroup, then a $\Gamma$-orbit in $\mathbb{P}^1_{\Q}$ is called a \textit{cusp} of $\Gamma$.

\begin{defi}\label{cusp definition}
  Let $k\in\N$ and $\Gamma \subset \SL_2(\Z)$ be a congruence subgroup and $\psi \in \Omega(\pazocal{H})$ be weakly modular of weight $k$ under $\Gamma$.
  \begin{enumerate}
      \item We say $\psi$ is \textit{holomorphic at $\infty$} if it has a Fourier expansion $\psi(\tau)=\psi'(q_m)=\sum_{n=0}^{\infty} a_nq_m^n$ where $q_m = e^{2\pi i \tau/m}$ for some $m \in \Z^+$ or equivalently $\lim_{\im(\tau)\to \infty}\psi(\tau)$ exists in $\C$. We say this $\psi$ is \textit{zero at $\infty$} when $a_0=0$ or equivalently $\lim_{\im(\tau)\to \infty}\psi(\tau)=0$.
      \item We say $\psi$ is \textit{holomorphic at $s \in \Q$} if there exists some $h \in \GL_2^+(\Q)$ with $h \cdot s = \infty$ and $u \in \C$ such that the function $[h]_{k,u} \cdot \psi$ is holomorphic at $\infty$. We say this $\psi$ is \textit{zero at $s$} if the function $[h]_{k,u} \cdot \psi$ is zero at $\infty$.
      \item We say $\psi$ is \textit{holomorphic at all the cusps} if $\psi$ is holomorphic at all $s \in \mathbb{P}^1_{\Q}$. We say this $\psi$ is \textit{zero at all the cusps} if $\psi$ is zero at all $s \in \mathbb{P}^1_{\Q}$.
  \end{enumerate}
\end{defi}
\begin{rem}\label{rem holomorphic aat cusps}
\begin{enumerate}
    \item Each congruence subgroup $\Gamma$ contains $\Gamma(N)$ for some $N\geq 1$, thus contains a translation matrix of the form $\begin{bsmallmatrix}
    1&m\\0&1
\end{bsmallmatrix}$ so that $\begin{bsmallmatrix}
    1&m\\0&1
\end{bsmallmatrix}^{-1}: \tau \mapsto \tau-m$ for some $m \in \Z^+$, so every $\psi$ which is weakly modular of weight $k$ under $\Gamma$, is $m\Z$-periodic. Let $D = \{q\in \C: \abs{q}<1\}$ be the open unit disk and let $D'= D - \{0\}$, then $\tau \mapsto e^{2\pi i \tau/m}=q_m$ takes $\pazocal{H}$ to $D'$ and $\psi(\tau)=\psi'(q_m)$ where $\psi': D' \to \C$ and $\psi'(q_m)=\psi(m\log(q_m)/(2\pi i))$. Since $\psi$ is holomorphic on $\pazocal{H}$ then $\psi'$ is holomorphic on $D'$ so it has a Laurent expansion $\psi'(q_m)=\sum_{n \in \Z} a_nq_m^n$ for $q_m \in D'$. The relation $\abs{q_m} = e^{-2\pi \im(\tau)/m}$ shows that $q_m \to 0$ as $\im(\tau) \to \infty$, so to say $\psi$ to be holomorphic at $\infty$ if $\psi'$ extends holomorphically to $q_m=0$. Then our definition follows naturally.
\item To define the holomorphicity condition at $\Q$ in a convenient way, we observe that for any $s \in \mathbb{P}^1_{\Q}$, one can always find some $h \in \GL_2^+(\Q)$ such that $h \cdot s = \infty$. This motivates us to define the holomorphicity condition of $\psi$ at $s\in \Q$ by sending it to $\infty$ via such $h$ and consider the holomorphicity condition of $[h]_{k,u} \cdot \psi$ at $\infty$ for some $u \in \C$. By Lemma \ref{lemma weakly modular}, we know that $[h]_{k,u} \cdot \psi$ is weakly modular, so this definition makes sense.
\end{enumerate}
\end{rem}

Now we are ready to give a full definition of modular forms and cusp forms of integral weight.
\begin{defi}\label{definition modular form}
    Let $k \in \N$ and $\Gamma \subset \SL_2(\Z)$ be a congruence subgroup. A function $\psi \in \Omega(\pazocal{H})$ is a \textit{modular form of weight $k$ under $\Gamma$} if
\begin{enumerate}
    \item $\psi$ is holomorphic on $\pazocal{H}$,
    \item $\psi$ is weakly modular of weight $k$ under $\Gamma$,
    \item $\psi$ is holomorphic at all the cusps.
\end{enumerate}
If in addition,
\begin{enumerate}\setcounter{enumi}{3}
    \item $\psi$ is zero at all the cusps ,
\end{enumerate}
then $\psi$ is a \textit{cusp form} of weight $k$ under $\Gamma$. We let $M_k(\Gamma)$ and $S_k(\Gamma)$ denote the set of modular forms of weight $k$ under $\Gamma$ and the set of cusp forms of weight $k$ under $\Gamma$, respectively.
\end{defi}
\subsubsection{The case of half-integral weight}\label{section 2.3.2}
We define modular forms of weight $k \in \frac{1}{2} \Z$ under congruence subgroups of $\Gamma_1(4)$ in this subsection.

Throughout this article, for $z \in \C$, the square root $\sqrt{z}$ or $z^{\frac{1}{2}}$ is taken in the principal branch, that is $\arg(\sqrt{z}) \in (-\pi/2,\pi/2]$. Moreover, for $m \in \Z$, by $z^{\frac{m}{2}}$ we mean $(\sqrt{z})^m$. We first define some notations.
\begin{defi}\label{defi extended symbol and varepsilon}
    Let $d$ be an odd integer and $c \in \Z$.
    \begin{enumerate}
        \item The \textit{extended quadratic symbol} $\rbra{\frac{c}{d}}$ is defined as follows: when $d$ is a positive prime number, the symbol $\rbra{\frac{c}{d}}$ is defined in the usual way as the Legendre symbol, i.e. it equals 0 if $d|c$, 1 if $c$ is a nonzero square modulo $d$, and $-1$ otherwise. For any positive odd $d>1$, the previous symbol is extended to the Jacobi symbol. That is to say, let $d$ be written as a product of primes $d = \prod_j p_j$, so that we define $\rbra{\frac{c}{d}}=\prod_j\rbra{\frac{c}{p_j}}$. We finally extend this symbol for any odd integer $d$. If $c=0$, we adopt the convention that $\rbra{\frac{0}{d}}=1$ if $d = \pm 1$ and $\rbra{\frac{0}{d}}=0$ otherwise. If $d < 0$ and $c \neq 0$, then we define $\rbra{\frac{c}{d}} = \frac{c}{\abs{c}}\rbra{\frac{c}{-d}}$. Note that this extension ensures that the usual formula $\rbra{\frac{-1}{d}} = (-1)^{(d-1)/2}$ holds for any odd $d$.
        \item The function $\varepsilon_d$ is defined by $\varepsilon_d := \sqrt{\rbra{\frac{-1}{d}}}$, i.e.
        \begin{align}
            \varepsilon_d = \begin{cases}
        1 &\text{if }d \equiv 1 [4]\\
        i &\text{if }d \equiv 3 [4]
    \end{cases}
        \end{align}
    \end{enumerate}
\end{defi}

With these definitions, we define the map $h: \Gamma_0(4) \times \pazocal{H} \to \C$ by
\begin{align}\label{formula h}
    h(g,\tau) := \varepsilon^{-1}_d \rbra{\frac{c}{d}} \sqrt{c\tau + d} \quad \text{ for }g = \begin{bsmallmatrix}
      a&b\\c&d
    \end{bsmallmatrix} \in \Gamma_0(4), \tau \in \pazocal{H}.
\end{align}
\begin{rem}\label{rem choice of square roots}
    The map $h(g,\tau)$ is a choice of square roots of $\rbra{\frac{-1}{d}}(c\tau + d)$ by Definition \ref{defi extended symbol and varepsilon}.
\end{rem}
This map appears in the transformation formula for the classical theta function $\Theta(\tau)= \sum_{x \in \Z} e^{2\pi i \tau x^2}$ for $\tau \in \pazocal{H}$, which is
\begin{align*}
       \Theta(g\cdot \tau)= h(g,\tau)\Theta(\tau) \quad \text{for }g \in \Gamma_0(4),\tau \in \pazocal{H},
   \end{align*}
cf. \cite[\S 10.5]{MR1474964}. Moreover, we know that $\Theta$ is non-vanishing on $\pazocal{H}$ (cf. \cite[\S 21.12]{MR1424469}) so that this map is a cocycle.
\begin{lemma}\label{lemma cocycle h}
   The map $h: \Gamma_0(4) \times \pazocal{H} \to \C$ defines a cocycle, that is for $g_1,g_2 \in \Gamma_0(4)$ and $\tau \in \pazocal{H}$, we have
   \begin{align*}
       h(g_1g_2,\tau)= h(g_1,g_2\cdot \tau)h(g_2,\tau).
   \end{align*}
\end{lemma}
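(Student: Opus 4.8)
The plan is to deduce the cocycle identity for $h$ from the transformation formula $\Theta(g \cdot \tau) = h(g,\tau)\Theta(\tau)$ together with the fact that $\Theta$ is non-vanishing on $\pazocal{H}$, exactly as the remark preceding the statement hints. First I would fix $g_1,g_2 \in \Gamma_0(4)$ and $\tau \in \pazocal{H}$, and note that $\Gamma_0(4)$ is a group so that $g_1g_2 \in \Gamma_0(4)$ and the transformation formula applies to each of $g_1$, $g_2$, $g_1g_2$. Applying it to $g_1g_2$ gives
\begin{align*}
\Theta\bigl((g_1g_2)\cdot\tau\bigr) = h(g_1g_2,\tau)\,\Theta(\tau).
\end{align*}
On the other hand, $(g_1g_2)\cdot\tau = g_1\cdot(g_2\cdot\tau)$ since the $\SL_2$-action on $\pazocal{H}$ is a genuine left action, so applying the transformation formula first to $g_1$ at the point $g_2\cdot\tau \in \pazocal{H}$ and then to $g_2$ at $\tau$ yields
\begin{align*}
\Theta\bigl((g_1g_2)\cdot\tau\bigr) = \Theta\bigl(g_1\cdot(g_2\cdot\tau)\bigr) = h(g_1,g_2\cdot\tau)\,\Theta(g_2\cdot\tau) = h(g_1,g_2\cdot\tau)\,h(g_2,\tau)\,\Theta(\tau).
\end{align*}
Equating the two expressions and cancelling the nonzero factor $\Theta(\tau)$ (legitimate since $\Theta$ does not vanish on $\pazocal{H}$, cf. \cite[\S 21.12]{MR1424469}) gives $h(g_1g_2,\tau) = h(g_1,g_2\cdot\tau)h(g_2,\tau)$, which is the claim.

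There is essentially no obstacle here beyond making sure the ingredients are legitimately available: one needs the classical theta transformation formula under $\Gamma_0(4)$ with precisely the automorphy factor $h$ of \eqref{formula h} (which is the content of \cite[\S 10.5]{MR1474964}, with the branch of the square root matching the principal-branch convention fixed in Definition \ref{defi extended symbol and varepsilon}) and the non-vanishing of $\Theta$ on $\pazocal{H}$. The only subtlety worth a sentence is that the cancellation argument shows the identity holds for \emph{every} $\tau \in \pazocal{H}$, not merely generically, because $\Theta(\tau) \neq 0$ for all $\tau \in \pazocal{H}$; hence no continuity or analytic-continuation argument is needed. I would also remark, for completeness, that this gives a cocycle in the sense of \eqref{cocycle property} with $G = \Gamma_0(4)$ acting on $X = \pazocal{H}$ and $H = \C^\times$, so that Proposition \ref{prop cocycle defines a representation} becomes applicable to $h$ just as Lemma \ref{cocycle j} made it applicable to $j$. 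A purely computational alternative — expanding the definition of $h$ and invoking the known transformation law of the Jacobi symbol $\rbra{\frac{c}{d}}$ and of $\varepsilon_d$ under matrix multiplication in $\Gamma_0(4)$ — is possible but far messier, so I would not pursue it.
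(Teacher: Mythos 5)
Your proposal is correct and is essentially the paper's own argument: both derive the cocycle identity by writing $h(g,\tau)=\Theta(\tau)^{-1}\Theta(g\cdot\tau)$, using that $(g_1g_2)\cdot\tau=g_1\cdot(g_2\cdot\tau)$, and cancelling the nonvanishing factor $\Theta(\tau)$. Your extra remarks on the branch convention and on why no continuity argument is needed are fine but not required.
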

\begin{proof}
For $g_1,g_2 \in \Gamma_0(4)$ we know that
\begin{align*}
    h(g_1g_2,\tau) &= \Theta(\tau)^{-1}\Theta((g_1g_2) \cdot \tau)\\
    h(g_1,g_2\cdot \tau)h(g_2,\tau)&=\Theta(g_2 \cdot \tau)^{-1}\Theta(g_1\cdot(g_2 \cdot \tau)) \Theta(\tau)^{-1}\Theta(g_2 \cdot \tau).
\end{align*}
Since $(g_1g_2) \cdot \tau = g_1\cdot(g_2 \cdot \tau)$, the left hand sides of the above two quantities are equal.
\end{proof}
Recall that $\Lambda$ is the subgroup of $\GL_2(\Z[\frac{1}{2}])$ generated by $\alpha,\gamma$, and its intersection with $\SL_2(\Z)$ is $\Gamma_1(4)$. We aim to define another cocycle $\bar{h}: \Lambda \times \pazocal{H} \to \C$ which is generalized by $h$.

To do this, recall that by Remark \ref{rem choice of square roots}, the definition of the map $h$ involves a choice of square roots of $\rbra{\frac{-1}{d}}(c\tau + d)$ where $\rbra{\frac{-1}{d}}$ can take $\pm 1$ depending on $d$. For any $g = \begin{bsmallmatrix}
    a&b\\c&d
\end{bsmallmatrix} \in \Gamma_1(4)$, in fact $\rbra{\frac{-1}{d}}$ takes the single value 1. This suggests to define a central extension by $\Z/2\Z$ of $\GL_2^+(\Q)$, which we denote by $\pazocal{G}$ and is called the \textit{metaplectic group} over $\GL_2^+(\Q)$. In general, for half-integral modular forms under $\Gamma_0(4)$, one can follow the construction of \cite[\S IV.1]{koblitz1993}, which uses a four-sheeted covering, i.e. a $\Z/2\Z \times \Z/2\Z$ central extension, of $\GL_2^+(\Q)$. We now give the explicit definition of $\pazocal{G}$.

Recall for $g = \begin{bsmallmatrix}
    a&b\\c&d
\end{bsmallmatrix} \in \GL_2^+(\Q)$, the map $j:\GL_2^+(\R) \times \pazocal{H} \to \C$ is $(g,\tau) \mapsto c\tau + d$. We define $\pazocal{G}$ to be the set of all ordered pairs $\dot{g} := (g,\phi)$, where $g \in \GL_2^+(\Q)$ and $\phi:\pazocal{H} \to \C$ is a holomorphic function such that
\begin{align}\label{four-sheeted covering}
    \phi(\tau)^2 = j(g,\tau), \quad \text{for all }\tau \in \pazocal{H}.
\end{align}
\begin{rem}\label{rem product rule}
   We can define the product rule for two elements $(g_1,\phi_1),(g_2,\phi_2) \in \pazocal{G}$ as follows
   \begin{align*}
       (g_1,\phi_1)(g_2,\phi_2)= (g_1g_2,\phi_3),
   \end{align*}
   where $\phi_3(\tau) = \phi_1(g_1\cdot \tau)\phi_2(\tau)$ for $\tau \in \pazocal{H}$. One can check by direct computation that $\pazocal{G}$ is a group under this operation (cf. \cite[\S IV.1]{koblitz1993}), and that $(\dot{g} = (g,\phi), \tau) \mapsto \phi(\tau)$ is a cocycle.
\end{rem}

For $\dot{g} \in \pazocal{G}$, $k \in \frac{1}{2}\Z$ and $u \in \C$, we define an operator $[\dot{g}]_{k,u}$ on the functions in $\Omega(\pazocal{H})$ by
\begin{align}
([\dot{g}]_{k,u}\cdot \psi)(\tau) := \phi(\tau)^{-2k} \Det(g)^{-u}\psi(g^{-1}\cdot \tau).
\end{align}
Again, when $g \in \SL_2(\Q)$, we will simply use $[\dot{g}]_{k}$ instead of $[\dot{g}]_{k,u}$ by omitting the parameter $u$. The cocycle property of $\phi$ given by Remark \ref{rem product rule} immediately implies that $g \mapsto [\dot{g}]_{k,u}$ defines a left action of $\pazocal{G}$ on $\Omega(\pazocal{H})$.

In addition, this construction gives rise to a homomorphism $P: \pazocal{G} \to \GL_2^+(\Q)$, which projects onto the first coordinate of the pair: $(g,\phi) \mapsto g$. It follows that $\ker(P)\simeq \Z/2\Z$ and we have the short exact sequence $1 \to \Z/2\Z \to \pazocal{G} \xrightarrow{P} \GL_2^+(\Q) \to 1$.

For any subgroup $\Gamma \subseteq \Gamma_1(4)$ and $g \in \Gamma$, we define $\tilde{\phi}(\tau):=h(g,\tau)$ and
\begin{align*}
    \widetilde{\Gamma}:= \bbra{(g,\tilde{\phi}): g \in \Gamma} \subset \pazocal{G}.
\end{align*}

For the subgroup $\Gamma_1(4) \subset \Lambda$, the existence of a lifting $L: \Gamma_1(4) \to \pazocal{G}: g \mapsto (g,\tilde{\phi})$ of the projection $P: \pazocal{G} \to \Gamma_1(4):(g,\tilde{\phi})\mapsto g$ is guaranteed by (\ref{formula h}). Moreover, the functional equation for the classical theta function $\Theta$ says that
\begin{align}
    \Theta(\tau) =  \rbra{\frac{i}{2\tau}}^{1/2}\Theta\rbra{-\frac{1}{4\tau}},
\end{align}
cf. \cite[(3.4)]{koblitz1993}. This suggests the following result.
\begin{prop}\label{prop cocycle h bar}
    There exists a unique cocycle $\bar{h}: \Lambda \times \pazocal{H} \to \C$ such that for any $\tau \in \pazocal{H}$
    \begin{align*}
        \bar{h}(\alpha,\tau)&=1,\\
        \bar{h}(\gamma,\tau)&=\sqrt{-4\tau}.
    \end{align*}
    Moreover, we have $\bar{h}(g,\tau)^2 = j(g,\tau)$ and also $\bar{h}$ is holomorphic in $\tau$.
\end{prop}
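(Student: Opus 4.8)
The plan is to realize $\bar h$ explicitly as a twist of the elementary coboundary cocycle attached to the classical theta function $\Theta(\tau)=\sum_{x\in\Z}e^{2\pi i\tau x^{2}}$, rather than lifting into the metaplectic group $\pazocal{G}$ directly. First I would note that $\Lambda\subset\GL_2^+(\Q)$ and that $\det$ restricted to $\Lambda$ takes values in $\{4^{n}:n\in\Z\}$, since $\det\alpha=1$, $\det\gamma=4$ and $\alpha,\gamma$ generate $\Lambda$; hence $v:=\log_4\circ\det:\Lambda\to\Z$ is a homomorphism with $v(\alpha)=0$ and $v(\gamma)=1$. Putting $\zeta:=\sqrt{-2i}=1-i$ (principal branch) and $c(g):=\zeta^{\,v(g)}$, I would then set
\[
    \bar h(g,\tau):=c(g)\,\frac{\Theta(g\cdot\tau)}{\Theta(\tau)} .
\]

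Because $\Theta$ is holomorphic and non-vanishing on $\pazocal{H}$ (recalled in the excerpt) and every element of $\Lambda$ preserves $\pazocal{H}$, each function $\tau\mapsto\Theta(g\cdot\tau)/\Theta(\tau)$ is holomorphic and non-vanishing there, and $(g,\tau)\mapsto\Theta(g\cdot\tau)/\Theta(\tau)$ satisfies the cocycle identity (\ref{cocycle property}) by a one-line computation using $(g_1g_2)\cdot\tau=g_1\cdot(g_2\cdot\tau)$; multiplying by the $\tau$-independent homomorphism $c$ preserves that identity, so $\bar h$ is a cocycle, holomorphic in $\tau$. For the prescribed values: $\alpha\cdot\tau=\tau+1$ and $\Theta(\tau+1)=\Theta(\tau)$ (the summand $e^{2\pi ix^{2}}$ equals $1$), while $c(\alpha)=1$, so $\bar h(\alpha,\tau)=1$; and $\gamma\cdot\tau=-1/(4\tau)$, so the theta transformation formula recalled above gives $\Theta(-1/(4\tau))/\Theta(\tau)=(2\tau/i)^{1/2}=(-2i\tau)^{1/2}$, whence $\bar h(\gamma,\tau)=\sqrt{-2i}\,\sqrt{-2i\tau}$. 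This is the one spot that needs care: I would check that $\arg(-2i)+\arg(-2i\tau)=-\pi+\arg\tau\in(-\pi,0)$ for $\tau\in\pazocal{H}$, so there is no branch wrap-around and $\sqrt{-2i}\,\sqrt{-2i\tau}=\sqrt{(-2i)(-2i\tau)}=\sqrt{-4\tau}$, exactly as required; this is also what pins down the normalizing constant $\zeta=1-i$.

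For $\bar h(g,\tau)^{2}=j(g,\tau)$ I would form $\chi(g)(\tau):=\bar h(g,\tau)^{2}/j(g,\tau)$, which is again a cocycle valued in nowhere-vanishing holomorphic functions, since both $\bar h$ and $j$ are cocycles (the latter by Lemma \ref{cocycle j}). A cocycle that is trivial on a generating set of the acting group is identically trivial, so it is enough to observe $\chi(\alpha)=1^{2}/1=1$ and $\chi(\gamma)=(\sqrt{-4\tau})^{2}/(-4\tau)=1$. The same principle yields uniqueness at once: if $\bar h'$ is another cocycle with $\bar h'(\alpha,\cdot)=1$ and $\bar h'(\gamma,\cdot)=\sqrt{-4\tau}$, then $\bar h'/\bar h$ is a cocycle equal to $1$ on the generating set $\{\alpha,\gamma\}$, hence identically $1$.

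The only genuinely delicate point is the principal-branch bookkeeping for $\bar h(\gamma,\cdot)$ in the second paragraph — making sure no stray factor $-1$ enters in $\sqrt{-2i}\,\sqrt{-2i\tau}=\sqrt{-4\tau}$; everything else is formal manipulation of cocycles together with $\Theta(\tau+1)=\Theta(\tau)$, the $\Theta$-transformation formula, and the non-vanishing of $\Theta$ on $\pazocal{H}$. As a consistency check, for $g\in\Lambda\cap\Gamma_0(4)=\Gamma_1(4)$ one has $v(g)=0$, so $\bar h(g,\cdot)=\Theta(g\cdot\tau)/\Theta(\tau)$ coincides with the cocycle $h$ of (\ref{formula h}), which explains the assertion in the text that $\bar h$ generalizes $h$.
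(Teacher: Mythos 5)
Your proof is correct and follows essentially the same route as the paper: the paper also defines $\bar h(g,\tau)=\Det(g)^{u}\,\Theta(g\cdot\tau)/\Theta(\tau)$ with $4^{u}=(-2i)^{1/2}$, which is exactly your $\zeta^{v(g)}\Theta(g\cdot\tau)/\Theta(\tau)$, and derives the cocycle property and holomorphicity from the multiplicativity of $\det$ and the non-vanishing of $\Theta$ on $\pazocal{H}$. You are in fact more explicit than the paper on the principal-branch bookkeeping for $\bar h(\gamma,\cdot)$ and on the subgroup argument giving $\bar h^{2}=j$ and uniqueness, but these are details the paper leaves implicit rather than a different method.
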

\begin{proof}
Let $u$ be a solution of the equation $\rbra{\frac{i}{2}}^{\frac{1}{2}} = \rbra{\frac{1}{4}}^u$. We can verify that the map $\bar{h}$ satisfies
\begin{align*}
    \Theta(g \cdot \tau) = \bar{h}(g,\tau) \Det(g)^{-u} \Theta(\tau)
\end{align*}
for $g=\alpha,\gamma$ and any $\tau \in \pazocal{H}$ and in general for any $g \in \Lambda$ and $\tau \in \pazocal{H}$. Since the determinant function $\Det(g)$ is multiplicative, the claim that $\bar{h}$ is a cocycle follows from the exact same argument as in the proof of Lemma \ref{lemma cocycle h}. The holomorphicity of $\bar{h}$ in $\tau$ follows from the fact that $\Theta$ is holomorphic on $\pazocal{H}$.
\end{proof}
\begin{rem}
    On $\Gamma_1(4) \times \pazocal{H}$ the map $\bar{h}$ coincides with $h$
\begin{align*}
  \bar{h}|_{\Gamma_1(4) \times \pazocal{H}}= h,
\end{align*}
so that $\bar{h}$ can be seen as a generalization of the map $h$ by including the generator $\gamma \in \GL_2(\Z[\frac{1}{2}])$.
\end{rem}
For $g \in \Lambda$, by an abuse of notation we denote again by $\tilde{\phi}(\tau) := \bar{h}(g,\tau)$ and we set
\begin{align*}
    \tilde{g}:= (g, \tilde{\phi}) \in \pazocal{G}.
\end{align*}
Let $\widetilde{\Lambda}$ be the subgroup of $\pazocal{G}$ generated by $\widetilde{\alpha},\widetilde{\gamma}$, so by construction, we have built an isomorphism $\Lambda \to \widetilde{\Lambda}$. For $g \in \Lambda$, $k \in \frac{1}{2}\Z$ and $u \in \C$, we define the operator $[\tilde{g}]_{k,u}$ on the functions in $\Omega(\pazocal{H})$ by
\begin{align}
     ([\tilde{g}]_{k,u} \cdot \psi )(\tau):=\bar{h}(g^{-1},\tau)^{-2k} \Det(g)^{-u}\psi(g^{-1} \cdot \tau).
\end{align}
Analogously to the proof of Proposition \ref{prop left action} with the fact by Proposition \ref{prop cocycle h bar} that for every fixed $g \in \Lambda$ the cocycle $\bar{h}(g,\tau)$ is a holomorphic function on $\pazocal{H}$ so by Lemma \ref{prop cocycle defines a representation} we conclude that
\begin{prop}\label{prop left action half integral}
     For $k \in \frac{1}{2}\Z$ and $u \in \C$, the map $g \mapsto [\tilde{g}]_{k,u}$ defines a left action of $\Lambda$ on $\Omega(\pazocal{H})$.
 \end{prop}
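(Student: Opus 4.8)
The plan is to reduce the statement to Proposition \ref{prop cocycle defines a representation} in exactly the way the integral-weight case was handled in the proof of Proposition \ref{prop left action}, with the cocycle $j$ replaced by the cocycle $\bar h$ of Proposition \ref{prop cocycle h bar}. Concretely, for $k \in \frac12\Z$ and $u \in \C$ I would introduce the map $\varphi : \Lambda \times \pazocal{H} \to \C^{\times}$ defined by $\varphi(g,\tau) := \bar h(g,\tau)^{2k}\,\Det(g)^{-u}$. Here $2k \in \Z$, so $\bar h(g,\tau)^{2k}$ is an honest integer power of a nonzero complex number and no choice of branch is involved; and $\Det(g)^{-u} = e^{-u\log\Det(g)}$ is unambiguous since $\Det(g) \in \Q_{>0}$ for $g \in \Lambda \subset \GL_2^+(\Q)$.

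First I would check that $\varphi$ satisfies the cocycle relation (\ref{cocycle property}). The factor $g \mapsto \Det(g)^{-u}$ is a group homomorphism $\GL_2^+(\Q) \to \C^{\times}$ that is constant in $\tau$, hence a cocycle; the factor $(g,\tau) \mapsto \bar h(g,\tau)^{2k}$ is the $2k$-th power of the cocycle $\bar h$ of Proposition \ref{prop cocycle h bar}, and the integer power of a cocycle into an abelian group is again a cocycle; the pointwise product of two $\C^{\times}$-valued cocycles is a cocycle. Feeding $\varphi$ into Proposition \ref{prop cocycle defines a representation} with $G = \Lambda$ acting on $X = \pazocal{H}$, with $H = \C^{\times}$, and with the tautological representation $\pi : \C^{\times} \to \GL_1(\C) = \C^{\times}$, yields a left action of $\Lambda$ on the space of all $\C$-valued functions on $\pazocal{H}$, namely $(g \cdot \psi)(\tau) = \varphi(g,g^{-1}\cdot\tau)\,\psi(g^{-1}\cdot\tau)$. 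To identify this with $[\tilde g]_{k,u}$, apply the cocycle relation to $gg^{-1} = I$ together with $\bar h(I,\tau) = 1$ (which follows from $\bar h(\alpha,\tau) = 1$ and the cocycle property), giving $\bar h(g,g^{-1}\cdot\tau) = \bar h(g^{-1},\tau)^{-1}$ and hence $\varphi(g,g^{-1}\cdot\tau) = \bar h(g^{-1},\tau)^{-2k}\Det(g)^{-u}$, so the two operators agree. Finally, to see the action preserves $\Omega(\pazocal{H})$: by Proposition \ref{prop cocycle h bar} the function $\tau \mapsto \bar h(g^{-1},\tau)$ is holomorphic on $\pazocal{H}$, and it is nowhere zero there because $\bar h(g^{-1},\tau)^2 = j(g^{-1},\tau) = c'\tau + d' \neq 0$ for $\tau \in \pazocal{H}$; since $\tau \mapsto g^{-1}\cdot\tau$ is a biholomorphism of $\pazocal{H}$, the product $\bar h(g^{-1},\tau)^{-2k}\Det(g)^{-u}\psi(g^{-1}\cdot\tau)$ is holomorphic whenever $\psi$ is.

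I do not expect a genuine obstacle: this is the verbatim analogue of Proposition \ref{prop left action}, and the only points that deserve a word of care are that the exponent $2k$ is an integer — so that $\bar h^{2k}$ is unambiguous and behaves multiplicatively with no branch discussion — and that $\bar h$ is holomorphic and non-vanishing on $\pazocal{H}$, both of which are already recorded in Proposition \ref{prop cocycle h bar}. The slightly delicate bookkeeping is simply that the relevant cocycle is $\bar h$ rather than $j$, and that one must invoke $\bar h(I,\cdot) = 1$ to match the $g^{-1}$ appearing in the definition of $[\tilde g]_{k,u}$ with $\varphi$ evaluated at $g^{-1}\cdot\tau$.
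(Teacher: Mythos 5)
Your proof is correct and takes essentially the same route as the paper: the paper's own argument is a one-line reduction to Proposition \ref{prop cocycle defines a representation} via the cocycle $\bar h$ of Proposition \ref{prop cocycle h bar} together with its holomorphicity, which is exactly what you carry out. The details you make explicit --- that $2k\in\Z$ so no branch choice arises, that $\bar h$ is non-vanishing on $\pazocal{H}$, and the identity $\bar h(g,g^{-1}\cdot\tau)=\bar h(g^{-1},\tau)^{-1}$ needed to match the operator $[\tilde g]_{k,u}$ with the cocycle formula --- are left implicit in the paper but are precisely the right verifications.
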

 Again, when $g \in \Gamma \subseteq \Gamma_1(4) \subset \SL_2(\R)$, we simply replace $[\tilde{g}]_{k,u}$ with $[\tilde{g}]_k$ omitting the parameter $u$, if there is no ambiguity caused.

 This notion enables us to define a function as weak modular under $\tilde{\Gamma}$ for a congruence subgroup $\Gamma \subseteq \Gamma_1(4)$.
 \begin{defi}\label{defi weakly modular half integral}
     Let $\psi \in \Omega(\pazocal{H})$ and $k \in \frac{1}{2}\Z$ and let $\Gamma \subseteq \Gamma_1(4)$ be a congruence subgroup. We say that $\psi$ is \textit{weakly modular of weight $k$ under $\tilde{\Gamma}$} if for some $u \in \C$ the set $\{ g \in \Lambda: [\tilde{g}]_{k,u} \cdot \psi = \psi \}$ contains $\Gamma$.
 \end{defi}
Following the discussion after Lemma \ref{lemma weakly modular}, the action of $\pazocal{G}$ on $\mathbb{P}^1_{\Q}$ is defined to be the action of the image of $\pazocal{G}$ via the projection map $P: \pazocal{G} \to \GL_2^+(\Q)$ on $\mathbb{P}^1_{\Q}$.

Now we will explain what it means for $\psi \in \Omega(\pazocal{H})$ to be holomorphic at cusps, under our newly defined notion of weak modular.
\begin{defi}\label{defi holomorphic at cusps half integral}
   Let $k \in \frac{1}{2}\Z$ and $\Gamma \subseteq \Gamma_1(4)$ be a congruence subgroup and $\psi \in \Omega(\pazocal{H})$ be weakly modular of weight $k$ under $\tilde{\Gamma}$.
   \begin{enumerate}
        \item We say $\psi$ is \textit{holomorphic at $\infty$} if it has a Fourier expansion $\psi(\tau) = \psi'(q_m)=\sum_{n=0}^\infty a_n q_m^n$ where $q_m = e^{2\pi i \tau/m}$ for some $m \in \N^+$ or equivalently $\lim_{\im(\tau) \to \infty} \psi(\tau)$ exists in $\C$. We say this $\psi$ is zero at $\infty$ when $a_0=0$ or equivalently $\lim_{\im(\tau) \to \infty} \psi(\tau)=0$.
        \item We say $\psi$ is \textit{holomorphic at $s \in \Q$} if there exists some $\dot{h} \in \pazocal{G}$ with $\dot{h} \cdot s = \infty$ and $u \in \C$ such that the function $[\dot{h}]_{k,u} \cdot \psi$ is holomorphic at $\infty$. We say this $\psi$ is \textit{zero at $s \in \Q$} if the function $[\dot{h}]_{k,u} \cdot \psi$ is zero at $\infty$.
        \item We say $\psi$ is \textit{holomorphic at all the cusps} if $\psi$ is holomorphic at all $s \in \mathbb{P}^1_{\Q}$. We say this $\psi$ is \textit{zero at all the cusps} if $\psi$ is zero at all $s \in \mathbb{P}^1_{\Q}$.
    \end{enumerate}
\end{defi}
\begin{rem}
  \begin{enumerate}
        \item Each congruence subgroup $\Gamma \subseteq \Gamma_1(4)$ contains $\Gamma(N)$ with $4|N$, thus contains a translation matrix of the form $\begin{bsmallmatrix}
            1&m\\0&1
        \end{bsmallmatrix}$ for some $m \in \Z^+$ such that $\psi$ is invariant under $\tau \mapsto \tau - m$. The definition of holomorphicity at $\infty$ follows exactly as the case of integer weights which is explained in Remark \ref{rem holomorphic aat cusps}(1).
        \item For $s \in \Q$, we can again find some $\dot{h}=(h,\phi(\tau))$ such that $\dot{h} \cdot s = \infty$, then by an analogous argument to Lemma \ref{lemma congruence subgroup} and \ref{lemma weakly modular}, we can find some $m \in \N^+$ with $4|m$ such that $[\dot{h}]_{k,u} \cdot \psi$ is invariant under $[(\begin{bsmallmatrix}
            1&m\\0&1
        \end{bsmallmatrix},1)]_k$, where $(\begin{bsmallmatrix}
            1&m\\0&1
        \end{bsmallmatrix},1) \in \widetilde{\Lambda}$. In other words,
        \begin{align*}
            ([\dot{h}]_{k,u} \cdot \psi )(\tau) = [(\begin{bsmallmatrix}
            1&m\\0&1
        \end{bsmallmatrix},1)]_k \cdot ([\dot{h}]_{k,u} \cdot \psi )(\tau) =([\dot{h}]_{k,u} \cdot \psi )(\tau-m).
        \end{align*}
        Thus our definition follows naturally.

    \end{enumerate}
\end{rem}

In Definition \ref{defi holomorphic at cusps half integral}(2), if we can find some $\dot{h_1} \in \pazocal{G}$ with $\dot{h_1} \cdot s = \infty$ and $u_1\in \C$ such that the function $[\dot{h_1}]_{k,u_1} \cdot \psi$ is zero at $\infty$, this automatically implies the same property of $[\dot{h_2}]_{k,u_2} \cdot \psi$ for any $\dot{h_2} \in \pazocal{G}$ with $\dot{h_2} \cdot s = \infty$ and $u_2 \in \C$.
\begin{lemma}\label{lemma holomorphic at s existence}
Let $k\in \frac{1}{2}\Z$ and $\Gamma \subseteq \Gamma_1(4)$ be a congruence subgroup and $\psi \in \Omega(\pazocal{H})$ be weakly modular of weight $k$ under $\tilde{\Gamma}$. For $s \in \Q$, if there exists some $\dot{h_1} \in \pazocal{G}$ with $\dot{h_1} \cdot s=\infty$ and $u_1 \in \C$ such that the function $[\dot{h_1}]_{k,u_1} \cdot \psi$ is zero at $\infty$, then the function $[\dot{h_2}]_{k,u_2} \cdot \psi$ is zero at $\infty$ for any $\dot{h_2} \in \pazocal{G}$ with $\dot{h_2} \cdot s = \infty$ and $u_2 \in \C$.
\end{lemma}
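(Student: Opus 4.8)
The plan is to reduce the claim to the observation that any two normalizations of $\psi$ at the cusp $s$ differ only by a nonzero multiplicative constant and by precomposition with an affine map $\tau\mapsto\lambda\tau+\mu$ with $\lambda\in\Q_{>0}$ and $\mu\in\Q$; since both ``holomorphic at $\infty$'' and ``zero at $\infty$'' are manifestly stable under these two operations, the lemma then follows.

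First I would set $\dot h:=\dot h_2\,\dot h_1^{-1}\in\pazocal G$, so that $\dot h_2=\dot h\,\dot h_1$. Because $\dot h_1\cdot s=\dot h_2\cdot s=\infty$, the projection $h:=P(\dot h)=h_2h_1^{-1}\in\GL_2^+(\Q)$ fixes $\infty$, hence is upper triangular; consequently $j(h,\tau)$ equals the constant nonzero lower-right entry of $h$, and $h^{-1}$ likewise fixes $\infty$, acting on $\pazocal H$ by an affine map $\tau\mapsto\lambda\tau+\mu$ with $\lambda,\mu\in\Q$ and $\lambda>0$ (since $h^{-1}$ preserves $\pazocal H$). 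Writing $\dot h=(h,\phi)$ with $\phi(\tau)^2=j(h,\tau)$ a nonzero constant, holomorphy of $\phi$ on the connected domain $\pazocal H$ forces $\phi$ to be a nonzero constant, so that $([\dot h]_{k,u}\cdot F)(\tau)=\phi^{-2k}\Det(h)^{-u}\,F(\lambda\tau+\mu)$ for every $F\in\Omega(\pazocal H)$ --- a nonzero scalar times precomposition by $\tau\mapsto\lambda\tau+\mu$.

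Next I would relate $[\dot h_2]_{k,u_2}\cdot\psi$ to $\chi:=[\dot h_1]_{k,u_1}\cdot\psi$. Since $\Det$ is constant on a fixed matrix of $\GL_2^+(\Q)$, changing the parameter $u$ only rescales $[\dot g]_{k,u}$ by a nonzero constant, so all computations may be carried out with one fixed value, say $u=0$; using $\dot h_2=\dot h\,\dot h_1$ and the fact, recorded just after the definition of $[\dot g]_{k,u}$, that $\dot g\mapsto[\dot g]_{k,u}$ is a left action of $\pazocal G$ on $\Omega(\pazocal H)$, this yields $[\dot h_2]_{k,u_2}\cdot\psi=C\,([\dot h]_{k,0}\cdot\chi)=C'\,\chi(\lambda\tau+\mu)$ for suitable nonzero constants $C,C'$.

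Finally, I would conclude by a removable-singularity argument. By the half-integral-weight analogue of Lemmas~\ref{lemma congruence subgroup} and~\ref{lemma weakly modular} (compare the remark after Definition~\ref{defi holomorphic at cusps half integral}), both $[\dot h_1]_{k,u_1}\cdot\psi$ and $[\dot h_2]_{k,u_2}\cdot\psi$ are invariant under an integer translation, hence periodic; and by hypothesis $\lim_{\im(\tau)\to\infty}\chi(\tau)=0$. As $\lambda>0$ gives $\im(\lambda\tau+\mu)=\lambda\,\im(\tau)\to\infty$, we get $\lim_{\im(\tau)\to\infty}\bigl([\dot h_2]_{k,u_2}\cdot\psi\bigr)(\tau)=C'\cdot 0=0$; being periodic and bounded near $\infty$, the function $[\dot h_2]_{k,u_2}\cdot\psi$, written as a function of $q_{m}=e^{2\pi i\tau/m}$, therefore extends holomorphically across $q_m=0$ with value $0$, i.e.\ it is holomorphic at $\infty$ and zero at $\infty$. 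I do not expect a substantive obstacle here; the points that need care are the bookkeeping with the auxiliary parameters $u_1,u_2$ and the structural observation that a matrix of $\GL_2^+(\Q)$ fixing $\infty$ produces a \emph{constant} automorphy factor and an \emph{orientation-preserving} affine substitution --- exactly what makes the cusp conditions of Definition~\ref{defi holomorphic at cusps half integral} independent of the auxiliary choices.
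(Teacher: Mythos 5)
Your proposal is correct and follows essentially the same route as the paper: decompose $\dot{h_2}=(\dot{h_2}\dot{h_1}^{-1})\dot{h_1}$, observe that $h_2h_1^{-1}$ lies in the upper-triangular stabilizer $\GL_2^+(\Q)_\infty$ so that $[\dot{h_2}\dot{h_1}^{-1}]_{k,u}$ acts by a nonzero scalar composed with an orientation-preserving affine substitution, and conclude that vanishing at $\infty$ is preserved. The only cosmetic difference is that the paper reads off the vanishing of the constant term directly from the transported Fourier expansion, whereas you use the equivalent $\lim_{\im(\tau)\to\infty}$ criterion together with a removable-singularity argument.
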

\begin{proof}
Note that if we have $\dot{h_1} \cdot s = \infty$, then for any $\dot{h_2} \in \pazocal{G}$ with $\dot{h_2} \cdot s=\infty$ we can deduce that $h_2h_1^{-1} \in \GL_2^+(\Q)_{\infty}$ the stabilizer of $\GL_2^+(\Q)$ at $\infty$. It can be characterized as follows
\begin{align*}
    \GL_2^+(\Q)_{\infty}= \bbra{\begin{bmatrix}
        a&b\\0&d
    \end{bmatrix}:a,b,d \in \Q, ad>0}.
\end{align*}
Since the function $[\dot{h_1}]_{k,u_1} \cdot \psi$ is zero at $\infty$, there exists some $m \in \N^*$ such that $[\dot{h_1}]_{k,u_1} \cdot \psi(\tau) = \sum_{n=0}^\infty a_n q_m^n$ with $q_m = e^{2\pi i \tau/m}$ and $a_0=0$, and it follows that for $\dot{h_2} \in \pazocal{G}$ we have
\begin{align*}
    \rbra{[\dot{h_2}]_{k,u_1} \cdot \psi }(\tau) = \rbra{[\dot{h_2}\dot{h_1^{-1}}]_{k,u_1}[\dot{h_1}]_{k,u_1}\cdot \psi }(\tau) = \epsilon^{-2k} d^{-2k}a^{-u_1} \sum_{n=0}^\infty a_n e^{2\pi i (a\tau +b)n/dm}
\end{align*}
for some $h_1h_2^{-1} = \begin{bsmallmatrix}
    a&b\\0&d
\end{bsmallmatrix} \in \GL_2^+(\Q)_{\infty}$ and $\epsilon \in \bbra{\pm 1}$. For any $u_2 \in \C$, the function $[\dot{h_2}]_{k,u_2} \cdot \psi$ differs from $[\dot{h_2}]_{k,u_1}\cdot \psi$ by a scalar multiple in $\C$. Hence the conclusion follows.
\end{proof}

Now we define modular forms and cusp forms of weight $k \in \frac{1}{2}\Z$ under $\Gamma$ for any congruence subgroup $\Gamma \subseteq \Gamma_1(4)$.
\begin{defi}\label{defi half integral modular forms}
   Let $k \in \frac{1}{2}\Z$ and $\Gamma \subseteq \Gamma_1(4)$ be a congruence subgroup. A function $\psi \in \Omega(\pazocal{H})$ is a \textit{modular form of weight $k$ under $\Gamma$} if
\begin{enumerate}
    \item $\psi$ is holomorphic on $\pazocal{H}$,
    \item $\psi$ is weakly modular of weight $k$ under $\tilde{\Gamma}$,
    \item $\psi$ is holomorphic at all the cusps.
\end{enumerate}
If in addition,
\begin{enumerate}\setcounter{enumi}{3}
    \item $\psi$ is zero at all the cusps ,
\end{enumerate}
then $\psi$ is a \textit{cusp form} of weight $k$ under $\Gamma$. We let $M_k(\Gamma)$ and $S_k(\Gamma)$ denote the set of modular forms of weight $k$ under $\Gamma$ and the set of cusp forms of weight $k$ under $\Gamma$, respectively.
\end{defi}

\begin{rem}
   Note that for $k \in \Z$ and $g \in \Gamma \subseteq \Gamma_1(4)$, by construction, the operator $[\tilde{g}]_k$ coincides with $[g]_k$. It follows immediately that the conditions for holomorphicity at cusps in this subsection are equivalent to those defined in Definition \ref{cusp definition}. Hence, this can be seen as a generalization of Definition \ref{cusp definition}, extending it to include half-integral weights.
\end{rem}
In the subsequent sections, we will specialize our attention to subgroups of $\Gamma_1(4)$. For any $k \in \frac{1}{2}\Z$, which can either be integral or half-integral, and $g \in \GL_2^+(\Q)$, we will consistently use the operators $[\dot{g}]_{k,u}$ (and $[\tilde{g}]_{k,u}$ if $g \in \Lambda$) for some $u \in \C$ where $\dot{g} \in \pazocal{G}$.

\subsection{Weak modularity and equivariance}\label{subsection Weak modularity and equivariance}
We will now start the proof of Theorem \ref{modular form} by showing that for $d \in \N^*$, any prime $p$ and any $f:(\Z/p\Z)^d \to \C$, the weighted theta function $\theta_f$ defined by (\ref{theta z^d}) is weakly modular under some congruence subgroup of $\Gamma_1(4)$. For this purpose, we introduce the following family of weighted theta functions. For $j \in X := \mathbb{F}_p \cup \{ \infty \}$, we identify each element with an element in $\mathbb{P}^1_{\mathbb{F}_p}$ as in the discussion before Definition \ref{cusp definition}. In addition, by abuse of notation we identify an element of $\Z/p\Z \simeq \mathbb{F}_p \subset \mathbb{P}^1_{\mathbb{F}_p}$ with its unique representative in $\{0,1,\ldots,p-1\}$ in (\ref{thetafunction1}). Let us define $\theta_f^j: \pazocal{H} \to \C$ to be
\begin{align}
    \theta^j_f(\tau) &:= \sum_{x \in \Z^d} f(x[p]) e^{2\pi i Q(x,x) \frac{\tau -j}{p^2}}\quad\text{for }0 \leq j \leq p-1 , \quad \tau \in \pazocal{H}\label{thetafunction1}\\
    \theta^\infty_f (\tau)&:= p^d \theta_{\mathcal{F}(f)}(\tau) = p^d \sum_{x \in \Z^d} \mathcal{F}(f)(x[p]) e^{2\pi i Q(x,x) \tau},\quad \tau \in \pazocal{H}
\end{align}
where $\mathcal{F}(f)(\xi) = \sum_{x \in (\Z/p\Z)^d} f(x) e^{-2\pi i Q(x,\xi)\frac{1}{p}}$ as defined in (\ref{convention}).

Note that if $f$ is odd, meaning that $f(-x) = -f(x)$ for $x \in (\Z/p\Z)^d$, then we have $\theta_f^j = 0$ for all $j \in X$, so that in this subsection, we will only work with even functions $f$. For any $N \in \N$, let us denote by $V_N$ the space $V_N:= \{ f: (\Z/N\Z)^d \to \C \text{ with }f \text{ even}\}$.

With the help of the notations above, we present a specific example from the Poisson summation formula.
\begin{prop}\label{lemma poisson infinity}
Let $d \in \N^*$, $p$ be a prime and $f \in V_p$. The weighted theta function $\theta^\infty_f$ satisfies the formula
\begin{align}\label{poisson infinity}
    \theta_f^{\infty}(\tau) = \rbra{\frac{i}{2\tau}}^{\frac{d}{2}} \theta^0_{f}\rbra{-\frac{1}{4\tau}},\quad \tau \in \pazocal{H}.
\end{align}
Recall that the square root $\sqrt{\frac{i}{2\tau}}$ is taken in the principal branch, and for $d \geq 1$, by $\rbra{\frac{i}{2\tau}}^{\frac{d}{2}}$ we mean $\rbra{\sqrt{\frac{i}{2\tau}}}^d$.
\end{prop}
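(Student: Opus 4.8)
The plan is to derive the identity directly from the Poisson summation formula for the lattice $\Z^d$ inside $(\Z/p\Z)^d \times \R^d$, applied to a carefully chosen function $\phi$. First I would fix $\tau \in \pazocal{H}$ and introduce, for $x \in (\Z/p\Z)^d$ and $t \in \R^d$, the function $\phi(x,t) := f(x)\, e^{2\pi i Q(t,t)\tau}$. Since $f$ is even (indeed arbitrary on $(\Z/p\Z)^d$) and $t \mapsto e^{2\pi i Q(t,t)\tau}$ is a Gaussian with $\im(\tau)>0$, we have $\phi \in S((\Z/p\Z)^d \times \R^d)$, so Corollary \ref{corollary poisson} applies with $N=p$. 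The left-hand side of \eqref{poisson summation for lattice} is then $\sum_{t \in \Z^d} f(t[p])\, e^{2\pi i Q(t,t)\tau}$, which is exactly $\theta_f(\tau)$; but I actually want to feed in $\tau' := -\tfrac{1}{4\tau}$ (or an appropriate rescaling) so that the right-hand side, after computing the Fourier transform, reassembles into $\theta^\infty_f$ evaluated at $\tau$. The key computation is the Fourier transform \eqref{fourier r,zp} of $\phi$: it factors as a product over the two components. The $(\Z/p\Z)^d$-component gives $\mathcal{F}(f)(l) = \sum_{x} f(x) e^{-2\pi i Q(x,l)/p}$, and the $\R^d$-component is the classical Gaussian Fourier transform $\int_{\R^d} e^{2\pi i Q(t,t)\tau} e^{-2\pi i Q(\xi,t)}\,dt = \bigl(\tfrac{i}{2\tau}\bigr)^{d/2} e^{-2\pi i Q(\xi,\xi)/(4\tau)}$, where the square root is the principal branch — this is the one-dimensional identity $\int_\R e^{\pi i a t^2}\,e^{-2\pi i \xi t}\,dt = (-ia)^{-1/2} e^{-\pi i \xi^2/a}$ raised to the $d$-th power, with $a = 2\tau$, and one checks the branch bookkeeping reduces to $\bigl(\sqrt{i/(2\tau)}\bigr)^d$.

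With the Fourier transform in hand, the right-hand side of \eqref{poisson summation for lattice} becomes
\[
\frac{1}{p^d}\sum_{\xi \in \Z^d} \mathcal{F}(f)\!\left(-\xi[p]\right) \left(\frac{i}{2\tau}\right)^{d/2} e^{-2\pi i Q(\xi,\xi)/(4\tau p^2)}.
\]
Since $f$ is even, $\mathcal{F}(f)$ is even, so $\mathcal{F}(f)(-\xi[p]) = \mathcal{F}(f)(\xi[p])$. Thus this sum equals $\tfrac{1}{p^d}\bigl(\tfrac{i}{2\tau}\bigr)^{d/2}\sum_{\xi \in \Z^d}\mathcal{F}(f)(\xi[p]) e^{2\pi i Q(\xi,\xi)(-1/(4\tau))/p^2}$. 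Comparing with the definitions, the left-hand side equals $\sum_{t\in\Z^d} f(t[p]) e^{2\pi i Q(t,t)\tau}$; if instead I run Poisson summation with the Gaussian parameter chosen so that the output exponentials are $e^{2\pi i Q(\xi,\xi)\tau}$ and the input exponentials are $e^{2\pi i Q(t,t)(-1/(4\tau))/p^2}$, then the left-hand side is $\theta^0_f(-1/(4\tau))$ (reading off definition \eqref{thetafunction1} with $j=0$) and the right-hand side is $\tfrac{1}{p^d}\bigl(\tfrac{i}{2\tau}\bigr)^{d/2}\cdot p^d \theta_{\mathcal{F}(f)}(\tau) = \bigl(\tfrac{i}{2\tau}\bigr)^{d/2}\theta^\infty_f(\tau)$, using the definition $\theta^\infty_f = p^d \theta_{\mathcal{F}(f)}$. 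Rearranging gives precisely \eqref{poisson infinity}. Concretely I would pick $\phi(x,t) := \mathcal{F}(f)(x)\, e^{2\pi i Q(t,t)\tau}$, apply Corollary \ref{corollary poisson}, note $\mathcal{F}(\mathcal{F}(f)) = p^d f$ by the Fourier inversion formula with the normalization $\mathcal{F}_{\widehat A}\mathcal{F}_A(\phi)(x) = \tfrac{1}{p^d}\phi(-x)$ and evenness of $f$, and then the rescaling $t \mapsto t/p$ in the $\R^d$-variable (i.e. absorbing the $p^2$ into the quadratic form) converts the Gaussian parameter as needed; this is a routine change of variable that produces the $-1/(4\tau)$ inside $\theta^0_f$.

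The main obstacle I anticipate is not the Poisson summation step itself but the careful bookkeeping of the several normalizing constants and, above all, the branch of the square root: one must verify that the Gaussian integral $\int_{\R^d} e^{2\pi i Q(t,t)\tau - 2\pi i Q(\xi,t)}\,dt$ really equals $\bigl(\tfrac{i}{2\tau}\bigr)^{d/2} e^{-\pi i Q(\xi,\xi)/(2\tau)}$ with the \emph{principal} branch as declared in the statement, rather than differing by a sign or an eighth root of unity. The clean way to handle this is to first establish it for $\tau = iy$ with $y>0$ (where everything is a real positive Gaussian and $\sqrt{i/(2\tau)} = \sqrt{1/(2y)}>0$ is unambiguous), and then analytically continue in $\tau$ over $\pazocal{H}$: both sides are holomorphic in $\tau \in \pazocal{H}$ and $\tau \mapsto \sqrt{i/(2\tau)}$ stays in the principal branch because $i/(2\tau)$ avoids the negative real axis for $\tau \in \pazocal{H}$. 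Once the scalar Gaussian identity is pinned down with the correct branch, the rest is the algebraic rearrangement sketched above, and the evenness hypothesis on $f$ (so that $\mathcal{F}(f)$ is even and the sign in $\mathcal{F}(f)(-\xi[p])$ disappears) is exactly what makes the two theta series line up.
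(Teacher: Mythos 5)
Your proposal is correct and follows essentially the same route as the paper: take $\phi(s,t)=\mathcal{F}(f)(s)\,e^{2\pi i Q(t,t)\tau}$, apply Corollary \ref{corollary poisson} with $N=p$, factor the Fourier transform into the finite-group part (where inversion plus evenness of $f$ gives back $p^d f$) and the Gaussian part, and pin down the principal branch of $\bigl(\tfrac{i}{2\tau}\bigr)^{d/2}$ by first computing at $\tau=iy$ and analytically continuing — exactly the paper's derivation of \eqref{fourier transform gaussian} and its use in the proof. The only cosmetic difference is your initial detour through $\phi(s,t)=f(s)\eta_\tau(t)$ before settling on the same choice of test function.
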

Recall that we follow the convention of the Fourier transform formula (\ref{convention}). The proof of the above proposition relies on the following properties. Their standard proofs can be found in \cite{MR1970295}.
\begin{lemma}\label{lemma gaussian}
For $x \in \R^d$, let $G(x) = e^{-\pi Q(x,x)}$ be the Gaussian function on $\R^d$.
\begin{enumerate}
    \item For $d =1$, the Fourier transform of the Gaussian function is itself.
    \item If $\phi \in S(\R^d)$ takes the form of a product $\phi(x) = \phi_1(x_1) \cdots \phi_d(x_d)$, then its Fourier transform is the corresponding product $\mathcal{F}(\phi)(\xi) = \mathcal{F}(\phi_1)(\xi_1)\cdots \mathcal{F}(\phi_d)(\xi_d)$. In particular, (1) holds for any $d \geq 1$.
    \item For any $\phi \in S(\R^d)$ and given any $r >0$, the Fourier transform of the function $\phi(xr)$ is $r^{-d}\mathcal{F}(\phi)(\xi/r)$. In particular, for $r >0$ the Fourier transform of $G(x\sqrt{2r})$ is $(2r)^{-\frac{d}{2}} G(\xi/\sqrt{2r})$.
\end{enumerate}
\end{lemma}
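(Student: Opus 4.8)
The plan is to establish the three assertions in the stated order, observing that only the one-dimensional computation in (1) requires genuine analytic work, while (2) and (3) follow formally from Fubini's theorem and a change of variables. Throughout I use the Fourier transform convention (\ref{convention}) with trivial finite part, namely $\mathcal{F}(\phi)(\xi) = \int_{\R^d} \phi(t) e^{-2\pi i Q(\xi,t)}\,dt$ for $\phi \in S(\R^d)$.

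For part (1), I would set $g(\xi) := \mathcal{F}(G)(\xi) = \int_{\R} e^{-\pi t^2} e^{-2\pi i \xi t}\,dt$ and derive an ordinary differential equation for $g$. Since $G$ and its derivatives decay rapidly, I may differentiate under the integral sign to obtain $g'(\xi) = \int_{\R} (-2\pi i t)\, e^{-\pi t^2}\, e^{-2\pi i \xi t}\,dt$. Using the identity $-2\pi t\, e^{-\pi t^2} = \frac{d}{dt} e^{-\pi t^2}$ and integrating by parts — the boundary terms vanishing by Gaussian decay — the derivative that falls on $e^{-2\pi i \xi t}$ produces a factor $-2\pi i \xi$, yielding the linear ODE $g'(\xi) = -2\pi \xi\, g(\xi)$. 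Its solution is $g(\xi) = g(0)\, e^{-\pi \xi^2}$, and since $g(0) = \int_{\R} e^{-\pi t^2}\,dt = 1$ is the normalized Gaussian integral, I conclude $g = G$.

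For part (2), the exponential factorizes as $e^{-2\pi i Q(\xi, t)} = \prod_{k=1}^{d} e^{-2\pi i \xi_k t_k}$ by definition of $Q$, and by hypothesis $\phi(t) = \prod_{k=1}^d \phi_k(t_k)$. As each $\phi_k$ lies in the one-dimensional Schwartz space, Fubini's theorem splits the $d$-dimensional integral into a product of one-dimensional ones, giving $\mathcal{F}(\phi)(\xi) = \prod_{k=1}^d \mathcal{F}(\phi_k)(\xi_k)$. Applying this to $G(x) = \prod_{k=1}^d e^{-\pi x_k^2}$ and invoking part (1) in each coordinate shows $\mathcal{F}(G)(\xi) = \prod_{k=1}^d e^{-\pi \xi_k^2} = G(\xi)$, so (1) holds in every dimension.

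For part (3), I substitute $u = r t$ in the defining integral; since $du = r^d\,dt$ and bilinearity of $Q$ gives $Q(\xi, t) = r^{-1} Q(\xi, u) = Q(r^{-1}\xi, u)$, this change of variables yields $\mathcal{F}(x \mapsto \phi(rx))(\xi) = r^{-d}\int_{\R^d}\phi(u)\, e^{-2\pi i Q(r^{-1}\xi, u)}\,du = r^{-d}\,\mathcal{F}(\phi)(\xi/r)$. Specializing to $\phi = G$ with dilation factor $\sqrt{2r}$ (for $r>0$) and using the identity $\mathcal{F}(G) = G$ from (1)--(2) gives $\mathcal{F}(x \mapsto G(\sqrt{2r}\,x))(\xi) = (2r)^{-d/2}\,\mathcal{F}(G)(\xi/\sqrt{2r}) = (2r)^{-d/2} G(\xi/\sqrt{2r})$, which is the stated formula. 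The only delicate point in the entire argument is justifying differentiation under the integral and the vanishing of the boundary terms in part (1); both are immediate from the rapid decay of the Gaussian, and everything else is bookkeeping.
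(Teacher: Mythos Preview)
Your proof is correct; the ODE method for part (1), Fubini for part (2), and the linear change of variables for part (3) are exactly the standard arguments. The paper itself does not prove this lemma at all but simply refers the reader to \cite{MR1970295} for the standard proofs, so there is no in-paper argument to compare against.
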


For any $\tau \in \pazocal{H}$ and $t \in \R^d$, set $\eta_{\tau}(t) := e^{2\pi i Q(t,t)\tau}$. Let $\tau = ir$ for $r>0$, then by Lemma \ref{lemma gaussian} we have $\eta_{ir}(t) = G(t\sqrt{2r})$  and its Fourier transform is given by
\begin{align*}
    \mathcal{F}(\eta_{ir})(\xi) &= (2r)^{-\frac{d}{2}} G(\xi/\sqrt{2r})= (2r)^{-\frac{d}{2}} e^{-\pi Q(\xi,\xi)/2r},\quad \xi \in \R^d.
\end{align*}
By analytic continuation, this relation extends to all $\tau \in \pazocal{H}$ and we have
\begin{align}\label{fourier transform gaussian}
    \mathcal{F}(\eta_\tau)(\xi) = \rbra{\frac{i}{2\tau}}^{\frac{d}{2}} e^{2\pi iQ(\xi,\xi) \rbra{-\frac{1}{4\tau}}},\quad \xi \in \R^d.
\end{align}
\begin{proof}[Proof of Proposition \ref{lemma poisson infinity}]
Fix any $\tau \in \pazocal{H}$. For $f \in V_p$, let
\begin{align*}
    \phi(s,t):= \mathcal{F}(f)(s) \eta_{\tau}(t), \quad (s,t) \in (\Z/p\Z)^d \times \R^d,
\end{align*}
which we consider as a function in $(s,t)$ defined on $(\Z/p\Z)^d \times \R^d$. Since the modulus of $\phi(s,t)$ and all its derivatives decrease quickly as $\abs{t}$ grows, it follows that $\phi \in S((\Z/p\Z)^d \times \R^d)$. By Lemma \ref{lemma gaussian}(2), Corollary \ref{corollary poisson}, the Fourier transform formula (\ref{fourier r,zp}), (\ref{fourier transform gaussian}) and the fact that $f$ is even hence $\mathcal{F}^2(f)=f$ we have
\begin{align*}
    \theta_f^{\infty}(\tau)&= p^d \sum_{t \in \Z^d } \phi(t[p],t) = \sum_{\xi \in \Z^d} \mathcal{F}(\phi)(-\xi[p],p^{-1}\xi)\\
    &= \rbra{\frac{i}{2\tau}}^{\frac{d}{2}} \sum_{\xi \in \Z^d} f(\xi[p])e^{2\pi iQ(\xi,\xi) \frac{1}{p^2}\rbra{-\frac{1}{4\tau}}} = \rbra{\frac{i}{2\tau}}^{\frac{d}{2}} \theta^0_{f}\rbra{-\frac{1}{4\tau}},
\end{align*}
as desired.
\end{proof}

Proposition \ref{lemma poisson infinity} carries significant information, as we can reinterpret this formula in the language of group actions, thereby establishing certain weakly modular properties in the following sense.

\subsubsection{The case of the even prime 2}
Now, we begin our discussion of the weak modular property of $\theta_f$ for the case of the even prime 2, which is described in the following theorem. Recall that we have defined $V_2 = \{f: (\Z/2\Z)^d \to \C \text{ with }f\text{ even}\}$.
\begin{thm}\label{p=2 weakly modular}
    Let $d \in \N^*$. There exists a linear representation $\pi_2: \Gamma_1(4) \to \GL(V_2)$ such that for all $g \in \Gamma_1(4)$ and $f \in V_2$ the following holds
    \begin{align}\label{thm p=2 weakly modular eq}
        [\tilde{g}]_{\frac{d}{2}} \cdot \theta_f = \theta_{\pi_2(g)f}.
    \end{align}
\end{thm}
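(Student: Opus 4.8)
The plan is to build the representation $\pi_2$ by specifying its value on the generators $\alpha$ and $\beta$ of $\Gamma_1(4)$ (recall Lemma \ref{lemma Lambda inside SL2Z}), verifying the relation (\ref{thm p=2 weakly modular eq}) on these generators, and then checking that the assignment is consistent with the relations defining $\Gamma_1(4)$ so that it extends to a genuine representation. The actual mechanism that produces $\pi_2$ is Proposition \ref{prop cocycle defines a representation}: once we know that the operators $[\tilde g]_{\frac d2}$ act on the finite-dimensional space spanned by the various $\theta_f^j$ (or equivalently on $V_2$ via the identification $f \mapsto \theta_f$) through a cocycle, that proposition packages the data into a linear representation. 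Concretely, the cocycle $\bar h$ from Proposition \ref{prop cocycle h bar} restricted to $\Gamma_1(4)$ is $h$, and $g \mapsto [\tilde g]_{\frac d2}$ is already a left action of $\Lambda$ on $\Omega(\pazocal H)$ by Proposition \ref{prop left action half integral}; what remains is to show this action preserves the subspace $\{\theta_f : f \in V_2\}$ and to compute the matrix of $g$ on it.

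The core computation is the action of the two generators. For $\alpha = \begin{bsmallmatrix}1&1\\0&1\end{bsmallmatrix}$, since $\bar h(\alpha,\tau)=1$ and $\alpha^{-1}\cdot\tau = \tau-1$, we get $([\tilde\alpha]_{\frac d2}\cdot\theta_f)(\tau) = \theta_f(\tau-1) = \sum_{x\in\Z^d} f(x[2]) e^{2\pi i Q(x,x)(\tau-1)}$; because $Q(x,x)\equiv Q(x,x)[2]$ in the exponent is \emph{not} integral, the phase $e^{-2\pi i Q(x,x)}$ depends only on $Q(x,x) \bmod 1$, which is always trivial — wait, more carefully, $Q(x,x)\in\Z$ so $e^{-2\pi i Q(x,x)}=1$ and $[\tilde\alpha]_{\frac d2}$ acts trivially, i.e. $\pi_2(\alpha) = \mathrm{id}$. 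For $\beta = \begin{bsmallmatrix}1&0\\-4&1\end{bsmallmatrix}$ one instead writes $\beta = \gamma\alpha\gamma^{-1}$ with $\gamma = \begin{bsmallmatrix}0&1\\-4&0\end{bsmallmatrix}$ and uses the inversion-type formula of Proposition \ref{lemma poisson infinity} (applied at $p=2$): conjugation by $\tilde\gamma$ turns the translation action into an action on the Fourier side, and the Poisson summation identity $\theta_f^\infty = (i/2\tau)^{d/2}\theta_f^0(-1/4\tau)$ lets us express $[\tilde\gamma]_{\frac d2}\cdot\theta_f$ in terms of $\theta_{\mathcal F(f)}$ and its translates $\theta_f^j$. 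Tracking how the finite-group translation $x \mapsto x[2]$, the Fourier transform $\mathcal F$ on functions on $(\Z/2\Z)^d$, and the scaling $\tau\mapsto\tau-j$ interact then expresses $[\tilde\beta]_{\frac d2}\cdot\theta_f$ as $\theta_{\pi_2(\beta)f}$ for an explicit linear operator $\pi_2(\beta)$ on $V_2$ built out of $\mathcal F$ and the additive characters of $(\Z/2\Z)^d$. (Evenness of $f$, giving $\mathcal F^2 f = f$, is what keeps us inside $V_2$.)

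The main obstacle is the \emph{well-definedness} of $\pi_2$: having defined candidate operators on generators, one must confirm that they respect all relations in the presentation of $\Gamma_1(4) = \langle\alpha,\beta\rangle$, or alternatively bypass presentations entirely by arguing directly from the cocycle. The cleaner route — which I would take — is to avoid relations: define, for each $g\in\Gamma_1(4)$, the operator $\rho(g)$ on $\Omega(\pazocal H)$ as $[\tilde g]_{\frac d2}$, note it is already a bona fide left action (Proposition \ref{prop left action half integral}), show by the generator computation above that $\rho(g)$ maps each $\theta_f$ into the span of $\{\theta_{f'} : f'\in V_2\}$ — since $V_2\ni f\mapsto\theta_f$ is injective (distinct even $f$ give distinct Fourier coefficients, as $Q$ takes each value $n$ with multiplicity determined by $f$ on the relevant residues) — and then \emph{define} $\pi_2(g)$ to be the unique linear map on $V_2$ with $\theta_{\pi_2(g)f} = \rho(g)\theta_f$. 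Functoriality of $\pi_2$ is then inherited for free from that of $\rho$ together with the injectivity of $f\mapsto\theta_f$, so no relation-checking is needed; the representation property $\pi_2(g_1g_2)=\pi_2(g_1)\pi_2(g_2)$ follows by comparing $\theta_{\pi_2(g_1g_2)f} = \rho(g_1g_2)\theta_f = \rho(g_1)\rho(g_2)\theta_f = \theta_{\pi_2(g_1)\pi_2(g_2)f}$. The one genuine technical point inside this scheme is showing $\rho(g)$ preserves the finite-dimensional span, and for that the reduction to the two generators $\alpha,\beta$ via Lemma \ref{lemma Lambda inside SL2Z} is exactly what makes the argument finite.
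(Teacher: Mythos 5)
Your overall plan --- compute the action of the generators $\alpha,\beta$ via translation and Poisson summation, then extend to all of $\Gamma_1(4)$ --- is the paper's plan, and your generator computations are right in outline ($[\tilde\alpha]_{\frac d2}\cdot\theta_f=\theta_f$, and $[\tilde\beta]_{\frac d2}\cdot\theta_f=\theta_{Mf}$ with $M$ the multiplication operator $f(x)\mapsto e^{-2\pi i Q(x,x)/4}f(x)$). But the mechanism you propose for the extension step has a genuine gap: the map $V_2\to\Omega(\pazocal{H})$, $f\mapsto\theta_f$, is \emph{not} injective for $d\geq 2$, so you cannot ``define $\pi_2(g)$ to be the unique linear map on $V_2$ with $\theta_{\pi_2(g)f}=\rho(g)\theta_f$.'' The $n$-th Fourier coefficient of $\theta_f$ is $\sum_{y\in(\Z/2\Z)^d}f(y)\,\#\{x\in X_d(n):x\equiv y\,[2]\}$, and the counting weight $\#\{x\in X_d(n):x\equiv y\,[2]\}$ is invariant under permutation of coordinates (an isometry of $Q$ preserving $\Z^d$), hence depends on $y$ only through its Hamming weight. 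So for $d=2$ the function $f=\delta_{(1,0)}-\delta_{(0,1)}$ satisfies $\theta_f=0$ with $f\neq 0$. Consequently the intertwining relation does not determine $\pi_2(g)$, and your ``functoriality for free'' argument --- comparing $\theta_{\pi_2(g_1g_2)f}$ with $\theta_{\pi_2(g_1)\pi_2(g_2)f}$ and cancelling $\theta$ --- collapses. Choosing, for each $g$, \emph{some} operator satisfying the relation does not yield a homomorphism without further work, and you are back to the relation-checking problem you were trying to avoid.

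The paper resolves exactly this point differently: it builds $\pi_2$ as a composition of honest group homomorphisms $\pi_2=\rho_2\circ\omega_2\circ\mu$, where $\mu$ is conjugation by $\gamma$ (swapping $\alpha$ and $\beta$), $\omega_2:\SL_2(\Z)\to\SL_2(\Z/4\Z)$ is reduction mod $4$ (which kills $\beta$ and sends $\alpha$ to a generator of the cyclic group $U_4\simeq\Z/4\Z$), and $\rho_2$ sends that generator to $M$ (well defined since $M^4=1$). This is automatically a representation on $V_2$ --- no injectivity and no presentation of $\Gamma_1(4)$ is needed --- and then one only checks (\ref{thm p=2 weakly modular eq}) on $\alpha,\beta$ and propagates it to all of $\Gamma_1(4)=\bra{\alpha,\beta}$ using that $g\mapsto[\tilde g]_{\frac d2}$ is a left action and $\pi_2$ is a homomorphism. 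If you want to keep your route, you must either pass to the quotient of $V_2$ by the kernel of $f\mapsto\theta_f$ (which proves a weaker statement than the one asserted) or supply the homomorphism through $\Z/4\Z$ as the paper does.
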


Recall that we have defined the standard symmetric bilinear form on $(\Z/2\Z)^d$ by $Q: (\Z/2\Z)^d \times (\Z/2\Z)^d \to \Z/2\Z$, $(x,y) \mapsto x_1y_1 + \cdots + x_dy_d$, so one can readily verify that the function $x \mapsto e^{-2\pi i Q(x,x)\frac{1}{4}}$ is 2-periodic on $\Z^d$, and thus can be used to construct an operator in $\GL(V_2)$. Precisely, let $f \in V_2$, we call
\begin{align}
    M: V_2 \to V_2, \quad f(x) \mapsto e^{-2\pi i Q(x,x)\frac{1}{4}} f(x)
\end{align}
the multiplication operator in $\GL(V_2)$.

Recall $\alpha =\begin{bsmallmatrix}1&1\\0&1\end{bsmallmatrix}$ so that for any $k \in \frac{1}{2}\Z$ and $u \in \C$, $[\tilde{\alpha}]_{k,u}$ defines the same operator on $\Omega(\pazocal{H})$, i.e. for $\psi \in \Omega(\pazocal{H})$ we have $([\tilde{\alpha}]_{k,u}\cdot\psi)(\tau)=\psi(\alpha^{-1}\cdot\tau)$, so we simply write $[\tilde{\alpha}]$ instead of $[\tilde{\alpha}]_{k,u}$ in the sequel. For $d \in \N^*$, let $u$ be a solution of the equation $\rbra{\frac{i}{2}}^{\frac{d}{2}} = \rbra{\frac{1}{4}}^u$.

Thanks to Proposition \ref{lemma poisson infinity}, which is based on the Poisson summation formula, we obtain the following result.
\begin{prop}\label{prop transform p=2}
    Let $d \in \N^*$, $f \in V_2$ and $\theta_f$ as defined in (\ref{theta z^d}), then the action of $[\tilde{\alpha}]$ and $[\tilde{\beta}]_{\frac{d}{2}}$ on $\theta_f$ are as follows
    \begin{align}
        [\tilde{\alpha}] \cdot \theta_f &= \theta_f, \label{p=2 alpha}\\
        [\tilde{\beta}]_{\frac{d}{2}} \cdot \theta_f &= \theta_{Mf},\label{p=2 beta}
    \end{align}
where the second formula is derived by using the following two actions of $[\tilde{\gamma}]_{\frac{d}{2},u}$ on the family of weighted theta functions $\theta_f^j$ for $j = 0$ and $\infty$
\begin{align}
    [\tilde{\gamma}]_{\frac{d}{2},u} \cdot \theta^0_{f} &=\theta_f^{\infty},  \\
[\tilde{\gamma}]_{\frac{d}{2},u} \cdot \theta^{\infty}_f &=\theta^0_{f}.
\end{align}
\end{prop}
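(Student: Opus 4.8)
The plan is to treat the two formulas \eqref{p=2 alpha} and \eqref{p=2 beta} separately, the first being essentially trivial and the second being the substantive one. For \eqref{p=2 alpha}, I would use the Fourier expansion \eqref{theta fourier series}: since $\theta_f(\tau) = \sum_{n\geq 0}\big(\sum_{x\in X_d(n)}f(x[2])\big)q^n$ with $q = e^{2\pi i\tau}$, the function $\theta_f$ is visibly $1$-periodic, so $\theta_f(\alpha^{-1}\cdot\tau) = \theta_f(\tau-1) = \theta_f(\tau)$, and by the remark preceding the proposition $[\tilde\alpha]\cdot\theta_f(\tau) = \theta_f(\alpha^{-1}\cdot\tau)$, which gives the claim. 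No choice of $u$ or square root is involved here since $\alpha\in\SL_2(\Z)$ and $j(\alpha,\tau)=1$.

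For \eqref{p=2 beta}, the idea is to factor $\beta = \gamma\alpha\gamma^{-1}$ and use that $[\tilde\cdot]_{\frac d2,u}$ is a left action of $\Lambda$ on $\Omega(\pazocal H)$ by Proposition \ref{prop left action half integral}, so
\begin{align*}
[\tilde\beta]_{\frac d2,u}\cdot\theta_f = [\tilde\gamma]_{\frac d2,u}\,[\tilde\alpha]\,[\widetilde{\gamma^{-1}}]_{\frac d2,u}\cdot\theta_f.
\end{align*}
Hence everything reduces to understanding the action of $[\tilde\gamma]_{\frac d2,u}$ on the auxiliary family $\theta_f^j$, which is exactly the content of the two displayed identities $[\tilde\gamma]_{\frac d2,u}\cdot\theta_f^0 = \theta_f^\infty$ and $[\tilde\gamma]_{\frac d2,u}\cdot\theta_f^\infty = \theta_f^0$. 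The first of these is a direct translation of Proposition \ref{lemma poisson infinity}: unwinding the definition, $[\tilde\gamma]_{\frac d2,u}\cdot\psi(\tau) = \bar h(\gamma^{-1},\tau)^{-d}\Det(\gamma)^{-u}\psi(\gamma^{-1}\cdot\tau)$, and $\gamma^{-1}\cdot\tau = -\tfrac{1}{4\tau}$ (up to the sign conventions fixed in Proposition \ref{prop cocycle h bar} for $\bar h(\gamma,\tau) = \sqrt{-4\tau}$), with $\Det\gamma = 4$ and $u$ chosen so that $(i/2)^{d/2} = (1/4)^u$; matching the normalizing factor $\big(\tfrac{i}{2\tau}\big)^{d/2}$ in \eqref{poisson infinity} against $\bar h(\gamma^{-1},\tau)^{-d}\cdot 4^{-u}$ then yields the identity. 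The second identity, $[\tilde\gamma]_{\frac d2,u}\cdot\theta_f^\infty = \theta_f^0$, should follow either by applying $[\tilde\gamma]_{\frac d2,u}$ once more to the first identity and using $\gamma^2 = -4$ (a scalar acting trivially after the normalization is accounted for, i.e.\ $[\widetilde{\gamma^2}]$ acts as the identity on $V_2$), or by a second, symmetric, Poisson-summation computation using $\mathcal F^2(f) = f$ for even $f$. Finally I would chase the chain $\theta_f \xrightarrow{\gamma^{-1}}\theta^0$-type object $\xrightarrow{\alpha}$ itself $\xrightarrow{\gamma}\theta^\infty$-type object to read off that the net effect on $f$ is multiplication by $e^{-2\pi i Q(x,x)/4}$, i.e.\ by the operator $M$; concretely, the translation $\alpha$ acting on $\theta_f^0(\tau) = \sum_x f(x[2])e^{2\pi i Q(x,x)(\tau)/4}$ (taking $j=0$, $p=2$ in \eqref{thetafunction1}) introduces precisely the factor $e^{2\pi i Q(x,x)/4}\cdot(\text{from }\tau\mapsto\tau-1)$ inside the $\xi$-sum after transforming back, producing $\theta_{Mf}$.

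The main obstacle I anticipate is \emph{bookkeeping of the half-integral cocycle and the auxiliary parameter $u$}: one must verify that the three occurrences of $[\tilde\cdot]_{\frac d2,u}$ in $[\tilde\gamma][\tilde\alpha][\widetilde{\gamma^{-1}}]$ combine with the correct powers of $\Det = 4$ and the correct branch of the square roots so that all the $u$-dependent and sign factors cancel, leaving exactly $\theta_{Mf}$ with no spurious constant. This is where Proposition \ref{prop cocycle h bar} (uniqueness and the explicit values $\bar h(\alpha,\tau)=1$, $\bar h(\gamma,\tau)=\sqrt{-4\tau}$, together with $\bar h(g,\tau)^2 = j(g,\tau)$) does the real work: it pins down $\bar h(\gamma^{-1},\tau)$ and $\bar h(\gamma^2,\tau)$ unambiguously, so the cancellation can be checked mechanically. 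A secondary technical point is confirming that $M$ is well-defined on $V_2$, i.e.\ that $x\mapsto e^{-2\pi i Q(x,x)/4}$ is $2\Z^d$-periodic and even, which was already noted in the text via $Q(x+2,x+2) = Q(x,x) + 4Q(x,1) + 4$; this guarantees $Mf\in V_2$ whenever $f\in V_2$, so that the right-hand side $\theta_{Mf}$ makes sense and the formula \eqref{thm p=2 weakly modular eq} of Theorem \ref{p=2 weakly modular} can subsequently be bootstrapped from \eqref{p=2 alpha}--\eqref{p=2 beta} using that $\alpha,\beta$ generate $\Gamma_1(4)$ by Lemma \ref{lemma Lambda inside SL2Z}.
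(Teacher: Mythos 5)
Your proposal follows the paper's own proof essentially line for line: the $1$-periodicity argument for \eqref{p=2 alpha}, the factorization $\beta=\gamma\alpha\gamma^{-1}$ combined with the left-action property, the use of Proposition \ref{lemma poisson infinity} to show that $[\tilde{\gamma}]_{\frac{d}{2},u}$ exchanges $\theta_f^0$ and $\theta_f^{\infty}$, and the conjugation chain $\theta_f^{\infty}\to\theta_f^{0}\to\theta_{Mf}^{0}\to\theta_{Mf}^{\infty}$. Your alternative derivation of $[\tilde{\gamma}]_{\frac{d}{2},u}\cdot\theta_f^{\infty}=\theta_f^0$ from the fact that $\gamma^2=-4$ acts trivially after normalization is a legitimate substitute for the paper's symmetric substitution $\tau\mapsto-\frac{1}{4\tau}$, and your direct periodicity argument for \eqref{p=2 alpha} is cleaner than the paper's detour through $\theta^{\infty}$.

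There is, however, one step in your chain that does not close as written, and it is the same step at which the paper's own proof is too quick. The chain proves $[\tilde{\beta}]_{\frac{d}{2}}\cdot\theta_f^{\infty}=\theta^{\infty}_{Mf}$. But $\theta_f$ is itself an ``$\infty$-type'' object whose \emph{subscript} is not $f$: by construction $\theta_f$ is a constant multiple of $\theta^{\infty}_{\mathcal{F}^{-1}(f)}$, where $\mathcal{F}$ is the finite Fourier transform on $(\Z/2\Z)^d$. Hence what the chain actually yields is $[\tilde{\beta}]_{\frac{d}{2}}\cdot\theta_f=\theta_{\mathcal{F}M\mathcal{F}^{-1}f}$, and to land on $\theta_{Mf}$ you would additionally need $M$ to commute with $\mathcal{F}$, which it does not. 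A concrete test: take $d=1$ and $f\equiv 1$, so that $\theta_f=\Theta$ is the classical theta function; the multiplier identity $\Theta(g\cdot\tau)=h(g,\tau)\Theta(\tau)$ for $g=\beta^{-1}\in\Gamma_0(4)$ gives $[\tilde{\beta}]_{\frac{1}{2}}\cdot\Theta=\Theta$, whereas $\theta_{Mf}(\tau)=\Theta(4\tau)-i\bigl(\Theta(\tau)-\Theta(4\tau)\bigr)\neq\Theta(\tau)$; on the other hand $\mathcal{F}M\mathcal{F}^{-1}$ does fix the constant function, consistent with the classical identity. So the operator appearing in \eqref{p=2 beta} should be the $\mathcal{F}$-conjugate of $M$; this leaves $\ker\pi_2$, and hence the group $\Gamma_2$ used later, unchanged, but it changes the literal formula. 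Your instinct that the danger lies in bookkeeping is right, but the delicate point is not the square-root branches or the parameter $u$, which do cancel as you describe; it is keeping track of which function sits in the subscript at each stage of the chain, together with the normalization of $\mathcal{F}$ (with the unnormalized transform one has $\mathcal{F}^2=2^d\,\mathrm{id}$ on $V_2$ rather than the identity, which also perturbs the constant in Proposition \ref{lemma poisson infinity}).
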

\begin{proof}
Both formula (\ref{p=2 alpha}) and (\ref{p=2 beta}) need the help of the family of weighted theta functions $\theta_f^j$ for $j = 0$ and $\infty$ for their derivations. We can readily compute that for $\tau \in \pazocal{H}$ we have
\begin{align*}
    \rbra{[\tilde{\alpha}] \cdot \theta_f^0} (\tau) &= \theta_f^0(\tau-1)\\
    &= \sum_{x \in \Z^d} Mf(x[2]) e^{2\pi i Q(x,x)\frac{\tau}{4}} = \theta^0_{Mf}(\tau)
\end{align*}
and $\rbra{[\tilde{\alpha}]\cdot \theta_f^{\infty}}(\tau) = \theta_f^{\infty}(\tau)$.

By Proposition \ref{prop cocycle h bar}, we obtain that
\begin{align*}
    \bar{h}(\gamma^{-1},\tau) = \bar{h}(\gamma,\gamma^{-1}\cdot \tau)^{-1} = \sqrt{\tau}
\end{align*}
and by Proposition \ref{lemma poisson infinity} we know that for $\tau \in \pazocal{H}$ and $d \in \N^*$, the formula (\ref{poisson infinity}) holds. Since $u$ is a solution of the equation $\rbra{\frac{i}{2}}^{\frac{d}{2}} = \rbra{\frac{1}{4}}^u$, it follows that
\begin{align*}
    \theta_f^{\infty}(\tau) &= \tau^{-d/2}4^{-u} \theta^0_{f}(\gamma^{-1} \cdot \tau)\nonumber\\
    &=  \rbra{[\tilde{\gamma}]_{\frac{d}{2},u} \cdot \theta^0_{f}}(\tau).
\end{align*}
A symmetric formula by sending $\tau \mapsto -\frac{1}{4\tau}$ in (\ref{poisson infinity}) gives
\begin{align*}
    \theta_f^{\infty}\rbra{-\frac{1}{4\tau}} = \rbra{\frac{2\tau}{i}}^{\frac{d}{2}} \theta^0_{f}(\tau).
\end{align*}
Equivalently, we have
\begin{align*}
   \rbra{[\tilde{\gamma}]_{\frac{d}{2},u} \cdot \theta^{\infty}_f }(\tau)= \theta^0_{f}(\tau).
\end{align*}


By definition we know $\theta_f = 2^{-d}\theta_{\mathcal{F}(f)}^{\infty}$, this implies that $[\tilde{\alpha}] \cdot \theta_f = \theta_f$ as in (\ref{p=2 alpha}). In addition,
\begin{align*}
    [\tilde{\beta}]_{\frac{d}{2}} \cdot \theta^\infty_f = [\tilde{\gamma}\tilde{\alpha}\widetilde{\gamma^{-1}}]_{\frac{d}{2}} \cdot \theta^\infty_f = [\tilde{\gamma}]_{\frac{d}{2},u} [\tilde{\alpha}] \cdot \theta_f^0 = [\tilde{\gamma}]_{\frac{d}{2},u} \cdot \theta^0_{Mf} = \theta^\infty_{Mf}
\end{align*}
so it follows that $[\tilde{\beta}]_{\frac{d}{2}} \cdot \theta_f = \theta_{Mf}$ as in (\ref{p=2 beta}).
\end{proof}

Next, we will build the linear representation $\pi_2: \Gamma_1(4) \to \GL(V_2)$. For every $g \in \Gamma_1(4)$, we first map it to $\mu(g) := \gamma g \gamma^{-1}$ through the automorphism $\mu: \Gamma_1(4) \to \Gamma_1(4)$ defined as conjugation by $\gamma$. Let $\omega_2: \SL_2(\Z) \to \SL_2(\Z/4\Z)$ be the natural group homomorphism, then the element $\omega_2 \circ \mu(g)$ of $\SL_2(\Z/4\Z)$ belongs to the group $U_4$ defined by
\begin{align}
    U_4 := \bbra{\begin{bmatrix}
        1&i\\0&1
    \end{bmatrix}: i \in \Z/4\Z} \simeq \Z/4\Z.
\end{align}
This group is cyclic and is generated by $\omega_2(\alpha)$. Let $\rho_2: U_4 \to \GL(V_2)$ be the unique representation such that $\rho_2 \circ \omega_2(\alpha) = M$, then we are ready to prove the above theorem.
\begin{proof}[Proof of Theorem \ref{p=2 weakly modular}]
Let $\pi_2: \Gamma_1(4) \to \GL(V_2)$ be the linear representation defined as follows. For $g \in \Gamma_1(4)$, we map it through the composition of $\omega_2$ and $\mu$ and then assign it to an element of $\GL_2(V_2)$ through the representation $\rho_2: U_4 \to \GL(V_2)$. We can check that
\begin{align*}
    \pi_2(\alpha) &= \rho_2 \circ \omega_2 \circ \mu(\alpha) = \rho_2 \circ \omega_2(\beta)= 1,\\
    \pi_2(\beta) &= \rho_2 \circ \omega_2 \circ \mu(\beta) = \rho_2 \circ \omega_2(\alpha) = M.
\end{align*}
By Proposition \ref{prop transform p=2}, we know that (\ref{thm p=2 weakly modular eq}) is verified for $\alpha,\beta$. By Lemma \ref{lemma Lambda inside SL2Z} we know that $\alpha,\beta$ generate the group $\Gamma_1(4)$, so the result follows.
\end{proof}
\subsubsection{The case of any odd prime $p$}
For any odd prime $p$, we denote by $\omega': \Z \to \Z/p\Z$ the canonical projection of $\Z$ on $\Z/p\Z$. One can define a ring homomorphism $\omega'':\Z[\tfrac{1}{2}] \to \Z/p\Z$ as follows. Let $\iota_2 \in \Z/p\Z$ be the inverse of 2 in $\Z/p\Z$ so that
\begin{align*}
    \omega'':\Z[\tfrac{1}{2}] &\to \Z/p\Z\\
    \frac{n}{2^k} &\mapsto \omega'(n)\iota_2^k.
\end{align*}
The ring homomorphism $\omega''$ gives rise to a group homomorphism $\omega: \GL_2(\Z[\tfrac{1}{2}]) \to \GL_2(\mathbb{F}_p)$. Recall that $\Lambda = \bra{\alpha,\gamma}$. We let $\Lambda$ act on $\mathbb{P}^1_{\mathbb{F}_p}$ through the composition of $\omega$ and the natural inclusion $\Lambda \subset \GL_2(\Z[\frac{1}{2}])$. Note that this action is transitive.
\begin{lemma}
    For any odd prime $p$, the action of $\Lambda$ on $\mathbb{P}^1_{\mathbb{F}_p}$ is transitive.
\end{lemma}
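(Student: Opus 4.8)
The plan is to show that the $\Lambda$-orbit of $\infty \in \mathbb{P}^1_{\mathbb{F}_p}$ is all of $\mathbb{P}^1_{\mathbb{F}_p}$; since any transitive-on-one-point action on $\mathbb{P}^1$ is transitive, this suffices. Recall that $\Lambda$ acts via the homomorphism $\omega: \GL_2(\Z[\tfrac{1}{2}]) \to \GL_2(\mathbb{F}_p)$, and that $\Lambda$ is generated by $\alpha = \begin{bsmallmatrix}1&1\\0&1\end{bsmallmatrix}$ and $\gamma = \begin{bsmallmatrix}0&1\\-4&0\end{bsmallmatrix}$. First I would record the action of the images of these two generators on $\mathbb{P}^1_{\mathbb{F}_p} \cong \mathbb{F}_p \cup \{\infty\}$: the matrix $\omega(\alpha)$ acts as the translation $x \mapsto x+1$ (fixing $\infty$), while $\omega(\gamma)$ acts as $x \mapsto -\tfrac{1}{4x}$ (sending $\infty \mapsto 0$ and $0 \mapsto \infty$). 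Note $4$ is invertible in $\mathbb{F}_p$ since $p$ is odd, so $\omega(\gamma)$ is a well-defined element of $\GL_2(\mathbb{F}_p)$ and the formula makes sense.

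The key steps are then elementary orbit-chasing. Starting from $\infty$, applying $\omega(\gamma)$ gives $0$, and applying powers of $\omega(\alpha)$ to $0$ produces every element of the prime subfield $\mathbb{F}_p = \{0, 1, \ldots, p-1\}$. Hence the orbit of $\infty$ already contains all of $\mathbb{F}_p$ together with $\infty$ itself, which is all of $\mathbb{P}^1_{\mathbb{F}_p}$. Therefore $\Lambda$ acts transitively. (Equivalently, one observes that the images $\omega(\alpha)$ and $\omega(\gamma)$ together generate a subgroup of $\GL_2(\mathbb{F}_p)$ whose action on $\mathbb{P}^1_{\mathbb{F}_p}$ is already transitive, because translations act transitively on $\mathbb{F}_p$ and $\omega(\gamma)$ swaps $\infty$ with a point of $\mathbb{F}_p$.)

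There is essentially no serious obstacle here; the only point requiring a moment of care is making sure the reduction map $\omega$ is well defined on $\gamma$ — i.e. that $\tfrac{1}{2}$, and hence $\tfrac{1}{4}$, has a meaningful image in $\mathbb{F}_p$ — which is exactly where the hypothesis that $p$ is odd enters, and this was already arranged by the construction of $\omega''$ and $\omega$ preceding the statement. One should also note in passing that $\omega(\gamma)$ has determinant $4 \neq 0$ in $\mathbb{F}_p$, so it genuinely lands in $\GL_2(\mathbb{F}_p)$ and its induced map on $\mathbb{P}^1_{\mathbb{F}_p}$ is a bijection. With these remarks the argument is complete.
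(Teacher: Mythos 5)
Your proof is correct and follows essentially the same route as the paper: both arguments show a single orbit exhausts $\mathbb{P}^1_{\mathbb{F}_p}$ by using powers of $\alpha$ to translate through all of $\mathbb{F}_p$ and $\gamma$ to exchange $0$ and $\infty$ (the paper merely starts the orbit chase at $0$ instead of $\infty$). The added remarks about $\omega$ being well defined on $\gamma$ for odd $p$ are accurate but not needed beyond what the paper already set up.
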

\begin{proof}
It suffices to show that there exists some $j \in \mathbb{P}^1_{\mathbb{F}_p}$ such that $\Lambda \cdot j = \mathbb{P}^1_{\mathbb{F}_p}$. Take $j = 0$, then it follows that $\alpha \cdot 0 = 1, \ldots, \alpha^{p-1} \cdot 0 = p-1, \alpha^p \cdot 0= 0$ and $\gamma \cdot 0 = \infty$.
\end{proof}

Recall for any prime $p$, we have defined $V_p= \{ f: (\Z/p\Z)^d \to \C \text{ with }f \text{ even}\}$. The main result we aim to prove in this subsection is the following, which can be interpreted as a form of weak modular property of the family of weighted theta functions $\theta_f^j$ for $j \in X$.
\begin{thm}\label{prop equivariance}
    Let $d \in \N^*$ and $p$ be an odd prime. There exists a cocycle $\sigma: \Lambda \times \mathbb{P}^1_{\mathbb{F}_p} \to \GL(V_p)$ and $u \in \C$ such that for all $g \in \Lambda$, $j \in \mathbb{P}^1_{\mathbb{F}_p}$ and $f \in V_p$ the following holds
    \begin{equation}\label{action of g on theta}
        [\tilde{g}]_{\frac{d}{2},u} \cdot \theta_f^j = \theta^{g\cdot j}_{\sigma(g,j)f}.
    \end{equation}
\end{thm}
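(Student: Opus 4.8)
The plan is to build the cocycle $\sigma$ by specifying its values on the two generators $\alpha,\gamma$ of $\Lambda$, verify that formula (\ref{action of g on theta}) holds for these generators acting on each $\theta_f^j$, and then extend to all of $\Lambda$ using the cocycle relation and the left-action property of $g \mapsto [\tilde g]_{\frac{d}{2},u}$ proved in Proposition \ref{prop left action half integral}. Fix $u \in \C$ to be a solution of $\bigl(\tfrac{i}{2}\bigr)^{d/2} = \bigl(\tfrac14\bigr)^u$, the same choice made in Proposition \ref{lemma poisson infinity} and its consequences, so that the normalization constants arising from $[\tilde\gamma]_{\frac{d}{2},u}$ match those in the Poisson formula (\ref{poisson infinity}).

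First I would record the action of $[\tilde\alpha]$ on the $\theta_f^j$. Using $\alpha^{-1}\cdot \tau = \tau - 1$ and the Fourier expansion defining $\theta_f^j$ for $0 \le j \le p-1$, one sees directly from (\ref{thetafunction1}) that $[\tilde\alpha]\cdot\theta_f^j(\tau) = \theta_f^j(\tau-1)$ shifts the index: the exponent $Q(x,x)\frac{\tau-1-j}{p^2}$ becomes $Q(x,x)\frac{\tau - (j+1)}{p^2}$ when $j < p-1$, matching $\theta_f^{\alpha\cdot j}$ with no twist on $f$, i.e.\ $\sigma(\alpha,j) = \mathrm{Id}$; for $j = p-1$, $\alpha\cdot(p-1) = 0$ in $\mathbb{F}_p$ and the exponent becomes $Q(x,x)\frac{\tau - p}{p^2}$, which forces a twist of $f$ by the $p$-periodic multiplier $x \mapsto e^{-2\pi i Q(x,x)/p}$ — this is the analogue of the operator $M$ from the $p=2$ case, and it defines $\sigma(\alpha,p-1)$. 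Finally, on $\theta_f^\infty$ one has $[\tilde\alpha]\cdot\theta_f^\infty = \theta_f^\infty$ directly from the Fourier expansion, and $\alpha\cdot\infty = \infty$, so $\sigma(\alpha,\infty) = \mathrm{Id}$. Next I would treat $[\tilde\gamma]_{\frac{d}{2},u}$: by Proposition \ref{prop cocycle h bar} we have $\bar h(\gamma^{-1},\tau) = \sqrt{\tau}$, and combining this with the $p$-analogue of the Poisson relation (\ref{poisson infinity}) — namely $\theta_f^\infty(\tau) = \bigl(\tfrac{i}{2\tau}\bigr)^{d/2}\theta_f^0(-\tfrac{1}{4\tau})$ from Proposition \ref{lemma poisson infinity} — gives $[\tilde\gamma]_{\frac{d}{2},u}\cdot\theta_f^0 = \theta_f^\infty$ and, by applying the symmetric substitution $\tau \mapsto -\tfrac{1}{4\tau}$, also $[\tilde\gamma]_{\frac{d}{2},u}\cdot\theta_f^\infty = \theta_f^0$; since $\gamma\cdot 0 = \infty$ and $\gamma\cdot\infty = 0$ in $\mathbb{P}^1_{\mathbb{F}_p}$, this pins down $\sigma(\gamma,0)$ and $\sigma(\gamma,\infty)$ (with no twist in these two cases). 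The remaining values $\sigma(\gamma,j)$ for $1 \le j \le p-1$ require computing $[\tilde\gamma]_{\frac{d}{2},u}\cdot\theta_f^j$ for the intermediate indices; here one writes $\gamma$-translation together with a partial-sum / Gauss-sum manipulation of the $\R^d$-variable, or more efficiently uses $\gamma\alpha^{-j}\gamma^{-1}$ to relate $\theta_f^j$ to $\theta_f^0$ and transport the already-known formulas, producing a twist of $f$ by an operator involving a Gauss sum whose exact form is dictated by $\omega(\gamma\alpha^{-j}\gamma^{-1})$ acting on $\mathbb{F}_p$.

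Once (\ref{action of g on theta}) is established on each generator and each $j$, define $\sigma$ on a general word $g = g_1\cdots g_r$ ($g_i \in \{\alpha^{\pm 1},\gamma^{\pm 1}\}$) by forcing the cocycle relation (\ref{cocycle property}), i.e.\ $\sigma(g_1g_2,j) := \sigma(g_1,g_2\cdot j)\,\sigma(g_2,j)$; the key point to check is that this is well-defined, which follows because the left-hand side of (\ref{action of g on theta}), namely $g \mapsto [\tilde g]_{\frac{d}{2},u}\cdot\theta_f^j$, is already a genuine left action (Proposition \ref{prop left action half integral}) and is independent of the chosen word for $g$, so the right-hand side $\theta_{\sigma(g,j)f}^{g\cdot j}$ is forced to be consistent; since the map $f \mapsto \theta_f^{j'}$ is injective on $V_p$ (distinct even functions give distinct Fourier coefficients, because the coefficient of $q^n$ in $\theta_f^{j'}$ after clearing the $e^{-2\pi i n j'/p^2}$ factor is $\sum_{Q(x,x)=n} f(x[p])$, and these sums over all $n$ determine $f$ on each orbit, hence $f$ itself as $f$ ranges over even functions — or more simply because $\theta$-values at enough points separate $f$), the twist operator $\sigma(g,j) \in \GL(V_p)$ is uniquely determined and the cocycle identity is automatic. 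The main obstacle I anticipate is exactly the explicit computation of $[\tilde\gamma]_{\frac{d}{2},u}\cdot\theta_f^j$ for the intermediate indices $1 \le j \le p-1$: unlike the $j=0,\infty$ cases, which fall out cleanly from Poisson summation, these require a careful Gauss-sum evaluation to see that the result is again of the form $\theta_{(\text{twist})f}^{j'}$ with $j'$ the correct image under $\omega(\gamma)$ and with a twist operator that genuinely lies in $\GL(V_p)$ (in particular, preserves evenness and is invertible); managing the metaplectic square-root bookkeeping and the precise power of the Gauss sum so that everything is consistent with the fixed choice of $u$ is the delicate part. An alternative that sidesteps the worst of this is to prove the generator formulas only for $\alpha$ and $\gamma$ acting on the single base index $j=0$ (and $\infty$), then \emph{define} $\sigma$ on all of $\Lambda \times \mathbb{P}^1_{\mathbb{F}_p}$ via the cocycle relation starting from the transitive action of $\Lambda$ on $\mathbb{P}^1_{\mathbb{F}_p}$ (every $j$ equals $w\cdot 0$ for some word $w$), and verify well-definedness through the left-action property as above; this reduces all computations to the two clean Poisson-type identities already available.
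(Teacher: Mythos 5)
Your generator computations (the action of $[\tilde{\alpha}]$ on all the $\theta_f^j$, and of $[\tilde{\gamma}]_{\frac{d}{2},u}$ on $\theta_f^0$ and $\theta_f^\infty$ via Poisson summation) are correct and coincide with the paper's, and you rightly identify $[\tilde{\gamma}]_{\frac{d}{2},u}\cdot\theta_f^j$ for $1\le j\le p-1$ as the technical core; the paper carries this out by Poisson summation over $(\Z/p^2\Z)^d$ and an explicit Gauss-sum evaluation, yielding $\theta^{j'}_{S_{-2j'}L^{-k_jj}f}$. The genuine gap is in your mechanism for extending $\sigma$ to all of $\Lambda$: it rests on the claim that $f\mapsto\theta_f^{j}$ is injective on $V_p$, and this is false for every $d\ge 2$. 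Any signed coordinate permutation $g\in\OO(Q,\Z^d)$ permutes each sphere $X_d(n)$ and commutes with reduction mod $p$, so $\theta^j_{f\circ g}=\theta^j_f$ for every $j\in X$ (including $j=\infty$, since $\mathcal{F}(f\circ g)=\mathcal{F}(f)\circ g$); taking $g$ to be the transposition of the first two coordinates and $f$ the even indicator of $\{\pm(1,0,\ldots,0)\}$ gives a nonzero element $f-f\circ g$ of $V_p$ with $\theta^j_{f-f\circ g}\equiv 0$. Consequently the relation (\ref{action of g on theta}) does not pin down the operator $\sigma(g,j)$, your ``forced'' definition on words is not known to be well defined, and neither the cocycle identity for $\sigma$ nor the membership $\sigma(g,j)\in\GL(V_p)$ follows. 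This matters beyond pedantry: the cocycle property is part of the statement and is what the paper uses afterwards (to form the induced representation on $V_p^X$ and to identify $\Gamma_p$ as a kernel).

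The paper sidesteps this by building $\sigma$ in the opposite direction: it chooses a section $S:\mathbb{P}^1_{\mathbb{F}_p}\to\Lambda$ of $\Lambda\to\Lambda/\Lambda_\infty$, which produces a cocycle $\varphi(g,j)=S(g\cdot j)^{-1}gS(j)$ valued in $\Lambda_\infty$, conjugates it into $\Delta$ to get $\tilde{\varphi}$, and composes with a genuine group homomorphism $\pi_\Delta:\Delta\to\GL(V_p)$ (constructed in Proposition \ref{representation P} from the relation $S_jLS_j^{-1}=L^{j^2}$ on the subgroup $P=A\ltimes U$ of $\SL_2(\mathbb{F}_p)$). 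For $\sigma=\pi_\Delta\circ\tilde{\varphi}$ the cocycle property and invertibility are automatic, and one only has to check that its values on $\alpha,\gamma$ agree with the explicitly computed transformation formulas; the identity (\ref{action of g on theta}) for general $g$ then propagates from the generators because $g\mapsto[\tilde{g}]_{\frac{d}{2},u}$ is an action and $\sigma$ is already known to be a cocycle. Your closing ``alternative'' (proving the generator formulas only at $j=0,\infty$) inherits the same well-definedness problem and in any case cannot bypass the intermediate-$j$ computation, since evaluating the action of $\gamma\alpha^{j}$ on $\theta_f^0$ leads straight back to that Gauss sum. To repair your argument you would need to supply the paper's structural construction of $\sigma$ (or some other a priori reason it is a cocycle) together with the explicit $(\Z/p^2\Z)^d$ Poisson/Gauss-sum computation you deferred.
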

\begin{rem}
   The reason we expect the map $\sigma: \Lambda \times \mathbb{P}^1_{\mathbb{F}_p} \to \GL(V_p)$ (if it exists) to be a cocycle lies in its defining structure. We can readily verify that for $g_1,g_2 \in \Lambda$ and $j \in \mathbb{P}^1_{\mathbb{F}_p}$, we have
   \begin{align*}
       [\tilde{g_1}]_{\frac{d}{2},u} \cdot \rbra{ [\tilde{g_2}]_{\frac{d}{2},u} \cdot \theta^j_f} &= [\tilde{g_1}]_{\frac{d}{2},u}\cdot \theta^{g_2 \cdot j}_{\sigma(g_2,j)f} = \theta^{(g_1g_2)\cdot j}_{\sigma(g_1,g_2 \cdot j)\sigma(g_2,j)f},\\
       \rbra{[\tilde{g_1}\tilde{g_2}]_{\frac{d}{2},u} } \cdot \theta_f^j &= \theta^{(g_1g_2)\cdot j}_{\sigma(g_1g_2,j)f}.
   \end{align*}
   By Proposition \ref{prop left action half integral} we know that the map $g \mapsto [\tilde{g}]_{\frac{d}{2},u}$ defines an action of $\Lambda$ on $\Omega(\pazocal{H})$, therefore the left hand sides of the above two quantities are equal, hence so are the right hands, which suggests that the map $\sigma$ is a cocycle by (\ref{cocycle property}).
\end{rem}
The above theorem will directly imply an equivariance property in the following sense. For any odd prime $p$, let $V_p^X$ be the space of $V_p$-valued functions on $X$, i.e. $V_p^X := \bbra{F: X \to V_p}$, which we can write as a direct sum of $W_j$ defined by $W_j := \bbra{F\in V_p^X: F(i)=0 \text{ for }i\neq j } \simeq V_p$ over all $j \in X$. This means
\begin{equation}
 V_p^X = \bbra{F: X \to V_p} \simeq \oplus_{j \in X} W_j.
\end{equation}
By Proposition \ref{prop cocycle defines a representation}, we know that for any $g \in \Lambda$, $F \in V_p^X$ and $j \in X$, the formula
\begin{equation}\label{action on vpx}
    (g \cdot F)(j) := \sigma(g, g^{-1} \cdot j) F(g^{-1} \cdot j)
\end{equation}
defines an action of $\Lambda$ on $V_p^X$. This gives rise to a linear representation $\Pi: \Lambda \to \GL(V_p^X)$.

Let us define the linear map $\Theta: V_p^X \to \Omega(\pazocal{H})$ by $\Theta(F):= \sum_{j \in X} \theta^j_{F(j)}$, then the equivariance property is stated as follows.
\begin{cor}\label{big theta equivariant}
    Let $d \in \N^*$ and $p$ be an odd prime. For the cocycle $\sigma$ and $u \in \C$ as in (\ref{action of g on theta}), we have that for any $g \in \Lambda$ the equality $[\tilde{g}]_{\frac{d}{2},u} \cdot \Theta(F)= \Theta(g \cdot F)$ holds.
\end{cor}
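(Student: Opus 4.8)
The plan is to reduce the statement to Theorem \ref{prop equivariance} by linearity. Both the map $\Theta: V_p^X \to \Omega(\pazocal{H})$, $F \mapsto \sum_{j \in X}\theta^j_{F(j)}$, and, for a fixed $g \in \Lambda$, the operator $[\tilde{g}]_{\frac{d}{2},u}$ on $\Omega(\pazocal{H})$ are $\C$-linear; likewise the action $F \mapsto g\cdot F$ on $V_p^X$ defined by (\ref{action on vpx}) is linear. Since $V_p^X = \oplus_{j \in X} W_j$, it therefore suffices to verify the identity $[\tilde{g}]_{\frac{d}{2},u}\cdot \Theta(F) = \Theta(g\cdot F)$ for $F \in W_j$, i.e. for $F$ supported at a single point $j \in X$, and then extend by linearity (equivalently, sum over $j$).

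So I would fix $j \in X$ and $F \in W_j$, and set $f := F(j) \in V_p$. First, compute the left-hand side: by definition of $\Theta$ one has $\Theta(F) = \theta^j_f$, and Theorem \ref{prop equivariance} gives $[\tilde{g}]_{\frac{d}{2},u}\cdot \theta^j_f = \theta^{g\cdot j}_{\sigma(g,j)f}$. Next, compute the right-hand side directly from (\ref{action on vpx}): for $i \in X$ we have $(g\cdot F)(i) = \sigma(g,g^{-1}\cdot i)F(g^{-1}\cdot i)$, which vanishes unless $g^{-1}\cdot i = j$, that is unless $i = g\cdot j$; and for $i = g\cdot j$ it equals $\sigma(g,j)f$. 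Hence $g\cdot F \in W_{g\cdot j}$ with $(g\cdot F)(g\cdot j) = \sigma(g,j)f$, so $\Theta(g\cdot F) = \theta^{g\cdot j}_{\sigma(g,j)f}$, which matches the left-hand side exactly. Summing over $j$ yields the claim for arbitrary $F \in V_p^X$.

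I do not expect any genuine obstacle here: once Theorem \ref{prop equivariance} is available, the corollary is a purely formal consequence of the fact that $\Theta$ intertwines the coordinatewise $\Lambda$-action on $V_p^X$ built from the cocycle $\sigma$ via Proposition \ref{prop cocycle defines a representation} with the $\Lambda$-action $g \mapsto [\tilde{g}]_{\frac{d}{2},u}$ on $\Omega(\pazocal{H})$. The only points requiring (minimal) care are the linearity of the operator $[\tilde{g}]_{\frac{d}{2},u}$ — immediate from its definition — and the index bookkeeping in (\ref{action on vpx}), in particular keeping track that the cocycle is evaluated at $g^{-1}\cdot i$ rather than at $i$, so that the support of $g\cdot F$ lands precisely at $g\cdot j$.
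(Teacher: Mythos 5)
Your proposal is correct and is essentially the paper's own argument: the paper computes $[\tilde{g}]_{\frac{d}{2},u}\cdot\Theta(F)$ and $\Theta(g\cdot F)$ directly by summing Theorem \ref{prop equivariance} over $j\in X$ and reindexing $j\mapsto g\cdot j$, which is exactly your single-support computation assembled by linearity over the decomposition $V_p^X=\oplus_{j\in X}W_j$. No gaps.
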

\begin{proof}
Indeed, for any $g \in \Lambda$
    \begin{align*}
    [\tilde{g}]_{\frac{d}{2},u} \cdot \Theta(F) &= \sum_{j \in X} [\tilde{g}]_{\frac{d}{2},u}\cdot \theta^j_{F(j)} = \sum_{j \in X} \theta^{g \cdot j}_{\sigma(g,j)F(j)},\\
    \Theta(g \cdot F) &= \sum_{j \in X} \theta^j_{(g \cdot F)(j)} = \sum_{j \in X} \theta^j_{\sigma (g,g^{-1}\cdot j)F(g^{-1}\cdot j)}\\
    &= \sum_{j \in X} \theta^{g\cdot j}_{\sigma(g,j)F(j)}.
\end{align*}
Hence $[\tilde{g}]_{\frac{d}{2},u} \cdot \Theta(F)= \Theta(g \cdot F)$.
\end{proof}
We now begin proving Theorem \ref{prop equivariance}. Let $p$ be an odd prime and $f \in V_p$, let
\begin{equation}\label{operatorL}
    L: V_p \to V_p, \quad f(x) \mapsto e^{-2\pi i Q(x,x)\frac{1}{p}} f(x)
\end{equation}
be the multiplication operator by the Gaussian in $\GL(V_p)$, and for every $1 \leq j \leq p-1$ let
\begin{equation}\label{operatorsj}
    S_j: V_p \to V_p, \quad f(x) \mapsto f(jx)
\end{equation}
be the parameter multiplication operator by $j$ in $\GL(V_p)$.


For $d \in \N^*$, recall $u$ is a solution of the equation $\rbra{\frac{i}{2}}^{\frac{d}{2}} = \rbra{\frac{1}{4}}^u$. For every $1 \leq j \leq p-1$, we define $j'$ to be the unique element in $\{1,\ldots,p-1\}$ such that $4jj'+1 \in p\Z$, and we define $k_j := \frac{4jj'+1}{p} \in \Z$. We obtain the following result.
\begin{prop}\label{proposition transformation theta general}
Let $d \in \N^*$, $p$ be an odd prime and $f \in V_p$. The action of $[\tilde{\alpha}]$ and $[\tilde{\gamma}]_{\frac{d}{2},u}$ on the family of weighted functions $\theta_f^j$ for $j \in X$ can be described in Table \ref{table 1}.
\begin{table}[h]
\setlength{\tabcolsep}{10pt} 
\begin{threeparttable}
\captionof{table}{The action of $[\tilde{\alpha}]$ and $[\tilde{\gamma}]_{\frac{d}{2},u}$ on $\theta_f^j$ for $j \in X$}\label{table 1}
\renewcommand{\arraystretch}{1.5}
    \begin{tabular}{l|l|l}
    \toprule
        $j=0$&\multirow{2}{*}{$[\tilde{\alpha}] \cdot \theta_f^j = \theta_f^{j+1}$}&$[\tilde{\gamma}]_{\frac{d}{2},u} \cdot \theta^j_{f}= \theta_{f}^{\infty}$\\
        \cmidrule{1-1}\cmidrule{3-3}
        $1\leq j \leq p-2$&&\multirow{2}{*}{$ [\tilde{\gamma}]_{\frac{d}{2},u} \cdot \theta_{f}^{j} = \theta^{j'}_{S_{-2j'}L^{-k_jj}f}$}\\
        \cmidrule{1-2}
        $j=p-1$&$ [\tilde{\alpha}] \cdot \theta_f^j = \theta_{L f}^0 $& \\
        \midrule
        $j=\infty$&$[\tilde{\alpha}] \cdot \theta_f^j = \theta_f^\infty$&$[\tilde{\gamma}]_{\frac{d}{2},u} \cdot \theta^j_f = \theta^0_{f}$\\
        \bottomrule
    \end{tabular}
\end{threeparttable}
\end{table}
\end{prop}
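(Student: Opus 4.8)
The plan is to verify each entry of Table~\ref{table 1} by direct computation on the Fourier series, reducing everything to the action of the generators $\tilde{\alpha}$ and $\tilde{\gamma}$ on the individual functions $\theta_f^j$. The translation entries (the column for $[\tilde{\alpha}]$) are essentially bookkeeping: since $[\tilde{\alpha}] \cdot \psi(\tau) = \psi(\tau-1)$, applying this to $\theta_f^j(\tau) = \sum_{x \in \Z^d} f(x[p]) e^{2\pi i Q(x,x)(\tau-j)/p^2}$ for $0 \le j \le p-2$ just shifts the parameter $j$ to $j+1$, giving $\theta_f^{j+1}$ directly from the definition. For $j = p-1$ one gets $e^{2\pi i Q(x,x)(\tau-p)/p^2}$; writing $\tau - p = (\tau)$ shifted and extracting the factor $e^{-2\pi i Q(x,x)/p}$, which is exactly the multiplication operator $L$ from (\ref{operatorL}) acting on the coefficient function (note $Q(x,x) \bmod p$ depends only on $x[p]$), yields $\theta_{Lf}^0$. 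The case $j = \infty$ is immediate since $\theta_f^\infty$ is $1$-periodic in $\tau$.

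For the $[\tilde{\gamma}]_{\frac{d}{2},u}$ column, the entries $j = 0 \mapsto \infty$ and $j = \infty \mapsto 0$ are precisely the content of Proposition~\ref{lemma poisson infinity} (the Poisson summation identity $\theta_f^\infty(\tau) = (i/2\tau)^{d/2}\theta_f^0(-1/4\tau)$ and its symmetric form), rewritten in operator language using $\bar h(\gamma,\tau) = \sqrt{-4\tau}$ and the choice of $u$ solving $(i/2)^{d/2} = (1/4)^u$ — exactly as was done in the $p=2$ case in Proposition~\ref{prop transform p=2}. The genuinely new computation is the middle entry, $[\tilde{\gamma}]_{\frac{d}{2},u} \cdot \theta_f^j = \theta_{S_{-2j'}L^{-k_j j}f}^{j'}$ for $1 \le j \le p-2$. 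Here I would proceed as follows: by definition $\gamma \cdot \tau = -1/(4\tau)$, so
\begin{align*}
\bigl([\tilde{\gamma}]_{\frac{d}{2},u} \cdot \theta_f^j\bigr)(\tau) = \bar h(\gamma^{-1},\tau)^{-d}\,4^{-u}\,\theta_f^j\!\left(-\tfrac{1}{4\tau}\right) = \tau^{-d/2}4^{-u}\sum_{x \in \Z^d} f(x[p])\, e^{2\pi i Q(x,x)\frac{1}{p^2}(-\frac{1}{4\tau}-j)}.
\end{align*}
The exponent $-\frac{1}{4\tau p^2} - \frac{j}{p^2}$ must be massaged into the form $\frac{1}{p^2}\bigl(\frac{\tau - j'}{?}\bigr)$-type so that a further Poisson summation (again via Proposition~\ref{lemma poisson infinity} applied in the right variable) converts the dual lattice sum back into a $\theta$-series with parameter $j'$. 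The key arithmetic fact is the relation $4jj' + 1 = p k_j$, which lets one write $-j \equiv \frac{1}{4j'}\pmod{\text{the relevant denominator}}$ after clearing: splitting off the Gaussian twist produces the factor $L^{-k_j j}$ on the coefficient function, and the rescaling of the summation variable by $-2j'$ (to absorb the $4j'$ coming from $1/(4j')$) produces the operator $S_{-2j'}$. The constants $\tau^{-d/2}4^{-u}$ combine with the Jacobi-symbol/$\varepsilon$-factors from $\bar h$ to give exactly the half-integral weight factor, using that $(i/2)^{d/2} = (1/4)^u$.

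The main obstacle will be the middle entry: carefully tracking the square roots and the normalization constant $4^{-u}$ through the two successive applications of Poisson summation, and verifying that the composition of the linear change of variables $x \mapsto -2j' x$ on $\Z^d$ (which is a bijection on $(\Z/p\Z)^d$ since $p$ is odd and $-2j'$ is invertible mod $p$) with the Gaussian twist $e^{-2\pi i Q(x,x)k_j j/p}$ reproduces exactly $S_{-2j'} L^{-k_j j}$ — in particular checking that $Q(-2j'x, -2j'x) = 4 j'^2 Q(x,x)$ interacts correctly with $k_j$ and $j$ modulo $p$ so that the exponents match. One also has to confirm $\theta_f^j$ is preserved inside $\Omega(\pazocal H)$ and that the identity of holomorphic functions, once verified on a ray $\tau = ir$, extends to all of $\pazocal H$ by analytic continuation. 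Everything else is either a direct restatement of Proposition~\ref{lemma poisson infinity} or elementary manipulation of the defining Fourier series.
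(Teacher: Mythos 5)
Your plan follows essentially the same route as the paper: the $[\tilde{\alpha}]$ column and the $j=0,\infty$ entries for $[\tilde{\gamma}]_{\frac{d}{2},u}$ are handled exactly as you describe, and for the middle entry you have correctly identified the key relation $4jj'+1=pk_j$ and the provenance of the operators $S_{-2j'}$ (rescaling of the summation variable) and $L^{-k_jj}$ (the Gaussian twist). One precision on the step you flag as the main obstacle: Proposition~\ref{lemma poisson infinity} by itself does not apply to the twisted series; the paper instead regards $\theta_f^j$ as a theta series with coefficient function $M_jf(x)=e^{-2\pi i Q(x,x)j/p^2}f(x)$ on $(\Z/p^2\Z)^d$ and applies Corollary~\ref{corollary poisson} at modulus $p^2$. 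The "massaging" then reduces to one finite Fourier transform: writing $x=k+pl$, the inner sum $\sum_{l\in\Z/p\Z}e^{-2\pi i\, l(2jk+\xi)/p}$ vanishes unless $2jk+\xi\equiv 0\ [p]$, which forces $k\equiv 2j'\xi\ [p]$ and yields $\mathcal{F}_{(\Z/p^2\Z)^d}(M_jf)=p^d\,M_{j'}L^{k_jj'}S_{2j'}f$; a single Poisson summation then gives $\theta_f^j(\tau)=(i/2\tau)^{d/2}\theta^{j'}_{L^{k_jj'}S_{2j'}f}(-1/4\tau)$, and the table entry follows by inverting and using that $j\mapsto j'$ is an involution. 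So there is only one Poisson summation here, not two, and your claimed identity is correct once that finite Fourier transform is carried out.
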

\begin{proof}
We can readily compute that for $\tau \in \pazocal{H}$ and $0\leq j \leq p-2$, we have
\begin{align}
    \rbra{[\tilde{\alpha}] \cdot \theta_f^j}(\tau) &= \theta_f^j(\tau-1)\nonumber\\
    &=\theta_f^{j+1}(\tau)
\end{align}
and for $j = p-1$, we have
\begin{align}
    \rbra{[\tilde{\alpha}] \cdot \theta_f^{p-1}}(\tau) &= \sum_{x \in \Z^d} f(x[p]) e^{2\pi i Q(x,x) \frac{\tau -p}{p^2}}\nonumber\\
    &= \sum_{x \in \Z^d} (L f)(x[p]) e^{2\pi i Q(x,x) \frac{\tau}{p^2}} = \theta_{L f}^0 (\tau).
\end{align}
Finally for $j = \infty$, we have
\begin{align}
    \rbra{[\tilde{\alpha}] \cdot \theta_f^\infty}(\tau) = \theta_f^\infty(\tau).
\end{align}
Therefore, the first column of Table \ref{table 1} is true. Next we compute the second column. By Proposition \ref{prop transform p=2}, we know that the first and last line are true. For $1\leq j \leq p-1$, the expression (\ref{thetafunction1}) can be rewritten as
\begin{align*}
    \theta_f^j(\tau) = \sum_{x \in \Z^d} (M_j f)(x[p^2]) e^{2\pi i Q(x,x) \frac{\tau}{p^2}},
\end{align*}
where we have denoted by $M_j: V_p \to V_{p^2}$ the linear operator such that for $f \in V_p$ and $x \in (\Z/p^2\Z)^d$, we have
\begin{equation}
   M_jf(x) = e^{-2\pi i Q(x,x)\frac{j}{p^2}} f(x).
\end{equation}
We will apply Poisson summation formula to $\theta_f^j$. To this aim, we first compute the Fourier transform of $M_jf$. Take $d=1$. For any $f \in V_p$, the Fourier transform of $M_j f$ is given by
\begin{align*}
    \mathcal{F}_{\Z/p^2\Z}(M_j f)(\xi) &= \sum_{x \in \Z/p^2\Z} e^{-2\pi i Q(x,x)\frac{j}{p^2}} f(x) e^{-2\pi i Q(x,\xi)\frac{1}{p^2}}\\
    &=\sum_{k =0}^{p-1} f(k) \sum_{l \in \Z/p\Z}e^{-2\pi i  \frac{1}{p^2}\sbra{Q(k+pl,k+pl)j+Q(k+pl,\xi)}}\\
    &= \sum_{k =0}^{p-1} f(k) e^{-2\pi i \frac{1}{p^2}\sbra{Q(k,k)j+Q(k,\xi)}}\sum_{l \in \Z/p\Z} e^{-2\pi i \frac{1}{p}Q(l,2jk+\xi)}.
\end{align*}
The second summation is nonzero only when $2jk + \xi \in p \Z$. In this case, $k=2j' \xi[p]$ where $j'$ is the unique element in $\{1,\ldots,p-1\}$ such that $4jj'+1 \in p \Z$. Then, the exponential term in the first summation gives
\begin{align*}
   e^{-2\pi i \frac{1}{p^2}\sbra{Q(k,k)j+Q(k,\xi)}}&= e^{-2\pi i \frac{1}{p^2}\sbra{4jj'^2Q(\xi,\xi)+2j'Q(\xi,\xi)}}= e^{-2\pi i \frac{1}{p^2}\sbra{(4jj'+1)j'+j'}Q(\xi,\xi)}.
\end{align*}
Written in terms of operators $S_j$ and $L$ we have
\begin{align*}
    \mathcal{F}_{\Z/p^2\Z}(M_j f)(\xi) &= p f(2j' \xi)e^{-2\pi i \frac{j'}{p^2}Q(\xi,\xi)}e^{-2\pi i \frac{(4jj'+1)j'}{p^2}Q(\xi,\xi)}= p (M_{j'}L^{k_jj'}S_{2j'}f)(\xi),
\end{align*}
where we recall that $k_j = \frac{4jj'+1}{p} \in \Z$. By Lemma \ref{lemma gaussian}(2), we conclude that for any $d\geq 1$, the following holds
\begin{align*}
    \mathcal{F}_{(\Z/p^2\Z)^d}(M_j f)(\xi) = p^d (M_{j'}L^{k_jj'}S_{2j'}f)(\xi).
\end{align*}
Back to our goal using Poisson summation, by Corollary \ref{corollary poisson} and the Fourier transform formula (\ref{fourier r,zp}), (\ref{fourier transform gaussian}) we have
\begin{align*}
    \theta^j_f(\tau) &= \rbra{\frac{i}{2 \frac{\tau}{p^2}}}^{\frac{d}{2}} \frac{1}{p^{2d}} \sum_{\xi \in \Z^d} \mathcal{F}_{(\Z/p^2\Z)^d}(M_j f)(-\xi[p^2]) e^{2\pi i Q(\xi,\xi)\frac{1}{p^4} \rbra{-\frac{p^2}{4\tau}}}\\
    &= \rbra{\frac{i}{2\tau}}^{\frac{d}{2}} \sum_{\xi \in \Z^d}(M_{j'}L^{k_jj'}S_{2j'}f)(\xi[p]) e^{2\pi i Q(\xi,\xi)\frac{1}{p^2} \rbra{-\frac{1}{4\tau}}}\\
    &= \rbra{\frac{i}{2\tau}}^{\frac{d}{2}} \theta_{L^{k_jj'}S_{2j'}f}^{j'}\rbra{-\frac{1}{4\tau}}.
\end{align*}
Equivalently,
\begin{align*}
    \rbra{[\tilde{\gamma}]_{\frac{d}{2},u} \cdot \theta_{L^{k_jj'}S_{2j'}f}^{j'}}(\tau) = \theta^j_f(\tau),
\end{align*}
and since $j \mapsto j'$ is an involution on $\{1,\ldots,p-1\}$, we get
\begin{align}
    \rbra{[\tilde{\gamma}]_{\frac{d}{2},u} \cdot \theta_{f}^{j}}(\tau) &= \theta^{j'}_{\sbra{L^{k_jj}S_{2j}}^{-1}f}(\tau)=\theta^{j'}_{S_{-2j'}L^{-k_jj}f}(\tau).
    \end{align}
\end{proof}

Table \ref{table 1} suggests that for any odd prime $p$, there should exist a cocycle $\sigma: \Lambda \times \mathbb{P}_{\mathbb{F}_p}^1 \to \GL(V_p)$ such that (\ref{action of g on theta}) holds. In Table \ref{table3} we rewrite what the values of $\sigma$ should be for $\alpha, \gamma$.

Let $\Lambda_{\infty}= \bbra{ g \in \Lambda : g \cdot \infty = \infty}$ be the stabilizer of $\infty$, viewed as an element of $\mathbb{P}^1_{\mathbb{F}_p}$, in $\Lambda$. Let us define
\begin{equation}
    \Delta := \bbra{g =\begin{bmatrix}
        a&b\\c&d
    \end{bmatrix}: a,b,d \in \Z[\tfrac{1}{2}], c \in p\Z[\tfrac{1}{2}], \det g \in 4^{\Z}},
\end{equation}
which is a subgroup of $\GL_2(\Z[\frac{1}{2}])$. We can give a description of the subgroup $\Lambda_{\infty}$ as follows.
\begin{lemma}\label{lemma description of lambda infty}
   For any odd prime $p$, the subgroup $\Lambda_{\infty}$ is the intersection of two groups $\Lambda$ and $\Delta$, i.e. $\Lambda_{\infty} = \Lambda \cap \Delta$.
\end{lemma}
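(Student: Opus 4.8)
The plan is to reduce both sides of the claimed equality to one and the same condition on the lower-left matrix entry of an element of $\Lambda$. First I would unwind $\Lambda_{\infty}$: by construction the action of $g = \begin{bmatrix} a & b \\ c & d \end{bmatrix} \in \Lambda \subset \GL_2(\Z[\tfrac{1}{2}])$ on $\mathbb{P}^1_{\mathbb{F}_p}$ is the natural action of $\omega(g) \in \GL_2(\mathbb{F}_p)$, whose matrix entries are the images of $a,b,c,d$ under the ring homomorphism $\omega'' : \Z[\tfrac{1}{2}] \to \Z/p\Z$. Since $2$ is invertible modulo $p$, the kernel of $\omega''$ is exactly $p\Z[\tfrac{1}{2}]$ (from $\omega''(n/2^k) = 0$ one gets $\omega'(n) = 0$, i.e.\ $p \mid n$). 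By the description of the $\GL_2(\mathbb{F}_p)$-action on $\mathbb{P}^1_{\mathbb{F}_p}$ recorded before Definition~\ref{cusp definition}, we have $\omega(g) \cdot \infty = \infty$ if and only if the lower-left entry of $\omega(g)$ vanishes, i.e.\ $\omega''(c) = 0$, i.e.\ $c \in p\Z[\tfrac{1}{2}]$. Hence $\Lambda_{\infty} = \{ g \in \Lambda : c \in p\Z[\tfrac{1}{2}] \}$.

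Next I would observe that the remaining two defining conditions of $\Delta$ hold automatically on all of $\Lambda$. The condition $a,b,c,d \in \Z[\tfrac{1}{2}]$ is immediate from $\Lambda \subset \GL_2(\Z[\tfrac{1}{2}])$. For the determinant condition, $\det$ is multiplicative with $\det\alpha = 1$ and $\det\gamma = 4$, hence $\det\alpha^{\pm1} = 1$ and $\det\gamma^{\pm1} = 4^{\pm1}$, so $\det g \in 4^{\Z}$ for every $g \in \Lambda = \langle\alpha,\gamma\rangle$. Consequently, for $g \in \Lambda$ we have $g \in \Delta$ if and only if its lower-left entry lies in $p\Z[\tfrac{1}{2}]$; by the previous paragraph this is exactly the condition $g \in \Lambda_{\infty}$, which yields $\Lambda_{\infty} = \Lambda \cap \Delta$. (That $\Delta$ is itself a group, as asserted, can be checked directly: $\det$ is multiplicative, $(g_1 g_2)_{21} = c_1 a_2 + d_1 c_2 \in p\Z[\tfrac{1}{2}]$ when $c_1,c_2 \in p\Z[\tfrac{1}{2}]$, and $g^{-1} = (\det g)^{-1}\begin{bmatrix} d & -b \\ -c & a \end{bmatrix}$ has lower-left entry $-c/\det g \in p\Z[\tfrac{1}{2}]$ and determinant $(\det g)^{-1} \in 4^{\Z}$, with all entries remaining in $\Z[\tfrac{1}{2}]$ since $\det g$ is a unit there.)

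I expect no serious obstacle here: this is a bookkeeping identity matching an intrinsic description of the point stabilizer $\Lambda_{\infty}$ with the matrix-entry description of $\Delta$. The only points that need a little care are the identification $\ker(\omega'') = p\Z[\tfrac{1}{2}]$, used to translate ``$\omega(g)$ fixes $\infty$'' into a condition on $c$, and noticing that the exponent set in the determinant condition has to be $4^{\Z}$ rather than $4^{\N}$ — this is forced by $\gamma^{-1} \in \Lambda$, which has determinant $4^{-1}$.
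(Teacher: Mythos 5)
Your proof is correct and follows essentially the same route as the paper's: both directions reduce to observing that $\det g\in 4^{\Z}$ holds automatically on $\Lambda=\langle\alpha,\gamma\rangle$ by multiplicativity of the determinant, and that $g\cdot\infty=\infty$ is equivalent to the lower-left entry lying in $p\Z[\tfrac12]$. Your explicit identification $\ker(\omega'')=p\Z[\tfrac12]$ and the verification that $\Delta$ is a group are details the paper leaves implicit, but they do not change the argument.
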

\begin{proof}
We first suppose that $g \in \Lambda \cap \Delta$, then $g =\begin{bsmallmatrix}
        a&b\\c&d
    \end{bsmallmatrix} \in \Lambda$ such that $a,b,d \in \Z[\tfrac{1}{2}], c \in p\Z[\tfrac{1}{2}], \det g \in 4^{\Z}$. It follows that $g \cdot \infty = \infty$, so $g \in \Lambda_{\infty}$. Conversely, we suppose $g \in \Lambda_{\infty}$, this means $g \in \Lambda$ such that $g \cdot \infty = \infty$. Since $\Lambda$ is generated by $\alpha,\gamma$ and $\det \alpha = 1$, $\det \gamma = 4$, we get $\det g \in 4^{\Z}$. Besides, $g \cdot \infty = \infty$ implies that $ g = \begin{bsmallmatrix}
        a&b\\c&d
    \end{bsmallmatrix} \in \GL_2(\Z[\frac{1}{2}])$ such that $a,b,d \in \Z[\tfrac{1}{2}], c \in p\Z[\tfrac{1}{2}]$, so $g\in \Delta$. Hence we have $g \in \Lambda \cap \Delta$.
\end{proof}
We claim the refined version of Theorem \ref{prop equivariance} as follows.
\begin{thm}\label{prop theta tilde}
    Let $d \in \N^*$ and $p$ be an odd prime. There exists a cocycle $\tilde{\varphi}: \Lambda \times \mathbb{P}_{\mathbb{F}_p}^1 \to \Delta$ and a linear representation $\pi_{\Delta}: \Delta \to \GL(V_p)$, so that the composition of $\pi_{\Delta}$ and $\tilde{\varphi}$ defines another cocycle $\sigma: \Lambda \times \mathbb{P}_{\mathbb{F}_p}^1 \to \GL(V_p)$, and there exists $u \in \C$ such that for any $g \in \Lambda$, $j \in \mathbb{P}_{\mathbb{F}_p}^1$ and $f \in V_p$ the equality (\ref{action of g on theta}) in Theorem \ref{prop equivariance} holds, with the values of $\sigma$, for $\alpha, \gamma$, aligned with those in Table \ref{table3}.
\end{thm}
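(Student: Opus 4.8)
The plan is to construct the maps $\tilde\varphi$ and $\pi_\Delta$ separately and set $\sigma:=\pi_\Delta\circ\tilde\varphi$: once $\tilde\varphi$ is a cocycle with values in $\Delta$ and $\pi_\Delta$ is a homomorphism, the cocycle identity for $\sigma$ is immediate, so the real work is (a) producing $\tilde\varphi$, (b) producing $\pi_\Delta$, and (c) checking that the resulting $\sigma$ matches Table \ref{table3} on the generators $\alpha,\gamma$ of $\Lambda$.

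For (a) I would use that $\Lambda$ acts transitively on $X=\mathbb P^1_{\mathbb F_p}$ (through $\omega$) with $\gamma\cdot\infty=0$ and $\alpha^j\cdot 0=j$, and fix the section $s\colon X\to\Lambda$, $s(\infty)=I$, $s(j)=\alpha^j\gamma$ for $0\le j\le p-1$, so that $s(j)\cdot\infty=j$. Setting $\tilde\varphi(g,j):=s(g\cdot j)^{-1}g\,s(j)$, this element stabilises $\infty$ and lies in $\Lambda$, hence in $\Lambda_\infty=\Lambda\cap\Delta$ by Lemma \ref{lemma description of lambda infty}; composing with the inclusion $\Lambda_\infty\hookrightarrow\Delta$ gives $\tilde\varphi\colon\Lambda\times X\to\Delta$, and $\tilde\varphi(g_1g_2,j)=\tilde\varphi(g_1,g_2\cdot j)\tilde\varphi(g_2,j)$ follows by inserting $s(g_2\cdot j)\,s(g_2\cdot j)^{-1}$ and telescoping. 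A direct matrix computation (using that $\gamma^2=-4I$ is central, so $\gamma^{-1}\alpha^n\gamma=\beta^n$, and that $\gamma\cdot j=j'$ in $\mathbb F_p$) then gives the explicit values
\begin{gather*}
\tilde\varphi(\alpha,j)=I\ (0\le j\le p-2),\qquad \tilde\varphi(\alpha,p-1)=\beta^p,\qquad \tilde\varphi(\alpha,\infty)=\alpha,\\
\tilde\varphi(\gamma,0)=\gamma^2=-4I,\qquad \tilde\varphi(\gamma,\infty)=I,\qquad \tilde\varphi(\gamma,j)=\begin{bsmallmatrix}-4j&1\\-4k_jp&4j'\end{bsmallmatrix}\ (1\le j\le p-1),
\end{gather*}
and one checks that each of these matrices lies in $\Delta$ (the last has determinant $4$ and lower-left entry $-4k_jp\in p\Z[\tfrac12]$).

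For (b) I would note that $\GL(V_p)$ contains the subgroup $N_p$ generated by the Gaussian multiplication $L$ of (\ref{operatorL}), of order $p$, and the dilations $S_m$ of (\ref{operatorsj}) for $m\in(\Z/p\Z)^\times$; since $S_mS_{m'}=S_{mm'}$ and $S_mL^kS_m^{-1}=L^{m^2k}$, one has $N_p\cong\Z/p\Z\rtimes(\Z/p\Z)^\times$ with $(\Z/p\Z)^\times$ acting through squares. The idea is to define $\pi_\Delta\colon\Delta\to N_p\subset\GL(V_p)$ so that for $g=\begin{bsmallmatrix}a&b\\c&d\end{bsmallmatrix}\in\Delta$ the dilation part of $\pi_\Delta(g)$ is governed by $a$ modulo $p$ (equivalently by $d$, since $\det g\in 4^{\Z}$) and the $L$-part by the quadratic data of $(a,b,c)$ modulo $p^2$; in particular this forces $\pi_\Delta(\alpha)=\mathrm{id}$, $\pi_\Delta(-4I)=\mathrm{id}$, $\pi_\Delta(\beta^p)=L$, and $\pi_\Delta\big(\begin{bsmallmatrix}-4j&1\\-4k_jp&4j'\end{bsmallmatrix}\big)=S_{-2j'}L^{-k_jj}$. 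The main point of (b) — and, I expect, the \emph{hardest part of the whole statement} — is to check that this assignment respects the relations of $\Delta$; this is exactly where the three conditions defining $\Delta$ are all needed: $c\in p\Z[\tfrac12]$ makes the $L$-exponent an integer depending only on $c/p\bmod p$, the $\Z[\tfrac12]$-integrality of $a,b,d$ keeps $p$ out of denominators, and $\det g\in 4^{\Z}$ makes the square-class of the dilation parameter well defined. Once $\pi_\Delta$ is known to be a homomorphism, $\sigma:=\pi_\Delta\circ\tilde\varphi$ is automatically a cocycle.

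For (c) I would apply $\pi_\Delta$ to the list in (a), obtaining $\sigma(\alpha,j)=\mathrm{id}$ for $0\le j\le p-2$, $\sigma(\alpha,p-1)=L$, $\sigma(\alpha,\infty)=\mathrm{id}$, $\sigma(\gamma,0)=\sigma(\gamma,\infty)=\mathrm{id}$, and $\sigma(\gamma,j)=S_{-2j'}L^{-k_jj}$, which are exactly the entries of Table \ref{table3} (equivalently Table \ref{table 1}); Proposition \ref{proposition transformation theta general} then says that (\ref{action of g on theta}) holds for $g=\alpha$ and $g=\gamma$ with the $u$ fixed there by $(\tfrac i2)^{d/2}=(\tfrac14)^u$. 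Finally, since $g\mapsto[\tilde g]_{\frac d2,u}$ is an action of $\Lambda$ (Proposition \ref{prop left action half integral}) and $\sigma$ is a cocycle, the identity (\ref{action of g on theta}) propagates from the two generators $\alpha,\gamma$ to all of $\Lambda$, exactly as explained in the remark following Theorem \ref{prop equivariance}. Steps (a) and (c) are essentially bookkeeping resting on Lemma \ref{lemma description of lambda infty} and Proposition \ref{proposition transformation theta general}; the genuinely delicate input is the construction of $\pi_\Delta$ in (b).
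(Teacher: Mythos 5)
Your steps (a) and (c) follow the paper's architecture exactly: the same section $j\mapsto\alpha^j\gamma$, $\infty\mapsto 1$, the same telescoping verification of the cocycle identity, the same generator values, and the same propagation of (\ref{action of g on theta}) from $\alpha,\gamma$ to all of $\Lambda$ via Proposition \ref{proposition transformation theta general} and the cocycle property. The problem is step (b), which you yourself flag as ``the hardest part of the whole statement'' and then do not carry out: you never define $\pi_\Delta$ on a general element of $\Delta$, and you never verify that your assignment is a homomorphism. Since the existence of the linear representation $\pi_\Delta$ is precisely the content that makes $\sigma=\pi_\Delta\circ\tilde\varphi$ a cocycle rather than an ad hoc list of operators, this is a genuine gap, not bookkeeping.

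Moreover, your normalization makes (b) strictly harder than it needs to be. The paper does \emph{not} take $\tilde\varphi=\varphi$ followed by the inclusion $\Lambda_\infty\hookrightarrow\Delta$; it first conjugates by $\pazocal{P}=\begin{bsmallmatrix}0&1\\p&0\end{bsmallmatrix}$, setting $\tilde\varphi(g,j):=\pazocal{P}\varphi(g,j)\pazocal{P}^{-1}$ (so, e.g., $\tilde\varphi(\alpha,p-1)=\begin{bsmallmatrix}1&-4\\0&1\end{bsmallmatrix}$ rather than $\beta^p$). After this conjugation the element that must act by $L$ has a unit in position $(1,2)$ modulo $p$, and $\pi_\Delta$ becomes the composite of three honest homomorphisms: $g\mapsto 2^{-\xi(g)}g$ (using $\det g\in 4^{\Z}$), reduction $\omega$ into the Borel subgroup $P=A\ltimes U$ of $\SL_2(\mathbb{F}_p)$ (using $c\in p\Z[\tfrac12]$), and the representation $\rho$ of $P$ built in Proposition \ref{representation P} from the single relation $S_jLS_j^{-1}=L^{j^2}$. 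In your un-conjugated setup this route is unavailable: $\beta^p=\begin{bsmallmatrix}1&0\\-4p&1\end{bsmallmatrix}\equiv I\ [p]$ must map to $L\neq 1$, so no representation factoring through reduction mod $p$ can work, and your proposed dependence on ``quadratic data of $(a,b,c)$ modulo $p^2$'' would have to be made precise and checked against all relations of $\Delta$ by hand. (Such a homomorphism does exist --- it is $\pi_\Delta\circ\mathrm{Ad}_{\pazocal{P}}$ for the paper's $\pi_\Delta$ --- but producing it is exactly the point of the theorem, and you have not done so.) To repair the proof along the paper's lines, insert the conjugation by $\pazocal{P}$ into the definition of $\tilde\varphi$ and then define $\pi_\Delta=\rho\circ\omega\circ(g\mapsto 2^{-\xi(g)}g)$; your computations in (a) and (c) then go through after conjugating each matrix, and they do reproduce Table \ref{table3}.
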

\begin{table}[h]
\setlength{\tabcolsep}{10pt} 
\renewcommand{\arraystretch}{1.5}
\begin{minipage}{.5\linewidth}
\begin{threeparttable}
\captionof{table}{The values of $\sigma(g,j) \in \GL(V_p)$ for $\alpha,\gamma$ and $j \in X$}\label{table3}
    \begin{tabular}{l|l|l}
    \toprule
    &$g=\alpha$&$g=\gamma$\\
    \midrule
        $j=0$&\multirow{2}{*}{1}&$1$\\
        \cmidrule{1-1}\cmidrule{3-3}
        $1\leq j \leq p-2$&&\multirow{2}{*}{$S_{-2j'}L^{-k_jj}$}\\
        \cmidrule{1-2}
        $j=p-1$&$L$& \\
        \midrule
        $j=\infty$&$1$&$1$\\
        \bottomrule
    \end{tabular}
    \end{threeparttable}
    \end{minipage}
  \begin{minipage}{.4\linewidth}
  \begin{threeparttable}
   \captionof{table}{The values of $\tilde{\varphi}(g,j)\in \Delta$ for $\alpha,\gamma$ and $j \in X$}\label{table4}
    \begin{tabular}{l|l|l}
    \toprule
    &$g=\alpha$&$g=\gamma$\\
    \midrule
        $j=0$&\multirow{2}{*}{1}&$\begin{bsmallmatrix}
         -4&0\\0&-4
        \end{bsmallmatrix}$\\
        \cmidrule{1-1}\cmidrule{3-3}
        $1\leq j \leq p-2$&&\multirow{2}{*}{$\begin{bsmallmatrix}
         4j'&-4k_j\\p&-4j
        \end{bsmallmatrix}$}\\
        \cmidrule{1-2}
        $j=p-1$&$\begin{bsmallmatrix}
         1&-4\\0&1
        \end{bsmallmatrix}$& \\
        \midrule
        $j=\infty$&$\begin{bsmallmatrix}
         1&0\\p&1
        \end{bsmallmatrix}$&$1$\\
        \bottomrule
    \end{tabular}
    \end{threeparttable}
    \end{minipage}
\end{table}

For any odd prime $p$, we first build the linear representation $\pi_{\Delta}: \Delta \to \GL(V_p)$. We define a homomorphism $\xi : \Delta \to \Z$ as follows: for every $g \in \Delta$, let $\xi(g)$ be a unique integer such that $4^{\xi(g)} = \det g$. Then the element $\omega(2^{-\xi(g)}g)$ of $\SL_2(\mathbb{F}_p)$ belongs to the group $P$ defined by
\begin{align}
    P &:= \bbra{\begin{bmatrix}
       j&i\\0&j^{-1}
    \end{bmatrix}:j \in \mathbb{F}_p^*, i\in\mathbb{F}_p}= A \ltimes U,
\end{align}
which is the semidirect product of $A$ by $U$, where
\begin{align*}
    A &:= \bbra{\begin{bmatrix}
        j&0\\0&j^{-1}
    \end{bmatrix}: j \in \mathbb{F}_p^*}, U :=\bbra{\begin{bmatrix}
       1&i\\0&1
    \end{bmatrix}: i \in \mathbb{F}_p} \simeq \mathbb{F}_p.
\end{align*}
This means that $P = AU$ and that $A \cap U = \{1\}$. We write the elements of $A$ as $a_j:=\begin{bsmallmatrix}
        j&0\\0&j^{-1}
    \end{bsmallmatrix}\text{ for $j \in \mathbb{F}_p^*$}$. The group $U$ is cyclic and is generated by $\omega(\alpha)$. The representation of $P$ can be constructed from those of $A$ and $U$.
\begin{prop}\label{representation P}
Let $p$ be an odd prime and $\iota_{-4}$ be the inverse of $-4$ in $\mathbb{F}_p$. For $j \in \mathbb{F}_p^*$ we have the following identity in $\GL(V_p)$,
\begin{equation}\label{semidirect relation}
    S_j L S_j^{-1} = L^{j^2}.
\end{equation}
In particular, there exists a unique representation $\rho: P \to \GL(V_p)$ such that
\begin{align}
    \rho(a_j) = S_j \quad \text{ for }j \in \mathbb{F}_p^*, \quad \rho\circ \omega(\alpha) = L^{\iota_{-4}}.
\end{align}
\begin{proof}
    For $f \in V_p$ and $x \in (\Z/p\Z)^d$, we can readily compute
\begin{align*}
    \rbra{S_jLf}(x) = \rbra{Lf}(jx) = e^{-2\pi i Q(x,x) \frac{j^2}{p}}f(jx) = e^{-2\pi i Q(x,x) \frac{j^2}{p}} \rbra{S_jf}(x) = \rbra{L^{j^2}S_j f}(x).
\end{align*}
Let us define the representation $\rho_1: A \to \GL(V_p)$ by $a_j \mapsto S_j$ and $\rho_2: U \to \GL(V_p)$ by $\omega(\alpha) \mapsto L^{\iota_{-4}}$. It follows that $\rho_1(a_j)\rho_2\circ \omega(\alpha)= S_jL^{\iota_{-4}}$ is equal to $$\rho_2(a_j\omega(\alpha) a_j^{-1}) \rho_1(a_j) = \rho_2\rbra{\begin{bsmallmatrix}
        1&j^2\\0&1
    \end{bsmallmatrix}} \rho_1(a_j) = L^{\iota_{-4}j^2}S_j,$$ as a result of (\ref{semidirect relation}). Thus for every element of $P$ which can uniquely be written as a product $a_j\omega(\alpha)^k$, for $j \in \mathbb{F}_p^*$ and $0 \leq k \leq p-1$, we define $\rho(a_j\omega(\alpha)^k):=\rho_1(a_j)\rho_2(\omega(\alpha)^k)$, as required.
\end{proof}
\end{prop}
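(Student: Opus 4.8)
The plan is to prove the proposition in two stages: first establish the commutation relation (\ref{semidirect relation}) by a direct computation on functions, and then use it as the compatibility condition that glues representations of $A$ and $U$ into a single representation of the semidirect product $P = A \ltimes U$.

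First I would verify $S_j L S_j^{-1} = L^{j^2}$ by evaluating on an arbitrary $f \in V_p$ at a point $x \in (\Z/p\Z)^d$. Since $Q$ is a quadratic form one has $Q(jx,jx) = j^2 Q(x,x)$, so that
\begin{align*}
    (S_j L f)(x) = (Lf)(jx) = e^{-2\pi i Q(jx,jx)/p} f(jx) = e^{-2\pi i j^2 Q(x,x)/p}(S_j f)(x) = (L^{j^2} S_j f)(x),
\end{align*}
giving $S_j L = L^{j^2} S_j$, which is (\ref{semidirect relation}). Along the way I would record the elementary facts that $S_j$ and $L$ preserve evenness (hence genuinely lie in $\GL(V_p)$, with $S_j^{-1} = S_{j^{-1}}$ and $L^{-1} = L^{p-1}$), that $S_j S_k = S_{jk}$, and that $L^p = \mathrm{id}$ because $e^{-2\pi i Q(x,x)} = 1$.

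Second, I would construct $\rho$ through the universal property of the semidirect product. Define $\rho_1 : A \to \GL(V_p)$ by $a_j \mapsto S_j$; the relation $S_j S_k = S_{jk}$ shows this is a homomorphism on $A \cong \mathbb{F}_p^*$. Define $\rho_2 : U \to \GL(V_p)$ on the cyclic group $U \cong \mathbb{F}_p$ by sending its generator $\omega(\alpha)$ to $L^{\iota_{-4}}$; this is well defined precisely because $L^p = \mathrm{id}$, so that $L^{\iota_{-4}}$ has order dividing $p = \lvert U \rvert$. The key compatibility to check is that conjugation intertwines the two assignments: in $P$ one computes $a_j \omega(\alpha) a_j^{-1} = \omega(\alpha)^{j^2}$, whence $\rho_2(a_j \omega(\alpha) a_j^{-1}) = L^{\iota_{-4} j^2}$, while on the other side (\ref{semidirect relation}) yields $\rho_1(a_j)\rho_2(\omega(\alpha))\rho_1(a_j)^{-1} = S_j L^{\iota_{-4}} S_j^{-1} = (L^{j^2})^{\iota_{-4}} = L^{\iota_{-4} j^2}$. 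Since these coincide, the pair $(\rho_1,\rho_2)$ respects the defining relations of $P = A \ltimes U$, and setting $\rho(a_j \omega(\alpha)^k) := S_j L^{\iota_{-4} k}$ defines a homomorphism. Uniqueness is immediate since $A$ together with $\omega(\alpha)$ generate $P$.

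The main obstacle is the compatibility verification in the second stage: one must confirm that $S_j$ conjugates the prescribed image $L^{\iota_{-4}}$ of the unipotent generator exactly as the $A$-action on $U$ dictates. This is where both the commutation relation (\ref{semidirect relation}) and the specific choice of exponent $\iota_{-4}$ (fixed so that $\rho \circ \omega(\alpha)$ reproduces the Gaussian multiplier recorded in Table \ref{table3}) enter essentially; everything else reduces to the routine operator identities for $S_j$ and $L$ assembled in the first stage.
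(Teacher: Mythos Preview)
Your proposal is correct and follows essentially the same approach as the paper: a direct computation of $(S_jLf)(x)$ to obtain the commutation relation, followed by defining homomorphisms $\rho_1$ on $A$ and $\rho_2$ on $U$ and checking the semidirect-product compatibility via $a_j\omega(\alpha)a_j^{-1}=\omega(\alpha)^{j^2}$. You are in fact slightly more thorough than the paper, explicitly recording that $S_j,L\in\GL(V_p)$, that $L^p=\mathrm{id}$, and noting uniqueness from the generating set.
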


Next, we will describe how to build the cocycle $\tilde{\varphi}: \Lambda \times \mathbb{P}_{\mathbb{F}_p}^1 \to \Delta$. We choose a section $S: \mathbb{P}^1_{\mathbb{F}_p} \to \Lambda$ of the surjection $\Lambda \to \Lambda / \Lambda_\infty \simeq \mathbb{P}^1_{\mathbb{F}_p}$ as follows,
\begin{align}\label{section}
    S: \mathbb{P}^1_{\mathbb{F}_p} &\to \Lambda\\\nonumber
    j &\mapsto \alpha^j \gamma \quad\text{ for }0\leq j \leq p-1,\\
    \infty&\mapsto 1.\nonumber
\end{align}
This defines a cocycle $\varphi: \Lambda \times \mathbb{P}^1_{\mathbb{F}_p} \to \Lambda_{\infty}$ by the formula $gS(j)=S(g \cdot j)\varphi(g,j)$ for $g \in \Lambda$ and $j \in \mathbb{P}^1_{\mathbb{F}_p}$. Denote by $\pazocal{P}$ the matrix $\begin{bsmallmatrix}0&1\\p&0\end{bsmallmatrix}$, then one can quickly verify that $\Delta$ is stable under conjugation by $\pazocal{P}$. We define $\tilde{\varphi}: \Lambda \times \mathbb{P}^1_{\mathbb{F}_p} \to \Delta$ by $\tilde{\varphi}(g,j):= \pazocal{P}\varphi(g,j)\pazocal{P}^{-1}$ for $g \in \Lambda$ and $j \in \mathbb{P}^1_{\mathbb{F}_p}$.
\begin{lemma}
Let $p$ be an odd prime. The map $\tilde{\varphi}: \Lambda \times \mathbb{P}^1_{\mathbb{F}_p} \to \Delta$ is a cocycle, with values, for generators $\alpha, \gamma$ summarized in Table \ref{table4}.
\end{lemma}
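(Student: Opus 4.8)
The plan is to deduce the cocycle property of $\tilde{\varphi}$ from that of $\varphi$, and then to read off the table entries by unwinding the defining relation $gS(j)=S(g\cdot j)\varphi(g,j)$ on the two generators $\alpha,\gamma$ and conjugating by $\pazocal{P}$.

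First I would record that $\varphi$ is a genuine cocycle taking values in $\Lambda_{\infty}$. Since $\gamma\cdot\infty=0$ and $\alpha^{j}\cdot 0=j$ in $\mathbb{P}^{1}_{\mathbb{F}_p}$ (and $S(\infty)=1$), the section obeys $S(j)\cdot\infty=j$ for every $j\in X$; hence for $g\in\Lambda$ the element $\varphi(g,j)=S(g\cdot j)^{-1}gS(j)$ lies in $\Lambda$ and fixes $\infty$, i.e. $\varphi(g,j)\in\Lambda_{\infty}$. The relation $\varphi(g_1g_2,j)=\varphi(g_1,g_2\cdot j)\varphi(g_2,j)$ then drops out after inserting $S(g_2\cdot j)S(g_2\cdot j)^{-1}$ between the two factors on the right. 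Because conjugation by the fixed matrix $\pazocal{P}$ is a group automorphism of $\GL_2(\Q)$, because $\Delta$ is stable under it (as noted just before the statement), and because $\Lambda_{\infty}=\Lambda\cap\Delta\subseteq\Delta$ by Lemma \ref{lemma description of lambda infty}, the map $\tilde{\varphi}(g,j)=\pazocal{P}\varphi(g,j)\pazocal{P}^{-1}$ takes values in $\Delta$ and inherits the cocycle relation. That disposes of the first assertion.

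For the table I would first compute the generator actions on $X$: $\alpha\cdot\infty=\infty$, $\alpha\cdot j=j+1$ for $0\le j\le p-2$, $\alpha\cdot(p-1)=0$; and $\gamma\cdot\infty=0$, $\gamma\cdot 0=\infty$, while for $1\le j\le p-1$ one has $\gamma\cdot j=(-4j)^{-1}=j'$ in $\mathbb{F}_p$ by the defining congruence $4jj'+1\in p\Z$, and this value is a nonzero element of $\mathbb{F}_p$, so that $S(\gamma\cdot j)=\alpha^{j'}\gamma$. Substituting into $\varphi(g,j)=S(g\cdot j)^{-1}gS(j)$ with $S(k)=\alpha^{k}\gamma$ for $0\le k\le p-1$, almost every case collapses at once: $\varphi(\alpha,j)=\gamma^{-1}\alpha^{-(j+1)}\alpha^{j+1}\gamma=1$ for $0\le j\le p-2$, $\varphi(\gamma,\infty)=\gamma^{-1}\gamma=1$, $\varphi(\gamma,0)=\gamma^{2}=-4I$, and $\varphi(\alpha,p-1)=\gamma^{-1}\alpha^{p}\gamma$. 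The one case that needs a real computation is $g=\gamma$, $1\le j\le p-1$, where $\varphi(\gamma,j)=\gamma^{-1}\alpha^{-j'}\gamma\alpha^{j}\gamma$; with $\gamma^{-1}=-\tfrac14\gamma$ and $4jj'+1=pk_j$ this simplifies to $\begin{bsmallmatrix}-4j&1\\-4pk_j&4j'\end{bsmallmatrix}$, of determinant $4$ as it must be for an element of $\Lambda_{\infty}$. Finally, conjugating each of these matrices by $\pazocal{P}$, which sends $\begin{bsmallmatrix}a&b\\c&d\end{bsmallmatrix}$ to $\begin{bsmallmatrix}d&c/p\\pb&a\end{bsmallmatrix}$, reproduces precisely the entries of Table \ref{table4}.

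The hard part is purely bookkeeping rather than conceptual: one has to track the identification of $g\cdot j$ with its representative in $\{0,\dots,p-1\}$ in the two cases where the action wraps around ($\alpha\cdot(p-1)$ and $\gamma\cdot j$), and carry out the single non-trivial $2\times 2$ product carefully, invoking $4jj'+1=pk_j$ at the right moment. No other ingredient is needed.
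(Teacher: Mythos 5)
Your argument is correct and follows essentially the same route as the paper: deduce the cocycle relation for $\varphi$ by inserting $S(g_2\cdot j)S(g_2\cdot j)^{-1}$, transfer it to $\tilde\varphi$ via conjugation by $\pazocal{P}$, and compute the generator values directly from $\varphi(g,j)=S(g\cdot j)^{-1}gS(j)$. One small point in your favour: your intermediate value $\varphi(\gamma,j)=\begin{bsmallmatrix}-4j&1\\-4pk_j&4j'\end{bsmallmatrix}$ (determinant $4$) is the correct one — the paper's displayed bottom-right entry $4j$ is a typo, as one sees from the determinant and from the fact that only your version conjugates to the Table \ref{table4} entry $\begin{bsmallmatrix}4j'&-4k_j\\p&-4j\end{bsmallmatrix}$.
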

\begin{proof}
For $g_1,g_2 \in \Lambda$ and $j \in \mathbb{P}^1_{\mathbb{F}_p}$, we have
\begin{align*}
    \varphi(g_1g_2,j) &=  S(g_1g_2\cdot j)^{-1}g_1g_2S(j)\\
    &=S(g_1\cdot (g_2\cdot j))^{-1}g_1S(g_2 \cdot j)S(g_2 \cdot j)^{-1}g_2S(j)\\
    &=\varphi(g_1,g_2\cdot j)\varphi(g_2,j),
\end{align*}
so the map $\varphi$ is a cocycle and it follows that so is the map $\tilde{\varphi}$. We first compute explicitly the values of $\varphi(g,j)$ for $g = \alpha,\gamma$ and $j \in \mathbb{P}^1_{\mathbb{F}_p}$ as follows. For $g = \alpha$ we have
\begin{align*}
    \varphi(\alpha,j) &= S(j+1)^{-1}\alpha S(j) = 1 \quad\text{for }0 \leq j \leq p-2,\\
    \varphi(\alpha,p-1) &=  S(0)^{-1}\alpha S(p-1)\\
    &= \gamma^{-1} \alpha \alpha^{p-1} \gamma = \begin{bsmallmatrix}
      1&0\\-4p&1
    \end{bsmallmatrix},\\
    \varphi(\alpha,\infty)&= S(\infty)^{-1}\alpha S(\infty)= \alpha.
\end{align*}
For $g = \gamma$ we have
\begin{align*}
    \varphi(\gamma,0) &= S(\infty)^{-1}\gamma S(0) =  \gamma^2\pazocal = \begin{bsmallmatrix}
      -4&0\\0&-4
    \end{bsmallmatrix}.\\
    \varphi(\gamma,j) &=  S(\gamma \cdot j)^{-1}\gamma S(j) = S(j')^{-1}\gamma S(j) \\
    &= \gamma^{-1} \alpha^{-j'}\gamma\alpha^j \gamma = \begin{bsmallmatrix}
      -4j&1\\-4pk_j&4j
    \end{bsmallmatrix}  \quad\text{for }1\leq j \leq p-1,
\end{align*}
where $j'$ is the unique element in $\{1,\ldots,p-1\}$ such that $4jj'+1 \in p\Z$, with $k_j = \frac{4jj'+1}{p}$ as defined before. Finally,
\begin{align*}
    \varphi(\gamma,\infty) = S(0)^{-1}\gamma S(\infty) = 1.
\end{align*}

We know for $g \in \Lambda$ and $j \in \mathbb{P}^1_{\mathbb{F}_p}$ that $\tilde{\varphi}(g,j) = \pazocal{P}\varphi(g,j)\pazocal{P}^{-1}$. Correspondingly we conclude that for $g = \alpha$ we have
\begin{align*}
    \tilde{\varphi}(\alpha,j) &= 1 \quad\text{for }0 \leq j \leq p-2,\\
    \tilde{\varphi}(\alpha,p-1) &= \begin{bsmallmatrix}
      1&-4\\0&1
    \end{bsmallmatrix},\\
    \tilde{\varphi}(\alpha,\infty)&= \begin{bsmallmatrix}
      1&0\\p&1
    \end{bsmallmatrix}.
\end{align*}
For $g = \gamma$ we have
\begin{align*}
    \tilde{\varphi}(\gamma,0) &= \begin{bsmallmatrix}
      -4&0\\0&-4
    \end{bsmallmatrix}.\\
    \tilde{\varphi}(\gamma,j) &= \begin{bsmallmatrix}
      4j'&-4k_j\\p&-4j
    \end{bsmallmatrix}  \quad\text{for }1\leq j \leq p-1,\\
\tilde{\varphi}(\gamma,\infty) &= 1.
\end{align*}
In summary, all the values we computed agree with those in Table \ref{table4}.
\end{proof}
\begin{lemma}\label{sigma composition}
    Let $p$ be an odd prime. Let $\sigma: \Lambda \times \mathbb{P}^1_{\mathbb{F}_p} \to \GL(V_p)$ be the cocycle, which is the composition of the linear representation $\pi_{\Delta}: \Delta \to \GL(V_p)$ and the cocycle $\tilde{\varphi}: \Lambda \times \mathbb{P}^1_{\mathbb{F}_p} \to \Delta$. Then the values of $\sigma$ for $\alpha,\gamma$ agree with those in Table \ref{table3}.
\end{lemma}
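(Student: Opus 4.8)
The plan is to unwind the definition of $\sigma = \pi_{\Delta} \circ \tilde{\varphi}$ on the two generators $\alpha,\gamma$ of $\Lambda$ and to match the resulting operators against Table \ref{table3}, using the matrix values $\tilde{\varphi}(\alpha,j)$ and $\tilde{\varphi}(\gamma,j)$ already recorded in Table \ref{table4}. Recall that $\pi_{\Delta}$ sends $g \in \Delta$ to $\rho\bigl(\omega(2^{-\xi(g)}g)\bigr) \in \GL(V_p)$, where $\xi(g) \in \Z$ is determined by $4^{\xi(g)} = \Det(g)$ and $\rho: P \to \GL(V_p)$ is the representation of Proposition \ref{representation P} with $\rho(a_j) = S_j$ for $j \in \mathbb{F}_p^*$ and $\rho(\omega(\alpha)) = L^{\iota_{-4}}$. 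Since $g \mapsto 2^{-\xi(g)}g$ is a homomorphism $\Delta \to \SL_2(\Z[\tfrac{1}{2}])$ (because $\xi$ is), $\omega$ carries it into $P$ (the lower-left entry of any element of $\Delta$ lies in $p\Z[\tfrac{1}{2}]$, hence vanishes mod $p$), and $\rho$ is a homomorphism, the composite $\pi_{\Delta}$ is a homomorphism; composing it with the cocycle $\tilde{\varphi}$ of the previous lemma then produces a cocycle $\sigma$, so only its values at the generators need to be checked.

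First I would isolate the computational recipe, which is uniform across the table. For each matrix $M := \tilde{\varphi}(g,j)$ appearing in Table \ref{table4}, one reads off $\Det(M)$ — always a power of $4$, giving $\xi(M)$ — then reduces $2^{-\xi(M)}M$ modulo $p$ to land in $P \subset \SL_2(\mathbb{F}_p)$, factors this element uniquely as $a_{\ell}\,\omega(\alpha)^m$ according to $P = A \ltimes U$, and applies $\rho$ to obtain $\sigma(g,j) = S_{\ell}\,L^{m\iota_{-4}}$. Three elementary facts are used throughout: $L^p = \mathrm{id}$ on $V_p$ (since $x \mapsto Q(x,x)$ takes the value $1$ mod $p$ for $d \geq 1$), so exponents of $L$ matter only modulo $p$; $4\iota_{-4} \equiv -1 \bmod p$ and $-4\iota_{-4} \equiv 1 \bmod p$; and $S_{-1} = \mathrm{id}$ on $V_p$, because $V_p$ consists of even functions, hence $S_{\ell} = S_{-\ell}$ there.

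Running the recipe: for $g = \alpha$ and $0 \leq j \leq p-2$ we have $M = 1$, so $\sigma(\alpha,j) = 1$; for $j = p-1$, $M = \begin{bsmallmatrix}1&-4\\0&1\end{bsmallmatrix}$ has determinant $1$ and reduces to $\omega(\alpha)^{-4}$, so $\sigma(\alpha,p-1) = L^{-4\iota_{-4}} = L$; for $j = \infty$, $M = \begin{bsmallmatrix}1&0\\p&1\end{bsmallmatrix} \equiv I$, so $\sigma(\alpha,\infty) = 1$. For $g = \gamma$ and $j = 0$, $M = \begin{bsmallmatrix}-4&0\\0&-4\end{bsmallmatrix}$ has determinant $16 = 4^2$, so $2^{-2}M = -I = a_{-1}$ and $\sigma(\gamma,0) = S_{-1} = 1$; and $\sigma(\gamma,\infty) = 1$ since there $M = 1$. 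The one entry that actually uses arithmetic is $g = \gamma$, $1 \leq j \leq p-1$, where $M = \begin{bsmallmatrix}4j'&-4k_j\\p&-4j\end{bsmallmatrix}$: then $\Det(M) = -16jj' + 4pk_j = 4$ by the defining relation $pk_j = 4jj'+1$, so $\xi(M) = 1$; reducing $2^{-1}M$ mod $p$ gives $\begin{bsmallmatrix}2j'&-2k_j\\0&-2j\end{bsmallmatrix}$, which lies in $P$ because $(2j')(-2j) = -4jj' \equiv 1 \bmod p$; its $A \ltimes U$ factorisation is $a_{2j'}\,\omega(\alpha)^{4jk_j}$ (using $(2j')^{-1} \equiv -2j$ once more); hence $\sigma(\gamma,j) = S_{2j'}\,L^{4jk_j\iota_{-4}} = S_{2j'}\,L^{-jk_j} = S_{-2j'}\,L^{-k_jj}$, matching Table \ref{table3}.

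The main obstacle is precisely this last case: one must thread together the relation $pk_j = 4jj'+1$ (invoked three times: for $\Det M$, for membership in $P$, and for inverting $2j'$), the identity $4\iota_{-4} \equiv -1$ that fixes the exponent of $L$, the substitution $S_{2j'} = S_{-2j'}$ valid on even functions, and the bookkeeping convention — inherited from Proposition \ref{proposition transformation theta general} — for the order in which $S$ and $L$ are composed. A sign error in any of these would destroy the agreement with Table \ref{table3}; everything else reduces to the trivial cases $\Det M \in \{1,16\}$ handled above.
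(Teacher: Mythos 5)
Your proposal is correct and follows essentially the same route as the paper: read off $\det\tilde{\varphi}(g,j)$ to get $\xi$, reduce $2^{-\xi}\tilde{\varphi}(g,j)$ modulo $p$ into $P$, and apply $\rho$, using $S_{-1}=1$ on even functions and $-4\iota_{-4}\equiv 1 \bmod p$ to match Table \ref{table3}. You merely make explicit the $A\ltimes U$ factorisation $a_{2j'}\,\omega(\alpha)^{4jk_j}$ in the one nontrivial case, which the paper leaves implicit.
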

\begin{proof}
We will check 4 non-trivial cases.
\begin{align*}
    \pi_{\Delta} \circ \tilde{\varphi} (\alpha,p-1) &= \pi_{\Delta} \rbra{\begin{bsmallmatrix}
        1&-4\\0&1
    \end{bsmallmatrix}} = L,\\
    \pi_{\Delta} \circ \tilde{\varphi} (\alpha,\infty) &= \pi_{\Delta} \rbra{\begin{bsmallmatrix}
        1&0\\p&1
    \end{bsmallmatrix}} = 1.
\end{align*}
Since $f \in V_p$ is even, we also have
\begin{align}\label{even s(-1)}
   \pi_{\Delta} \circ \tilde{\varphi} (\gamma,0) = \pi_{\Delta} \rbra{\begin{bsmallmatrix}
        -4&0\\0&-4
    \end{bsmallmatrix}} = S_{-1} = 1.
\end{align}
For $1\leq j \leq p-1$, and $\tilde{\varphi}(\gamma,j) = \begin{bsmallmatrix}
        4j'&-4k_j\\p&-4j
    \end{bsmallmatrix}$, we have $\det \tilde{\varphi}(\gamma,j) =4 $ so that $\xi(\tilde{\varphi}(\gamma,j)) = 1$ and
\begin{align*}
    \omega(2^{-\xi(\tilde{\varphi}(\gamma,j))}\tilde{\varphi}(\gamma,j)) = \begin{bsmallmatrix}
        2j'&-2k_j\\0&-2j
    \end{bsmallmatrix}.
\end{align*}
Hence,
\begin{align*}
    \pi_{\Delta} \circ \tilde{\varphi} (\gamma,j) = S_{2j'}L^{-k_jj} = S_{-2j'}L^{-k_jj}
\end{align*}
as a result of (\ref{even s(-1)}). The remaining 2 cases are obvious to verify.
\end{proof}

Now we can combine all the ingredients to summarize the proof of Theorem \ref{prop theta tilde}, and the proof of Theorem \ref{prop equivariance} will follow directly.
\begin{proof}[Proof of Theorem \ref{prop theta tilde}]
Recall that for an odd prime $p$, the representation $\pi_{\Delta}: \Delta \to \GL(V_p)$ is constructed as follows: for every $g \in \Delta \subset \GL_2(\Z[\frac{1}{2}])$, we map it to an element of $\SL_2(\mathbb{F}_p)$ through the composition $\omega(2^{-\xi(g)}g)$ and then assign it to an element of $\GL(V_p)$ through the representation $\rho: P \to \GL(V_p)$ in Proposition \ref{representation P}. Meanwhile, the cocycle $\tilde{\varphi}$ is defined by the choice of section (\ref{section}) and the conjugation by $\pazocal{P}$. Finally, Lemma \ref{sigma composition} verifies that our construction is correct.
\end{proof}

Let us denote by $\pi$ the linear representation $\pi: \Lambda_{\infty} \to \GL(V_p)$ which is defined as the restriction of $\pi_{\Delta}$ to $\Lambda_{\infty}$.
\begin{rem}
Since $\Lambda$ acts transitively on $\mathbb{P}^1_{\mathbb{F}_p}$, the linear representation $\Pi : \Lambda \to \GL(V_p^X)$ defined by the action (\ref{action on vpx}) is the induced representation $\Ind^{\Lambda}_{\Lambda_{\infty}}(V_p)$ of $\Lambda$ in the sense of \cite{MR0450380}.
\end{rem}

We will use the above structure to prove that, for $d\in \N^*$ and any prime $p$ with $f \in V_p$, the weighted theta function $\theta_f$, defined in (\ref{theta z^d}), is weakly modular under some congruence subgroup.
\newpage
\subsection{Proof of Theorem \ref{modular form} and \ref{cusp form}}\label{subsection proof of two theorems}
\subsubsection{Proof of Theorem \ref{modular form}, Part \RNum{1}}\label{proof of theorem part 1}

        The claim that $\theta_f$ is holomorphic on $\pazocal{H}$ follows from the fact that the $d$-dimensional theta function $\theta(\tau)=\sum_{x \in \Z^d}e^{2\pi i Q(x,x) \tau}$ is holomorphic on $\pazocal{H}$, cf. \cite[\S 4.9]{MR2112196}. Since $f$ is discrete, its norm over $(\Z/p\Z)^d$ can be bounded above by some positive constant $c_f$. It follows that the series $\theta_f$ converges absolutely and uniformly on compact subsets of $\pazocal{H}$. Since each summand is holomorphic, the conclusion follows directly.

\subsubsection{Proof of Theorem \ref{modular form}, Part \RNum{2}}\label{proof of theorem part 2}
We prove that $\theta_f$ is weakly modular of weight $\frac{d}{2}$ under some congruence subgroup.
\medskip
\paragraph{\textbf{For the even prime 2}} We have constructed in Theorem \ref{p=2 weakly modular} the representation $\pi_2: \Gamma_1(4) \to \GL_2(V_2)$ so that for all $g \in \Gamma_1(4)$ and $f \in V_2$ we have
    \begin{align*}
        [\tilde{g}]_{\frac{d}{2}} \cdot \theta_f = \theta_{\pi_2(g)f}.
    \end{align*}
We claim that the group $\Gamma_2$ defined by (\ref{group gamma2}) satisfies
\begin{align*}
    \Gamma_2 = \ker \pi_2 = \bbra{ g \in \Gamma_1(4): \pi_2(g)=1}.
\end{align*}
Indeed, since $\pi_2 = \rho_2 \circ \omega_2 \circ \mu$, to describe $\ker\pi_2$ is to find those $g = \begin{bsmallmatrix}
    a&b\\4c&d
\end{bsmallmatrix} \in \Gamma_1(4)$ for which $\rho_2 \circ \omega_2(\gamma g \gamma^{-1}) = 1$. For such $g$,
\begin{align*}
    \gamma g \gamma^{-1} = \begin{bmatrix}
        d &-c\\ -4b&a
    \end{bmatrix}  = \begin{bmatrix}
        1 & 0\\ 0&1
    \end{bmatrix} \in \SL_2(\Z/4\Z)
\end{align*}
and it follows that $c=0[4]$. Finally,
\begin{align*}
\ker\pi_2 = \bbra{ g = \begin{bmatrix}
        a&b\\c&d
    \end{bmatrix} \in \SL_2(\Z): a,d=1[4],c=0[16]},
\end{align*}
as required. In particular, for the even prime 2, any $g \in \Gamma_2$ and $f \in V_2$, we have
\begin{align*}
    [\tilde{g}]_{\frac{d}{2}} \cdot \theta_f = \theta_f,
\end{align*}
which means $\theta_f$ is weakly modular of weight $\frac{d}{2}$ under $\Gamma_2$.

\medskip
         \paragraph{\textbf{For any odd prime $p$}} We have constructed in Theorem \ref{prop theta tilde} the representation $\pi_{\Delta}: \Delta \to \GL(V_p)$ and the cocycle $\tilde{\varphi}: \Lambda \times \mathbb{P}^1_{\mathbb{F}_p} \to \Delta$ which defines the cocycle $\sigma: \Lambda \times \mathbb{P}^1_{\mathbb{F}_p} \to \GL(V_p)$ so that for $u \in \C$ to be a solution of the equation $\rbra{\frac{i}{2}}^{\frac{1}{2}} = \rbra{\frac{1}{4}}^u$, for any $g \in \Lambda$, $j \in \mathbb{P}^1_{\mathbb{F}_p}$ and $f \in V_p$ the following holds
        \begin{align*}
            [\tilde{g}]_{\frac{d}{2},u} \cdot \theta^j_{f} = \theta^{g\cdot j}_{\sigma(g,j)f} = \theta^{g\cdot j}_{\pi_{\Delta} \circ \tilde{\varphi}(g,j)f}.
        \end{align*}
        In particular, for $j = \infty$ and any $g \in \Lambda_{\infty}$ we get
        \begin{align*}
            [\tilde{g}]_{\frac{d}{2},u} \cdot \theta^{\infty}_{f} = \theta^{\infty}_{\pi_{\Delta} \circ \tilde{\varphi}(g,\infty)f}.
        \end{align*}
        By definition we have $\theta_f = p^{-d} \theta^\infty_{\mathcal{F}(f)}$. By our choice of section (\ref{section}) at $\infty$, we know for $g \in \Lambda_{\infty}$ that $\varphi(g,\infty) = g$, hence $\pi_{\Delta} \circ \tilde{\varphi}(g,\infty) = \pi_{\Delta}(\pazocal{P}g\pazocal{P}^{-1})$. This implies
        \begin{align}\label{theta f weakly modular}
            [\tilde{g}]_{\frac{d}{2},u} \cdot \theta_{f} = \theta_{\pi_{\Delta}(\pazocal{P}g\pazocal{P}^{-1})f}.
        \end{align}
         We claim that the group $\Gamma_p$ defined by (\ref{group gamma_p}) satisfies $$\Gamma_p = \pazocal{P}^{-1}\ker \pi_{\Delta}\pazocal{P} \cap \SL_2(\Z)= \bbra{ g \in \Lambda_{\infty} \cap \SL_2(\Z): \pi_{\Delta}(\pazocal{P}g\pazocal{P}^{-1}) = 1}.$$
        By Lemma \ref{lemma Lambda inside SL2Z} and \ref{lemma description of lambda infty}, we know that $\Lambda_{\infty} \cap \SL_2(\Z) = \Gamma_1(4) \cap \Delta$ so that
        \begin{align*}
        \Lambda_{\infty} \cap \SL_2(\Z) = \bbra{ g = \begin{bmatrix}
        a&b\\c&d
    \end{bmatrix} \in \SL_2(\Z): a,d=1[4],c=0[4p]}.
        \end{align*}
    by the Chinese reminder theorem. Therefore, to describe $\pazocal{P}^{-1}\ker \pi_{\Delta}\pazocal{P} \cap \SL_2(\Z)$ is to find those $g = \begin{bsmallmatrix}
    a&b\\4pc&d
\end{bsmallmatrix} \in \Lambda_{\infty} \cap \SL_2(\Z)$ for which $\pi_{\Delta}(\pazocal{P}g\pazocal{P}^{-1}) =1$. For such $g$,
\begin{align*}
    \pazocal{P}g\pazocal{P}^{-1} = \begin{bmatrix}
        d&4c\\bp&a
    \end{bmatrix} = \begin{bmatrix}
        1&0\\0&1
    \end{bmatrix} \in \SL_2(\mathbb{F}_p)
\end{align*}
and it follows that $c=0[p]$, $a,d=1[p]$. Finally,
\begin{align*}
    \pazocal{P}^{-1}\ker \pi_{\Delta}\pazocal{P} \cap \SL_2(\Z) = \bbra{ g = \begin{bmatrix}
        a&b\\c&d
    \end{bmatrix} \in \SL_2(\Z): a,d=1[4p],c=0[4p^2]},
\end{align*}
as required. In particular, for any odd prime $p$, for $g \in \Gamma_p$ and $f \in V_p$, by (\ref{theta f weakly modular}) we have
\begin{align*}
    [\tilde{g}]_{\frac{d}{2}} \cdot \theta_{f} = \theta_{\pi_{\Delta}(\pazocal{P}g\pazocal{P}^{-1})f} = \theta_f,
\end{align*}
which means $\theta_{f}$ is weakly modular of weight $\frac{d}{2}$ under $\Gamma_p$.

\subsubsection{Proof of Theorem \ref{modular form}, Part \RNum{3}}
We prove that $\theta_f$ is holomorphic at all $s\in \mathbb{P}^1_{\Q}$. By definition (\ref{theta fourier series}) we know that $\theta_f$ is holomorphic at $\infty$. For $s \in \Q$, let $u_s := \begin{bsmallmatrix}
                1&-s\\0&1
            \end{bsmallmatrix}$. We set
        \begin{align*}
            h_s := \gamma u_s,
        \end{align*}
which is a matrix in $\GL_2^+(\Q)$ with $h_s \cdot s = \infty$. Let $u$ be a solution of the equation $\rbra{\frac{i}{2}}^{\frac{d}{2}} = \rbra{\frac{1}{4}}^u$. We claim that for any $s \in \Q$ and let $\dot{h_s} \in \pazocal{G}$, we have a Fourier series expansion $\rbra{[\dot{h_s}]_{\frac{d}{2},u} \cdot \theta_f}(\tau) = \sum_{n=0}^\infty a_n e^{2\pi i \tau n/m}$ for some $m \in \N^*$ and $a_n \in \C$.

\begin{prop}\label{prop theta_f holomophic at all s}
Let $d \in \N^*$, $p$ be a prime and $f \in V_p$. For every $s \in \Q$ in any of its quotient forms $\frac{w}{v}$ with $w \in \Z$ and $v \in \N^*$, we decompose the denominator $v$ as $v = cp^r$ for some $c \in \N^*$ and $r \in \N$ with $\gcd(c,p)=1$ and we let $\tilde{r} = \max\{r,1\}$. With $u$ defined above, for every $s \in \Q$, there exists some $F : (\Z/cp^{\tilde{r}}\Z)^d \to \C$ defined as
\begin{align}\label{fourier transform of A}
    F(\xi) := \sum_{x \in (\Z/cp^{\tilde{r}}\Z)^d} f(x)e^{2\pi i Q(x,x)\frac{w}{cp^r}} e^{-2\pi i Q(x,\xi)\frac{1}{cp^{\tilde{r}}}}
\end{align}
such that for some $\epsilon \in \bbra{\pm 1}$ the following holds
\begin{equation}\label{holomorphic sum in lattice}
   \rbra{[\dot{h_s}]_{\frac{d}{2},u} \cdot \theta_f }(\tau)= \frac{1}{(\epsilon cp^{\tilde{r}})^d} \sum_{\xi \in \Z^d} F(\xi[cp^{\tilde{r}}]) e^{2\pi i Q(\xi,\xi)\frac{1}{(cp^{\tilde{r}})^2} \tau},\quad \tau \in \pazocal{H}.
\end{equation}
\end{prop}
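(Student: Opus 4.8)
The idea is to run, at the rational cusp $s$, the same kind of Poisson summation computation used in Proposition~\ref{lemma poisson infinity} and Proposition~\ref{proposition transformation theta general}. First I would record that $h_s^{-1}\cdot\tau = u_s^{-1}\cdot(\gamma^{-1}\cdot\tau) = s-\tfrac{1}{4\tau}$, and that, choosing $\dot{h_s}=\tilde{\gamma}\,\dot{u_s}\in\pazocal{G}$ over $h_s=\gamma u_s$ with $\dot{u_s}=(u_s,1)$, the operator $[\dot{h_s}]_{\frac{d}{2},u}$ acts through the same automorphy factor as $[\tilde{\gamma}]_{\frac{d}{2},u}$ does in the proof of Proposition~\ref{prop transform p=2} (where $\bar{h}(\gamma^{-1},\tau)=\sqrt{\tau}$); explicitly,
\begin{equation*}
   \bigl([\dot{h_s}]_{\frac{d}{2},u}\cdot\theta_f\bigr)(\tau)=\pm\,\tau^{-d/2}4^{-u}\,\theta_f\!\Bigl(s-\tfrac{1}{4\tau}\Bigr).
\end{equation*}
So everything reduces to a theta-inversion formula for $\theta_f$ evaluated at $s-\tfrac{1}{4\tau}$.

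\textbf{The twisted weight and the Poisson step.} Put $g_s(x):=f(x[p])\,e^{2\pi i Q(x,x)\frac{w}{cp^r}}$ for $x\in\Z^d$. Replacing one coordinate of $x$ by $x+cp^r$ changes $Q(x,x)\tfrac{w}{cp^r}$ by an integer, so $g_s$ is $cp^r$-periodic; it is also $p$-periodic since $f(\cdot[p])$ is; hence $g_s$ is invariant modulo $\operatorname{lcm}(p,cp^r)=cp^{\tilde r}$ — this is precisely why one sets $\tilde r=\max\{r,1\}$, a factor of $p$ being forced even when $p\nmid v$ — so $g_s$ descends to a function on $(\Z/cp^{\tilde r}\Z)^d$, still written $g_s$. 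Let $N:=cp^{\tilde r}$ and $\phi(z,t):=g_s(z)\,\eta_{-1/(4\tau)}(t)$, which lies in $S\bigl((\Z/N\Z)^d\times\R^d\bigr)$ because $\eta_{-1/(4\tau)}$ decays like a Gaussian; then $\theta_f(s-\tfrac{1}{4\tau})=\sum_{x\in\Z^d}\phi(x[N],x)$. I would now apply Corollary~\ref{corollary poisson}, factor $\mathcal{F}(\phi)(l,\zeta)=\mathcal{F}_{(\Z/N\Z)^d}(g_s)(l)\cdot\mathcal{F}_{\R^d}(\eta_{-1/(4\tau)})(\zeta)$, observe that $\mathcal{F}_{(\Z/N\Z)^d}(g_s)$ is exactly the function $F$ of (\ref{fourier transform of A}), and apply (\ref{fourier transform gaussian}) with $-\tfrac{1}{4\tau}$ in place of $\tau$ to get $\mathcal{F}_{\R^d}(\eta_{-1/(4\tau)})(\zeta)=(-2i\tau)^{d/2}e^{2\pi iQ(\zeta,\zeta)\tau}$. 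This yields
\begin{equation*}
   \theta_f\!\Bigl(s-\tfrac{1}{4\tau}\Bigr)=\frac{(-2i\tau)^{d/2}}{N^d}\sum_{\xi\in\Z^d}F(-\xi[N])\,e^{2\pi iQ(\xi,\xi)\frac{1}{N^2}\tau},
\end{equation*}
and since $f$ is even so is $g_s$, hence so is $F$, and we may replace $F(-\xi[N])$ by $F(\xi[N])$.

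\textbf{Collapsing the constant.} It remains to combine this with the automorphy factor. With principal branches $\tau^{-d/2}(-2i\tau)^{d/2}=(-2i)^{d/2}$ — both sides are holomorphic and non-vanishing on $\pazocal{H}$ and agree at $\tau=i$, so their ratio is a constant square root of $1$ — and then $4^{-u}(-2i)^{d/2}=(i/2)^{d/2}(-2i)^{d/2}=1$ by the defining equation $(i/2)^{d/2}=(1/4)^u$ for $u$. Hence the whole prefactor $\pm\tau^{-d/2}4^{-u}\cdot\frac{(-2i\tau)^{d/2}}{N^d}$ collapses to $\frac{1}{(\epsilon N)^d}$ for some $\epsilon\in\{\pm1\}$ (the sign being the last remaining ambiguity, inherited from the square-root choice in $\dot{h_s}$ over $h_s$), and this is (\ref{holomorphic sum in lattice}). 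As a by-product this completes Part~III of Theorem~\ref{modular form}: the right-hand side of (\ref{holomorphic sum in lattice}) is manifestly $\sum_{n\ge 0}a_n e^{2\pi i\tau n/m}$ with $m=(cp^{\tilde r})^2$ and $a_n=\frac{1}{(\epsilon cp^{\tilde r})^d}\sum_{Q(\xi,\xi)=n}F(\xi[cp^{\tilde r}])$ a finite sum, so $[\dot{h_s}]_{\frac{d}{2},u}\cdot\theta_f$ is holomorphic at $\infty$, i.e.\ $\theta_f$ is holomorphic at $s$.

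\textbf{Where the difficulty sits.} The conceptual steps are routine once the apparatus of \S\ref{section poisson summation}--\S\ref{subsection Weak modularity and equivariance} is available; the real work is bookkeeping. The first delicate point is pinning down the exact period $cp^{\tilde r}$ of the twisted weight $g_s$, in particular seeing that when $p\nmid v$ one genuinely needs the additional factor of $p$ for Corollary~\ref{corollary poisson} to apply over $(\Z/cp^{\tilde r}\Z)^d$, and checking that the $F$ of (\ref{fourier transform of A}) is then well defined on $(\Z/cp^{\tilde r}\Z)^d$. The second, and the one most prone to hidden sign errors, is tracking the several square roots — the automorphy factor of $[\dot{h_s}]_{\frac{d}{2},u}$, the Gaussian factor $(-2i\tau)^{d/2}$ from (\ref{fourier transform gaussian}), and $4^{-u}$ — so that they cancel down to exactly $\pm1/N^d$; this is precisely where the particular choice of $u$, rather than an unspecified branch of $\Det(h_s)^{-u}$, is needed.
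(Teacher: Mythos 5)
Your proposal is correct and follows essentially the same route as the paper: decompose $h_s=\gamma u_s$, identify the twisted weight $A(x)=f(x)e^{2\pi i Q(x,x)\frac{w}{cp^r}}$ as a function on $(\Z/cp^{\tilde r}\Z)^d$, apply Corollary \ref{corollary poisson} together with (\ref{fourier transform gaussian}), and collapse the automorphy factors using the defining equation for $u$. The only (cosmetic) difference is that the paper first Poisson-sums $\theta_f(\tau+s)$ with the Gaussian at $\tau$ and then substitutes $\tau\mapsto -\tfrac{1}{4\tau}$ before applying $[\dot{\gamma}]_{\frac{d}{2},u}$, whereas you Poisson-sum directly at $h_s^{-1}\cdot\tau$; the resulting computation and sign bookkeeping are identical.
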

\begin{proof}
We know that for some $\epsilon_1 \in \bbra{\pm 1}$
\begin{align*}
    \rbra{[\dot{u_s}]_{\frac{d}{2}} \cdot \theta_f} (\tau) = \epsilon_1^{-d}\theta_f (\tau+s) &= \epsilon_1^{-d}\sum_{x \in \Z^d} f(x[p]) e^{2\pi i Q(x,x)(\tau+s)}\\
    &=\epsilon_1^{-d}\sum_{x \in \Z^d} f(x[p]) e^{2\pi i Q(x,x)\frac{w}{cp^r}}e^{2\pi i Q(x,x)\tau}, \quad \tau \in \pazocal{H}.
\end{align*}
Since $f \in V_p$, the function $A(x) := f(x)e^{2\pi i Q(x,x)\frac{w}{cp^r}}$ is defined on $(\Z/cp^{\tilde{r}}\Z)^d$. Recall we have defined $\eta_{\tau}(t) = e^{2\pi i Q(t,t)\tau}$ for $t\in \R^d$. Fix any $\tau \in \pazocal{H}$, the function
\begin{align*}
    \phi(s,t) := A(s) \eta_{\tau}(t), \quad (s,t) \in (\Z/cp^{\tilde{r}}\Z)^d \times \R^d
\end{align*}
in $(s,t)$ is defined on $(\Z/cp^{\tilde{r}}\Z)^d \times \R^d$, and it follows that $\phi \in S((\Z/cp^{\tilde{r}}\Z)^d \times \R^d)$. By definition, $F(\xi) = \mathcal{F}_{(\Z/cp^{\tilde{r}}\Z)^d}(A)(\xi)$ is the Fourier transform of the function $A$, then by Corollary \ref{corollary poisson} and (\ref{fourier transform gaussian}), we have
\begin{align*}
    \rbra{[\dot{u_s}]_{\frac{d}{2}} \cdot \theta_f }(\tau) &= \epsilon_1^{-d}\sum_{t \in \Z^d} \phi(t[cp^{\tilde{r}}],t) = \frac{1}{(\epsilon_1 cp^{\tilde{r}})^d} \sum_{\xi \in \Z^d} \mathcal{F}(\phi)(-\xi[cp^{\tilde{r}}],(cp^{\tilde{r}})^{-1}\xi)\\
    &= \rbra{\frac{i}{2\tau}}^{\frac{d}{2}} \frac{1}{(\epsilon_1 cp^{\tilde{r}})^d} \sum_{\xi \in \Z^d} F(\xi[cp^{\tilde{r}}]) e^{2\pi iQ(\xi,\xi)\frac{1}{(cp^{\tilde{r}})^2} \rbra{-\frac{1}{4\tau}}}.
\end{align*}
By sending $\tau \mapsto -\frac{1}{4\tau}$ in the above equality we have
\begin{align*}
     \rbra{[\dot{u_s}]_{\frac{d}{2}} \cdot \theta_f} \rbra{-\frac{1}{4\tau}} = \rbra{\frac{2\tau}{i}}^{\frac{d}{2}} \frac{1}{(\epsilon_1 cp^{\tilde{r}})^d} \sum_{\xi \in \Z^d} F(\xi[cp^{\tilde{r}}]) e^{2\pi i Q(\xi,\xi)\frac{1}{(cp^{\tilde{r}})^2} \tau}.
\end{align*}
Since $u$ is a solution of the equation $\rbra{\frac{i}{2}}^{\frac{d}{2}} = \rbra{\frac{1}{4}}^u$, it follows that for some $\epsilon_2 \in \bbra{\pm 1}$
\begin{align*}
    \rbra{[\dot{h_s}]_{\frac{d}{2},u} \cdot \theta_f }(\tau)&= \rbra{[\dot{\gamma}]_{\frac{d}{2},u} \cdot [\dot{u_s}]_{\frac{d}{2}} \cdot \theta_f}(\tau)= \epsilon_2^{-d}\tau^{-\frac{d}{2}}4^{-u} \rbra{[\dot{u_s}]_{\frac{d}{2}} \cdot \theta_f }(\gamma^{-1}\cdot \tau)  \\
    &= \frac{1}{(\epsilon_1 \epsilon_2 cp^{\tilde{r}})^d} \sum_{\xi \in \Z^d} F(\xi[cp^{\tilde{r}}]) e^{2\pi i Q(\xi,\xi)\frac{1}{(cp^{\tilde{r}})^2} \tau}.
\end{align*}
Let $\epsilon := \epsilon_1 \epsilon_2$, the conclusion follows.
\end{proof}
Now (\ref{holomorphic sum in lattice}) can be rewritten as follows
\begin{align*}
    \rbra{[\dot{h_s}]_{\frac{d}{2},u} \cdot \theta_f }(\tau) = \frac{1}{(\epsilon cp^{\tilde{r}})^d} \sum_{n=0}^{\infty} \rbra{\sum_{x \in X_d(n)}F(x[cp^{\tilde{r}}])} q^n_{(cp^{\tilde{r}})^2}, \quad q_{(cp^{\tilde{r}})^2} = e^{2\pi i \tau / (cp^{\tilde{r}})^2}, \tau \in \pazocal{H}.
\end{align*}
So for any $s \in \Q$, the function $[\dot{h_s}]_{\frac{d}{2},u} \cdot \theta_f$ is indeed in the form of a Fourier expansion with no negative powers of $q_{(cp^{\tilde{r}})^2}$, and we have proved that $\theta_f$ is holomorphic at all $s \in \mathbb{P}_{\Q}^1$ by Definition \ref{defi holomorphic at cusps half integral}.
\begin{rem}
     Let $u$ be defined as before. In fact one can prove directly that for all $h\in \GL_2^+(\Q)$ the weakly modular function $[\dot{h}]_{\frac{d}{2},u} \cdot \theta_{f}$ is holomorphic at $\infty$ by \cite[Proposition 1.2.4]{MR2112196}. Since we will later use (\ref{fourier transform of A}) to establish our cusp form criteria, the proposition above is also interesting in its own right.
\end{rem}

\subsubsection{Proof of Theorem \ref{cusp form}}
Let $d \in \N^*$, $p$ be a prime and $f \in V_p$. For any $r \in \N$, $w \in \Z$ and $\tilde{r}= \max\{r,1\}$, we define
\begin{align}
    S(r,w) := \sum_{y \in (\Z/p^{\tilde{r}}\Z)^d} f(y)e^{2\pi i Q(y,y)\frac{w}{p^r}},
\end{align}
where $f(y)$ means that $f$ is evaluated at the image of $y$ in $(\Z/p\Z)^d$. The following lemma will help us to prove Theorem \ref{cusp form} in both directions.
\begin{lemma}\label{lemma bezout}
Let $d \in \N^*$, $p$ be a prime and $f \in V_p$. For any $c \in \N^*$ with $\gcd(c,p)=1$, $r \in \N$, $w \in \Z$ and $\tilde{r}= \max\{r,1\}$, there exist $m_1,m_2 \in \Z$ such that the following decomposition holds
\begin{align}\label{decomposition of cusp form}
    \sum_{x \in (\Z/cp^{\tilde{r}}\Z)^d} f(x)e^{2\pi i Q(x,x)\frac{w}{cp^r}} = S(r,m_1w)\sum_{z \in (\Z/c\Z)^d} e^{2\pi i Q(z,z)\frac{m_2w}{c}}.
\end{align}
\end{lemma}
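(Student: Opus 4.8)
The plan is to reduce the single sum over $(\Z/cp^{\tilde r}\Z)^d$ to a product of a sum over the ``$p$-part'' and a sum over the ``$c$-part'', by combining the coprimality $\gcd(c,p)=1$ with the coordinatewise Chinese Remainder Theorem and an exact partial-fraction splitting of the exponent $\frac{w}{cp^r}$.

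First I would apply B\'ezout: since $\gcd(c,p^r)=1$ — which includes the case $r=0$, where $p^r=1$ — there exist $u,v\in\Z$ with $up^r+vc=1$, hence the \emph{exact} identity $\frac{1}{cp^r}=\frac{u}{c}+\frac{v}{p^r}$ in $\Q$, and therefore
\[
e^{2\pi i Q(x,x)\frac{w}{cp^r}} = e^{2\pi i Q(x,x)\frac{uw}{c}}\cdot e^{2\pi i Q(x,x)\frac{vw}{p^r}},\qquad x\in\Z^d.
\]
(When $r=0$ the second factor is identically $1$, since $\frac{vw}{p^0}=vw\in\Z$.) The key observation is that, because $Q(x,x)=x_1^2+\cdots+x_d^2$ is well defined modulo every integer, the factor $e^{2\pi i Q(x,x)\frac{uw}{c}}$ depends only on $x\bmod c$, while both $e^{2\pi i Q(x,x)\frac{vw}{p^r}}$ and $f(x)$ — the latter since $p\mid p^{\tilde r}$ — depend only on $x\bmod p^{\tilde r}$.

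Next I would invoke the bijection $x\mapsto(x\bmod c,\,x\bmod p^{\tilde r})$ from $(\Z/cp^{\tilde r}\Z)^d$ onto $(\Z/c\Z)^d\times(\Z/p^{\tilde r}\Z)^d$ (coordinatewise CRT, using $\gcd(c,p^{\tilde r})=1$). Under it the summand $f(x)e^{2\pi i Q(x,x)\frac{w}{cp^r}}$ factors as a function of $x\bmod c$ times a function of $x\bmod p^{\tilde r}$, so the sum splits as
\[
\Bigl(\sum_{y\in(\Z/p^{\tilde r}\Z)^d} f(y)\,e^{2\pi i Q(y,y)\frac{vw}{p^r}}\Bigr)\Bigl(\sum_{z\in(\Z/c\Z)^d} e^{2\pi i Q(z,z)\frac{uw}{c}}\Bigr).
\]
The first bracket is exactly $S(r,vw)$ by definition, and the second is the $c$-sum appearing on the right-hand side of (\ref{decomposition of cusp form}) with $m_2=u$. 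Hence the decomposition holds with $m_1=v$ and $m_2=u$.

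I do not anticipate a genuine obstacle; the only points needing care are bookkeeping ones. One must keep straight the distinction between the exponent denominator $p^r$ and the summation modulus $p^{\tilde r}$ in the degenerate case $r=0$ (which is precisely why the statement uses $\tilde r=\max\{r,1\}$), and one must check that the two exponential factors above depend on genuinely disjoint CRT-components of $x$, so that the product over $(\Z/cp^{\tilde r}\Z)^d$ truly decouples. Note also that evenness of $f$ is not used in this lemma; it enters only in the subsequent application.
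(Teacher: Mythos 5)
Your proposal is correct and follows essentially the same route as the paper: Bézout's identity to split $\frac{w}{cp^r}$ into $\frac{m_1w}{p^r}+\frac{m_2w}{c}$, followed by the coordinatewise CRT factorization of the sum (which the paper leaves implicit under "the result follows naturally"). Your extra bookkeeping on the $r=0$ case and on why each factor depends only on the appropriate residue class is accurate and consistent with the paper's use of $\tilde r=\max\{r,1\}$.
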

\begin{proof}
    Since $\gcd(c,p^r) = 1$ by assumption, Bézout's identity implies that there exist $m_1,m_2 \in \Z$ such that $m_1c + m_2 p^r = 1$. Hence, one can write
    \begin{align*}
        \frac{w}{cp^r} =  \frac{w}{cp^r}(m_1c + m_2 p^r) = \frac{m_1w}{p^r} + \frac{m_2 w}{c}.
    \end{align*}
    The result follows naturally.
\end{proof}
(1) We first show that for $d \in \N^*$ and any $f:(\Z/p\Z)^d\to \C$, if the weighted theta function $\theta_f$ is a cusp form, then (\ref{vanishing condition p=2}) and (\ref{vanishing condition}) hold respectively for the even prime 2 and any odd prime $p$. By definition (\ref{theta fourier series}) we know that since $\theta_f$ is zero at $\infty$, we necessarily have $\sum_{x \in X_d(0)}f(x[p]) = 0$, which means $f(0,\cdots,0)=0$. For $s \in \Q$, recall that we have defined $h_s = \gamma u_s$ as in the previous subsection so that $h_s \cdot s = \infty$. Since $\theta_f$ is zero at $s \in \Q$, by Lemma \ref{lemma holomorphic at s existence} we know that the function $[\dot{h_s}]_{\frac{d}{2},u} \cdot \theta_f$ is zero at $\infty$ for $\dot{h_s} \in \pazocal{G}$. This means that for all $s$ as we decompose it in Proposition \ref{prop theta_f holomophic at all s}, we have $F(0)=0$ in (\ref{fourier transform of A}), i.e. by (\ref{decomposition of cusp form})
\begin{align*}
    \sum_{x \in (\Z/cp^{\tilde{r}}\Z)^d} f(x)e^{2\pi i Q(x,x)\frac{w}{cp^r}} = S(r,m_1w)\sum_{z \in (\Z/c\Z)^d} e^{2\pi i Q(z,z)\frac{m_2w}{c}} = 0
\end{align*}
for all $r\in \N$, $m_1,w \in \Z$. In particular, after taking $c=1$ above, for any $r \geq 1$ and $w \in \Z$ we know that $S(r,w)=0$.

\medskip
\paragraph{\textbf{For the even prime 2}} Let $w \in \Z$. Take $r=2$ and we have
\begin{align}
    S(2,w) &= \sum_{x \in (\Z/4\Z)^d} f(x)e^{2\pi i Q(x,x)\frac{w}{4}} = 2^d \sum_{x \in (\Z/2\Z)^d} f(x)e^{2\pi i Q(x,x)\frac{w}{4}}\nonumber\\
    &=2^d \sum_{a \in \Z/4\Z} \bbra{ \sum_{\substack{x \in (\Z/2\Z)^d\\ Q(x,x)=a[4]}} f(x) } e^{2\pi i \frac{aw}{4}}.\label{p=2s2}
\end{align}
Hence imposing $S(2,w)=0$ for all $w \in \Z$, we necessarily have $\forall a \in \Z/4\Z: \sum_{x \in X_{2,d}(a)}f(x) =0$.

For $r \geq 3$, $k \in \{0,1\}^d$ and $w \in \Z$, we define
\begin{align}
    R(r,k,w) := \sum_{u \in (\Z/2^{r-2}\Z)^d} e^{2\pi i Q(u,u+k) \frac{w}{2^{r-2}}}.
\end{align}
The following lemma simplifies the calculations of $S(r,w)$ for $r \geq 3$.
\begin{lemma}\label{lemma R3}
For $r \geq 3$ , $k \in \{0,1\}^d$ and $w \in \Z$, we have
\begin{align}
    R(r,k,w) = 0 \begin{cases}
        \text{ for }r = 3 \text{ and } k \in \{0,1\}^d \setminus \{1,\cdots,1\}\\
        \text{ for }r \geq 4\text{ and }k \in \{0,1\}^d \setminus \{0,\cdots,0\}
        \end{cases}.
\end{align}
\end{lemma}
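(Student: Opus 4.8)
The plan is to deduce both vanishing statements from one translation symmetry of the exponential sum, avoiding explicit Gauss-sum evaluations. Write $N=2^{r-2}$, and for a coordinate $j$ consider the half-period shift $v=\tfrac N2\,e_j$, where $e_j$ is the $j$-th standard basis vector of $(\Z/N\Z)^d$. Reindexing $R(r,k,w)=\sum_{u\in(\Z/N\Z)^d}e^{2\pi i Q(u,u+k)w/N}$ by the bijection $u\mapsto u+v$ and using $Q(u+v,u+v+k)=Q(u,u+k)+2Q(u,v)+Q(v,v+k)$: the linear term $2Q(u,v)=N Q(u,e_j)=Nu_j$ contributes the trivial factor $e^{2\pi i u_jw}=1$, while the constant $Q(v,v+k)=\tfrac N2\bigl(\tfrac N2+k_j\bigr)$ contributes the $u$-independent factor $(-1)^{(2^{r-3}+k_j)w}$. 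Hence I would obtain $R(r,k,w)=(-1)^{(2^{r-3}+k_j)w}\,R(r,k,w)$ for every coordinate $j$.

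From this, if some coordinate $j$ makes the exponent $(2^{r-3}+k_j)w$ odd, then $R(r,k,w)=0$. Here one uses that $w$ is odd, which is exactly the case arising in the application: the cusp $s=w/v$ is taken in lowest terms with $v$ a multiple of $2^r$, so $\gcd(w,2)=1$ (for $w$ even there is no such vanishing, e.g. $R(r,k,0)=N^d$). With $w$ odd: if $r=3$ then $2^{r-3}=1$, so $2^{r-3}+k_j$ is odd precisely at coordinates $j$ with $k_j=0$, and such a $j$ exists whenever $k\neq(1,\dots,1)$; if $r\geq 4$ then $2^{r-3}$ is even, so $2^{r-3}+k_j$ is odd precisely at coordinates $j$ with $k_j=1$, and such a $j$ exists whenever $k\neq(0,\dots,0)$. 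In either case $R(r,k,w)=0$, which is the assertion.

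I expect no real obstacle here; the only points needing care are the reindexing bookkeeping modulo $N$ (that $u\mapsto u+\tfrac N2 e_j$ is a bijection of $(\Z/N\Z)^d$ and that $2Q(u,v)w/N$ is an integer) and keeping the parity of $w$ in view. As a purely computational alternative one can instead factor $R(r,k,w)=\prod_{i=1}^d\bigl(\sum_{u\in\Z/N\Z}e^{2\pi i(u^2+uk_i)w/N}\bigr)$ via $Q(u,u+k)=\sum_i(u_i^2+u_ik_i)$, and evaluate each one-variable quadratic Gauss sum modulo a power of $2$ — completing the square through $u^2+u=\tfrac14\bigl((2u+1)^2-1\bigr)$ when $k_i=1$ — but the translation argument is shorter, so that is the one I would write up.
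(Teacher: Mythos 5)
Your argument is correct and, at its core, it is the same mechanism the paper uses for $r\geq 4$: a translation of the summation variable by the half-period $2^{r-3}$ in a single coordinate $j$, which multiplies $R(r,k,w)$ by the sign $(-1)^{(2^{r-3}+k_j)w}$ and forces vanishing whenever that exponent is odd. The paper first factors $R(r,k,w)$ into a product of one-dimensional sums and then treats $r=3$ separately, by evaluating the resulting two-term sums $\sum_{u\in\Z/2\Z}e^{\pi i (u^2+k_ju)w}$ directly, whereas you run the shift on the full $d$-dimensional sum and let the same computation cover $r=3$ (where $2^{r-3}=1$ swaps the roles of $k_j=0$ and $k_j=1$); your packaging is more uniform, but nothing essential differs. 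The one substantive point is the parity of $w$, which you are right to flag: as stated, the lemma asserts vanishing for all $w\in\Z$, and this is false for even $w$ (e.g.\ $R(r,k,0)=2^{(r-2)d}$). The paper's own proof silently uses $w$ odd in the step asserting $e^{2\pi i h_0(0)w/2}=-e^{2\pi i h_0(1)w/2}$, which amounts to $1+(-1)^w=0$. So the lemma should carry the hypothesis that $w$ is odd. This does not damage the application, since for $p=2$ an even numerator $w$ lets one reduce the fraction $w/(c2^{r})$ to a smaller value of $r$, and by Lemma \ref{lemma holomorphic at s existence} it suffices to verify the vanishing of $F(0)$ for one representative quotient form of each cusp; but your write-up is the more careful of the two on this point.
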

\begin{proof}
For $r=3$, $k \in \{0,1\}$ and $u \in \Z/2\Z$, let us call $h_k: \Z/2\Z \to \Z/2\Z$ defined by $h_k (u):= u^2 + ku$, then it follows that for every $u \in \Z/2\Z$ we have
\begin{align*}
    h_0(u) &= u^2 \in \{0,1\}\\
    h_1(u) &= u^2 + u =0.
\end{align*}
Hence, we know that
\begin{align*}
    e^{2 \pi i \frac{h_0(0)}{2}} = -e^{2 \pi i \frac{h_0(1)}{2}},\quad e^{2 \pi i \frac{h_1(u)}{2}} = 1 \quad\text{for }u \in \Z/2\Z.
\end{align*}
In general, for $k \in \{0,1\}^d$ and $w \in \Z$ it follows that
\begin{align*}
    R(3,\{1,\cdots,1\},w) &= \sum_{u \in (\Z/2\Z)^d} \prod_{i=1}^d e^{2\pi i \frac{h_1(u_i)}{2} w} = 2^d,\\
    R(3,k,w) &= 0,\quad k \in \{0,1\}^d \setminus \{1,\cdots,1\}
\end{align*}
as desired.

For $r \geq 4$, $k \in \{0,1\}$ and $u \in \Z/2^{r-2}\Z$, let us call $g_k: \Z/2^{r-2}\Z \to \Z/2^{r-2}\Z$ defined by $g_k(u):=u^2+ku$, then it follows that for every $u \in \Z/2^{r-2}\Z$ we have
\begin{align*}
    g_0(u) &= g_0(u+2^{r-3}) = u^2\\
    g_1(u) +2^{r-3}&=g_1(u+2^{r-3}) = u^2+u+2^{r-3}.
\end{align*}
Hence for every $u \in \Z/2^{r-2}\Z$ we get
\begin{align*}
    e^{2\pi i \frac{g_0(u)}{2^{r-2}}} &= e^{2\pi i \frac{g_0(u+2^{r-3})}{2^{r-2}}}, \\
    e^{2\pi i \frac{g_1(u)}{2^{r-2}}} &=-e^{2\pi i \frac{g_1(u+2^{r-3})}{2^{r-2}}},
\end{align*}
and in general for $k \in \{0,1\}^d$ and $w \in \Z$ it follows that
\begin{align*}
    R(r,\{0,\cdots,0\},w) &= \sum_{u \in (\Z/2^{r-2}\Z)^d}\prod_{i=1}^d e^{2\pi i \frac{g_0(u_i)}{2^{r-2}}w} =\sum_{u \in (\Z/2^{r-2}\Z)^d}\prod_{i=1}^d e^{2\pi i \frac{u_i^2}{2^{r-2}}w},\\
    R(r,k,w) &= 0, \quad k \in \{0,1\}^d \setminus \{0,\cdots,0\}
\end{align*}
as desired.
\end{proof}
Now we calculate $S(3,w)$ for $w \in \Z$ as follows
\begin{align}
    S(3,w) &= \sum_{x \in (\Z/8\Z)^d} f(x)e^{2\pi i Q(x,x)\frac{w}{8}} = 2^d \sum_{x \in (\Z/4\Z)^d} f(x)e^{2\pi i Q(x,x)\frac{w}{8}} \nonumber\\
    &= 2^d \sum_{k \in \{0,1\}^d} f(k) e^{2\pi i Q(k,k) \frac{w}{8}} \sum_{u \in (\Z/2\Z)^d} e^{2\pi i Q(u,u+k) \frac{w}{2}}\nonumber\\
    &= 2^d \sum_{k \in \{0,1\}^d} f(k) e^{2\pi i Q(k,k) \frac{w}{8}} R(3,k,w) = 4^d f(1,\cdots,1)e^{2\pi i Q(1,1) \frac{w}{8}}  \label{p=2 s3},
\end{align}
by Lemma \ref{lemma R3}. Hence the condition $S(3,w)=0$ for all $w \in \Z$ means that we necessarily have $f(1,\cdots,1)=0$, and in summary (\ref{vanishing condition p=2}) holds by Proposition \ref{prop theta_f holomophic at all s} and Lemma \ref{lemma bezout}.
\medskip
\paragraph{\textbf{For any odd prime $p$}} Let $w \in \Z$, take $r=1$ we have
\begin{align}
    S(1,w) = \sum_{x \in (\Z/p\Z)^d} f(x)e^{2\pi i Q(x,x)\frac{w}{p}} = \sum_{a \in \Z/p\Z} \bbra{ \sum_{\substack{x \in (\Z/p\Z)^d\\ Q(x,x)=a}} f(x) } e^{2\pi i \frac{aw}{p}}\label{p prime s1}.
\end{align}
Hence the condition $S(1,w)=0$ for all $w \in \Z$ means we necessarily have $\forall a \in \Z/p\Z: \sum_{x \in X_{p,d}(a)}f(x) =0$. In sum, (\ref{vanishing condition}) holds by Proposition \ref{prop theta_f holomophic at all s} and Lemma \ref{lemma bezout}.

\medskip
(2) For the converse direction, we show that for $d \in \N^*$ and any $f:(\Z/p\Z)^d \to \C$, if (\ref{vanishing condition p=2}) and (\ref{vanishing condition}) hold respectively for the even prime 2 and any odd prime $p$, then the weighted theta function $\theta_f$ is a cusp form.
\medskip
\paragraph{\textbf{For the even prime 2}}
Now suppose that (\ref{vanishing condition p=2}) holds, then $f(0,\cdots,0) = 0$ implies that $\theta_f$ is zero at $\infty$ by definition (\ref{theta fourier series}). In addition, let $w \in \Z$. For $r = 0,1$ we have
\begin{align*}
S(0,w) &=\sum_{x \in (\Z/2\Z)^d} f(x),\\
    S(1,w) &= \sum_{x \in (\Z/2\Z)^d} f(x)e^{2\pi i Q(x,x)\frac{w}{2}} = \sum_{a \in \Z/2\Z} \bbra{ \sum_{\substack{x \in (\Z/2\Z)^d\\ Q(x,x)=a}} f(x) } e^{2\pi i \frac{aw}{2}},
\end{align*}
so by assumption, (\ref{p=2s2}) and (\ref{p=2 s3}), we have $S(r,w)=0$ for $r=0,1,2,3$. For $r \geq 4$, we have
\begin{align*}
    S(r,w) &= \sum_{x \in (\Z/2^r\Z)^d} f(x)e^{2\pi i Q(x,x)\frac{w}{2^r}} = 2^d \sum_{x \in (\Z/2^{r-1}\Z)^d} f(x)e^{2\pi i Q(x,x)\frac{w}{2^r}}\\
    &= 2^d\sum_{k \in \{0,1\}^d}f(k) e^{2\pi i Q(k,k)\frac{w}{2^r}} \sum_{u \in (\Z/2^{r-2}\Z)^d} e^{2\pi i Q(u,u+k)\frac{w}{2^{r-2}}}\\
    &= 2^d\sum_{k \in \{0,1\}^d}f(k) e^{2\pi i Q(k,k)\frac{w}{2^r}} R(r,k,w) = 2^d f(0,\cdots,0) R(r,\{0,\cdots,0\},w) = 0
\end{align*}
by Lemma \ref{lemma R3} and our assumption. This means that for all $s\in \Q$ as we decompose it in Proposition \ref{prop theta_f holomophic at all s}, we have $F(0)=0$ in (\ref{fourier transform of A}). Since for every $s \in \Q$, the 0-th Fourier coefficient $F(0)$ of the weakly modular form $[\dot{h_s}]_{\frac{d}{2},u} \cdot \theta_f$ vanishes, we conclude that $\theta_f$ is zero at all the cusps by Definition \ref{defi holomorphic at cusps half integral}.
\medskip
\paragraph{\textbf{For any odd prime $p$}} For $r \geq 2$, $k \in (\Z/p^r \Z)^d$ and $w \in \Z$, we define
\begin{align}
    T(r,k,w) := \sum_{u \in (\Z/p^{r-1}\Z)^d} e^{2\pi i Q(k+pu,k+pu) \frac{w}{p^r}}.
\end{align}
The following lemma simplifies the calculations of $S(r,w)$ for $r \geq 2$.
\begin{lemma}\label{lemma T(r,k,w)}
Let $p$ be an odd prime. For $r \geq 2$, $k \in (\Z/p^r \Z)^d$ and $w \in \Z$, we have
\begin{align}
    T(r,k,w) = \begin{cases}
        p^d\sum_{v \in (\Z/ p^{r-2}\Z)^d} e^{2\pi i Q(v,v) \frac{w}{p^{r-2}}}, &\quad k \in (p\Z/p^r\Z)^d\\
        0, &\quad k \notin (p\Z/p^r\Z)^d
        \end{cases}.
\end{align}
\end{lemma}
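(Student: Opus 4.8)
The plan is to reduce the identity to a one-variable exponential sum and then exploit a translation symmetry of that sum. First, because $Q(k+pu,k+pu)=\sum_{i=1}^d(k_i+pu_i)^2$ and the index set $(\Z/p^{r-1}\Z)^d$ is a product, the sum defining $T(r,k,w)$ factors as $T(r,k,w)=\prod_{i=1}^d T_1(r,k_i,w)$ with $T_1(r,m,w):=\sum_{u\in\Z/p^{r-1}\Z}e^{2\pi i(m+pu)^2w/p^r}$; moreover $T_1(r,m,w)$ depends only on $m$ modulo $p$, since replacing $u$ by $u+p^{r-1}$ changes $(m+pu)^2$ by a multiple of $p^r$. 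It therefore suffices to prove two one-variable facts: (i) $T_1(r,0,w)=p\sum_{v\in\Z/p^{r-2}\Z}e^{2\pi i v^2w/p^{r-2}}$, and (ii) $T_1(r,m,w)=0$ whenever $p$ does not divide $m$ (here I also use that $p$ does not divide $w$, which is the situation in which the lemma is applied, the cusps $w/(cp^r)$ being taken in lowest terms). Granting (i) and (ii), multiplying the $d$ factors gives the claim: the nonzero alternative $p^d\sum_{v\in(\Z/p^{r-2}\Z)^d}e^{2\pi i Q(v,v)w/p^{r-2}}$ occurs exactly when every coordinate of $k$ lies in $p\Z$, and $T(r,k,w)$ vanishes as soon as one coordinate does not.

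For (i) take $m=0$: then $(pu)^2w/p^r=u^2w/p^{r-2}$, so $T_1(r,0,w)=\sum_{u\in\Z/p^{r-1}\Z}e^{2\pi i u^2w/p^{r-2}}$, the summand depends only on $u$ modulo $p^{r-2}$ (changing $u$ by $p^{r-2}$ alters $u^2$ by a multiple of $p^{r-2}$), and each class modulo $p^{r-2}$ is hit by exactly $p^{r-1}/p^{r-2}=p$ elements of $\Z/p^{r-1}\Z$; this gives the stated value, which for $r=2$ reads $T_1(2,0,w)=p$.

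The substance is in (ii), which I regard as the main obstacle. I would use the translation $u\mapsto u+p^{r-2}$ on $\Z/p^{r-1}\Z$, which has additive order $p$ and thus partitions $\Z/p^{r-1}\Z$ into $p^{r-2}$ orbits of size $p$. Expanding $(m+p(u+p^{r-2}))^2=(m+pu)^2+2p^{r-1}(m+pu)+p^{2(r-1)}$ and multiplying by $w/p^r$ shows that this translation changes the exponent, modulo $\Z$, by exactly $2mw/p$, hence multiplies every summand by the fixed scalar $\zeta:=e^{2\pi i\cdot 2mw/p}$. Since $p$ is odd and divides neither $m$ nor $w$, the integer $2mw$ is prime to $p$, so $\zeta$ is a primitive $p$-th root of unity, the $p$ summands in each orbit add to $0$, and summing over orbits yields $T_1(r,m,w)=0$. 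The delicate point is choosing this translation element correctly---$p^{r-2}$ is forced by the requirement that $\zeta$ be a nontrivial $p$-th root of unity, and this is also exactly where one needs that $p$ does not divide $w$---after which the product decomposition, the collapse of the sum in (i), and the orbit count in (ii) are all routine, modulo keeping track of the degenerate case $r=2$ throughout.
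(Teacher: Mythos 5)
Your proof is correct and follows essentially the same route as the paper's: the key step in both is the translation $u \mapsto u + p^{r-2}v$ (you take $v$ to be a standard basis vector after factoring the sum coordinate-wise, and phrase the cancellation as an orbit sum rather than as $T = \zeta T \Rightarrow T=0$, but the mechanism is identical), followed by the same collapse of the sum in the case $k \in (p\Z/p^r\Z)^d$. Your observation that the vanishing case genuinely requires $p \nmid w$ is a good catch: the lemma as stated for arbitrary $w \in \Z$ is literally false when $p \mid w$ (e.g.\ $w=0$ gives $T(r,k,0)=p^{(r-1)d}$ for every $k$), and the paper's proof silently uses the same restriction when it deduces $T(r,k,w)=0$ from $T(r,k,w) = e^{2\pi i\, 2Q(k,v)w/p}\,T(r,k,w)$.
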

\begin{proof}
    For $r \geq 2$, $k \in (\Z/p^r \Z)^d$ and $w \in \Z$,
    \begin{align*}
        T(r,k,w) = e^{2\pi i  \frac{Q(k,k)}{p^r} w} \sum_{u \in (\Z/p^{r-1}\Z)^d} e^{2\pi i \sbra{\frac{2Q(k,u)}{p^{r-1}}+\frac{Q(u,u)}{p^{r-2}}}w}.
    \end{align*}
    Fix $v \in (\Z/p\Z)^d$, then the sum in $T(r,k,w)$ is invariant under the shift $u \mapsto u + p^{r-2}v$, i.e.
    \begin{align*}
        T(r,k,w) = e^{2\pi i  \frac{Q(k,k)}{p^r}w} \sum_{u \in (\Z/p^{r-1}\Z)^d} e^{2\pi i \sbra{\frac{2Q(k,u+p^{r-2}v)}{p^{r-1}}+\frac{Q(u+p^{r-2}v,u+p^{r-2}v)}{p^{r-2}}}w} = e^{2\pi i  \frac{2Q(k,v)}{p}w} T(r,k,w).
    \end{align*}
    Hence, if $k \notin (p\Z/p^r\Z)^d$, then we have $T(r,k,w) = 0$. If $k \in (p\Z/p^r\Z)^d$, then there exists $l \in (\Z/p^{r-1}\Z)^d$ such that $k = pl$ and
    \begin{align*}
        T(r,k,w) = \sum_{u \in (\Z/p^{r-1}\Z)^d} e^{2\pi i Q(l+u,l+u) \frac{w}{p^{r-2}}} = \sum_{u \in (\Z/p^{r-1}\Z)^d} e^{2\pi i Q(u,u) \frac{w}{p^{r-2}}} = p^d\sum_{v \in (\Z/ p^{r-2}\Z)^d} e^{2\pi i Q(v,v) \frac{w}{p^{r-2}}}.
    \end{align*}
    Hence, the statement follows.
\end{proof}

Now suppose that (\ref{vanishing condition}) holds, then $f(0,\cdots,0) = 0$ implies that $\theta_f$ is zero at $\infty$ by definition (\ref{theta fourier series}). In addition, let $w \in \Z$. For $r = 0$ we have
\begin{align*}
    S(0,w) &=\sum_{x \in (\Z/p\Z)^d} f(x),
\end{align*}
so by assumption and (\ref{p prime s1}), we have $S(0,w)=S(1,w)=0$. For $r \geq 2$ we have
\begin{align*}
    S(r,w) &= \sum_{x \in (\Z/p^r\Z)^d} f(x)e^{2\pi i Q(x,x)\frac{w}{p^r}} = \sum_{k \in \{0,\cdots,p-1\}^d} f(k) \sum_{u \in (\Z/p^{r-1}\Z)^d}e^{2\pi i Q(k+pu,k+pu)\frac{w}{p^r}} \\
    &= \sum_{k \in \{0,\cdots,p-1\}^d} f(k) T(r,k,w) = f(0,\cdots,0)p^d \sum_{v \in (\Z/ p^{r-2}\Z)^d} e^{2\pi i Q(v,v) \frac{w}{p^{r-2}}} = 0
\end{align*}
by Lemma \ref{lemma T(r,k,w)} and our assumption. This means that for all $s\in \Q$ as we decompose it in Proposition \ref{prop theta_f holomophic at all s}, we have $F(0)=0$ in (\ref{fourier transform of A}). Since for every $s \in \Q$, the 0-th Fourier coefficient $F(0)$ of the weakly modular form $[\dot{h_s}]_{\frac{d}{2},u} \cdot \theta_f$ vanishes, we conclude that $\theta_f$ is zero at all the cusps by Definition \ref{defi holomorphic at cusps half integral}.

\section{Local equidistribution results}\label{section 3}
In this section, we prove Theorem \ref{theorem equidistribution}. Recall in \S \ref{section 2}, we have proved in Theorem \ref{modular form} that the weighted theta function $\theta_f$ defined by (\ref{theta z^d}) is a modular form of weight $\frac{d}{2}$, and a necessary and sufficient condition for it to be a cusp form in Theorem \ref{cusp form}. These preparations allow us to use bounds for the Fourier coefficients of cusp forms, which are introduced in \S \ref{section 3.1}. Next, we quantify the representation number by the standard quadratic form in \S \ref{section 3.2}. For dimension 4, an explicit formula by Jacobi is applicable already. For higher dimensions larger than 4, this number is characterized by a singular series, where the description is sufficient for us to prove Theorem \ref{theorem equidistribution} on non-zero level sets, i.e. on $X_{p,d}(a)$ where $a \neq 0$. On $X_{p,d}(0) \setminus \{0,\cdots,0\}$, we need a further precision of the singular series. To do this, we have explicitly computed the $p$-adic local density for each odd $p$ in Corollary \ref{cor explicit formula for p-adic density}. This calculation finally leads to Proposition \ref{prop difference bound}, which allows us to conclude the equidistribution phenomenon on each level set.
\subsection*{A few notes on asymptotic notations}
We recall the \textit{Bachmann-Landau} notation. Let $f: \R \to \C$ and $g: \R \to \R_{> 0}$. We write $f(x) = O(g(x))$ as $x \to \infty$ as meaning that $\limsup_{x \to \infty} \abs{f(x)}/g(x)<\infty$. We write $f(x)=o(g(x))$ when $\lim_{x \to \infty} \abs{f(x)}/g(x) = 0$. We also use \textit{Vinogradov's} notation as follows. In the scenario just introduced, we write $f(x) \ll g(x)$ to mean that $f(x) = O(g(x))$. Also, we write $f(x) \gg g(x)$ when both $f$ and $g$ are non-negative and $g(x) \ll f(x)$. Finally we write $f(x) \asymp g(x)$ when one has both $f(x) \ll g(x)$ and $g(x) \ll f(x)$.
\subsection{Bounds for the Fourier coefficients of cusp forms}\label{section 3.1}
Let $\Gamma$ be a congruence subgroup of $\Gamma_1(4)$ and let
\begin{align}\label{fourier expansion of cusp forms}
\psi(\tau) = \sum_{n=1}^{\infty}a_nq^n,\quad q = e^{2\pi i \tau},\tau \in \pazocal{H}
\end{align}
be a cusp form of weight $k \in \frac{1}{2}\Z$ under $\Gamma$.

The well-known Hecke bound (cf. \cite[(5.7)]{MR1474964}), which states that $a_n \ll n^{k/2}$, is already strong enough for our application when $d \geq 5$. However, an improvement is required for our equidistribution theory when $d =4$, which can be achieved using a non-trivial estimate for the Kloosterman sums that appear in the Fourier expansion of Poincaré series. The following statement is a result from \cite[(5.19)]{MR1474964}.
\begin{thm}[Kloosterman sums bound]\label{thm bound kloosterman sums}
Let $\psi$ be a cusp form of weight $2$ under $\Gamma$, which admits the Fourier expansion (\ref{fourier expansion of cusp forms}), then for any $\epsilon>0$ its $n$-th Fourier coefficients satisfy
\begin{equation}
    a_n \ll n^{3/4 + \epsilon}.
\end{equation}
\end{thm}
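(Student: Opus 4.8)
The plan is to realise the Fourier coefficients $a_n$ by testing $\psi$ against Poincar\'e series and then to invoke Weil's bound for Kloosterman sums, following \cite[\S 5]{MR1474964}. Since $\Gamma \supseteq \Gamma(N)$ for some $N$ with $4 \mid N$ and since $k = 2$ is an even integer, one works entirely within the classical theory of integral-weight modular forms, the metaplectic cover playing no role here. I would begin by introducing, for each $m \ge 1$, the weight-$2$ Poincar\'e series $P_m$ attached to the cusp $\infty$ of $\Gamma$; because weight $2$ sits on the boundary of absolute convergence, $P_m$ has to be defined through Hecke's device, i.e.\ by inserting a convergence factor $|c\tau+d|^{-2s}$, establishing everything for $\Re(s)$ large, and continuing analytically to $s = 0$.

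Next I would record the unfolded Fourier expansion of $P_m$ at $\infty$, which has the shape
\begin{equation*}
   P_m(\tau) = \sum_{n \ge 1} p_m(n)\,q^n, \qquad
   p_m(n) = \delta_{m,n} + 2\pi i^{-2}\Big(\tfrac{n}{m}\Big)^{1/2}\sum_{\substack{c>0\\ N\mid c}} \frac{S(m,n;c)}{c}\,J_1\!\Big(\frac{4\pi\sqrt{mn}}{c}\Big),
\end{equation*}
where the $S(m,n;c)$ are the (generalized) Kloosterman sums attached to the cusp pair $(\infty,\infty)$ and $J_1$ is the usual Bessel function of the first kind. A cusp form that is Petersson-orthogonal to every $P_m$ has all its Fourier coefficients at $\infty$ equal to zero, hence vanishes; therefore the family $\{P_m\}_{m\ge 1}$ spans the finite-dimensional space $S_2(\Gamma)$, and $\psi = \sum_{j=1}^{r}\lambda_j P_{m_j}$ for a fixed finite set of indices $m_1,\dots,m_r$. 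Consequently $a_n = \sum_{j=1}^r \lambda_j\, p_{m_j}(n)$, and it suffices to bound $p_m(n)$ for each fixed $m$.

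The estimation of $p_m(n)$ is then routine: Weil's bound gives $|S(m,n;c)| \le \tau(c)\gcd(m,n,c)^{1/2}c^{1/2}$, and the elementary bounds $J_1(x) \ll x$ and $J_1(x) \ll x^{-1/2}$ control the small- and large-argument regimes. Splitting the $c$-sum at $c \asymp \sqrt{mn}$ and inserting these estimates, the range $c \le \sqrt{mn}$ contributes $\ll_{m}(n/m)^{1/2}(mn)^{1/4+\epsilon}\ll_{m,\epsilon} n^{3/4+\epsilon}$, while the range $c > \sqrt{mn}$ yields a convergent tail of the same order; summing the finitely many terms $\lambda_j p_{m_j}(n)$ gives $a_n \ll_{\psi,\epsilon} n^{3/4+\epsilon}$, as claimed.

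The main obstacle is the careful justification of the weight-$2$ Poincar\'e-series machinery: one must check that the analytic continuation to $s = 0$ exists and leaves no residual polar term in the Fourier expansion — this is precisely where the cuspidality of $\psi$ (equivalently, orthogonality to Eisenstein series) enters — and that the unfolding producing the Kloosterman sums is legitimate. Once the expansion above is in hand, the remaining analysis is the standard pairing of Weil's bound with classical Bessel asymptotics.
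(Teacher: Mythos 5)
The paper does not actually prove this statement; it is quoted directly from \cite[(5.19)]{MR1474964}, and your proposal is precisely the standard argument behind that citation: weight-$2$ Poincar\'e series regularized by Hecke's $|c\tau+d|^{-2s}$ device, the fact that they span $S_2(\Gamma)$, the Fourier expansion with Kloosterman sums and $J_1$, Weil's bound, and the split of the $c$-sum at $c\asymp\sqrt{mn}$ — and your exponent bookkeeping $(n/m)^{1/2}(mn)^{1/4+\epsilon}\ll_{m,\epsilon}n^{3/4+\epsilon}$ is correct. The one point you pass over lightly is that for a general congruence subgroup $\Gamma\subseteq\Gamma_1(4)$ the Kloosterman sums attached to the cusp pair $(\infty,\infty)$ carry extra congruence conditions on the entries, so Weil's bound must be applied after detecting those conditions with characters modulo the level; this is standard and changes nothing in the final exponent.
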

\begin{rem}\label{rem kloosterman sums}
For a cusp form of weight $k$, where $k$ is half an odd integer such as $3/2$, one can obtain the bound $a_n \ll n^{k/2 - 1/4 + \epsilon}$ for any $\epsilon>0$. The proof relies on an estimate for the Kloosterman sums twisted by the extended quadratic symbol, known as the Salié sum. For dimension 3, an even sharper bound, if one exists, is needed for application (cf. \S \ref{section 4}).
\end{rem}
\subsection{The representation number by quadratic forms}\label{section 3.2}
In this subsection, we present a series of results on the representation number $r_d(n)$ of the standard quadratic form $Q$. For low dimensions, such as $d=4$, these results are classically known.
\begin{thm}[cf. {\cite[\S 11.3]{MR1474964}}]\label{thm rn for d =3 or 4}
    Let $n \in \N$. The representation number by the standard quadratic form $Q$ in dimension 4 satisfies
       \begin{align}
        r_4(n) = 8(2+(-1)^n) \sum_{d|n,2 \nmid d} d.
    \end{align}
\end{thm}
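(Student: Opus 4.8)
The plan is to derive Jacobi's four-square type formula $r_4(n) = 8(2 + (-1)^n)\sum_{d \mid n,\, 2 \nmid d} d = 8 \sigma(n) - 32\sigma(n/4)$ (with the convention $\sigma(x) = 0$ if $x \notin \N$) by the theory of modular forms of weight $2$ on $\Gamma_0(4)$, exactly in the spirit of the $\theta$-function machinery built in \S\ref{section 2}. First I would consider the classical theta series $\theta(\tau) = \sum_{x \in \Z} e^{2\pi i x^2 \tau}$ and observe that $\theta^4 = \sum_{n \geq 0} r_4(n) q^n$; by Theorem \ref{modular form} (applied with $d = 4$, $p$ any odd prime, $f \equiv 1$, noting $\theta_{1} = \theta^4$) together with the transformation law $\Theta(g\cdot\tau) = h(g,\tau)\Theta(\tau)$ for $g \in \Gamma_0(4)$ recalled before Lemma \ref{lemma cocycle h}, the function $\theta^4$ is a holomorphic modular form of weight $2$ on $\Gamma_0(4)$, holomorphic and nonzero in its leading term at each cusp.

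The key structural input is that $M_2(\Gamma_0(4))$ is a $2$-dimensional $\C$-vector space with no cusp forms, i.e. $S_2(\Gamma_0(4)) = 0$; this follows from the genus/dimension formula for $\Gamma_0(4)$ (the modular curve $X_0(4)$ has genus $0$ and three cusps $\{0, 1/2, \infty\}$), or alternatively can be extracted from the valence formula as in \cite[Chapter VII]{MR0344216}. Consequently $\theta^4$ is determined by its value at, say, two cusps, and it suffices to exhibit two explicit weight-$2$ modular forms on $\Gamma_0(4)$ spanning the space. The natural choice is the pair of Eisenstein series $E(\tau) := 1 - 24\sum_{n \geq 1}\sigma(n) q^n$ restricted appropriately — more precisely $G_2(\tau) - 2G_2(2\tau)$ and $G_2(\tau) - 4G_2(4\tau)$, where $G_2$ is the quasimodular Eisenstein series of weight $2$; the standard fact that $G_2(\tau) - NG_2(N\tau)$ is a genuine weight-$2$ modular form on $\Gamma_0(N)$ kills the anomalous term in the transformation of $G_2$. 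I would then write $\theta^4 = a\bigl(G_2(\tau) - 2G_2(2\tau)\bigr) + b\bigl(G_2(\tau) - 4G_2(4\tau)\bigr)$, match the constant term and the coefficient of $q^1$ (using $r_4(0) = 1$, $r_4(1) = 8$), solve the resulting $2 \times 2$ linear system for $a, b$, and read off the coefficient formula. Expanding $\sum_n \sigma(n)q^n - 2\sum_n \sigma(n)q^{2n}$ and $\sum_n \sigma(n)q^n - 4\sum_n\sigma(n)q^{4n}$ and simplifying yields $r_4(n) = 8\sigma(n) - 32\sigma(n/4)$, which one checks equals $8(2 + (-1)^n)\sum_{d \mid n,\, 2\nmid d} d$ by separating the cases $n$ odd, $n \equiv 2 \pmod 4$, and $4 \mid n$ (writing $n = 2^a m$ with $m$ odd and using multiplicativity of $\sigma$).

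The main obstacle — really the only nontrivial point — is justifying $\dim M_2(\Gamma_0(4)) = 2$ and $S_2(\Gamma_0(4)) = 0$; once that is in hand the argument is a finite linear-algebra computation. An alternative that sidesteps any appeal to general dimension formulas, and stays closer to the paper's self-contained style, is to argue directly: both $\theta^4$ and $8\sigma(n) - 32\sigma(n/4)$ generate weight-$2$ forms on $\Gamma_0(4)$ that are holomorphic at all cusps, their difference is therefore a cusp form of weight $2$ on $\Gamma_0(4)$, and a cusp form of weight $2$ that vanishes to sufficiently high order (here, order $\geq 1$ at $\infty$ automatically, and one checks the other two cusps) must be identically zero by the valence formula $\sum_{\text{orbits}} \operatorname{ord} = \frac{k}{12}[\SL_2(\Z):\Gamma_0(4)] = \frac{2}{12}\cdot 6 = 1$, leaving no room once one cusp already contributes. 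I would present the Eisenstein-series version as the clean route, citing \cite{MR1474964} or \cite{MR0344216} for the structure of $M_2(\Gamma_0(4))$, and relegate the elementary rewriting of the coefficient formula to a one-line case check.
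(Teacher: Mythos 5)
The paper does not actually prove this statement: it is quoted as a classical result of Jacobi with a pointer to \cite[\S 11.3]{MR1474964}, so there is no internal proof to compare against. Your proposal supplies a proof, and it is the standard and correct one. The computation checks out: with $E_2(\tau)=1-24\sum_{n\geq 1}\sigma(n)q^n$, the forms $E_2(\tau)-2E_2(2\tau)$ and $E_2(\tau)-4E_2(4\tau)$ have leading coefficients $(-1,-24)$ and $(-3,-24)$, hence span the $2$-dimensional space $M_2(\Gamma_0(4))$ and are separated by $(a_0,a_1)$; matching $(1,8)$ forces $a=0$, $b=-\tfrac13$, i.e. $\theta^4=-\tfrac13\bigl(E_2(\tau)-4E_2(4\tau)\bigr)$, which gives $r_4(n)=8\sigma(n)-32\sigma(n/4)$, and the case check $n$ odd / $n\equiv 2\,[4]$ / $4\mid n$ confirms agreement with $8(2+(-1)^n)\sum_{d\mid n,\,2\nmid d}d$. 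Two caveats. First, invoking Theorem \ref{modular form} with $f\equiv 1$ only yields weak modularity under the much smaller group $\Gamma_p$; the statement you actually need is modularity of $\theta^4$ under all of $\Gamma_0(4)$, which comes from the transformation law $\Theta(g\cdot\tau)=h(g,\tau)\Theta(\tau)$ recalled before Lemma \ref{lemma cocycle h} together with $h(g,\tau)^4=(c\tau+d)^2$ (using $\varepsilon_d^4=1$ and $\rbra{\tfrac{c}{d}}^4=1$ since $\gcd(c,d)=1$), plus holomorphy at the three cusps of $\Gamma_0(4)$ — so you should lean on that input rather than on Theorem \ref{modular form}. Second, in your ``alternative'' valence-formula route, the difference of two forms holomorphic at the cusps is a cusp form only after one verifies that the constant terms agree at all three cusps, not just at $\infty$; you flag this but it is the step that actually carries the content there, so the Eisenstein-basis route you designate as primary is the one to present. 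The remaining external inputs ($\dim M_2(\Gamma_0(4))=2$, $S_2(\Gamma_0(4))=0$, modularity of $E_2(\tau)-NE_2(N\tau)$) are standard and appropriately cited.
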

\begin{rem}\label{rem r4}
    Note that by the exact formula above for $r_4(n)$, when $n = 2^k$ for $k \in \N$, the representation number is $r_4(2^k) = 24$, which does not provide enough candidates for equidistribution.
\end{rem}

For $d \geq 5$, effective estimates for $r_d(n)$ are provided by the circle method. We first present a characterization of $r_d(n)$ in terms of a singular series, which was adapted by Kloosterman (cf. \cite[\S 11]{MR1474964}). The original statement applies to any positive-definite quadratic form. In particular, the following case is relevant to our application.
\begin{thm}[cf. {\cite[Theorem 11.2]{MR1474964}}]\label{thm representation number}
    Let $d \geq 5$ and $n \in \N$. The representation number by the standard quadratic form $Q$ satisfies
    \begin{equation}
        r_d(n) = \frac{\pi^{d/2}}{\Gamma(d/2)}n^{d/2 -1}\mathfrak{S}_d(n) + o(n^{d/2-1}),
    \end{equation}
    where the singular series $\mathfrak{S}_d(n)$ is defined by
    \begin{equation}\label{eq Ad}
        \mathfrak{S}_d(n) = \sum_{q=1}^{\infty}A_d(q,n), \quad A_d(q,n) = \sum_{\substack{a=1\\(a,q)=1}}^{q}  \rbra{q^{-1} S(q,a)}^d e^{2\pi i \rbra{\frac{-na}{q}}}.
    \end{equation}
    Note that by convention $A_d(1,n)=1$. Here, $S(q,a)$ denotes the Gauss sum, which is defined by
    \begin{equation}\label{gauss sum formula}
        S(q,a) = \sum_{t=1}^q e^{2 \pi i \rbra{\frac{at^2}{q}}}.
    \end{equation}
    Also, $\Gamma(z)$ denotes the familiar $\Gamma$-function defined by $\Gamma(z) = \int_0^\infty t^{z-1} e^{-t} dt \quad(\re(z)>0)$.
\end{thm}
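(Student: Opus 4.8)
The plan is to establish the asymptotic by the Hardy--Littlewood--Kloosterman circle method, following the scheme of \cite[\S 11]{MR1474964}. Write $e(x):=e^{2\pi i x}$, put $Q_0:=\lfloor\sqrt{n}\rfloor$, and introduce the truncated Weyl sum $f(\alpha):=\sum_{x^2\le n}e(\alpha x^2)$, so that orthogonality of characters gives $r_d(n)=\int_0^1 f(\alpha)^d e(-n\alpha)\,d\alpha$, the $d$-fold convolution extracting exactly the integer points on the shell $Q(x,x)=n$. Using the Farey dissection of order $Q_0$, every $\alpha\in[0,1)$ lies on a short arc about a reduced fraction $a/q$ with $q\le Q_0$; on such an arc the first task is to prove the uniform approximation
\[
  f\left(\tfrac{a}{q}+\beta\right)=q^{-1}S(q,a)\,v(\beta)+O\left(q^{1/2}\right),\qquad v(\beta):=\int_{-\sqrt{n}}^{\sqrt{n}}e(\beta t^2)\,dt,
\]
by splitting the summation variable into residues modulo $q$ and applying Euler--Maclaurin (partial summation) inside each class, with $S(q,a)$ the Gauss sum (\ref{gauss sum formula}).

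Next I would raise this to the $d$-th power, integrate over each arc, and sum. The main term assembles, up to smaller contributions, as $\sum_{q\le Q_0}A_d(q,n)\,J(n)$, where, after extending the $\beta$-integration from the arc to all of $\R$, one identifies the singular integral
\[
  J(n)=\int_{\R}v(\beta)^d e(-n\beta)\,d\beta=\int_{\R^d}\delta\bigl(Q(x,x)-n\bigr)\,dx=\frac{\pi^{d/2}}{\Gamma(d/2)}\,n^{d/2-1},
\]
the last step being a polar change of variables together with a Beta-function identity. Completing the sum $\sum_{q\le Q_0}$ to $\sum_{q=1}^{\infty}$ brings in the singular series $\mathfrak{S}_d(n)$; its tail $\sum_{q>Q_0}|A_d(q,n)|$ is $o(1)$ because the classical bound $|S(q,a)|\ll q^{1/2}$ forces $|A_d(q,n)|\ll q^{1-d/2}$, and $\sum_q q^{1-d/2}$ converges precisely when $d\ge 5$. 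This convergence is exactly where the hypothesis $d\ge 5$ is used, and it is also what makes $\mathfrak{S}_d(n)$ absolutely convergent with $\mathfrak{S}_d(n)\asymp 1$.

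The main obstacle is showing that the accumulated error is $o(n^{d/2-1})$. Writing $f^d=(q^{-1}S(q,a)v+O(q^{1/2}))^d$, the dominant cross term is $(q^{-1}S(q,a)v)^{d-1}\cdot O(q^{1/2})$; using stationary phase in the form $|v(\beta)|\ll n^{1/2}(1+n|\beta|)^{-1/2}$ on the arc $|\beta|\le(qQ_0)^{-1}$, this contributes a quantity of shape
\[
  \sum_{q\le Q_0}\ \sum_{(a,q)=1}\ \int q^{1/2}\bigl(q^{-1/2}n^{1/2}(1+n|\beta|)^{-1/2}\bigr)^{d-1}d\beta\ \ll\ n^{(d-3)/2}\sum_{q\le Q_0}q^{(4-d)/2},
\]
since the $\beta$-integral is $\ll n^{-1}$ once $d\ge 4$; the remaining sum over $q\le Q_0=\sqrt{n}$ then makes the whole thing $o(n^{d/2-1})$ for $d\ge 5$. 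The pure error term $(O(q^{1/2}))^d$ summed over arcs is $\ll Q_0^{d/2}=n^{d/4}$, which is $o(n^{d/2-1})$ exactly because $d>4$; and the error from truncating the singular integral at $|\beta|\le(qQ_0)^{-1}$ is absorbed by the decay $v(\beta)^d\ll n^{d/2}(1+n|\beta|)^{-d/2}$. Tracking which of these needs $d\ge 4$ and which needs $d\ge 5$ is the delicate bookkeeping, but no genuinely deep input such as cancellation in Kloosterman sums is required here; that refinement is needed only in dimension $4$, where $n^{d/2-1}$ sits at the very edge of the trivial bound (cf. Theorem \ref{thm bound kloosterman sums} and Remark \ref{rem kloosterman sums}).

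I would also point out a shorter route fitting the present framework: since $\theta^d$ is a modular form of weight $d/2$ on a congruence subgroup of $\Gamma_1(4)$ (the case $f\equiv 1$ of Theorem \ref{modular form}, or directly from the classical theory), one may write $\theta^d=E_{d/2}+C_{d/2}$ with $E_{d/2}$ Eisenstein and $C_{d/2}$ a cusp form. The $n$-th Fourier coefficient of $E_{d/2}$ is, by the classical evaluation of Eisenstein coefficients, exactly $\frac{\pi^{d/2}}{\Gamma(d/2)}n^{d/2-1}\mathfrak{S}_d(n)$, while the Hecke bound gives the coefficients of $C_{d/2}$ as $\ll n^{d/4+\epsilon}=o(n^{d/2-1})$ for $d\ge 5$. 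Either way, the dichotomy $d/4<d/2-1\iff d>4$ is the arithmetic heart of the statement.
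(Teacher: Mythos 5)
The paper does not actually prove this statement: it is imported directly from the cited source (Iwaniec, Theorem 11.2, treated in his \S 11 via the circle method), so there is no internal argument to compare against. Your sketch is a correct outline of exactly that standard Hardy--Littlewood/Kloosterman argument --- generating integral, Farey dissection, major-arc approximation of the Weyl sum by $q^{-1}S(q,a)v(\beta)$, evaluation of the singular integral as $\pi^{d/2}n^{d/2-1}/\Gamma(d/2)$, and absolute convergence of $\sum_{q}A_d(q,n)$ from $\abs{S(q,a)}\ll q^{1/2}$ precisely when $d\geq 5$ --- and your exponent bookkeeping for the error terms, including the observation that no Kloosterman-sum cancellation is needed once $d\geq 5$, is consistent with how the theorem is used later in the paper. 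The one imprecision worth flagging is the uniform error $O(q^{1/2})$ claimed for $f(a/q+\beta)$: the correct major-arc estimate carries an additional factor of the shape $(1+n\abs{\beta})^{1/2}$, which must be tracked on the arc $\abs{\beta}\leq (qQ_0)^{-1}$; doing so does not change any of your final exponents, so the sketch stands.
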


To derive useful bounds for $r_d(n)$ for our later application, we introduce the notion of $p$-adic local density.
\begin{defi}[cf. {\cite[\S 7]{circle}}]\label{defi p-adic local density}
    Let $d \in \N^*$. For every prime $p$, the \textit{$p$-adic local density} is defined by
\begin{equation}
   \delta_{p,d}(n) = \sum_{h=0}^{\infty}A_d(p^h,n), \quad \text{for }n \in \N.
\end{equation}
\end{defi}

Despite the presence of the infinity symbol in the above definition, we will show that for any odd prime $p$, the $p$-adic local density is in fact a finite sum.
\begin{prop}\label{prop formula for A_d(p^h,n)}
    Let $d \geq 1$ and $n \in \N$ and $p$ be an odd prime. If $d$ is even, then
    \begin{align}\label{ad even}
        A_d(p^h,n) = \begin{cases}
            \frac{p-1}{p} \rbra{\varepsilon_p^d p^{1-d/2}}^h&\quad\text{if }1 \leq h \leq \ord_p(n)\\
            -\rbra{\varepsilon_p^d p^{1-d/2}}^{\ord_p(n)+1}p^{-1} &\quad\text{if }h = \ord_p(n)+1\\
             0 &\quad\text{if }h > \ord_p(n)+1
        \end{cases}.
    \end{align}
    If $d$ is odd, then
    \begin{align}\label{ad odd}
        A_d(p^h,n) = \begin{cases}
            0&\quad\text{if }1 \leq h \leq \ord_p(n) \text{ and }h \text{ is odd}\\
            \frac{p-1}{p}  p^{(1-d/2)h}&\quad\text{if }1 \leq h \leq \ord_p(n) \text{ and }h \text{ is even}\\
           p^{(1-d/2)\ord_p(n)+(1-d)/2}\varepsilon_p^{d+1}\rbra{\frac{-n/p^{\ord_p(n)}}{p}} &\quad\text{if }h = \ord_p(n)+1\text{ and }h \text{ is odd}\\
            - p^{(1-d/2)\ord_p(n)-d/2} &\quad\text{if }h = \ord_p(n)+1\text{ and }h \text{ is even}\\
             0 &\quad\text{if }h > \ord_p(n)+1
        \end{cases}.
    \end{align}
Here, the symbols $\rbra{\frac{n}{p}}$ and $\varepsilon_p$ are defined in Definition \ref{defi extended symbol and varepsilon}.
\end{prop}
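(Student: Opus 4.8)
The plan is to reduce the whole computation to two classical ingredients: the evaluation of quadratic Gauss sums modulo odd prime powers, and the evaluation of Ramanujan-type sums. First I would record, for an odd prime $p$ and $a$ coprime to $p$, the reduction $S(p^h,a)=p\,S(p^{h-2},a)$ valid for $h\ge 2$; this follows by writing $t=u+p^{h-1}v$ with $0\le u<p^{h-1}$ and $0\le v<p$, expanding $t^2\equiv u^2+2p^{h-1}uv\pmod{p^h}$, summing over $v$ (which kills all terms with $p\nmid u$) and setting $u=pw$. Combined with $S(1,a)=1$ and the classical identity $S(p,a)=\varepsilon_p\rbra{\tfrac{a}{p}}\sqrt{p}$, this gives $p^{-h}S(p^h,a)=p^{-h/2}$ when $h$ is even and $p^{-h}S(p^h,a)=p^{-h/2}\varepsilon_p\rbra{\tfrac{a}{p}}$ when $h$ is odd. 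Raising to the $d$-th power, the factor $\rbra{\tfrac{a}{p}}^d$ disappears on units unless $d$ and $h$ are both odd, in which case it is $\rbra{\tfrac{a}{p}}$.

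Substituting into $A_d(p^h,n)=\sum_{\substack{1\le a\le p^h\\(a,p)=1}}\bigl(p^{-h}S(p^h,a)\bigr)^d e^{-2\pi i na/p^h}$ then leaves exactly two types of character sum. In every case except ``$d$ and $h$ both odd'' one is left with the Ramanujan sum $c_{p^h}(n)=\sum_{\substack{1\le a\le p^h\\(a,p)=1}}e^{-2\pi i na/p^h}$, times a power of $p$ and a sign $\varepsilon_p^{d}$ that appears only when $h$ is odd; this sign is harmless because $\varepsilon_p^d=\pm1$ for even $d$, so the total contribution reassembles into $\rbra{\varepsilon_p^d p^{1-d/2}}^h$. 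Using the standard values $c_{p^h}(n)=p^{h-1}(p-1)$ if $p^h\mid n$, $c_{p^h}(n)=-p^{h-1}$ if $\ord_p(n)=h-1$, and $c_{p^h}(n)=0$ if $\ord_p(n)\le h-2$, and collecting exponents — for instance $p^{-dh/2}(-p^{h-1})=-p^{(1-d/2)\ord_p(n)-d/2}$ when $h=\ord_p(n)+1$ — yields all of (\ref{ad even}) and the $h$-even rows of (\ref{ad odd}).

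The remaining case $d,h$ both odd requires the twisted sum $G_{p^h}(n):=\sum_{\substack{1\le a\le p^h\\(a,p)=1}}\rbra{\tfrac{a}{p}}e^{-2\pi i na/p^h}$. Writing $a=a_1+pa_2$ with $1\le a_1\le p-1$ and $0\le a_2<p^{h-1}$, the inner sum over $a_2$ is $p^{h-1}$ if $p^{h-1}\mid n$ and $0$ otherwise; when $p^{h-1}\mid n$, writing $n=p^{h-1}n'$, the leftover sum over $a_1$ is the classical Gauss sum $\sum_{a_1=1}^{p-1}\rbra{\tfrac{a_1}{p}}e^{-2\pi i n'a_1/p}=\rbra{\tfrac{-n'}{p}}\varepsilon_p\sqrt{p}$, which vanishes precisely when $p\mid n'$. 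Hence $G_{p^h}(n)=0$ unless $\ord_p(n)=h-1$, and then $G_{p^h}(n)=p^{h-1}\rbra{\tfrac{-n/p^{\ord_p(n)}}{p}}\varepsilon_p\sqrt{p}$; feeding this back into $A_d(p^h,n)=p^{-dh/2}\varepsilon_p^{d}G_{p^h}(n)$ and simplifying the exponent $-dh/2+h-\tfrac12=(1-d/2)\ord_p(n)+(1-d)/2$ produces the two $h$-odd rows of (\ref{ad odd}), while $\ord_p(n)\ne h-1$ forces $A_d(p^h,n)=0$. I expect no serious obstacle here: the only real work is the disciplined case analysis over the parities of $d$ and $h$ and the position of $h$ relative to $\ord_p(n)$, together with the exponent bookkeeping needed to match the stated closed forms; the main point one must not botch is the reduction of the twisted sum $G_{p^h}$ to a single classical Gauss sum modulo $p$.
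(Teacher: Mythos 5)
Your proposal is correct and follows essentially the same route as the paper: evaluate $S(p^h,a)$ via the reduction $S(p^h,a)=pS(p^{h-2},a)$ and the classical value of $S(p,a)$, then reduce $A_d(p^h,n)$ to the Ramanujan sum in all cases except $d,h$ both odd, where the quadratic-character-twisted sum is needed. The only cosmetic difference is that the paper evaluates the vanishing of the twisted sum for $h>\ord_p(n)+1$ by rewriting it as $S(p^h,n)-pS(p^{h-2},n)$, whereas you use the splitting $a=a_1+pa_2$; both are equally valid.
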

We postpone the proof of the above result until the end of this section, which serves as a manifestation of the celebrated Gauss sums. Direct applications of this result include Corollaries \ref{cor explicit formula for p-adic density}, \ref{cor singular series} and \ref{thm bound r(n)}. To begin, for any odd prime $p$ we are able to write the explicit formula for the $p$-adic local density.
\begin{cor}\label{cor explicit formula for p-adic density}
    Let $d \geq 3$ and $n \in \N$ and $p$ be an odd prime. If $d$ is even, then
    \begin{align}\label{explicit formula for p-adic d even}
    \delta_{p,d}(n) = C_{p,d} \rbra{1-\rbra{\varepsilon_p^d p^{1-d/2}}^{\ord_p(n)+1}},
    \end{align}
    where $C_{p,d} = (1-\varepsilon_p^d p^{-d/2})(1-\varepsilon_p^d p^{1-d/2})^{-1}$ is a positive constant which depends on $p$ and $d$.
    If $d$ is odd, then
    \begin{align}\label{explicit formula for p-adic d odd}
       \delta_{p,d}(n) = \begin{cases}
          p^{(1-d/2)\ord_p(n)} E_{p,d} + F_{p,d}&\quad \text{ if }\ord_p(n) \text{ is odd}\\
           p^{(1-d/2)\ord_p(n)}G_{p,d,n}+F_{p,d}&\quad \text{ if }\ord_p(n) \text{ is even}\\
       \end{cases},
    \end{align}
    where $E_{p,d} = p^{1-d/2}(p-1)(1-p^{2-d})^{-1}$ and $F_{p,d} = (1-p^{1-d})(1-p^{2-d})^{-1}$ are two positive constants which depend on $p$ and $d$; and $G_{p,d,n} = p^{1-d}(1-p)(1-p^{2-d})^{-1} + p^{(1-d)/2} \varepsilon_p^{d+1} \rbra{\frac{-n/p^{\ord_p(n)}}{p}}$ is a term which depends on $p,d,n$.
\end{cor}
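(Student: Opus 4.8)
The plan is to derive the closed-form expressions for $\delta_{p,d}(n)$ by summing the finitely many nonzero terms $A_d(p^h,n)$ provided by Proposition \ref{prop formula for A_d(p^h,n)}, together with the convention $A_d(1,n)=1$. Write $v:=\ord_p(n)$ throughout. By Proposition \ref{prop formula for A_d(p^h,n)}, for $h>v+1$ we have $A_d(p^h,n)=0$, so
\[
\delta_{p,d}(n)=\sum_{h=0}^{\infty}A_d(p^h,n)=1+\sum_{h=1}^{v}A_d(p^h,n)+A_d(p^{v+1},n),
\]
a finite sum. The whole computation is then a matter of evaluating a geometric-type sum in the even case and splitting the sum according to the parity of the index $h$ in the odd case.

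\textbf{Even $d$.} First I would treat $d$ even. Here $A_d(p^h,n)=\tfrac{p-1}{p}\rho^h$ for $1\le h\le v$, where I abbreviate $\rho:=\varepsilon_p^d p^{1-d/2}$, and $A_d(p^{v+1},n)=-\rho^{v+1}p^{-1}$. Thus
\[
\delta_{p,d}(n)=1+\frac{p-1}{p}\sum_{h=1}^{v}\rho^{h}-\frac{\rho^{v+1}}{p}
=1+\frac{p-1}{p}\cdot\frac{\rho-\rho^{v+1}}{1-\rho}-\frac{\rho^{v+1}}{p}.
\]
Putting everything over the common denominator $p(1-\rho)$ and simplifying the numerator gives, after collecting the terms with and without $\rho^{v+1}$, an expression of the form $C_{p,d}\,(1-\rho^{v+1})$ with $C_{p,d}=\dfrac{1-\varepsilon_p^d p^{-d/2}}{1-\varepsilon_p^d p^{1-d/2}}$; one checks $C_{p,d}>0$ for $d\ge 3$ since then $p^{1-d/2}<1$ and $p^{-d/2}<1$ (and $\varepsilon_p^d=\pm1$, with the sign of $\varepsilon_p^d p^{1-d/2}$ never reaching $1$ when $d\ge3$, $p\ge 3$). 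This yields \eqref{explicit formula for p-adic d even}.

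\textbf{Odd $d$.} For $d$ odd the terms $A_d(p^h,n)$ with $1\le h\le v$ vanish when $h$ is odd and equal $\tfrac{p-1}{p}p^{(1-d/2)h}$ when $h$ is even; here $p^{1-d/2}$ is not an integer but $p^{(1-d/2)h}=p^{(1-d/2)h}$ is well-defined as a real number and, for $h$ even, is an integer power of $p$, namely $(p^{2-d})^{h/2}$. So $\sum_{h=1}^{v}A_d(p^h,n)=\tfrac{p-1}{p}\sum_{1\le h\le v,\ h\text{ even}}(p^{2-d})^{h/2}$, a finite geometric sum in $p^{2-d}$ whose value depends on $\lfloor v/2\rfloor$; it is cleanest to organize the final answer by the parity of $v$. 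The boundary term $A_d(p^{v+1},n)$ likewise has two shapes depending on whether $v+1$ is odd or even, i.e.\ on the parity of $v$. I would then do the two cases $v$ odd and $v$ even separately: in each, sum the geometric series, add $1$ and the appropriate boundary term, and regroup to isolate the factor $p^{(1-d/2)v}$. The terms independent of $v$ collapse to $F_{p,d}=(1-p^{1-d})(1-p^{2-d})^{-1}$ and the coefficient of $p^{(1-d/2)v}$ comes out as $E_{p,d}=p^{1-d/2}(p-1)(1-p^{2-d})^{-1}$ when $v$ is odd (where the boundary term's quadratic-symbol contribution is absorbed — here one uses that for $v$ odd, $v+1$ is even, so the boundary term is $-p^{(1-d/2)v-d/2}$, carrying no symbol), and as $G_{p,d,n}=p^{1-d}(1-p)(1-p^{2-d})^{-1}+p^{(1-d)/2}\varepsilon_p^{d+1}\bigl(\tfrac{-n/p^{v}}{p}\bigr)$ when $v$ is even (where $v+1$ is odd, so the boundary term is $p^{(1-d/2)v+(1-d)/2}\varepsilon_p^{d+1}\bigl(\tfrac{-n/p^{v}}{p}\bigr)$, contributing the quadratic symbol, while the even-index geometric sum contributes the rational piece). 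Positivity of $E_{p,d}$ and $F_{p,d}$ for $d\ge3$ is immediate since $p^{2-d}<1$. This gives \eqref{explicit formula for p-adic d odd}.

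\textbf{Main obstacle.} The arguments are entirely routine geometric-series bookkeeping; the only real care needed is the bookkeeping of parities in the odd-$d$ case — correctly matching the parity of the top index $v+1$ of the boundary term to whether the quadratic symbol $\bigl(\tfrac{-n/p^{v}}{p}\bigr)$ appears, and making sure the finite even-index geometric sum $\sum (p^{2-d})^{h/2}$ is summed with the right number of terms in each parity case so that the $v$-independent remainder really is the clean constant $F_{p,d}$. A secondary subtlety is checking the claimed positivity of the constants, which reduces to the inequalities $p^{-d/2}<1$, $p^{1-d/2}<1$ (valid since $d\ge3$) and noting $\varepsilon_p^d\in\{\pm1\}$ never makes $\varepsilon_p^d p^{1-d/2}$ equal to $1$; none of this is deep.
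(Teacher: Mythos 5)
Your proposal is correct and follows essentially the same route as the paper: sum the finitely many nonzero terms from Proposition \ref{prop formula for A_d(p^h,n)}, evaluate the resulting geometric sum for even $d$, and for odd $d$ split according to the parity of $\ord_p(n)$, correctly matching the parity of the boundary index $v+1$ to whether the quadratic symbol appears and attributing the rational part of $G_{p,d,n}$ to the top of the even-index geometric sum. The only difference is that you leave the final algebraic regrouping as a sketch where the paper writes it out, but the bookkeeping you describe is exactly what that computation does.
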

\begin{proof}
If $d$ is even, then
\begin{align*}
    \delta_{p,d}(n) &= 1 + \frac{p-1}{p} \sum_{h=1}^{\ord_p(n)} \rbra{\varepsilon_p^d p^{1-d/2}}^h - \rbra{\varepsilon_p^d p^{1-d/2}}^{\ord_p(n)+1}p^{-1}\\
    &= 1 + \frac{p-1}{p} \frac{\varepsilon_p^d p^{1-d/2}-\rbra{\varepsilon_p^d p^{1-d/2}}^{\ord_p(n)+1}}{1-\varepsilon_p^d p^{1-d/2}}- \rbra{\varepsilon_p^d p^{1-d/2}}^{\ord_p(n)+1}p^{-1}\\
    &=  \rbra{1-\varepsilon_p^d p^{1-d/2}}^{-1} \sbra{1-\varepsilon_p^d p^{-d/2} - \rbra{\varepsilon_p^d p^{1-d/2}}^{\ord_p(n)+1}(1-\varepsilon_p^d p^{-d/2})}.
\end{align*}
By rearranging terms we get the desired result. If $d$ and $\ord_p(n)$ are both odd, then
\begin{align*}
     \delta_{p,d}(n)&= 1 + \frac{p-1}{p} \sum_{h=1}^{(\ord_p(n)-1)/2} p^{(2-d)h} - p^{(1-d/2)\ord_p(n)-d/2} \\
     &= 1 + \frac{p-1}{p} \frac{p^{2-d}-p^{(1-d/2)(\ord_p(n)+1)}}{1-p^{2-d}} - p^{(1-d/2)\ord_p(n)}p^{-d/2}\\
     &=  p^{(1-d/2)\ord_p(n)} \sbra{\frac{-p^{1-d/2}+p^{-d/2}}{1-p^{2-d}}-p^{-d/2}} + \frac{1-p^{1-d}}{1-p^{2-d}}.
 \end{align*}
 By simplifying the expression in the bracket, we get the stated result. If $d$ is odd and $\ord_p(n)$ is even, then
 \begin{align*}
    \delta_{p,d}(n)&=1 + \frac{p-1}{p} \sum_{h=1}^{\ord_p(n)/2}p^{(2-d)h} +  p^{(1-d/2)\ord_p(n)+(1-d)/2}\varepsilon_p^{d+1}\rbra{\frac{-n/p^{\ord_p(n)}}{p}} \\
    &= 1 + \frac{p-1}{p} \frac{p^{2-d}-p^{(1-d/2)(\ord_p(n)+2)}}{1-p^{2-d}} +  p^{(1-d/2)\ord_p(n)}p^{(1-d)/2}\varepsilon_p^{d+1}\rbra{\frac{-n/p^{\ord_p(n)}}{p}}\\
    &= p^{(1-d/2)\ord_p(n)} \sbra{\frac{p^{1-d}-p^{2-d}}{1-p^{2-d}}+ p^{(1-d)/2} \varepsilon_p^{d+1}\rbra{\frac{-n/p^{\ord_p(n)}}{p}}} + \frac{1-p^{1-d}}{1-p^{2-d}},
 \end{align*}
 which is equivalent to the claimed result.
\end{proof}

One can derive, using for instance Hensel's lemma, that for $d \geq 5$, the 2-adic local density $\delta_{2,d}(n)$ is bounded from below by a positive constant.
\begin{lemma}[cf. {\cite[\S 8]{circle}}]
    For $d \geq 5$, there exists $C_0>0$ such that for any $n \in \N$ we have
    \begin{align*}
        \delta_{2,d}(n) \geq C_0.
    \end{align*}
\end{lemma}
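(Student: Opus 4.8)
The plan is to realize $\delta_{2,d}(n)$ as a limit of normalized counts of solutions modulo powers of $2$ and to bound these counts from below by a Hensel-type lifting argument. Write $N_d(q;n):=\#\{x\in(\Z/q\Z)^d:Q(x,x)\equiv n\ [q]\}$. Expanding the indicator of the congruence $Q(x,x)\equiv n\ [2^k]$ into additive characters, evaluating the resulting complete Gauss sums $S(2^k,a)$, and grouping the characters according to $\gcd(a,2^k)=2^{k-h}$ (the case $a=0$ giving the term $A_d(1,n)=1$), one obtains for every $k\ge 0$ the classical identity
\[ 2^{-(d-1)k}\,N_d(2^k;n)=\sum_{h=0}^{k}A_d(2^h,n). \]
For $d\ge 5$ the singular series $\mathfrak S_d(n)$ of Theorem \ref{thm representation number} converges absolutely and factors as the Euler product $\prod_p\delta_{p,d}(n)$; in particular $\delta_{2,d}(n)=\sum_{h\ge 0}A_d(2^h,n)$ is finite and equals $\lim_{k\to\infty}2^{-(d-1)k}N_d(2^k;n)$. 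Hence it suffices to exhibit a constant $C_0>0$, independent of $n$, with $N_d(2^k;n)\ge C_0\,2^{(d-1)k}$ for all $k\ge 3$.

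To produce such a bound I would reserve the first coordinate for a Hensel lift and let the other $d-1\ge 4$ coordinates move freely. Fix $n\in\N$. Since the squares modulo $8$ are $\{0,1,4\}$ and $d-1\ge 4$, the form $x_2^2+\cdots+x_d^2$ represents every residue class modulo $8$; choose $a_2,\dots,a_d\in\{0,\dots,7\}$ with $a_2^2+\cdots+a_d^2\equiv n-1\ [8]$. For each of the $(2^{k-3})^{d-1}$ tuples $(t_2,\dots,t_d)\in(\Z/2^{k-3}\Z)^{d-1}$ set $x_i:=a_i+8t_i$ for $2\le i\le d$; then $c:=n-\sum_{i=2}^{d}x_i^2\equiv n-\sum_{i=2}^d a_i^2\equiv 1\ [8]$, so the polynomial $f(X)=X^2-c$ satisfies $\lvert f(1)\rvert_2\le 2^{-3}<2^{-2}=\lvert f'(1)\rvert_2^{2}$. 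By the strong form of Hensel's lemma there is $\xi\in\Z_2$ with $\xi^2=c$, and reducing $\xi$ modulo $2^k$ yields a value of $x_1$ with $x_1^2\equiv c\ [2^k]$, hence $\sum_{i=1}^d x_i^2\equiv n\ [2^k]$. Distinct tuples $(t_2,\dots,t_d)$ give distinct $(x_2,\dots,x_d)$, hence distinct solutions, so $N_d(2^k;n)\ge (2^{k-3})^{d-1}=2^{-3(d-1)}\,2^{(d-1)k}$, and therefore $\delta_{2,d}(n)\ge 2^{-3(d-1)}=:C_0$ for every $n$.

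The argument is otherwise routine; the only delicate point is the representability statement modulo $8$, and this is exactly where the hypothesis $d\ge 5$ enters: for $d-1=3$ the form $x_1^2+x_2^2+x_3^2$ misses the class $7$ modulo $8$, so the construction breaks down when $n\equiv 0\ [8]$, in line with the smallness of $r_4(2^k)$ recorded in Remark \ref{rem r4}. If one prefers not to invoke Hensel's lemma as a black box, one may replace it by the explicit one-step lift $(x_1+2^{k-1}t)^2\equiv x_1^2+2^{k}x_1t\ [2^{k+1}]$, valid for $k\ge 3$, which shows directly that every odd solution of $x_1^2\equiv c\ [2^k]$ lifts (uniquely) to a solution modulo $2^{k+1}$; this is the shape of the argument indicated in \cite[\S 8]{circle}.
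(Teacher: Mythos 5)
Your proof is correct and follows exactly the route the paper indicates for this lemma (which is only cited to the reference on the circle method and attributed to ``Hensel's lemma'', with no written proof): the orthogonality identity $2^{-(d-1)k}N_d(2^k;n)=\sum_{h=0}^{k}A_d(2^h,n)$ is the standard one, the representability of every residue class modulo $8$ by $d-1\ge 4$ squares is precisely where $d\ge 5$ enters, and the lift of $x_1^2\equiv c\ [2^k]$ for $c\equiv 1\ [8]$ is valid, giving the uniform bound $\delta_{2,d}(n)\ge 2^{-3(d-1)}$. Nothing further is needed.
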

Besides, by the defining form of $A_d(q,n)$, one can show that this is a multiplicative function of $q$ (cf. \cite[Lemma 7.4]{circle}), so that we can rewrite the singular series above in the form of a product of the $p$-adic local densities.
\begin{equation}\label{singular series product form}
    \mathfrak{S}_d(n) = \prod_p \delta_{p,d}(n).
\end{equation}
Moreover, Corollary \ref{cor explicit formula for p-adic density} implies that when $d \geq 5$, the singular series can be bounded from below by a positive constant which is uniform in $n$.

\begin{cor}\label{cor singular series}
   For $d \geq 5$, there exists $C>0$ such that for any $n \in \N$ we have
        \begin{align*}
           \mathfrak{S}_d(n) \geq C.
        \end{align*}
\end{cor}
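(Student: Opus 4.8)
The plan is to exploit the Euler product \eqref{singular series product form}, $\mathfrak{S}_d(n)=\prod_p\delta_{p,d}(n)$, and to bound each local factor from below by a quantity that is independent of $n$ and whose product over all primes converges to a strictly positive real number. For the prime $2$, the lemma quoted from \cite[\S 8]{circle} already supplies a uniform lower bound $\delta_{2,d}(n)\geq C_0>0$, so only the odd primes require work.

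For an odd prime $p$ I would substitute the explicit formulas of Corollary \ref{cor explicit formula for p-adic density}. In each case the dominant contribution is $1$ --- more precisely $C_{p,d}$ (for $d$ even) and $F_{p,d}$ (for $d$ odd), both of which are $1+O(p^{-2})$ --- and every remaining summand carries a factor $p^{(1-d/2)\ord_p(n)}\leq 1$ times a quantity of size $O(p^{-2})$, the point being that $d\geq 5$ forces $1-d/2\leq-3/2$ and $(1-d)/2\leq-2$, while the Legendre-symbol term $\left(\tfrac{-n/p^{\ord_p(n)}}{p}\right)$ occurring in $G_{p,d,n}$ has absolute value at most $1$. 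Treating $d$ even and $d$ odd separately, and in the odd case $\ord_p(n)$ even and odd exactly as in Corollary \ref{cor explicit formula for p-adic density}, I expect these estimates to give a uniform bound of the shape
\begin{equation*}
\delta_{p,d}(n)\geq 1-3p^{-2}
\end{equation*}
valid for all odd primes $p$ and all $n\in\N$; note that $1-3p^{-2}\in(0,1)$ for every $p\geq 3$.

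Granting this, the conclusion is immediate. Since $\sum_p 3p^{-2}<\infty$, the infinite product $\prod_p(1-3p^{-2})$ converges to a strictly positive number. Bounding the partial products of \eqref{singular series product form},
\begin{equation*}
\prod_{p\leq X}\delta_{p,d}(n)\geq \delta_{2,d}(n)\prod_{3\leq p\leq X}(1-3p^{-2})\geq C_0\prod_{p\geq 3}(1-3p^{-2}),
\end{equation*}
and letting $X\to\infty$ yields $\mathfrak{S}_d(n)\geq C_0\prod_{p\geq 3}(1-3p^{-2})=:C>0$, uniformly in $n$, as desired. The only genuine obstacle is the middle step: one must verify that the $n$-dependence of $\delta_{p,d}(n)$ --- which enters through the \emph{unbounded} quantity $\ord_p(n)$ and through the symbol in $G_{p,d,n}$ --- is harmless, i.e.\ that the accumulated error stays below $3p^{-2}$ for every odd $p$, and in particular for the smallest primes $p=3,5$, where the geometric-series tails produced by Corollary \ref{cor explicit formula for p-adic density} are largest. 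Since $p^{(1-d/2)\ord_p(n)}\leq 1$ absorbs all growth in $\ord_p(n)$ and the symbol contributes at most $1$ in absolute value, this reduces to a finite, elementary check.
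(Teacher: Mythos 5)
Your proposal is correct and follows essentially the same route as the paper: both factor $\mathfrak{S}_d(n)$ via \eqref{singular series product form}, invoke the uniform bound $\delta_{2,d}(n)\geq C_0$, and then use the explicit formulas of Corollary \ref{cor explicit formula for p-adic density} to bound each odd local factor from below uniformly in $n$ so that the resulting infinite product converges to a positive constant. The single estimate $\delta_{p,d}(n)\geq 1-3p^{-2}$ that you defer to an elementary check does hold for all odd $p$ and $d\geq 5$ (the worst case $p=3$, $d=5$, $\ord_p(n)=0$ gives $\delta_{3,5}(n)\geq 8/9$), and it packages in one stroke what the paper handles by splitting into $d$ even (via a Dirichlet $L$-function identity) and $d$ odd (via the constants $E_{p,d}$, $F_{p,d}$, $G_{p,d,n}$).
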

\begin{proof}
We use (\ref{singular series product form}) and Corollary \ref{cor explicit formula for p-adic density} to rewrite $\mathfrak{S}_d(n)$. We denote by the positive constant $C_0$ such that $\delta_{2,d}(n) \geq C_0$.
If $d$ is even, then we know from Corollary \ref{cor explicit formula for p-adic density} that
\begin{align*}
    \delta_{p,d}(n) = \begin{cases}
        1-\epsilon_p^d p^{-d/2} &\quad \text{for } p \nmid n\\
        (1-\epsilon_p^d p^{-d/2})\sum_{k=0}^{\ord_p(n)} (\varepsilon_p^d p^{1-d/2})^k &\quad \text{for } p \mid n
    \end{cases}.
\end{align*}
Moreover, the Euler product form of the Dirichlet $L$-function gives $L(d/2,\varepsilon_p^d) = \prod_p ( 1- \varepsilon_p^d p^{-d/2} )^{-1}$. Hence, using (\ref{singular series product form}) we have
\begin{align*}
    \mathfrak{S}_d(n) &= \delta_{2,d}(n) \prod_{p \nmid n } \delta_{p,d}(n) \prod_{p | n } \delta_{p,d}(n)\\
    &= \delta_{2,d}(n)(1-2^{-d/2}) L(d/2,\varepsilon_p^d)^{-1} \prod_{p|n} \sum_{k=0}^{\ord_p(n)} (\varepsilon_p^d p^{1-d/2})^k\\
    &\geq \delta_{2,d}(n)(1-2^{-d/2}) L(d/2,\varepsilon_p^d)^{-1} \prod_{p|n} \rbra{ 1 + \varepsilon_p^d p^{1-d/2}}.
\end{align*}
For $d=4$, the product above simplifies to an expression which involves a harmonic series, whereas if $d \geq 6$, the product converges absolutely to a positive constant $C_1$. Therefore, the conclusion holds for $C := (1-2^{-d/2}) L(d/2,\varepsilon_p^d)^{-1} C_0C_1$. If $d$ is odd, denote by $G_{p,d} = p^{1-d}(1-p)(1-p^{2-d})^{-1} - p^{(1-d)/2}$, then we know from Corollary \ref{cor explicit formula for p-adic density} that
\begin{align*}
    F_{p,d} = 1 + \frac{p^{2-d}-p^{1-d}}{1-p^{2-d}}> 1, \quad G_{p,d,n} \geq G_{p,d}, \quad E_{p,d} \geq G_{p,d}.
\end{align*}
It follows that for $d \geq 5$, we have
\begin{align*}
    \mathfrak{S}_d(n) = \delta_{2,d}(n) \prod_{p \text{ odd}} \delta_{p,d}(n) \geq \delta_{2,d}(n) \prod_{p \text{ odd}} \bbra{p^{(1-d/2)\ord_p(n)} G_{p,d} + 1} \geq C_0.
\end{align*}
Hence, the conclusion follows for $C := C_0$.
\end{proof}
 If we denote the leading factor of $r_d(n)$ by
    \begin{align}\label{infity density}
        \delta_{\infty,d}(n) := \frac{\pi^{d/2}}{\Gamma(d/2)}n^{d/2-1},
    \end{align}
    then one can verify that it embodies the density of real solutions to $Q(x)=n$ (cf. \cite[Chapter 20]{MR2061214}). Hence, for $d\geq 5$ the representation number by $Q$ reads
    \begin{align*}
        r_d(n) = \delta_{\infty,d}(n) \prod_p \delta_{p,d}(n) + o(n^{d/2-1}).
    \end{align*}


Hence, by Theorem \ref{thm representation number} we have the following bound for the representation number by $Q$.
\begin{cor}\label{thm bound r(n)}
Let $d \geq 5$. The representation number by $Q$ satisfies
\begin{align}
    r_d(n) &\asymp n^{d/2 -1}.
\end{align}
\end{cor}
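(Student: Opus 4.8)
The plan is to extract both sides of the asymptotic directly from the circle-method expansion of Theorem~\ref{thm representation number},
\[
r_d(n) = \frac{\pi^{d/2}}{\Gamma(d/2)}\, n^{d/2-1}\,\mathfrak{S}_d(n) + o\bigl(n^{d/2-1}\bigr),
\]
by bounding the singular series $\mathfrak{S}_d(n)$ from above and below uniformly in $n$. Write $A := \pi^{d/2}/\Gamma(d/2) > 0$ for the leading constant.

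For the lower bound $r_d(n) \gg n^{d/2-1}$, I would invoke Corollary~\ref{cor singular series}, which produces a constant $C>0$ with $\mathfrak{S}_d(n) \geq C$ for every $n$. Substituting into Theorem~\ref{thm representation number}, the main term $AC\,n^{d/2-1}$ eventually dominates the error term $o(n^{d/2-1})$; concretely, once $n$ is large enough that $|E(n)| \leq \tfrac{AC}{2} n^{d/2-1}$, one has $r_d(n) \geq \tfrac{AC}{2} n^{d/2-1}$, which is all that is required since $\asymp$ is an assertion as $n\to\infty$.

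For the upper bound $r_d(n) \ll n^{d/2-1}$, the task reduces to showing that $\mathfrak{S}_d(n)$ is bounded above by a constant independent of $n$. I would use the multiplicativity $\mathfrak{S}_d(n) = \prod_p \delta_{p,d}(n)$ recorded in (\ref{singular series product form}) together with the closed forms of Corollary~\ref{cor explicit formula for p-adic density}. For odd $p$ these formulas exhibit $\delta_{p,d}(n)$ as a bounded quantity times a finite geometric-type sum ($\sum_{k=0}^{\ord_p(n)}(\varepsilon_p^d p^{1-d/2})^k$ when $d$ is even, and the analogous sum $\sum p^{(2-d)k}$ when $d$ is odd), which is majorized in absolute value by its infinite completion $\bigl(1-p^{1-d/2}\bigr)^{-1}$; likewise $\delta_{2,d}(n)$ is bounded. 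Since $d\geq 5$ forces $d/2-1>1$, the Euler product $\prod_p \bigl(1-p^{1-d/2}\bigr)^{-1}$ (and $\prod_p(1+p^{-d/2})$) converges, so $\mathfrak{S}_d(n) \ll 1$ uniformly; feeding this back into Theorem~\ref{thm representation number} gives $r_d(n) \leq (AC'+1)\,n^{d/2-1}$ for $n$ large.

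Combining the two estimates yields $r_d(n) \asymp n^{d/2-1}$. The only genuinely delicate point is the uniform upper bound on the singular series; the estimate is essentially the computation already carried out (in the reverse direction) inside the proof of Corollary~\ref{cor singular series}, and the hypothesis $d\geq 5$ enters precisely through the convergence of $\sum_p p^{1-d/2}$ — for $d=4$ the corresponding product over $p\mid n$ behaves like a partial product of the harmonic series, so no such uniform bound can hold, in accordance with Remark~\ref{rem r4}.
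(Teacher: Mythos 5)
Your proposal is correct and follows exactly the route the paper intends: the lower bound comes from Corollary~\ref{cor singular series} together with the asymptotic of Theorem~\ref{thm representation number}, and the upper bound from a uniform bound on $\mathfrak{S}_d(n)$ via the Euler product and the closed forms of Corollary~\ref{cor explicit formula for p-adic density}. In fact you supply more detail than the paper, which deduces the corollary with a bare ``Hence'' and leaves the uniform upper bound on the singular series implicit; your observation that convergence of $\sum_p p^{1-d/2}$ is exactly where $d\geq 5$ enters is the right one.
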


Now, we turn to the proof of Proposition \ref{prop formula for A_d(p^h,n)}. The following properties of Gauss sums (cf. \cite[\S 3]{MR2061214}) form the core of the computation.
\begin{lemma}\label{lemma gauss sum}
For any odd $q \in \N^*$ and $a \in \N^*$, let $\varepsilon_q$ and $\rbra{\tfrac{a}{q}}$ be defined as in Definition \ref{defi extended symbol and varepsilon}.
\begin{enumerate}
    \item For $(a,q)=1$, the Gauss sum can be written as
    \begin{align*}
        S(q,a) = \sum_{\substack{t=1\\(t,q)=1}}^{q} \rbra{\frac{t}{q}} e^{2\pi i \rbra{\frac{at}{q}}} = \rbra{\frac{a}{q}}\varepsilon_q \sqrt{q}.
    \end{align*}
    \item For any odd prime $p$ and $h \geq 2$, the Gauss sum satisfies
    \begin{align*}
        S(p^h,a) = p S(p^{h-2},a) = \begin{cases}
           \varepsilon_p \rbra{\frac{a}{p}} p^{h/2} &\quad \text{if }h \text{ is odd}\\
           p^{h/2} &\quad \text{if }h \text{ is even}
        \end{cases}.
    \end{align*}
\end{enumerate}
\end{lemma}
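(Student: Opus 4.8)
The plan is to prove the two parts in the order dictated by their logical dependence: first evaluate $S(p,a)$ for an odd prime $p$ with $\gcd(a,p)=1$, which carries the only genuinely analytic input; then bootstrap to prime powers through the recursion of part (2); and finally assemble an arbitrary odd modulus by the Chinese Remainder Theorem, assuming $\gcd(a,q)=1$ throughout as in the statement. \textbf{Part (1), prime modulus.} For $q=p$ prime I would group the terms of $S(p,a)=\sum_{t \bmod p} e^{2\pi i a t^2/p}$ according to the value $s\equiv t^2 \bmod p$. The fibre count $\#\{t \bmod p : t^2 \equiv s\}$ equals $1+\rbra{\frac{s}{p}}$ for every $s$ (including $s=0$, where it is $1$ and $\rbra{\frac{0}{p}}=0$). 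Substituting this and discarding the contribution of the constant $1$, which vanishes by the orthogonality relation $\sum_{s \bmod p} e^{2\pi i a s/p}=0$ (valid since $p \nmid a$), leaves exactly the character sum $\sum_{s}\rbra{\frac{s}{p}} e^{2\pi i a s/p}$, the first asserted identity. The same grouping combined with a CRT factorisation extends the identity to squarefree $q$; only the prime case is needed below, so I would record it at that generality. To evaluate the character sum I substitute $t \mapsto \bar a t$ with $a\bar a \equiv 1 \bmod p$; since $\rbra{\frac{\bar a}{p}}=\rbra{\frac{a}{p}}$ this pulls out a factor $\rbra{\frac{a}{p}}$ and reduces everything to the normalised sum $g := \sum_{t}\rbra{\frac{t}{p}} e^{2\pi i t/p}$, and a short double-sum computation using orthogonality gives $|g|^2=p$.

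\textbf{The sign (main obstacle).} What then remains is the determination of the argument of $g$, namely that $g=\varepsilon_p\sqrt p$ rather than $-\varepsilon_p\sqrt p$. This is Gauss's classical theorem on the sign of the quadratic Gauss sum, and it is the one step I would import rather than reprove, citing \cite[\S 3]{MR2061214}; a self-contained proof (via the Landsberg--Schaar relation, or a Dirichlet-kernel/contour argument) is standard but lengthy, and constitutes the genuine difficulty of the lemma. Together with $|g|^2=p$ this yields $S(p,a)=\rbra{\frac{a}{p}}\varepsilon_p\sqrt p$.

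\textbf{Part (2), prime powers.} For $h \ge 2$ I write each $t \bmod p^h$ uniquely as $t=t_0+p^{h-1}u$ with $t_0 \bmod p^{h-1}$ and $u \bmod p$. Expanding $t^2=t_0^2+2t_0 p^{h-1}u+p^{2h-2}u^2$ and using $2h-2 \ge h$ kills the last term modulo $p^h$, so the inner $u$-sum is $\sum_{u \bmod p} e^{2\pi i\,2a t_0 u/p}$, equal to $p$ when $p \mid t_0$ and $0$ otherwise. Writing $t_0=p t_1$ and simplifying the exponent then yields the recursion $S(p^h,a)=p\,S(p^{h-2},a)$. Iterating down to the base cases $S(p,a)$ (for $h$ odd) and $S(1,a)=1$ (for $h$ even), and inserting the value of $S(p,a)$ from Part (1), produces the two stated closed forms after simplifying with $\rbra{\frac{a}{p}}^2=1$.

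\textbf{General odd $q$.} Finally, for the evaluation $S(q,a)=\rbra{\frac{a}{q}}\varepsilon_q\sqrt q$ at arbitrary odd $q$, I factor $q=\prod_p p^{h_p}$ and use the twisted multiplicativity $S(q_1q_2,a)=S(q_1,aq_2)\,S(q_2,aq_1)$ coming from the isomorphism $\Z/q\Z \cong \prod_p \Z/p^{h_p}\Z$. Feeding in the prime-power values from Part (2), and invoking the multiplicativity of the Jacobi symbol together with quadratic reciprocity $\rbra{\frac{q_1}{q_2}}\rbra{\frac{q_2}{q_1}}=(-1)^{\frac{q_1-1}{2}\frac{q_2-1}{2}}$, reproduces the claimed formula; the only bookkeeping is the identity $\varepsilon_{q_1}\varepsilon_{q_2}(-1)^{\frac{q_1-1}{2}\frac{q_2-1}{2}}=\varepsilon_{q_1q_2}$, which one checks directly from $\varepsilon_q=i^{(q-1)/2}$. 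The decisive obstacle remains the sign determination of the fundamental Gauss sum in Part (1); everything else is elementary manipulation of finite sums.
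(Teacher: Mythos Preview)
The paper does not prove this lemma at all: it is stated as a citation, ``cf.\ \cite[\S 3]{MR2061214}'', and used as a black box in the computation of $A_d(p^h,n)$. Your proposal therefore goes well beyond what the paper does, supplying a complete and standard argument (fibre-counting to rewrite $S(p,a)$ as a character sum, the $t=t_0+p^{h-1}u$ recursion for prime powers, and twisted multiplicativity plus quadratic reciprocity for general odd $q$), while correctly isolating the sign determination of $g=\sum_t\rbra{\tfrac{t}{p}}e^{2\pi i t/p}$ as the one genuinely nontrivial input and citing the same source for it.

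One small correction: your closing formula $\varepsilon_q=i^{(q-1)/2}$ is false (for $q=5$ it gives $i^2=-1$, whereas $\varepsilon_5=1$). The identity $\varepsilon_{q_1}\varepsilon_{q_2}(-1)^{\frac{q_1-1}{2}\frac{q_2-1}{2}}=\varepsilon_{q_1q_2}$ you need is still correct and is most cleanly checked by the four cases of $q_1,q_2\bmod 4$; just drop the incorrect shorthand. You were also right to restrict the \emph{middle} expression in part~(1), the character sum $\sum_{(t,q)=1}\rbra{\tfrac{t}{q}}e^{2\pi i a t/q}$, to squarefree moduli: for $q=p^2$ the Jacobi symbol is identically $1$ on units and the sum vanishes, whereas $S(p^2,a)=p$. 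The outer equality $S(q,a)=\rbra{\tfrac{a}{q}}\varepsilon_q\sqrt{q}$ still holds, and your route via twisted multiplicativity of $S$ handles it without passing through the character sum.
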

Using some basic properties of characters, we now present the following two lemmas, which serve as prerequisites for the proof of Proposition \ref{prop formula for A_d(p^h,n)}.
\begin{lemma}\label{lemma 3.11}
Let $p$ be an odd prime. If $h=\ord_p(n)+1$, then we have
\begin{align}
    \sum_{a \in (\Z/ p^h \Z)^*} e^{2\pi i \rbra{\frac{-na}{p^h}}} = - p^{\ord_p(n)}.
\end{align}
\end{lemma}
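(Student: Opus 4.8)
The plan is to reduce this exponential sum to a Ramanujan sum over $\Z/p\Z$, using that the summand depends on $a$ only through its class modulo $p$. First I would set $\nu := \ord_p(n)$, so that $h = \nu+1$, and write $n = p^\nu m$ with $\gcd(m,p)=1$; then for every $a$ one has $\frac{-na}{p^h} = \frac{-ma}{p}$, so that $e^{2\pi i \rbra{\frac{-na}{p^h}}} = e^{2\pi i \rbra{\frac{-ma}{p}}}$, a quantity depending only on $a \bmod p$.

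Next I would group the terms according to their residue mod $p$: the reduction homomorphism $(\Z/p^h\Z)^* \to (\Z/p\Z)^*$ is surjective with kernel $1 + p\Z/p^h\Z$ of order $p^{h-1} = p^\nu$, so each unit $b \bmod p$ has exactly $p^\nu$ preimages in $(\Z/p^h\Z)^*$. This yields
\begin{align*}
    \sum_{a \in (\Z/p^h\Z)^*} e^{2\pi i \rbra{\frac{-na}{p^h}}} = p^\nu \sum_{b \in (\Z/p\Z)^*} e^{2\pi i \rbra{\frac{-mb}{p}}}.
\end{align*}

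Finally I would evaluate the inner sum: since $\gcd(m,p)=1$, the map $b \mapsto -mb$ permutes $\Z/p\Z$, hence $\sum_{b \in \Z/p\Z} e^{2\pi i \rbra{\frac{-mb}{p}}} = \sum_{c \in \Z/p\Z} e^{2\pi i c/p} = 0$, and removing the term $b=0$ gives $\sum_{b \in (\Z/p\Z)^*} e^{2\pi i \rbra{\frac{-mb}{p}}} = -1$. Substituting back produces the claimed value $-p^\nu = -p^{\ord_p(n)}$. I do not anticipate any genuine obstacle here; the only step worth a line of justification is the $p^\nu$-to-$1$ fiber count for the reduction map, which follows from the short exact sequence $1 \to 1+p\Z/p^h\Z \to (\Z/p^h\Z)^* \to (\Z/p\Z)^* \to 1$.
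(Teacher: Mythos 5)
Your proof is correct and is essentially the same computation as the paper's: both exploit that the summand depends only on $a \bmod p$ and that the full sum of $p$-th roots of unity vanishes, the paper organizing this as (sum over all of $\Z/p^h\Z$) minus (sum over $p\Z/p^h\Z$) while you first collapse to $(\Z/p\Z)^*$ via the $p^{\nu}$-to-one fiber count and then remove the $b=0$ term. No gap; the fiber-count justification you flag is exactly the right thing to note.
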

\begin{proof}
 Let $n_1 = n/p^{\ord_p(n)}$, then we can rewrite the sum as
 \begin{align*}
     \sum_{a \in (\Z/ p^h \Z)^*} e^{2\pi i \rbra{\frac{-n_1a}{p}}} = \sum_{a \in \Z/ p^h \Z} e^{2\pi i \rbra{\frac{-n_1a}{p}}} - \sum_{a \in p\Z/ p^h \Z} e^{2\pi i \rbra{\frac{-n_1a}{p}}} = - p^{\ord_p(n)},
 \end{align*}
 as desired.
\end{proof}
\begin{lemma}\label{lemma 3.12}
Let $p$ be an odd prime and $h \geq 2$. Then we have
\begin{align}
    \sum_{a \in (\Z/ p^h \Z)^*} \rbra{\frac{a}{p}} e^{2\pi i \rbra{\frac{-na}{p^h}}} = \begin{cases}
       p^{\ord_p(n)+1/2} \varepsilon_p\rbra{\frac{-n/p^{\ord_p(n)}}{p}}, &\quad \text{if } h=\ord_p(n)+1\\
       0,&\quad \text{if } h>\ord_p(n)+1
    \end{cases}.
\end{align}
\end{lemma}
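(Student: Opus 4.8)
The strategy is to reduce the twisted exponential sum over $(\Z/p^h\Z)^*$ to a classical Gauss sum via a change of variables, and then invoke Lemma \ref{lemma gauss sum}(1). First I would write $m = \ord_p(n)$ and $n_1 = n/p^m$, so that $(n_1,p)=1$, and rewrite the exponent as $\tfrac{-na}{p^h} = \tfrac{-n_1 a}{p^{h-m}}$. The quadratic symbol $\rbra{\frac{a}{p}}$ depends only on $a \bmod p$, so I would break the sum over $a \in (\Z/p^h\Z)^*$ into residue classes modulo $p$: writing $a = a_0 + p t$ with $a_0 \in (\Z/p\Z)^*$ and $t \in \Z/p^{h-1}\Z$, the symbol factors out as $\rbra{\frac{a_0}{p}}$, and the remaining sum over $t$ produces a geometric-type sum that vanishes unless a suitable divisibility holds.

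Concretely, I would carry out this reduction in two cases according to the relation between $h$ and $m$. When $h > m+1$: after extracting $\rbra{\frac{a_0}{p}}$, the inner sum over $t$ is $\sum_{t \bmod p^{h-1}} e^{2\pi i (-n_1 a_0 - \text{(higher order)})/p^{h-m}}$-type, and since $h-1 \ge h-m \ge 2$ the sum over the top digit of $t$ is a full sum of $p$-th roots of unity times a nontrivial character, hence zero; summing a zero over all $a_0$ gives $0$. When $h = m+1$: here $\tfrac{-na}{p^h} = \tfrac{-n_1 a}{p}$ depends only on $a \bmod p$, so the sum collapses to $p^{m}\sum_{a_0 \in (\Z/p\Z)^*}\rbra{\frac{a_0}{p}} e^{2\pi i (-n_1 a_0)/p}$ (the factor $p^{m} = p^{h-1}$ counting the lifts), and by Lemma \ref{lemma gauss sum}(1) this inner sum equals $S(p, -n_1) = \rbra{\frac{-n_1}{p}}\varepsilon_p \sqrt{p}$. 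Multiplying by $p^m$ yields $p^{m + 1/2}\varepsilon_p \rbra{\frac{-n_1}{p}}$, which is exactly the claimed value $p^{\ord_p(n)+1/2}\varepsilon_p \rbra{\frac{-n/p^{\ord_p(n)}}{p}}$.

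I would pay particular attention to the bookkeeping of powers of $p$ in the lift count (there are $p^{h-1}$ elements of $(\Z/p^h\Z)^*$ lying above each $a_0 \in (\Z/p\Z)^*$, since the non-units modulo $p$ are excluded but all residues modulo the higher powers are allowed) and to the fact that $\rbra{\frac{a}{p}}$ is the \emph{Legendre} symbol here, which is genuinely a multiplicative character modulo $p$ so that $S(p,-n_1)$ is a true quadratic Gauss sum. The main obstacle, though minor, is making the vanishing in the case $h > m+1$ fully rigorous: one must verify that after pulling out $\rbra{\frac{a_0}{p}}$ the residual sum over $t$ really is an orthogonality sum for a nontrivial additive character on $\Z/p^{h-1-m}\Z$ (or on $\Z/p\Z$ after telescoping the digits), which requires that $p^{h-m} \nmid n_1$, i.e. that $h - m \ge 1$ strictly — automatic here since $h > m+1 > m$. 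Once this orthogonality is pinned down the lemma follows immediately.
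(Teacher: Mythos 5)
Your proposal is correct, and for the case $h=\ord_p(n)+1$ it is essentially the paper's argument: both reduce the sum to $p^{\ord_p(n)}$ copies of the Gauss sum $S(p,-n_1)=\bigl(\tfrac{-n_1}{p}\bigr)\varepsilon_p\sqrt{p}$ (the paper phrases the grouping as a sum over all of $\Z/p^h\Z$ minus the multiples of $p$, where the symbol vanishes, but this is the same bookkeeping as your decomposition $a=a_0+pt$).

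For the vanishing case $h>\ord_p(n)+1$ you take a genuinely different, and more elementary, route. You fix $a_0\in(\Z/p\Z)^*$, pull out $\bigl(\tfrac{a_0}{p}\bigr)$, and observe that the remaining sum $\sum_{t\bmod p^{h-1}}e^{2\pi i(-n_1t)/p^{h-m-1}}$ is the full sum of a nontrivial additive character (nontrivial precisely because $p\nmid n_1$ and $h-m-1\geq 1$; your stated condition ``$h-m\geq 1$ strictly'' should more precisely be $h-m-1\geq 1$, but that is exactly the hypothesis $h>m+1$, so nothing is lost). The paper instead replaces $\bigl(\tfrac{a}{p}\bigr)$ by $\bigl(\tfrac{a}{p}\bigr)+1$ — legitimate because the untwisted Ramanujan-type sum $\sum_{a\in(\Z/p^h\Z)^*}e^{2\pi i(-na)/p^h}$ vanishes in this range — thereby converting the twisted sum into $\sum_{b\in(\Z/p^h\Z)^*}e^{2\pi i(-nb^2)/p^h}=S(p^h,n)-pS(p^{h-2},n)$, which is zero by the Gauss-sum recursion of Lemma \ref{lemma gauss sum}(2). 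Your orthogonality argument is self-contained and avoids both the implicit Ramanujan-sum input and the recursion; the paper's version has the virtue of recycling machinery already set up for Proposition \ref{prop formula for A_d(p^h,n)}. Either way the lemma follows.
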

\begin{proof}
We again let $n_1 = n/p^{\ord_p(n)}$. For $h=\ord_p(n)+1$ we have
\begin{align*}
    \sum_{a \in (\Z/ p^h \Z)^*} \rbra{\frac{a}{p}} e^{2\pi i \rbra{\frac{-na}{p^h}}} &= \sum_{a \in \Z/ p^h \Z} \rbra{\frac{a}{p}} e^{2\pi i \rbra{\frac{-n_1a}{p}}} - \sum_{a \in p\Z/ p^h \Z} \rbra{\frac{a}{p}} e^{2\pi i \rbra{\frac{-n_1a}{p}}} \\
    &= p^{\ord_p(n)} \rbra{\frac{-n/p^{\ord_p(n)}}{p}} \varepsilon_p p^{1/2}
\end{align*}
by Lemma \ref{lemma gauss sum}(1) and the result follows. For $h>\ord_p(n)+1$ we have
\begin{align*}
    \sum_{a \in (\Z/ p^h \Z)^*} \rbra{\frac{a}{p}} e^{2\pi i \rbra{\frac{-na}{p^h}}} &= \sum_{a \in (\Z/ p^h \Z)^*} \sbra{\rbra{\frac{a}{p}}+1} e^{2\pi i \rbra{\frac{-na}{p^h}}} = \sum_{b \in (\Z/ p^h \Z)^*}  e^{2\pi i \rbra{\frac{-nb^2}{p^h}}} \\
    &= S(p^h,n)- pS(p^{h-2},n) = 0,
\end{align*}
by Lemma \ref{lemma gauss sum}(2).
\end{proof}
Now we are ready to write the formula of $A_d(p^h,n)$ for any $h \geq 1$.
\begin{proof}[Proof of Proposition \ref{prop formula for A_d(p^h,n)}]
Note that by definition (\ref{eq Ad}) for $h \geq 1$ we have
\begin{align*}
    A_d(p^h,n) = p^{-dh}\sum_{a \in (\Z/ p^h \Z)^*} S(p^h,a)^d e^{2\pi i \rbra{\frac{-na}{p^h}}}.
\end{align*}
If $d$ is even, then
\begin{align*}
    S(p^h,a)^d = \begin{cases}
       \varepsilon_p^d p^{dh/2} &\quad \text{ if }h\text{ is odd} \\
       p^{dh/2} &\quad \text{ if }h\text{ is even}
    \end{cases}, \quad\quad \varepsilon^{dh}_p = \begin{cases}
       \varepsilon_p^d  &\quad \text{ if }h\text{ is odd}\\
       1 &\quad \text{ if }h\text{ is even}
    \end{cases}.
\end{align*}
Hence, we have
\begin{align*}
    A_d(p^h,n) = p^{-dh/2} \varepsilon_p^{dh}\sum_{a \in (\Z/ p^h \Z)^*} e^{2\pi i \rbra{\frac{-na}{p^h}}},
\end{align*}
so that by Lemma \ref{lemma 3.11} we know
\begin{align*}
    A_d(p^h,n) = \begin{cases}
      p^{dh/2} \varepsilon_p^{dh} (p^h-p^{h-1}) &\quad \text{ if }1 \leq h \leq \ord_p(n)\\
    -\varepsilon_p^{d(\ord_p(n)+1)} p^{\ord_p(n)} p^{-d(\ord_p(n)+1)/2}&\quad \text{ if } h = \ord_p(n)+1\\
    0&\quad \text{ if } h > \ord_p(n)+1
    \end{cases}.
\end{align*}
Rearranging terms we get the claimed results when $d$ is even. If $d$ is odd, then
\begin{align*}
    S(p^h,a)^d = \begin{cases}
       \varepsilon_p^d \rbra{\frac{a}{p}} p^{dh/2} &\quad \text{ if }h\text{ is odd} \\
       p^{dh/2} &\quad \text{ if }h\text{ is even}
    \end{cases}.
\end{align*}
Hence, when $h$ is odd we have
\begin{align*}
    A_d(p^h,n) = p^{-dh/2} \varepsilon_p^{d}\sum_{a \in (\Z/ p^h \Z)^*}\rbra{\frac{a}{p}} e^{2\pi i \rbra{\frac{-na}{p^h}}}.
\end{align*}
Now by Lemma \ref{lemma 3.12} we know that
\begin{align*}
    A_d(p^h,n) = \begin{cases}
      0 &\quad \text{ if }1 \leq h \leq \ord_p(n) \text{ and }h\text{ is odd}\\
      p^{-d(\ord_p(n)+1)/2} \varepsilon_p^d p^{\ord_p(n)+1/2} \varepsilon_p\rbra{\frac{-n/p^{\ord_p(n)}}{p}} &\quad \text{ if } h = \ord_p(n) + 1\text{ and }h\text{ is odd}\\
      0 &\quad \text{ if } h > \ord_p(n) + 1\text{ and }h\text{ is odd}
    \end{cases}.
\end{align*}
Rearranging terms we get the claimed results when $d$ and $h$ are both odd. When $d$ is odd and $h$ is even, one can use the same strategy as the case when $d$ is even to conclude the proof.
\end{proof}
Let us state another property of $\delta_{p,d}(n)$ as follows.

\begin{lemma}\label{lemma p-adic density}
Let $d \in \N^*$, $p$ be a prime and $u \in \N^*$ such that $(p,u)=1$. Then
\begin{align}
    \delta_{p,d}(n) = \delta_{p,d}(u^2 n).
\end{align}
\end{lemma}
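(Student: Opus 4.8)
The plan is to prove the equality term by term in the defining series $\delta_{p,d}(n) = \sum_{h=0}^{\infty} A_d(p^h, n)$, i.e.\ to show that $A_d(p^h, u^2 n) = A_d(p^h, n)$ for every $h \geq 0$ once $\gcd(u,p) = 1$. Since $A_d(1,n) = 1$ by convention, only $h \geq 1$ needs attention, and there one works with
\[
A_d(p^h, n) = \sum_{\substack{a = 1\\ (a,\, p^h) = 1}}^{p^h} \bigl(p^{-h} S(p^h, a)\bigr)^{d}\, e^{2\pi i (-na/p^h)},
\]
the point being that $u$, hence $u^2$, is invertible modulo $p^h$ whenever $\gcd(u,p)=1$.

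First I would record the elementary invariance of the Gauss sum $S(q,m) = \sum_{t=1}^{q} e^{2\pi i m t^2/q}$ under scaling its argument by the square of a unit: if $\gcd(v,q) = 1$, then the substitution $t \mapsto vt$ permutes $\Z/q\Z$ and turns $\sum_{t} e^{2\pi i v^2 m t^2/q}$ into $\sum_{s} e^{2\pi i m s^2/q}$, so $S(q, v^2 m) = S(q, m)$.

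Next, in the sum defining $A_d(p^h, u^2 n)$ I would reindex by $c \equiv u^2 a \pmod{p^h}$: as $a$ runs over the units modulo $p^h$ so does $c$, one has $u^2 n a \equiv nc \pmod{p^h}$, and $a \equiv (u^{-1})^2 c \pmod{p^h}$ (with $u^{-1}$ the inverse of $u$ modulo $p^h$), whence $S(p^h, a) = S(p^h, (u^{-1})^2 c) = S(p^h, c)$ by the previous step. This gives
\[
A_d(p^h, u^2 n) = \sum_{\substack{c = 1\\ (c,\, p^h) = 1}}^{p^h} \bigl(p^{-h} S(p^h, c)\bigr)^{d}\, e^{2\pi i (-nc/p^h)} = A_d(p^h, n),
\]
and summing over $h \geq 0$ — the equality holding term by term, the series reducing to a finite sum for odd $p$ by Proposition \ref{prop formula for A_d(p^h,n)} — yields $\delta_{p,d}(n) = \delta_{p,d}(u^2 n)$.

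There is essentially no serious obstacle here: the only things to check are that $a \mapsto u^2 a$ is a bijection of $(\Z/p^h\Z)^{*}$, which is immediate, and the Gauss-sum invariance above. For odd $p$ one could instead read the identity off the explicit formulas of Proposition \ref{prop formula for A_d(p^h,n)}, using $\ord_p(u^2 n) = \ord_p(n)$ together with $\bigl(\tfrac{-u^2 n/p^{\ord_p(n)}}{p}\bigr) = \bigl(\tfrac{-n/p^{\ord_p(n)}}{p}\bigr)$ since $\bigl(\tfrac{u^2}{p}\bigr) = 1$; but the change-of-variables argument has the advantage of being uniform in $p$ and of covering $p = 2$ as well.
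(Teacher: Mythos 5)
Your proof is correct and follows essentially the same route as the paper: the invariance $S(q,v^2m)=S(q,m)$ of the Gauss sum under scaling by the square of a unit, applied termwise to give $A_d(p^h,u^2n)=A_d(p^h,n)$ for every $h$, hence the equality of the local densities. If anything, your direct reindexing $a\mapsto u^2a$ at each level $p^h$ is cleaner than the paper's passage from $h=1$ to general $h$ via the multiplicativity of $A_d(q,n)$ in $q$.
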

\begin{proof}
For any prime $p$ and $u \in \N^*$ such that $(p,u)=1$, we know by definition (\ref{gauss sum formula}) that for any $1\leq a \leq p$ we have $S(p,u^2a) = S(p,a)$. Hence, for $n \in \N^*$ it follows that $A_d(p,u^2n)= A_d(p,n)$. In general, for $h \geq 0$, using the fact that $A_d(q,n)$ is a multiplicative function in $q$, we know that $A_d(p^h,u^2n) = A_d(p^h,n)$. Hence, the conclusion follows by Definition \ref{defi p-adic local density}.
\end{proof}

\medskip
\subsection{Proof of Theorem \ref{theorem equidistribution}}
Let $d \in \N^*$, $d \geq 4$ and $p$ be an odd prime. Let $a \in \Z/p\Z$. We denote by $\pazocal{F}_p$ the set of functions $f: (\Z/p\Z)^d \to \C$ such that (\ref{vanishing condition}) holds.

(1) Suppose that $a \neq 0$. Let $\pazocal{F}_{p}(a) \subset \pazocal{F}_p$ be the subset of the functions on $(\Z/p\Z)^d$ such that $\sum_{x \in X_{p,d}(a)} f(x)=0$ and $f(x)=0$ on $X_{p,d}(a)^c$. It suffices to take $f \in \pazocal{F}_p(a)$ to test the equidistribution. In this case, whenever $n \equiv a [p]$ we know that $\sum_{x \in X_d(n)} f(x[p])$ is the $n$-th Fourier coefficient of the weighted theta function $\theta_f$ defined by (\ref{theta fourier series}). By Theorem \ref{cusp form}(2), we know that $\theta_f$ is a cusp form of weight $\frac{d}{2}$. Hence, for $d \geq 5$, the Hecke bound and Corollary \ref{thm bound r(n)} imply that over $n \equiv a [p]$ we have
\begin{align*}
    \sum_{x \in X_d(n)} f(x[p])\frac{1}{r_d(n)} \ll \frac{n^{d/4}}{n^{d/2-1}}= n^{1-d/4}.
\end{align*}
For $d=4$, Theorem \ref{thm bound kloosterman sums}, \ref{thm rn for d =3 or 4} and Remark \ref{rem r4} yield for any $\epsilon >0$, over odd integers $n \equiv a [p]$, we have
\begin{align*}
    \sum_{x \in X_4(n)} f(x[p])\frac{1}{r_4(n)} \ll \frac{n^{3/4+\epsilon}}{n}= n^{\epsilon-1/4}.
\end{align*}
Therefore, for $d \geq 5$, we know $\sum_{x \in X_d(n)} f(x[p]) = o(r_d(n))$ over $n \equiv a[p]$. For $d=4$ and $\epsilon$ small enough, we have $\sum_{x \in X_4(n)} f(x[p]) = o(r_4(n))$ over odd integers $n \equiv a[p]$. Meanwhile, for $f \in \pazocal{F}_p(a)$ we know that the right hand side of (\ref{eqn1 equi}) is
\begin{align*}
    \frac{1}{\abs{X_{p,d}(a)}} \sum_{x \in X_{p,d}(a)} f(x) = 0,
\end{align*}
so the statement of Theorem \ref{theorem equidistribution}(1) holds.

(2) Suppose that $a=0$. The following two results are employed in the proof of the second part.
\begin{prop}\label{prop r_4 difference}
    Let $p$ be an odd prime. Then there exist $C>0$ such that for any odd $n \in \N$, the following holds
    \begin{align}
        r_4(p^2 n) - r_4(n) \geq C n.
    \end{align}
\end{prop}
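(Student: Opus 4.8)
The plan is to derive the bound directly from the exact formula for $r_4$ recorded in Theorem~\ref{thm rn for d =3 or 4}. For odd $n$ one has $(-1)^n = -1$, so $2 + (-1)^n = 1$, and since every divisor of an odd integer is odd the inner sum is the full sum-of-divisors function. Thus $r_4(n) = 8\sigma(n)$, where $\sigma(m) := \sum_{d \mid m} d$. As $p$ is odd, $p^2 n$ is again odd, so likewise $r_4(p^2 n) = 8\sigma(p^2 n)$, and the assertion reduces to a lower bound for $\sigma(p^2 n) - \sigma(n)$.

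Next I would use the multiplicativity of $\sigma$. Write $n = p^a m$ with $a = \ord_p(n)$ and $\gcd(m,p) = 1$. Then
\begin{align*}
\sigma(n) = \frac{p^{a+1}-1}{p-1}\,\sigma(m), \qquad
\sigma(p^2 n) = \frac{p^{a+3}-1}{p-1}\,\sigma(m),
\end{align*}
so that
\begin{align*}
\sigma(p^2 n) - \sigma(n) = \frac{p^{a+3}-p^{a+1}}{p-1}\,\sigma(m) = p^{a+1}(p+1)\,\sigma(m).
\end{align*}

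Finally, applying the trivial bound $\sigma(m) \geq m$ gives $\sigma(p^2 n) - \sigma(n) \geq p^{a+1}(p+1)\,m = p(p+1)\,p^a m = p(p+1)\,n$, whence
\begin{align*}
r_4(p^2 n) - r_4(n) = 8\bigl(\sigma(p^2 n) - \sigma(n)\bigr) \geq 8p(p+1)\,n,
\end{align*}
and the statement holds with $C = 8p(p+1)$, a constant depending only on $p$. There is no genuine obstacle in this argument: it is an immediate consequence of the closed form for $r_4$, and the only point needing a little care is isolating the $p$-part of $n$ before invoking multiplicativity of $\sigma$.
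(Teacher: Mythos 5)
Your proof is correct and follows essentially the same route as the paper: both isolate the $p$-part of $n$, apply the closed formula from Theorem~\ref{thm rn for d =3 or 4} (which for odd $n$ reduces to $8\sigma(n)$), and bound the sum of odd divisors of the prime-to-$p$ part from below by that part itself, arriving at the same constant $C = 8p(p+1) = 8(p+p^2)$. Your explicit use of the multiplicativity of $\sigma$ is just a slightly cleaner packaging of the paper's direct manipulation of the divisor sum.
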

\begin{proof}
    Let $n = p^k n_1$ for some $k \in \N$ such that $(n_1,p)=1$. Then by Theorem \ref{thm rn for d =3 or 4} it follows that
    \begin{align*}
     r_4(p^2n) &= 8\sum_{d|p^{k+2}n_1,2\nmid d}d= 8\sum_{t=0}^{k+2}\sum_{d|n_1,2\nmid d}p^k d,\\
     r_4(n) &= 8\sum_{d|p^{k}n_1,2\nmid d}d = 8\sum_{t=0}^{k}\sum_{d|n_1,2\nmid d}p^k d.
    \end{align*}
    Hence,
    \begin{align*}
    r_4(p^2n)-r_4(n) &=8\rbra{\sum_{d|n_1,2\nmid d}d} \rbra{p^{k+1}+p^{k+2}}\\
      & \geq 8 \frac{n}{p^k}(p^{k+1}+p^{k+2}) = 8 (p+p^2) n.
    \end{align*}
Thus, the conclusion holds for $C := 8 (p+p^2)$.
\end{proof}
\begin{prop}\label{prop difference bound}
    Let $d \geq 5$ and $p$ be an odd prime. Then there exist $C,n_0 >0$ such that for $n \geq n_0$ the following holds
    \begin{align}
        r_d(p^2n)-r_d(n) \geq C n^{d/2-1}.
    \end{align}
\end{prop}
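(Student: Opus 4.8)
The plan is to play the singular series off against its own behaviour under $n\mapsto p^2n$, using the exact evaluations of the $p$-adic local densities from Corollary \ref{cor explicit formula for p-adic density}. By Theorem \ref{thm representation number}, together with (\ref{singular series product form}) and (\ref{infity density}), we have $r_d(m)=\delta_{\infty,d}(m)\,\mathfrak{S}_d(m)+o(m^{d/2-1})$ as $m\to\infty$; since $p$ is fixed, $\delta_{\infty,d}(p^2n)=p^{d-2}\delta_{\infty,d}(n)$ and the error term for $r_d(p^2n)$ is again $o(n^{d/2-1})$, so
\[
r_d(p^2n)-r_d(n)=\delta_{\infty,d}(n)\bigl(p^{d-2}\mathfrak{S}_d(p^2n)-\mathfrak{S}_d(n)\bigr)+o(n^{d/2-1}).
\]
It therefore suffices to show that $p^{d-2}\mathfrak{S}_d(p^2n)-\mathfrak{S}_d(n)\ge\kappa$ for some $\kappa>0$ independent of $n$: the error term is then absorbed for $n$ large, giving the statement with $C=\tfrac12\,\tfrac{\pi^{d/2}}{\Gamma(d/2)}\,\kappa$ and a suitable $n_0$.

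First I would isolate the $p$-part of the singular series. Lemma \ref{lemma p-adic density} with $u=p$ gives $\delta_{q,d}(p^2n)=\delta_{q,d}(n)$ for every prime $q\neq p$, so $\mathfrak{S}_d(p^2n)$ and $\mathfrak{S}_d(n)$ carry the common factor $\prod_{q\neq p}\delta_{q,d}(n)$ and
\[
p^{d-2}\mathfrak{S}_d(p^2n)-\mathfrak{S}_d(n)=\bigl(p^{d-2}\delta_{p,d}(p^2n)-\delta_{p,d}(n)\bigr)\prod_{q\neq p}\delta_{q,d}(n).
\]
The core of the argument is that $p^{d-2}\delta_{p,d}(p^2n)-\delta_{p,d}(n)$ is a strictly positive constant, independent of $n$. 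Since $\ord_p(p^2n)=\ord_p(n)+2$, inspection of the branches of Corollary \ref{cor explicit formula for p-adic density} shows that in every case $\delta_{p,d}$ decomposes as
\[
\delta_{p,d}(n)=c_{p,d}+p^{(1-d/2)\ord_p(n)}\,t_{p,d}(n),
\]
where $c_{p,d}$ equals $C_{p,d}$ for $d$ even and $F_{p,d}$ for $d$ odd, and $t_{p,d}(n)$ depends on $n$ only through $\ord_p(n)\bmod 2$ and through $n/p^{\ord_p(n)}$ --- both unaffected by $n\mapsto p^2n$, so $t_{p,d}(p^2n)=t_{p,d}(n)$. Consequently $p^{d-2}\delta_{p,d}(p^2n)=p^{d-2}c_{p,d}+p^{(1-d/2)\ord_p(n)}t_{p,d}(n)$ --- here $p^{d-2}$ exactly cancels the factor $p^{2(1-d/2)}=p^{2-d}$ coming from $\ord_p$ increasing by $2$, using $\varepsilon_p^{2d}=1$ in the even case --- and subtracting $\delta_{p,d}(n)$ leaves $(p^{d-2}-1)c_{p,d}>0$, since $C_{p,d},F_{p,d}>0$ and $p^{d-2}>1$.

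It remains to bound $\prod_{q\neq p}\delta_{q,d}(n)$ below. By Corollary \ref{cor singular series}, $\mathfrak{S}_d(n)\ge C>0$ uniformly in $n$, while the single factor $\delta_{p,d}(n)$ is positive and bounded above by a constant $M_p$ (in the formulas of Corollary \ref{cor explicit formula for p-adic density} every power of $p$ occurring is $\le 1$ because $1-d/2<0$), so $\prod_{q\neq p}\delta_{q,d}(n)=\mathfrak{S}_d(n)/\delta_{p,d}(n)\ge C/M_p>0$. Putting the displays together gives $p^{d-2}\mathfrak{S}_d(p^2n)-\mathfrak{S}_d(n)\ge(p^{d-2}-1)c_{p,d}\cdot C/M_p=:\kappa>0$, which completes the proof as described. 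The only genuine difficulty is the branch-by-branch bookkeeping behind the cancellation of the $n$-dependent terms; conceptually it is forced, the matching exponents $p^{d-2}$ and $p^{2-d}$ being precisely the homogeneity of the archimedean density $\delta_{\infty,d}$ mirrored at the prime $p$.
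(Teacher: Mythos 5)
Your proposal is correct and follows essentially the same route as the paper: extract the archimedean factor, use Lemma \ref{lemma p-adic density} to reduce to the single quantity $p^{d-2}\delta_{p,d}(p^2n)-\delta_{p,d}(n)$, show via the branches of Corollary \ref{cor explicit formula for p-adic density} that this equals the positive constant $(p^{d-2}-1)C_{p,d}$ (resp.\ $(p^{d-2}-1)F_{p,d}$), and bound $\prod_{q\neq p}\delta_{q,d}(n)$ below using Corollary \ref{cor singular series}. Your extra step of dividing the uniform lower bound on $\mathfrak{S}_d(n)$ by an upper bound $M_p$ for $\delta_{p,d}(n)$ is a slightly more explicit justification of the last point than the paper gives, but the argument is the same.
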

The following lemma, a consequence of Corollary \ref{cor explicit formula for p-adic density}, is central to proving the above proposition.
\begin{lemma}\label{lemma 3.16}
Let $d \geq 3$ and $p$ be an odd prime. Then there exists $C_0>0$ such that for any $n \in \N$ we have
\begin{align}
    p^{d-2} \delta_{p,d}(p^2n) - \delta_{p,d}(n) \geq C_0.
\end{align}
\end{lemma}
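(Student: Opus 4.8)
The plan is to compute $p^{d-2}\delta_{p,d}(p^2n)-\delta_{p,d}(n)$ directly from the explicit formulas of Corollary~\ref{cor explicit formula for p-adic density}. The only arithmetic input needed is that $\ord_p(p^2n)=\ord_p(n)+2$, so that $m:=\ord_p(n)$ and $m+2$ have the same parity, and moreover the ``$p$-free part'' is unchanged: $p^2n/p^{\ord_p(p^2n)}=n/p^{\ord_p(n)}$. Consequently each branch of Corollary~\ref{cor explicit formula for p-adic density} is preserved under $n\mapsto p^2n$, and in every branch the $n$-dependence enters only through a single power with negative exponent (since $d\ge 3$), which the prefactor $p^{d-2}$ is exactly tailored to cancel.

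First I would treat the case $d$ even. Here $\delta_{p,d}(n)=C_{p,d}\bigl(1-(\varepsilon_p^d p^{1-d/2})^{m+1}\bigr)$, and since $d$ is even one has $\varepsilon_p^{2d}=1$, hence $(\varepsilon_p^d p^{1-d/2})^{m+3}=p^{2-d}(\varepsilon_p^d p^{1-d/2})^{m+1}$. Therefore, writing $X:=(\varepsilon_p^d p^{1-d/2})^{m+1}$,
\[
  p^{d-2}\delta_{p,d}(p^2n)-\delta_{p,d}(n)
  =C_{p,d}\Bigl[p^{d-2}\bigl(1-p^{2-d}X\bigr)-\bigl(1-X\bigr)\Bigr]
  =C_{p,d}\bigl(p^{d-2}-1\bigr),
\]
a positive constant because $C_{p,d}>0$ and $d\ge 4$.

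Next I would treat $d$ odd. If $m$ is odd, $\delta_{p,d}(n)=p^{(1-d/2)m}E_{p,d}+F_{p,d}$ and $\delta_{p,d}(p^2n)=p^{(1-d/2)m}p^{2-d}E_{p,d}+F_{p,d}$; if $m$ is even the same identities hold with $E_{p,d}$ replaced by $G_{p,d,n}$, using that $G_{p,d,p^2n}=G_{p,d,n}$ since the Legendre symbol in its definition depends only on $n/p^{\ord_p(n)}$. In either subcase,
\[
  p^{d-2}\delta_{p,d}(p^2n)-\delta_{p,d}(n)=\bigl(p^{d-2}-1\bigr)F_{p,d}>0,
\]
since $F_{p,d}=(1-p^{1-d})(1-p^{2-d})^{-1}>0$ for $d\ge 3$. (The degenerate value $n=0$, where $\ord_p(n)=\infty$, causes no trouble: the relevant powers $p^{(1-d/2)m}$ and $(\varepsilon_p^d p^{1-d/2})^{m+1}$ both vanish, giving $\delta_{p,d}(0)=C_{p,d}$ respectively $F_{p,d}$, and the stated inequality still holds.) It then suffices to set $C_0:=(p^{d-2}-1)C_{p,d}$ if $d$ is even and $C_0:=(p^{d-2}-1)F_{p,d}$ if $d$ is odd.

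I expect no genuine obstacle: the entire content sits inside Corollary~\ref{cor explicit formula for p-adic density}, and the exponent $d-2$ has been chosen precisely so that raising the $p$-order by $2$ (which multiplies the leading term by $p^{2-d}$) is undone. One could even phrase the computation uniformly by noting that in every case $\delta_{p,d}(n)=p^{(1-d/2)\ord_p(n)}a_n+b$ with $a_{p^2n}=a_n$ and a constant $b=b(p,d)>0$, whence $p^{d-2}\delta_{p,d}(p^2n)-\delta_{p,d}(n)=(p^{d-2}-1)b$. The only care required is bookkeeping with the powers of $\varepsilon_p$ and keeping the parity branches straight.
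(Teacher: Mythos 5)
Your proposal is correct and follows essentially the same route as the paper: split into the cases $d$ even, $d$ odd with $\ord_p(n)$ odd, and $d$ odd with $\ord_p(n)$ even, use the explicit formulas of Corollary~\ref{cor explicit formula for p-adic density} together with $\ord_p(p^2n)=\ord_p(n)+2$ and $G_{p,d,p^2n}=G_{p,d,n}$, and observe that the factor $p^{d-2}$ exactly cancels the shift in the leading power, yielding the same constants $C_0=(p^{d-2}-1)C_{p,d}$ and $C_0=(p^{d-2}-1)F_{p,d}$. Your additional remark about the degenerate case $n=0$ is a harmless refinement not present in the paper.
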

\begin{proof}
If $d$ is even, recall that $C_{p,d}$ in (\ref{explicit formula for p-adic d even}) from Corollary \ref{cor explicit formula for p-adic density} is a positive constant which depends on $p$ and $d$, then it follows that
\begin{align*}
    p^{d-2} \delta_{p,d}(p^2n) - \delta_{p,d}(n) &= C_{p,d} \bbra{ p^{d-2}\rbra{1-\rbra{\varepsilon_p^d p^{1-d/2}}^{\ord_p(n)+3}}-\rbra{1-\rbra{\varepsilon_p^d p^{1-d/2}}^{\ord_p(n)+1}}} \\
    &= C_{p,d} \bbra{p^{d-2}-1 + \varepsilon_p^{d(\ord_p(n)+1)}p^{(1-d/2)(\ord_p(n)+1)}(1-\varepsilon_p^{2d})}\\
    &= C_{p,d} \rbra{p^{d-2}-1} >0.
\end{align*}
If $d$ and $\ord_p(n)$ are both odd, then $\ord_p(p^2n) = \ord_p(n)+2$ is also odd. Recall $E_{p,d}$ and $F_{p,d}$ in (\ref{explicit formula for p-adic d odd}) from Corollary \ref{cor explicit formula for p-adic density} are two positive constants which depend on $p$ and $d$. It follows that
\begin{align*}
    p^{d-2} \delta_{p,d}(p^2n) - \delta_{p,d}(n) &= p^{d-2} p^{(1-d/2)(\ord_p(n)+2)} E_{p,d} + p^{d-2} F_{p,d} - p^{(1-d/2)\ord_p(n)} E_{p,d} -  F_{p,d}\\
    &= p^{(1-d/2)\ord_p(n)} \rbra{p^{d-2} p^{2-d} - 1} E_{p,d} + \rbra{p^{d-2}-1} F_{p,d} \\
    &= \rbra{p^{d-2}-1} F_{p,d} >0.
\end{align*}
If $d$ is odd and $\ord_p(n)$ is even, then $\ord_p(p^2n) = \ord_p(n)+2$ is also even. Besides,
\begin{align*}
    \rbra{\frac{-p^2n/p^{\ord_p(p^2n)}}{p}} = \rbra{\frac{-n/p^{\ord_p(n)}}{p}}.
\end{align*}
Hence, by definition of $G_{p,d,n}$ in (\ref{explicit formula for p-adic d odd}) we know that $G_{p,d,n} = G_{p,d,p^2n}$. It follows that
\begin{align*}
    p^{d-2} \delta_{p,d}(p^2n) - \delta_{p,d}(n) &= p^{d-2} p^{(1-d/2)(\ord_p(n)+2)} G_{p,d,p^2n} + p^{d-2} F_{p,d} - p^{(1-d/2)\ord_p(n)} G_{p,d,n} - F_{p,d}\\
    & = p^{(1-d/2)\ord_p(n)} \rbra{ p^{d-2} p^{2-d} - 1} G_{p,d,n}  + (p^{d-2}-1) F_{p,d}\\
    & = \rbra{p^{d-2}-1} F_{p,d} >0.
\end{align*}
In summary, if $d$ is even, one can take $C_0 := (p^{d-2}-1)C_{p,d}$; if $d$ is odd, one can take $C_0 := (p^{d-2}-1)F_{p,d}$.
\end{proof}
\begin{proof}[Proof of Proposition \ref{prop difference bound}]
For $d \geq 5$, by Theorem \ref{thm representation number} and using the notation (\ref{infity density}) we get
\begin{align*}
    r_d(p^2n) - r_d(n) &= \delta_{\infty,d}(p^2n)\prod_l\delta_{l,d}(p^2n) - \delta_{\infty,d}(n)\prod_l\delta_{l,d}(n) + o\rbra{n^{d/2-1}}\\
    &= \frac{\pi^{d/2}}{\Gamma(d/2)} n^{d/2-1}\rbra{p^{d-2}\prod_{l} \delta_{l,d}(p^2n) - \prod_{l} \delta_{l,d}(n)} + o\rbra{n^{d/2-1}}.
\end{align*}
Let us denote the term in the brackets by $D_{p,d}(n)$, then by Lemma \ref{lemma p-adic density} we know that
\begin{align*}
    D_{p,d}(n) &= p^{d-2}\prod_{l} \delta_{l,d}(p^2n) - \prod_{l} \delta_{l,d}(n)\\
    &= \prod_{l \neq p} \delta_{l,d}(n) \rbra{p^{d-2}\delta_{p,d}(p^2n) - \delta_{p,d}(n)}.
\end{align*}
Now Corollary \ref{cor singular series} implies that there exists $C_1>0$ such that for $n \geq n_0$ we have $\prod_{l \neq p} \delta_{l,d}(n) > C_1$, which is uniform in $n$. Hence, we finally know that by Lemma \ref{lemma 3.16} for $n \geq n_0$ we have $D_{p,d}(n) \geq C_0C_1$ and the statement follows by taking
\begin{align*}
    C := \frac{\pi^{d/2}}{\Gamma(d/2)} C_0C_1.
\end{align*}
\end{proof}
We now return to the second part of the proof of Theorem \ref{theorem equidistribution}.

Let $\tilde{\pazocal{F}}_p(0) \subset \pazocal{F}_p$ be the subset of the functions on $(\Z/p\Z)^d$ such that $\sum_{x \in X_{p,d}(0) \setminus \{0,\cdots,0\}}f(x)=0$ and $f(x)=0$ on $\rbra{X_{p,d}(0) \setminus \{0,\cdots,0\}}^c$. Then it suffices to take $f \in \tilde{\pazocal{F}}_p(0)$ to test the equidistribution. We note that
\begin{align*}
   \abs{\bbra{x \in \Z^d \setminus (p\Z)^d: Q(x,x)=n}} &= \abs{\bbra{x \in \Z^d : Q(x,x)=n}} - \abs{\bbra{x \in \Z^d: Q(x,x)=n/p^2}}\\
    &= \begin{cases}
    r_d(n),\quad &\text{if }n \in  p\Z \setminus p^2 \Z\\
    r_d(n) - r_d(n/p^2),\quad &\text{if } n \in p^2 \Z
    \end{cases}.
\end{align*}
Hence, for $n \in p\Z \setminus p^2 \Z$ it follows that
\begin{align*}
    \sum_{\substack{ x \in \Z^d \setminus (p\Z)^d,\\ Q(x,x)=n}}f(x[p]) = \sum_{x \in X_d(n)}f(x[p]),
\end{align*}
and all the arguments follow exactly as those from (1). For $n \in p^2\Z$, the representation number $r_d(n/p^2)$ is nontrivial and
\begin{align*}
    \sum_{\substack{ x \in \Z^d \setminus (p\Z)^d,\\ Q(x,x)=n}}f(x[p]) =  \sum_{x \in X_d(n)}f(x[p]) - f(0,\cdots,0) r_d(n/p^2) = \sum_{x \in X_d(n)}f(x[p])
\end{align*}
again since $f \in \tilde{\pazocal{F}}_p(0)$. Now by Theorem \ref{thm bound kloosterman sums}, the Hecke bound and Proposition \ref{prop r_4 difference}, \ref{prop difference bound}, we can quickly conclude that the statement of Theorem \ref{theorem equidistribution}(2) holds.
\section{Remarks on equidistribution results in dimension 3}\label{section 4}
For low dimensions, one notices already in Theorem \ref{theorem equidistribution} that when $d=4$, the equidistribution fails over $n = 2^k, k \geq 1$.
For $d=3$, the formulas for the representation numbers are more subtle. By Siegel's theorem, we know that
 \begin{align}
      n^{1/2-\epsilon} \ll r_3(n) \ll n^{1/2+\epsilon}
    \end{align}
    for any $\epsilon>0$, cf. \cite[\S 11.3]{MR1474964}.

Furthermore, as noted in Remark \ref{rem kloosterman sums}, for $k = 3/2$, the $n$-th Fourier coefficients satisfy the bound $a_n \ll n^{k/2 -1/4 + \epsilon}$, based on a bound for Salié sums. In particular, if $n$ is square-free, \cite[Theorem 5.6]{MR1474964} provides a way to reduce the exponent $k/2-1/4$. However, this bound applies to cusp forms corresponding to the congruence subgroup $\Gamma_0(4)$, with weights $k \geq 5/2$.

Following Iwaniec's previous result, Duke and Schulze-Pillot (cf. \cite[Lemma 2]{MR1029390}) imposed another Shimura lifting-type condition to derive a useful bound for cusp forms under $\Gamma_1(N)$ of weights $k \geq 3/2$, which is relevant for our situation concerning non-zero level sets. However, further investigation of the zero level set requires a more refined understanding of the representation number $r_3(n)$.
\medskip
\printbibliography

\end{document}